%% file: sharp_santalo_paper_2026-09-29_1623.tex
\documentclass[11pt]{article}

\usepackage[margin=1in]{geometry}
\usepackage{amsmath,amsthm,amssymb,mathtools}
\usepackage{booktabs,array}
\usepackage{microtype}
\usepackage{xcolor}
\usepackage[hidelinks]{hyperref}
\usepackage{tikz}
\usetikzlibrary{arrows.meta,calc,positioning}
\usepackage{placeins}

\newtheorem{theorem}{Theorem}[section]
\newtheorem{proposition}[theorem]{Proposition}
\newtheorem{lemma}[theorem]{Lemma}
\newtheorem{corollary}[theorem]{Corollary}
\theoremstyle{definition}

\theoremstyle{remark}
\newtheorem{remark}[theorem]{Remark}

\newcommand{\R}{\mathbb R}
\newcommand{\vol}{\operatorname{vol}}
\newcommand{\Argmax}{\operatorname*{arg\,max}}
\newcommand{\Leg}{\mathcal L}

\newcommand{\dd}{d}

\definecolor{revisionred}{RGB}{180,0,0}

\title{{A sharp Santal\'o inequality in a slab\\with centrally symmetric sections}}
\author{Shiri Artstein-Avidan\thanks{School of Mathematical Sciences, Tel Aviv University, Tel Aviv, Israel.}}
\date{}

\begin{document}
\maketitle

\begin{abstract}
	
	We  prove   a sharp  Santal\'o-type inequality for the product $V(\varphi)V({\cal L}\varphi)$ where $\varphi$ is an even geometric convex function on $\R^n$, ${\cal L}$ is the Legendre transform, and $V$ is given by $V(\varphi)=\int(1+\varphi)^{-(n+1)}$. 
	For this size functional $V(\Leg\varphi)= V({\cal A}\varphi)$ where ${\cal A}$ is functional polarity, so the same inequality holds for both dualities. Through a projective correspondence, the problem is equivalent  to a new  Santal\'o-type inequality with respect to standard volume and standard polarity, on  a class of not necessarily centered convex bodies. This class consists of convex bodies  $K\subset\mathbb R^n\times[-1,1]$ containing $\{0\}\times[-1,1]$, for which every horizontal section is centrally symmetric about the distinguished vertical axis.  We determine the sharp upper bound for $\operatorname{vol}_{n+1}(K)\operatorname{vol}_{n+1}(K^\circ)$ in this class and classify all equality cases.  
	When $n=1$, the extremizers are the disk and its
	horizontal linear images. When $n\ge2$, the ball is no longer a
	maximizer, and the extremizer is unique up to horizontal linear
	transformations and the reflection $t\mapsto-t$.
	The symmetric  Santal\'o inequality applied to horizontal sections, together with monotone transport, reduces the problem to a maximization problem for increasing curves in the square $[-1,1]^2$. We solve this problem  using a global potential when $n=1$ and two   Hamilton-Jacobi branches when $n\ge2$. 	
	Finally, in dimension one we provide a sharp family of functional
	 Santal\'o  inequalities for the Legendre duality, interpolating between the volume $V$ and
	the exponential volume
	$\int \exp(-\varphi)$, 
	with centered quadratics as the only equality cases.
\end{abstract}

\section{Introduction}

For a convex body $K\subset\R^m$ containing the origin, its polar is
\[
 K^\circ=\{y\in\R^m:\langle x,y\rangle\le1\text{ for every }x\in K\}.
\]
The classical Blaschke-Santal\'o inequality~\cite{Santalo1949,MeyerPajor1990,Schneider2014,ArtsteinAvidanGiannopoulosMilmanbook} identifies ellipsoids as the
maximizers of the volume product after the appropriate centering.  This inequality has many variants, one of the influential variants being the functional form  \cite{BallThesis1986} with the non-symmetric counterpart  \cite{ArtsteinKlartagMilman2004, Lehec, FradeliziMeyer2007}, where the objects are convex functions, the duality is the Legendre transform $\Leg$, and the volume of a convex function $\varphi$ is given 
by $\int \exp(-\varphi)$.  Another functional variant, for which the sharp constant and extremizers are not known in general, is with the same volume measuring, but a different duality, the so-called ``polarity of functions'' $\cal A$ on the class of even geometric convex functions, that is, nonnegative even convex functions which vanish at the origin \cite{Artstein-Slomka}, see also \cite{GSS}.

In this note we discuss a variant of the  Blaschke-Santal\'o inequality which can be presented in two equivalent forms. 
In the functional form, we consider even geometric
convex functions on $\R^n$ and replace the exponential volume by
\[
V(\varphi)=\int_{\R^n}(1+\varphi(x))^{-(n+1)}\dd x.
\]
We determine the sharp upper bound for
$V(\varphi)V(\Leg\varphi)$, assuming that both volumes are positive
and finite, and classify all equality cases. This gives the sharp
form, at exponent $n+1$, of an inequality of Bobkov
\cite{Bobkov2010}. A particular feature of this volume is that
\[
V(\Leg\varphi)=V(\mathcal A\varphi),
\]
where $\mathcal A$ is functional polarity. Thus the same sharp
inequality holds for both dualities.

The equivalent geometric problem concerns usual volume and usual
polarity, but a different class of convex bodies. Fix a centered
vertical segment of length $2$ and its polar slab. We consider
convex bodies satisfying
\[
\{0\}\times[-1,1]\subset K\subset\R^n\times[-1,1],
\qquad
(x,t)\in K\Longleftrightarrow(-x,t)\in K.
\]
Thus every horizontal section is centrally symmetric about the
vertical axis, but the body itself need not be centrally symmetric
or have its centroid at the origin. This class is invariant under
polarity.

The connection between the two problems is given by a projective
transformation of the epigraph of a geometric convex function.
It produces a body in the above class whose volume is a fixed
dimensional multiple of $V(\varphi)$, and carries the Legendre
transform to ordinary polarity. This explains both the exponent
$n+1$ in the functional volume and the normalization in the
geometric problem. The correspondence is described in
Section~\ref{sec:functional}.

We determine the sharp upper bound for
$\vol_{n+1}(K)\vol_{n+1}(K^\circ)$ in this class and classify all
equality cases. The answer depends on the dimension. For $n=1$,
namely for bodies in $\R^2$, the disk and its horizontal linear
images are the only maximizers. For every $n\ge2$, the ball is
no longer a maximizer. The extremizer is unique up to horizontal
linear transformations and the reflection $t\mapsto-t$.
After such a linear transformation, it is a body of revolution
with one apex and an $n$-dimensional face at the opposite end of
the slab. The sharp constant and the profile of this body are
described in Section~\ref{sec:statements}.

In the functional formulation, the distinction is between
quadratic and nonquadratic extremizers. In dimension one, the
equality functions are precisely $\varphi(x)=cx^2/2$, $c>0$,
whereas in every dimension $n\ge2$ the quadratic functions are
no longer extremal. In dimension one we also prove a sharp
family of inequalities interpolating between the volume $V$
and the exponential volume $\int\exp(-\varphi)$. The equality
functions remain the centered quadratics throughout this
family.

The proof is carried out  in the geometric setting.
Brunn's principle and the symmetric Blaschke-Santal\'o inequality
applied to horizontal sections reduce the problem to concave
profiles. Monotone transport then bounds the volume product
in terms of an integral over an increasing curve in the square
$[-1,1]^2$. We bound this integral by constructing potentials
whose increments dominate the integrand. For $n=1$, a single
potential gives the sharp bound. For $n\ge2$, we need to use two
Hamilton-Jacobi branches.

The two-branch construction reduces the bound to the
maximization of a scalar function. The magical point is
that the critical-point equation for this function is also
the condition that makes the two characteristic curves meet properly.
Thus the maximizing parameter gives not only an upper bound
but also a curve attaining it. From this curve we reconstruct
the extremal profile and obtain the equality classification.

The precise statements are given in Section~\ref{sec:statements}.
The geometric proof is developed in
Sections~\ref{sec:transport}-\ref{sec:equality-profiles}, from the transport
reduction to the classification of equality bodies.
Section~\ref{sec:functional} establishes the functional
correspondence, and Section~\ref{sec:legendre-interpolation}
proves the one-dimensional interpolation.
Section~\ref{sec:earlier-functional-santalo} compares our results
with earlier functional inequalities. The appendix proves
continuity of the scalar bound, a derivative identity for it, endpoint exclusion as well as uniqueness of its
maximizing parameter.

\section{Precise statements and extremizers}\label{sec:statements}


Throughout, $n\ge1$ is the dimension of a horizontal section, so the bodies
under consideration live in $\R^{n+1}$.  We write
\[
 \kappa_n=\vol_n(B_2^n).
\]

\subsection{The profile problem}

For a convex body
$K\subset\R^n\times[-1,1]$ with $\{0\}\times [-1,1]\subset K$ write
\[
K_t=\{x\in\R^n:(x,t)\in K\}.
\]
Assume that every $K_t$ is centrally symmetric about the origin and define
its volume radius by
\begin{equation*}
	r(t)=\left(\frac{\vol_n(K_t)}{\kappa_n}\right)^{1/n}.
\end{equation*}
By Brunn concavity principle,  $r:[-1,1]\to[0,\infty)$ is a nonzero upper semicontinuous  concave function, 
which we call a ``profile function''.  
For $|s|\le1$, the horizontal section of the polar is
\begin{equation}\label{eq:polar-section}
	(K^\circ)_s
	=\bigcap_{-1< t<1}(1-st)K_t^\circ.
\end{equation}
The endpoint constraints are already contained by closure, so it is enough to intersect over the interior levels.

We define, for any  nonzero concave profile $r:[-1,1]\to[0,\infty)$, the 
  polar profile 
\begin{equation*}
 r^\circ(s)
 =\inf_{\{t\in(-1,1):\,r(t)>0\}}
   \frac{1-st}{r(t)},
 \qquad -1\le s\le1.
\end{equation*}
The endpoint values may equivalently be defined by one-sided limits; they do
not affect any integral below.  Note that  by the symmetric Blaschke-Santal\'o inequality in $\R^n$~\cite{MeyerPajor1990,Schneider2014}
\begin{align*}
	\vol_n((K^\circ)_s)
	&\le\inf_t\vol_n((1-st)K_t^\circ)\le\kappa_n\inf_t\left(\frac{1-st}{r(t)}\right)^n
	=\kappa_n(r^\circ(s))^n.
\end{align*}

Thus
\[
\vol_{n+1}(K)
=\kappa_n\int_{-1}^1r(t)^n\dd t,
\qquad
\vol_{n+1}(K^\circ)
\le\kappa_n\int_{-1}^1(r^\circ(s))^n\dd s.
\]
For the body of revolution
\[
K(r)=\{(x,t):\|x\|_2\le r(t),\ -1\le t\le1\},
\]
one has $K(r)^\circ=K(r^\circ)$, and both volume
formulas are equalities.


 \subsection{The sharp constants}

 We next  define the scalar quantity which gives the sharp constant.
 For $0<a\le1$, define 
 \begin{equation}\label{eq:Sigma-def}
 	\Sigma_n(a)
 	=
 	2p_a\log\frac1a
 	+
 	\int_{-a}^{1}
 	\frac{(1-w)^n}
 	{\sqrt{4p_a^2+w(1-w)^n}+2p_a}\dd w\quad {\text{where}}  \quad 
 		p_a=\frac12\sqrt{a(1+a)^n}.
 \end{equation}
 At $a=0$ we set
 \begin{equation} 
 	\Sigma_n(0)
 	=
 	\int_{-1}^{1}(1-t^2)^{n/2}\dd t
 	=
 	\frac{\kappa_{n+1}}{\kappa_n}.
 \end{equation}
In Appendix~\ref{sec:proofofLemmaSigma-cont} we prove 
that  $\Sigma_n$ is continuous on $[0,1]$  
 and therefore we can set
 \begin{equation}\label{eq:M-def}
 	M_n=\max_{0\le a\le1}\Sigma_n(a).
 \end{equation}
In Section~\ref{sec:sharpness} we show that the bound $M_n$
is attained by a closed calibrated path, and in
Appendix~\ref{sec:unique-maximizer} we prove that the
corresponding maximizing parameter is unique. More precisely:

 \begin{proposition}[The scalar maximizer]\label{prop:scalar-maximizer}
 	For $n=1$, $\Sigma_1$ is strictly decreasing on $[0,1]$. Thus its
 	unique maximizer is
$
 	a_1=0$ and $M_1=\Sigma_1(0)=\frac{\pi}{2}$.
 	For every $n\ge2$, there is a unique $a_n\in(0,1)$ such that
$ 	M_n=\Sigma_n(a_n)$. 
 	Moreover, $\Sigma_n$ is strictly increasing on $[0,a_n]$ and strictly
 	decreasing on $[a_n,1]$.
 \end{proposition}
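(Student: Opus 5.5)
The plan is to differentiate $\Sigma_n$ in $a$ and show that, up to a positive factor, the derivative equals a function $G_n(a)$ with a single sign change for $n\ge2$ and no sign change for $n=1$. Write $p=p_a$ and $S(w,p)=\sqrt{4p^2+w(1-w)^n}$. The first step is to rationalize the integrand:
\[
\frac{(1-w)^n}{S+2p}=\frac{S-2p}{w}.
\]
This makes the $p$-dependence explicit and smooth away from the lower endpoint.

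At the lower endpoint $w=-a$ we have $w(1-w)^n=-a(1+a)^n=-4p^2$, so $S=0$ and the integrand equals $(1+a)^n/(2p)=2p/a$. Differentiating $\Sigma_n$, this boundary contribution cancels exactly against the term $-2p/a$ coming from $\frac{d}{da}\bigl(2p\log\frac1a\bigr)$. What remains is
\[
\Sigma_n'(a)=p_a'\,G_n(a),\qquad
G_n(a)=2\log\frac1a+\int_{-a}^{1}\frac1w\Bigl(\frac{4p_a}{S(w,p_a)}-2\Bigr)\dd w .
\]
Differentiation under the integral is legitimate because the singularity of $1/S$ at $w=-a$ is of square-root type, hence integrable. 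Since $p_a'>0$ on $(0,1)$, the sign of $\Sigma_n'$ is the sign of $G_n$; this is the critical-point equation referred to in the introduction.

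Using $\mathrm{PV}\!\int_{-a}^1 \frac{\dd w}{w}=\log\frac1a$, we can rewrite
\[
G_n(a)=\mathrm{PV}\!\int_{-a}^1\frac{4p_a}{w\,S(w,p_a)}\dd w,
\]
and the problem reduces to the sign of this principal-value integral. The key steps are as follows.

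First, find the endpoint behaviour. As $a\to0$, $p_a\sim\sqrt a/2$, and I would rescale $w=a u$ near the origin to obtain the limit of $G_n$. For $n\ge2$ I expect $G_n(0^+)>0$, or $G_n\to+\infty$, because the weight $(1-w)^n$ suppresses the positive-$w$ part less than the negative part contributes. For $n=1$ I expect $G_1(0^+)\le 0$.

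Second, show $G_n(1)<0$ by a direct estimate. At $a=1$ we have $p=2^{(n-1)/2}$, $S$ is explicit, and comparing the contributions of $w<0$ and $w>0$ should suffice.

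Third, prove that $G_n$ changes sign at most once, which gives uniqueness of $a_n$ and the monotonicity on $[0,a_n]$ and $[a_n,1]$. For $n=1$, the same argument should show $G_1<0$ throughout, so $\Sigma_1$ is strictly decreasing and $M_1=\Sigma_1(0)=\kappa_2/\kappa_1=\pi/2$. With $G_n(0^+)>0>G_n(1)$ for $n\ge2$, continuity of $\Sigma_n$ on $[0,1]$ then yields the result.

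The main obstacle is the third step. My first attempt would be to show that $a\mapsto G_n(a)$ is strictly decreasing. I would substitute $w=-a+(1+a)v$, or parametrize the integral by $p$ instead of $a$, so that the moving endpoint is fixed, and then differentiate once more. The aim is to write $G_n'$ as an integral of a manifestly signed quantity: the $p$-derivative of $4p/S$ is $4w(1-w)^n/S^3$, and the factor $w$ cancels the $1/w$. Handling the extra endpoint term requires care because of the $S^{-3}$ singularity, so I would integrate by parts first. If strict monotonicity fails, the fallback is a zero-crossing argument: at any zero of $G_n$, show that $G_n'<0$, using the relation $G_n=0$ to eliminate the logarithm.
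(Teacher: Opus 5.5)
Your derivative computation is correct and is the paper's Lemma~\ref{lem:Sigma-derivative}: your $G_n$ equals $2\Psi_n$, and the endpoint term cancels as you say. Your step $G_n(1)<0$ is also fine; it is immediate from the rationalized form, since $\log(1/a)$ vanishes at $a=1$. The gaps are in the sign analysis near $a=0$ and in the uniqueness step, and both are genuine.

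\textbf{The limit at $a=0$.} Your expectation there is false: $G_n(0^+)=0$ for every $n$, not only for $n=1$. If you carry out your own rescaling $w=au$, the principal-value integral $\mathrm{PV}\!\int_{-a}^1 \frac{2p_a}{wS}\,dw$ splits into two pieces that cancel to leading order:
\begin{itemize}
\item $-\log(a/\varepsilon)-2\log 2+o(1)$ from $(-a,-\varepsilon)$;
\item $+\log(a/\varepsilon)+2\log 2+o(1)$ from $(\varepsilon,1)$.
\end{itemize}
The weight $(1-w)^n$ does not enter at this order. For $n\ge2$ the sign near $0$ is decided by a lower-order term from $w$ near $1$, where $S\approx 2p_a$. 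That term is positive, of order $a^{1/n}$ for $n\ge3$ and of order $\sqrt a\log(1/a)$ for $n=2$. So your step ``$G_n(0^+)>0>G_n(1)$'' has no valid starting point as written.

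The paper avoids these asymptotics. It combines the already-proved bound $\sqrt{A(r)B(r)}\le M_n$ with the cone profile $r(t)=1+t$ (a perturbed cone when $n=2$) to show $M_n>\Sigma_n(0)$.

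\textbf{Uniqueness.} This is the more serious gap. Since $G_n(0^+)=0$ and $G_n>0$ on $(0,a_n)$, the function $G_n$ is not monotone for $n\ge2$, so your first attempt (strict decrease of $a\mapsto G_n(a)$) cannot succeed. If it did, $G_n$ would be negative on $(0,1)$ and $\Sigma_n$ would be decreasing, contradicting $M_n>\Sigma_n(0)$.

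The missing idea is to normalize by $\sqrt a$: the paper proves that $F_n(a)=\Psi_n(a)/\sqrt a$ is strictly decreasing. The steps are:
\begin{itemize}
\item Fix the domain with $z=(1-w)/(1+a)$.
\item Write the integrand as $G(a,z,z^n)$, where $G(a,z,s)=\frac{1+a}{d}\bigl(a^{-1/2}-(a+sd)^{-1/2}\bigr)$ and $d=1-(1+a)z$.
\item Check that $G_{as}<0$; together with $z^n\le z$ this gives $F_n'\le F_1'$.
\item Conclude $F_1'<0$ from the closed form $\Psi_1(a)=-\log(1+2a)$.
\end{itemize}

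Your fallback, that every zero of $G_n$ is a strict down-crossing, would suffice logically. At a zero it is literally the same condition, since $\Psi_n'=\sqrt a\,F_n'$ wherever $\Psi_n=0$. But the proposal gives no mechanism for the sign of $G_n'$ at such points, and ``eliminating the logarithm'' does not provide one. That sign is the real content of the uniqueness statement.

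The same applies to $n=1$: you need the explicit evaluation of $\Psi_1$, or some substitute, rather than ``the same argument''.
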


 For $n=1$, let
 \[
 r_1(t)=\sqrt{1-t^2}.
 \]
 For $n\ge2$, let $T_n$ denote the  increasing graph
 associated with the parameter $a_n$, constructed in
 Section~\ref{sec:sharpness}, choosing the orientation which crosses
 the line $t=0$ before the line $s=0$. For $-1<t<1$ let
 \begin{equation}\label{eq:rn-definition}
 	r_n(t)
 	=
 	(1-tT_n(t))^{1/2}T_n'(t)^{1/(2n)},
 \end{equation}
 and extend it to the endpoints by its one-sided limits.
 Section~\ref{sec:equality-profiles} shows that $r_n$ is a concave
 profile.
 
 \begin{theorem}[Sharp profile inequality]\label{thm:profile}
 	For every  nonzero concave profile\footnote{Throughout, profiles are taken to be upper semicontinuous on
 		$[-1,1]$, so their endpoint values agree with their one-sided limits} $r:[-1,1]\to[0,\infty)$, 
 	\begin{equation}\label{eq:profile-main}
 	 \int_{-1}^1r(t)^n\dd t \int_{-1}^1(r^\circ(s))^n\dd s
 	 \le M_n^2.
 	\end{equation}
 	Equality holds if and only if
 	\[
 	r(t)=c\,r_n( t) \quad \text{or}\quad r(t)=c\,r_n(- t) 
 	\]
 	for some $c>0$.
 	For $n=1$ the maximizing profile is even and the two choices   coincide.
 \end{theorem}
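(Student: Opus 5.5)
We will prove the bound by a transport reduction to a variational problem for increasing curves in the square, and then bound that problem by potentials (calibrations). Let $r$ be a nonzero concave profile. We normalize by scaling so that the two factors in \eqref{eq:profile-main} are equal; the product is invariant under $r\mapsto cr$, since $(cr)^\circ=c^{-1}r^\circ$. For each $s$ with $r^\circ(s)>0$ we choose a contact level $t=\tau(s)$ at which the infimum defining $r^\circ(s)$ is attained, or approached. Because $(1-st)/r(t)$ has a mixed derivative of fixed sign in $(s,t)$, any selection $\tau$ is monotone increasing. Conversely, the contact relation $r^\circ(s)r(t)\le 1-st$, with equality on the graph of $\tau$, couples the two profiles. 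Completing the graph of $\tau$ by vertical and horizontal segments gives an increasing curve $\Gamma$ in $[-1,1]^2$ from $(-1,-1)$ to $(1,1)$; reversing the roles if needed covers the orientation $t\mapsto -t$.

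Along $\Gamma$ we parametrize both integrals. Write $r^n\,dt$ and $(r^\circ)^n\,ds$. By AM--GM, $\sqrt{\int r^n\int (r^\circ)^n}$ is bounded by half the sum after scaling. Monotone transport pairs $dt$ with $ds$ along the curve. On the graph part, $r(t)r^\circ(s)=1-st$ together with $r^n\,dt$ and $(r^\circ)^n\,ds$ gives a pointwise bound
\[
\tfrac12\bigl(\lambda^n r^n\,dt+\lambda^{-n}(r^\circ)^n\,ds\bigr)\ \ge\ (1-st)^{n/2}\sqrt{dt\,ds}
\]
in the reverse direction. The goal is the opposite inequality, so we instead express $r$ and $r^\circ$ through $\Gamma$: concavity of $r$ (equivalently, $\Gamma$ being the contact graph) yields $r(t)^n\le (1-t\tau(t))^{n/2}\tau'(t)^{1/2}$-type expressions, matching \eqref{eq:rn-definition}. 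We thus aim to show
\[
\int r^n\,dt\int (r^\circ)^n\,ds\ \le\ \Bigl(\int_\Gamma F\Bigr)^2,\qquad F=(1-st)^{n/2}\sqrt{dt\,ds},
\]
with the degenerate segments contributing boundary terms. This reduces \eqref{eq:profile-main} to showing $\int_\Gamma F\le M_n$ for every increasing curve $\Gamma$.

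To bound $\int_\Gamma F$ we use calibrations. For $n=1$ we seek a single potential $\Phi$ on the square with $d\Phi\ge F$ along every increasing direction, i.e.\ $\Phi_t\Phi_s\ge (1-st)/4$ with $\Phi_t,\Phi_s\ge0$. Then $\int_\Gamma F\le\Phi(1,1)-\Phi(-1,-1)$, and we choose $\Phi$ so that the right-hand side equals $\pi/2=\Sigma_1(0)$, using the solution of the Hamilton--Jacobi equation $\Phi_t\Phi_s=(1-st)^n/4$ generated by the disk. For $n\ge2$ we use two branches: a Hamilton--Jacobi solution $\Phi_a$ emanating from the corner region before the curve crosses $t=0$, and a second branch after it, glued along a characteristic. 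The parameter $p_a$ is the conserved momentum, and integrating along characteristics should produce exactly $\Sigma_n(a)$: the $2p_a\log(1/a)$ term comes from the horizontal or vertical segment, and the integral comes from the characteristic. The maximum over the junction parameter then gives $\int_\Gamma F\le\max_a\Sigma_n(a)=M_n$. For equality, every inequality must be tight: Blaschke--Santal\'o on sections, the AM--GM step, and $\Gamma$ being a calibrated curve. By Proposition~\ref{prop:scalar-maximizer} the parameter $a_n$ is unique, so $\Gamma=T_n$ or its reflection. Reconstructing $r$ from $\Gamma$ via the contact equations gives $r=c\,r_n(\pm t)$, and for $n=1$ this yields the disk.

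The main obstacle is the construction of the two-branch potential for $n\ge2$: verifying $d\Phi\ge F$ globally, including across the gluing line and near the corners where $1-st$ degenerates, and checking that the critical-point equation $\Sigma_n'(a)=0$ is exactly the condition under which the characteristics meet, so that the bound is attained.
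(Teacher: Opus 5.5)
Your reduction to the path problem fails, because you couple $t$ and $s$ through the contact selection $\tau$ rather than through a measure-preserving map.

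On the contact graph one has $r(t)r^\circ(s)=1-st$, so there $F=\sqrt{r^n\,dt}\,\sqrt{(r^\circ)^n\,ds}$. The axis-parallel completion segments carry zero action, because one coordinate is constant on them. Cauchy--Schwarz then gives $\int_\Gamma F\le\sqrt{\int r^n\,dt\int(r^\circ)^n\,ds}$. This is the reverse of what you need, and there are no boundary terms available to close the gap.

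The cone $r(t)=1+t$ makes this concrete. Here $r^\circ(s)=(1-s)/2$, and for every $s>-1$ the infimum defining $r^\circ(s)$ is approached only as $t\to1$. So $\Gamma$ runs along the bottom and right sides of the square and $\int_\Gamma F=0$, although the product equals $2^{n+2}/(n+1)^2$.

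The pointwise bound you invoke does not follow from concavity. For smooth $r$ the contact map $t\mapsto\tau(t)$ is $\tau=-r'/(r-tr')$, with $1-t\tau=r/(r-tr')$ and $\tau'=-rr''/(r-tr')^2$. Hence $r^n\le C(1-t\tau)^{n/2}\sqrt{\tau'}$ is equivalent to $-r''\ge C^{-2}r^{n-1}(r-tr')^{n+2}$. Up to the constant this is the extremal equation \eqref{eq:profile-ODE}, and it fails on every linear piece of $r$, where $\tau'=0$.

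The missing idea is to couple by the increasing transport $T$ pushing $r^n\,dt/A$ forward to $(r^\circ)^n\,ds/B$. Its Jacobian identity $(r^\circ(T(t)))^nT'(t)/B=r(t)^n/A$ makes Cauchy--Schwarz an identity: $\sqrt{AB}=\int\sqrt{r(t)^n(r^\circ(T(t)))^nT'(t)}\,dt$. The inequality $r(t)r^\circ(s)\le1-ts$, which holds for all pairs and not only for contact pairs, then gives $\sqrt{AB}\le I_n(T)$. The contact relation enters only in the equality analysis.

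Your account of the two-branch calibration for $n\ge2$ would also not work as stated. The parameter $a$ is not a free junction parameter to be maximized over. It is read off from the given curve as $a=\max\{0,-\min ts\}$. For that $a$ one takes the least momentum $p_a=\frac12\sqrt{a(1+a)^n}$ keeping $4p_a^2+w(1-w)^n\ge0$ on $[-a,1]$, with potentials $\Phi_{a,\pm}=p_a\log|t/s|\pm W_a(ts)$.

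On their common domain $t>0>s$ these potentials differ by $2W_a(ts)$. This vanishes only on the turning hyperbola $ts=-a$, where the square root vanishes and the gradients also agree. So the curve must be split at a point where $ts$ attains its minimum. Before it you use $\Phi_{a,-}$, which is regular across $t=0$ when $s<0$; after it you use $\Phi_{a,+}$, which is regular across $s=0$ when $t>0$. Switching at the crossing of $t=0$ is impossible, because $\Phi_{a,+}$ is singular there.

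The increments then telescope to $2W_a(1)=\Sigma_n(a)$. The term $2p_a\log(1/a)$ comes from the normalization $W_a(-a)=0$, not from axis-parallel segments, which contribute nothing. The case $a=0$ needs the separate potentials $\pm W_0(ts)$.

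Finally, the ``if'' direction of the equality statement requires attainment, and you flag the needed ingredients as obstacles but supply none of them. Attainment rests on three things:
\begin{enumerate}
\item the identity $\Sigma_n'(a)=2p_a'\Psi_n(a)$, so that criticality of $\Sigma_n$ is exactly the condition that the characteristics issuing from $(-1,-1)$ and $(1,1)$ meet on $ts=-a$;
\item excluding $a=0$ and $a=1$ as maximizers for $n\ge2$;
\item checking that $r_n$ built from $T_n$ is concave with $r_n^\circ(T_n(t))=(1-tT_n(t))/r_n(t)$, so that $A(r_n)=B(r_n)=M_n$.
\end{enumerate}
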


\subsection{The geometric form}
The geometric consequence is the following.
\begin{theorem}[Sharp convex body inequality]\label{thm:body-main}
	Let $K\subset\R^n\times[-1,1]$ be a convex body satisfying
	\[
	\{0\}\times[-1,1]\subset K,
	\qquad
	(x,t)\in K\Longleftrightarrow(-x,t)\in K.
	\]
	Then
	\begin{equation}\label{eq:body-bound-main}
		\vol_{n+1}(K)\vol_{n+1}(K^\circ)
		\le \kappa_n^2M_n^2.
	\end{equation}
	Equality holds if and only if, for some $A\in GL(n)$ and
	$\varepsilon\in\{-1,1\}$,
	\begin{equation}\label{eq:equality-body-form}
		K=
		\left\{
		(Ax,t):
		\|x\|_2\le r_n(\varepsilon t),\ -1\le t\le1
		\right\}.
	\end{equation}
\end{theorem}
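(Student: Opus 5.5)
Theorem~\ref{thm:body-main} will be deduced from Theorem~\ref{thm:profile}. We take the latter, together with the section formula \eqref{eq:polar-section} and the symmetric Blaschke--Santal\'o inequality applied to horizontal sections, as given.

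\textbf{The inequality.} Let $K$ be in the class and let $r$ be its volume-radius profile. By Brunn's principle $r$ is a nonzero upper semicontinuous concave profile. As computed above,
\[
\vol_{n+1}(K)=\kappa_n\int_{-1}^1 r^n
\qquad\text{and}\qquad
\vol_{n+1}(K^\circ)\le\kappa_n\int_{-1}^1 (r^\circ)^n .
\]
Multiplying these and applying \eqref{eq:profile-main} gives \eqref{eq:body-bound-main}.

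\textbf{Bodies of the form \eqref{eq:equality-body-form} attain equality.} For $K(r_n)$ both volume formulas are equalities, and $K(r_n)^\circ=K(r_n^\circ)$. Moreover, $r_n$ attains equality in Theorem~\ref{thm:profile}. For a horizontal linear map $\tilde A=A\oplus 1$, polarity gives $(\tilde AK)^\circ=\tilde A^{-T}K^\circ$. The product of volumes therefore picks up the factor $|\det A|\,|\det A^{-1}|=1$, so it is unchanged. The reflection $t\mapsto -t$ preserves the class and commutes with polarity, so the same holds for it.

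\textbf{Bodies attaining equality have the form \eqref{eq:equality-body-form}.} Suppose $K$ attains equality. Then every inequality in the chain must be an equality.

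First, the equality case of Theorem~\ref{thm:profile} gives $r=c\,r_n(\varepsilon\,\cdot)$. We may take $\varepsilon=1$.

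Second, for almost every $s$ we must have $\vol_n((K^\circ)_s)=\kappa_n r^\circ(s)^n$. For such an $s$, take a level $t$ at which the infimum defining $r^\circ(s)$ is attained; by concavity such a $t$ exists in $(-1,1)$ for interior $s$. Then
\[
\vol_n((K^\circ)_s)\le \vol_n\bigl((1-st)K_t^\circ\bigr)\le \kappa_n r^\circ(s)^n .
\]
Both inequalities are forced to be equalities. The second forces $K_t$ to be an ellipsoid, by the equality case of the symmetric Santal\'o inequality. The first forces $(K^\circ)_s=(1-st)K_t^\circ$, since the two sets are nested and have equal volume.

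From the transport construction, the map $s\mapsto t=T_n^{-1}(s)$ is a strictly increasing bijection of $(-1,1)$. Hence almost every $t\in(-1,1)$ arises this way, and almost every section $K_t$ is a centered ellipsoid $E_t$. It remains to show that all of these ellipsoids are homothetic, i.e.\ $E_t=r(t)A B_2^n$ for a single matrix $A$.

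\textbf{Main obstacle: showing the ellipsoids are homothetic.} The plan is to exploit equality in the intersection formula. We have
\[
(K^\circ)_s=(1-st)E_t^\circ\subset(1-su)E_u^\circ \quad\text{for all } u .
\]
Both sides, viewed as functions of $u$, have a contact point at $u=t$, where the volume infimum is attained. A first-order variation in $u$ then yields two facts.
\begin{itemize}
\item The family $u\mapsto \log\bigl((1-su)^{-1}E_u\bigr)$, with $E_u$ written through its positive definite matrix, has vanishing derivative in every direction at $u=t$.
\item The trace part of that derivative vanishes exactly because $t$ minimizes volume.
\end{itemize}
Combined, these show that the traceless part of $E_u^{-1}\dot E_u$ vanishes for almost every $u$. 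Consequently the shape matrix $E_u/\det(E_u)^{1/n}$ is constant.

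There is an alternative route, which avoids the differentiation. Since $E_t\subset K$ and $K$ is convex, Minkowski combinations of sections lie in $K$. Strict log-concavity of volume (Brunn--Minkowski with equality only for homothets), together with equality in Brunn's concavity along $r_n$, would then force homothety. I would try this Brunn--Minkowski route first.

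Once the sections are known to be homothetic ellipsoids, set $A$ to be the common shape matrix. Then $K=\tilde A K(r)$, and taking closures gives \eqref{eq:equality-body-form}.
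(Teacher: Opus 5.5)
Your proof of the inequality and your verification that the bodies \eqref{eq:equality-body-form} attain equality are fine and follow the paper. The gap is in the homothety step, which is the core of the equality classification. The route you would try first rests on a false premise. The profile $r_n$ is strictly concave on $(-1,1)$: $r_n''<0$ for $n\ge2$, and $r_1=\sqrt{1-t^2}$. Hence Brunn's inequality $r(\lambda t+(1-\lambda)u)\ge\lambda r(t)+(1-\lambda)r(u)$ is \emph{strict} along $r_n$, and no equality case of Brunn--Minkowski can be extracted from it. In fact, no argument that uses only the convexity of $K$ and its profile can succeed. Replace the sections by $r_n(t)\exp(\varepsilon\psi(t)D)B_2^n$, where $D$ is traceless diagonal and $\psi$ is smooth with support in a compact subinterval on which $r_n''\le -c<0$. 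For small $\varepsilon$, every support function $t\mapsto h_{K_t}(\theta)$ remains concave, so the body stays convex and in the class. Every volume radius is unchanged, yet the sections are not homothetic. The polar contact $(K^\circ)_{T(t)}=(1-tT(t))K_t^\circ$ therefore has to enter.

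Your differentiation route does use this contact, but it is incomplete as written. It presupposes that $u\mapsto E_u$ is differentiable. It also passes from ``traceless part zero almost everywhere'' to ``shape constant'', which needs absolute continuity: a Cantor-type function has zero derivative a.e.\ without being constant. Neither point is justified. The route is repairable. Concavity of $t\mapsto h_{K_t}(\theta)$ makes the matrices $Q_t$, defined by $h_{K_t}(\theta)^2=\langle Q_t\theta,\theta\rangle$, locally Lipschitz, once you know that \emph{every} interior section is an ellipsoid. The paper obtains ``every'' rather than ``almost every'' from the continuity of both sides of the sectionwise estimate.

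The paper avoids regularity of $E_u$ altogether. It writes $K_t=r(t)E_t$ with $\vol_n(E_t)=\kappa_n$. The contact then gives $E_t^\circ\subseteq e^{F_t(u)-F_t(t)}E_u^\circ$, where $F_t(u)=\log\frac{1-uT(t)}{r(u)}$ is minimized at $u=t$. Smoothness of $r$ alone then yields $E_t^\circ\subseteq e^{C_J|u-t|^2}E_u^\circ$ on each compact $J$. Chaining over $m$ equal subintervals of $[t_0,t_1]$ gives $E_{t_0}^\circ\subseteq e^{C_J(t_1-t_0)^2/m}E_{t_1}^\circ$. Letting $m\to\infty$, and arguing symmetrically for the reverse inclusion, gives $E_{t_0}=E_{t_1}$.

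A smaller point: concavity alone does not make the infimum defining $r^\circ(s)$ attained in $(-1,1)$. For $r(t)=1+t$ it is only approached as $t\to1$. Here it is attained because $r=c\,r_n$ satisfies $r^\circ(T(t))=(1-tT(t))/r(t)$.
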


For $n=1$ this gives
\[
\operatorname{area}(K)\operatorname{area}(K^\circ)\le\pi^2,
\]
with equality precisely for the horizontal linear images of the disk.
For every $n\ge2$, the ball is not a maximizer. In the orientation
$\varepsilon=1$, the equality body has an apex at $t=-1$ and a
positive-radius face at $t=1$; the other orientation is obtained by
reflection in the horizontal hyperplane.

\subsection{The functional form}

The inequality can equivalently be presented in a functional form. 
For a proper lower semicontinuous convex function
$\varphi:\R^n\to[0,\infty]$ with $\varphi(0)=0$, let
\[
\Leg\varphi(y)
=
\sup_{x\in\R^n}
\bigl(\langle x,y\rangle-\varphi(x)\bigr)
\]
and set
\begin{equation}\label{eq:functional-volume}
	V(\varphi)
	=
	\int_{\R^n}\frac{\dd x}{(1+\varphi(x))^{n+1}}.
\end{equation}
Let
\[
q(x)=\frac{\|x\|_2^2}{2}.
\]

\begin{theorem}[Sharp functional inequality]\label{thm:functional-main}
	Let $\varphi:\R^n\to[0,\infty]$ be proper, lower semicontinuous,
	even and convex, with $\varphi(0)=0$, and assume that
	$V(\varphi)$ and $V(\Leg\varphi)$ are positive and finite. Then
	\begin{equation}\label{eq:functional-bound}
		V(\varphi)V(\Leg\varphi)
		\le
		\frac{(n+1)^2}{2^{n+2}}\kappa_n^2M_n^2.
	\end{equation}
%
	For $n=1$ this becomes
	\[
	V(\varphi)V(\Leg\varphi)
	\le\frac{\pi^2}{2},
	\]
	with equality if and only if
	\[
	\varphi(x)=\frac{c x^2}{2},
	\qquad c>0.
	\]
	
	For $n\ge2$, equality holds precisely for the functions whose
	projective bodies, under the correspondence of
	Section~\ref{sec:functional}, are the equality bodies in
	Theorem~\ref{thm:body-main}. Equivalently, up to an invertible
	linear change of variables, there are two equality functions,
	corresponding to the profiles $r_n(t)$ and $r_n(-t)$; they are
	Legendre dual to one another.
\end{theorem}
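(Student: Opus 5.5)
The plan is to reduce Theorem~\ref{thm:functional-main} to Theorem~\ref{thm:body-main} through a projective correspondence between epigraphs and bodies in the slab class.

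\textbf{Step 1: the correspondence.} For an even geometric convex $\varphi$, consider the epigraph $\mathrm{epi}\,\varphi\subset\R^n\times[0,\infty)$. Apply the projective map
\[
P(x,u)=\Bigl(\frac{\sqrt2\,x}{1+u},\ \frac{1-u}{1+u}\Bigr),
\]
which sends the half-line $\{0\}\times[0,\infty]$ onto the segment $\{0\}\times(-1,1]$. Let $K_\varphi=\overline{P(\mathrm{epi}\,\varphi)}$. The constant $\sqrt2$ is to be fixed by the polarity computation in Step~3. The image of a convex set under a projective map that is regular on it is convex, and evenness of $\varphi$ gives $(x,t)\in K_\varphi\Leftrightarrow(-x,t)\in K_\varphi$. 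The conditions $\varphi\ge0$ and $\varphi(0)=0$ give $K_\varphi\subset\R^n\times[-1,1]$ and $\{0\}\times[-1,1]\subset K_\varphi$. Positivity and finiteness of both volumes should correspond exactly to $K_\varphi$ being a convex body with compact polar; these are the nondegeneracy conditions.

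\textbf{Step 2: volume.} Write $K_\varphi$ as the region under the graph of the image of $\varphi$ and change variables. The Jacobian of $(x,u)\mapsto P(x,u)$ is $2^{n/2}\cdot 2\,(1+u)^{-(n+2)}$. Integrating first over $u\in[\varphi(x),\infty)$ gives
\[
\vol_{n+1}(K_\varphi)=\frac{2^{n/2+1}}{n+1}\,V(\varphi).
\]
This step explains the exponent $n+1$.

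\textbf{Step 3: duality.} Verify $K_{\Leg\varphi}=(K_\varphi)^\circ$. The condition $(y,s)\in K_\varphi^\circ$ reads
\[
\frac{\sqrt2\langle x,y\rangle+s(1-u)}{1+u}\le1\quad\text{for all } u\ge\varphi(x).
\]
Equivalently, $\sqrt2\langle x,y\rangle\le(1-s)+u(1+s)$. Writing $(y,s)=P(z,v)$, this becomes $\langle x,z\rangle\le v+u$ for all $u\ge\varphi(x)$, that is, $v\ge\Leg\varphi(z)$. So the polar is the image of $\mathrm{epi}\,\Leg\varphi$, after adjusting the normalization constant so that the factors match. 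I expect this bookkeeping to be the main obstacle. The constants must be chosen so that $P$ intertwines polarity with Legendre exactly. The points at infinity and the boundary levels $t=\pm1$ must also be handled by closure.

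Combining Steps~2 and~3 with Theorem~\ref{thm:body-main} gives
\[
V(\varphi)V(\Leg\varphi)=\frac{(n+1)^2}{2^{n+2}}\vol(K_\varphi)\vol(K_\varphi^\circ)\le\frac{(n+1)^2}{2^{n+2}}\kappa_n^2M_n^2.
\]
This is the constant in \eqref{eq:functional-bound}.

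\textbf{Step 4: equality.} Equality forces $K_\varphi$ to be an equality body of Theorem~\ref{thm:body-main}. The correspondence is a bijection, since $\varphi$ is recovered from $K_\varphi$ by $P^{-1}$. A horizontal $GL(n)$ map on $K$ corresponds to a linear change of variables in $\varphi$. The reflection $t\mapsto-t$ swaps $K$ and $K^\circ$ in the extremal family, so it corresponds to swapping $\varphi$ and $\Leg\varphi$; this is why the two equality functions are Legendre dual.

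For $n=1$, Proposition~\ref{prop:scalar-maximizer} gives $M_1=\pi/2$, so the bound is $\frac{4}{8}\cdot4\cdot\frac{\pi^2}{4}=\frac{\pi^2}{2}$. The disk $x^2+t^2\le1$ pulls back to $2x^2/(1+u)^2+\bigl((1-u)/(1+u)\bigr)^2\le1$, that is, $x^2\le2u$, so $\varphi=x^2/2$. Its horizontal images $x\mapsto\lambda x$ give $\varphi=cx^2/2$.
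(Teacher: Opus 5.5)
Your proposal is correct and is essentially the paper's proof: your map is the paper's $P$ followed by the reflection $t\mapsto-t$. That difference is harmless, since this reflection commutes with polarity and preserves both the slab class and the family of equality bodies. Your Jacobian computation and the identity $\langle P(x,u),P(z,v)\rangle\le1\iff\langle x,z\rangle\le u+v$ are exactly the paper's, so the factor $\sqrt2$ already gives $K_{\Leg\varphi}=K_\varphi^\circ$ with no further adjustment.

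The only step you merely assert is surjectivity of the correspondence, which is needed for existence of the two equality functions when $n\ge2$. The paper obtains it by observing that $P^{-1}(K)$ is upward closed, since in your orientation $P(x,u+h)=\frac{1+u}{1+u+h}P(x,u)+\frac{h}{1+u+h}(0_n,-1)$ with $(0_n,-1)\in K$.
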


\begin{remark}
	The Legendre transform and functional polarity are two
	order-reversing involutions on the class of lower
	semicontinuous geometric convex functions, and both
	preserve evenness
	\cite{ArtsteinAvidanMilman2009,ArtsteinAvidanMilman2011}.
Up to the corresponding linear changes, they are unique. The Legendre
transform is given by 
\[
 \Leg\varphi(y)=\sup_{x\in\R^n}
 \bigl(\langle x,y\rangle-\varphi(x)\bigr),
\]
and the polarity transform by
\[
 \mathcal A\varphi(y)=
 \begin{cases}
 \displaystyle
 \sup_{\{x:\,\varphi(x)>0\}}
 \dfrac{\langle x,y\rangle-1}{\varphi(x)},
 &0\ne y\in\{\varphi=0\}^{\circ},\\[3mm]
 0,&y=0,\\
 +\infty,&y\notin\{\varphi=0\}^{\circ},
 \end{cases}
\]
with the convention that the supremum of the empty set is zero.
Section~\ref{sec:functional} also shows that
\begin{equation}\label{eq:A-L-volume}
	V(\mathcal A\varphi)=V(\Leg\varphi),
\end{equation}
where $\mathcal A$ is functional polarity. Consequently,
Theorem~\ref{thm:functional-main} gives the identical sharp
inequality, with the same equality cases, when $\Leg\varphi$ is
replaced by $\mathcal A\varphi$.
Geometrically, $V$ is a constant multiple of the volume of a diagonal section
of the homogeneous epigraph cone (see \cite{ArtsteinICM})
\begin{equation*}
C_\varphi
=\overline{\bigl\{(sx,s,sz):s>0,\ (x,z)\in\operatorname{epi}\varphi\bigr\}}
\subset\R^n\times\R\times\R.
\end{equation*}
\end{remark}

\input{sharp_santalo_optimal_profiles_tikz_fragment_2026-09-01.tex}

\input{sharp_santalo_optimal_paths_tikz_fragment_2026-09-01.tex}

\FloatBarrier

 \subsection{The one-dimensional interpolation}

 In the case $n=1$, the extremal bodies are disks and the extremal functions
 are quadratic.  We interpolate between the rational functional volume $V$ from \eqref{eq:functional-volume} used in
 Theorem~\ref{thm:functional-main} and the classical exponential functional
 Blaschke-Santal\'o inequality
 \cite{BallThesis1986,ArtsteinKlartagMilman2004,FradeliziMeyer2007}.
 
 For $1\le\beta<\infty$, set
 \[
 V_\beta(\varphi)
 =
 \int_{\R}
 \left(1+\frac{\varphi(x)}{\beta}\right)^{-\beta-1}\dd x,
 \]
 and set
 \[
 V_\infty(\varphi)=\int_{\R}e^{-\varphi(x)}\dd x.
 \]
 Thus $V_1=V$ in dimension one.
 
 \begin{theorem}[Sharp one-dimensional Legendre interpolation]
 	\label{thm:legendre-interpolation}
 	Let $1\le\beta\le\infty$, and let
 	$\varphi:\R\to[0,\infty]$ be proper, lower semicontinuous, even
 	and convex, with $\varphi(0)=0$. Assume that
 	$V_\beta(\varphi)$ and $V_\beta(\Leg\varphi)$ are positive and
 	finite. Then
 	\begin{equation}\label{eq:legendre-interpolation}
 		V_\beta(\varphi)V_\beta(\Leg\varphi)
 		\le V_\beta(q)^2,
 		\qquad q(x)=\frac{x^2}{2}.
 	\end{equation}
 	For $1\le\beta<\infty$,
 	\begin{equation}\label{eq:legendre-constant}
 		V_\beta(q)^2
 		=
 		2\beta\pi
 		\left(
 		\frac{\Gamma\!\left(\beta+\frac12\right)}
 		{\Gamma(\beta+1)}
 		\right)^2,
 	\end{equation}
 	while
 	\[
 	V_\infty(q)^2=2\pi.
 	\]
 	Equality holds if and only if
 	\[
 	\varphi(x)=\frac{cx^2}{2}
 	\qquad\text{for some }c>0.
 	\]
 \end{theorem}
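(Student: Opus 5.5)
Throughout, $\varphi$ is even, so its sublevel sets are symmetric intervals. The plan is to reduce to a one-dimensional problem for the quantities $A(\varphi)=\int(1+\varphi/\beta)^{-\beta-1}$ via layer-cake, and then exploit the $n=1$ case of Theorem~\ref{thm:functional-main}.

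\textbf{Step 1: reduce to the positive half-line.} Write $\varphi(x)=f(|x|)$ with $f:[0,\infty)\to[0,\infty]$ convex, nondecreasing, $f(0)=0$. Then $\Leg\varphi(y)=f^*(|y|)$, where $f^*(y)=\sup_{x\ge0}(xy-f(x))$. The key substitution is the monotone change of variables along the ``gradient'' map. Assume first that $f$ is smooth and strictly convex, and let $y=f'(x)$. Then
\[
f^*(f'(x))=xf'(x)-f(x),
\qquad
\int_0^\infty g_\beta(f^*(y))\,dy=\int_0^\infty g_\beta\bigl(xf'(x)-f(x)\bigr)f''(x)\,dx,
\]
where $g_\beta(u)=(1+u/\beta)^{-\beta-1}$, or $e^{-u}$ if $\beta=\infty$. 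The general case will follow by approximation, using lower semicontinuity and monotone convergence.

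\textbf{Step 2: use a Prékopa–Leindler / reverse-Hölder type pointwise bound.} For $\beta=\infty$ the result is the classical functional Santaló inequality for even functions, whose equality cases are known (Ball, Artstein–Klartag–Milman, Fradelizi–Meyer). I would quote it directly. For $\beta=1$ it is the $n=1$ case of Theorem~\ref{thm:functional-main}, since $(n+1)^2\kappa_1^2M_1^2/2^{3}=4\cdot4\cdot(\pi^2/4)/8=\pi^2/2$.

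For intermediate $\beta$ I would try the following representation:
\[
g_\beta(u)=\int_0^\infty e^{-su}\,\gamma_\beta(s)\,ds,
\qquad
\gamma_\beta(s)=\frac{\beta^{\beta+1}s^\beta e^{-\beta s}}{\Gamma(\beta+1)},
\]
which is a Gamma mixture. This gives
\[
V_\beta(\varphi)=\int_0^\infty \gamma_\beta(s)\int e^{-s\varphi}\,ds .
\]
Since $\Leg(s\varphi)(y)=s\Leg\varphi(y/s)$, the exponential inequality gives
\[
\int e^{-s\varphi}\int e^{-t\Leg\varphi}\le 2\pi\,\frac{1}{\sqrt{st}}\,h\!\left(\frac ts\right)
\]
for some function $h$. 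For the naive product with $t\neq s$, though, this fails to be sharp: mixing loses the constant. So the better route is probably an interpolation of the paper's own transport method. Via the projective correspondence used for $n=1$, one replaces the factor $(1+\varphi)^{-2}$ by $(1+\varphi/\beta)^{-\beta-1}$. The monotone-transport reduction of Section~\ref{sec:transport} then expresses $V_\beta(\varphi)V_\beta(\Leg\varphi)$, through the Cauchy–Schwarz/AM–GM step $\int F\int G\le(\int\sqrt{FG\,T'}\ldots)^2$, as an integral over an increasing curve. One then builds a single global potential, as in the $n=1$ case of Theorem~\ref{thm:profile}, whose increment dominates the integrand. I expect the potential to come from the quadratic case: along $y=cx$ the pointwise quantity is
\[
\sqrt{g_\beta(cx^2/2)\,g_\beta(y^2/(2c))\cdot c}\,,
\]
and the candidate potential is the antiderivative of this expression evaluated on the diagonal curve.

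\textbf{Step 3: equality.} Equality forces equality in the AM–GM/transport step, which forces $f'(x)/x$ to be constant, hence $\varphi=cx^2/2$. The value \eqref{eq:legendre-constant} is a Beta integral:
\[
\int(1+x^2/(2\beta))^{-\beta-1}dx=\sqrt{2\beta}\,B\!\left(\tfrac12,\beta+\tfrac12\right)=\sqrt{2\beta\pi}\;\frac{\Gamma(\beta+\frac12)}{\Gamma(\beta+1)}.
\]

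The main obstacle is Step 2 for $1<\beta<\infty$. One must verify that the calibrating potential inequality, the analogue of the $n=1$ potential, still holds for every $\beta\ge1$. I would first try to show that the two-variable function $\Phi(x,y)=g_\beta(\cdot)\ldots$ satisfies the needed concavity (a log-concavity in $(\log x,\log y)$) uniformly in $\beta$, checking that the bounding cases $\beta=1$ and $\beta=\infty$ are already known.
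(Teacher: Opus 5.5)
Your proposal has a genuine gap, and it is located exactly where the theorem has new content. The endpoints are fine to quote: $\beta=1$ is the $n=1$ case of Theorem~\ref{thm:functional-main}, and your constant check $\pi^2/2$ is right, while $\beta=\infty$ is Ball's inequality. Your Beta-integral evaluation of $V_\beta(q)$ is also correct. For $1<\beta<\infty$, however, you neither set up the path functional precisely nor construct a calibrating potential.

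The recipe you suggest cannot work as stated. A calibration needs $\Phi$ defined on the whole square, with $\Phi_t,\Phi_s\ge0$ and $4\Phi_t\Phi_s$ bounded below by the weight of the path functional at \emph{every} point, because the transport curve of an arbitrary $\varphi$ can go anywhere. ``The antiderivative of the integrand along the diagonal'' only prescribes $\Phi$ on the diagonal. It says nothing about the off-diagonal inequality, which is the whole difficulty. This is not a formality: for the closely related weight $(1-ts)^n$ with $n\ge2$, the diagonal is not optimal at all (Theorem~\ref{thm:profile}). So some structural property of the specific weight must be identified. A vague ``log-concavity in $(\log x,\log y)$'' does not do this, and knowing the two endpoint cases gives no interpolation mechanism in $\beta$. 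Also, in the level variables the weight involves $h_\beta=-g_\beta'$ from the layer-cake formula, not $g_\beta$ evaluated along $y=cx$.

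The idea you are missing is the following. Use the sublevel radii $x(u),y(v)$, for which $x(u)y(v)\le u+v$ holds with no regularity assumption. Transport $h_\beta x\,du/A$ to $h_\beta y\,dv/B$, and compactify by $t=(\beta-u)/(\beta+u)$, $s=(\beta-v)/(\beta+v)$. The bound becomes $\sqrt{AB}\le c_\beta J_\beta(\sigma)$ with
\[
J_\beta(\sigma)=\int_{-1}^1\bigl[(1+t)(1+\sigma)\bigr]^{(\beta-1)/2}\sqrt{(1-t\sigma)\sigma'}\,dt .
\]
Now take the ansatz $\Phi_\beta=A(t)+B(t)(1+s)^{\theta}$ with $\theta=(\beta+1)/2$. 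Fix $A,B$ by requiring $\Phi_t=\Phi_s=\tfrac12(1+t)^{\beta-1}\sqrt{1-t^2}$ on the diagonal. With $\xi=1+t$ and $\eta=1+s$, the Hamilton--Jacobi gap is then
\[
4\Phi_t\Phi_s-(1-ts)(\xi\eta)^{\beta-1}
=\frac{\xi^{\beta-1}\eta^{(\beta-1)/2}}{\theta}\bigl[\xi^\theta-\eta^\theta-\theta\eta^{\theta-1}(\xi-\eta)\bigr]\ge0 .
\]
This is nonnegative by convexity of $r\mapsto r^\theta$, which is precisely where $\beta\ge1$ enters.

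Uniqueness of the equality path also depends on this potential. For $\beta>1$, strict convexity shows the diagonal is the unique equality path. For $\beta=1$, the gap vanishes identically, and one uses instead the explicit solutions $\sigma=t+C\sqrt{1-t^2}$ of the equality ODE and rules out $C\ne0$. Only after this does your Step~3 go through, as $T(u)=u$, $y=\lambda x$, $\lambda x(u)^2=2u$.

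Finally, your Step~1 reduction to smooth strictly convex $f$ by approximation could not yield the equality cases even with a potential in hand, since equality is not stable under limits. The sublevel-set formulation makes that reduction unnecessary.
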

 
 The endpoint $\beta=1$ is Theorem~\ref{thm:functional-main} in
 dimension one, while $\beta=\infty$ is Ball's even functional
 Santal\'o inequality \cite{BallThesis1986}.
 Section~\ref{sec:legendre-interpolation} gives a direct proof of
 the full family.

\section{Monotone transport and the path problem}\label{sec:transport}

The elementary inequality behind the profile duality is
\begin{equation}\label{eq:polar-pointwise}
 r(t)r^\circ(s)\le1-ts,
 \qquad -1\le t,s\le1.
\end{equation}
For convenience, set
\begin{equation*}
A(r):= \int_{-1}^1r(t)^n\dd t,
	\qquad
 B(r): = \int_{-1}^1(r^\circ(s))^n\dd s. 
\end{equation*}
The profile duality inequality combined with monotone transport produces the following bound. 

\begin{lemma}[Transport reduction]\label{lem:transport}
Let $r$ be as in Theorem~\ref{thm:profile}.  There is an increasing,
absolutely continuous map $T:[-1,1]\to[-1,1]$ with $T(-1)=-1$ and
$T(1)=1$ such that
\begin{equation*}
 \sqrt{A(r)B(r)}
 \le I_n(T):=
 \int_{-1}^{1}(1-tT(t))^{n/2}\sqrt{T'(t)}\dd t.
\end{equation*}
\end{lemma}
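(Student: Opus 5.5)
We construct $T$ as the monotone transport pushing the probability measure with density $r(t)^n/A(r)$ on $[-1,1]$ onto the one with density $r^\circ(s)^n/B(r)$. Then we estimate the geometric mean of the two densities along the transport by the pointwise inequality \eqref{eq:polar-pointwise}.

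\emph{Step 1 (the two densities).} By concavity and nonvanishing, $r$ is positive on an interval $J=(\alpha,\beta)\subset[-1,1]$ and vanishes outside its closure. The function $r^\circ$ is an infimum of affine functions of $s$ with positive values (since $|s|,|t|<1$), so it is concave. It is also finite and positive on $(-1,1)$: positivity holds because $r$ is bounded, and finiteness because $r(t)>0$ for some $t$. Hence $f=r^n/A$ and $g=(r^\circ)^n/B$ are probability densities; $A,B\in(0,\infty)$ by boundedness and positivity. Let $F,G$ be their distribution functions. Then $G$ is strictly increasing and $C^1$-like (absolutely continuous with positive density) on $[-1,1]$. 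Define $T=G^{-1}\circ F$. It is increasing and continuous, with $T(-1)=-1$ and $T(1)=1$. Since $G^{-1}$ is locally Lipschitz on $(0,1)$ and $F$ is absolutely continuous, $T$ is absolutely continuous on compact subintervals. We upgrade to all of $[-1,1]$ using monotonicity and boundedness: for an increasing continuous map, absolute continuity on closed subintervals of the open interval gives $\int T'\le 2$, and absolute continuity on the whole interval follows because $T$ is constant outside $\bar J$. In fact $T$ is constant on $[-1,\alpha]$ and on $[\beta,1]$, since $F$ is constant there. The Jacobian equation $g(T(t))T'(t)=f(t)$ holds for a.e.\ $t\in J$, by the chain rule for $G\circ T=F$.

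\emph{Step 2 (pointwise estimate).} For a.e.\ $t\in J$,
\[
\frac{r(t)^n}{\sqrt{AB}}=\sqrt{f(t)\,g(T(t))\,T'(t)}=\frac{\bigl(r(t)\,r^\circ(T(t))\bigr)^{n/2}}{\sqrt{AB}}\sqrt{T'(t)}.
\]
Integrating over $J$ gives
\[
\sqrt{A(r)B(r)}=\int_J r(t)^{n/2}r^\circ(T(t))^{n/2}\sqrt{T'(t)}\,dt.
\]
By \eqref{eq:polar-pointwise}, $r(t)r^\circ(T(t))\le 1-tT(t)$. On $[-1,1]\setminus J$ the integrand of $I_n(T)$ is nonnegative, because $1-tT\ge0$. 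Therefore $\sqrt{AB}\le I_n(T)$.

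\emph{Main obstacle.} The delicate point is the regularity bookkeeping in Step 1. We need the chain-rule identity almost everywhere with no singular part lost. We also need to handle the endpoints where $r$ or $r^\circ$ may vanish, so that $G^{-1}$ has unbounded derivative. We address this by working on compact subintervals of $J$ and passing to the limit by monotone convergence. Note that we only need an inequality: any singular part of $T$ can only decrease $\int\sqrt{T'}$ relative to the mass it carries. So even if $T'$ is interpreted as the density of the absolutely continuous part, the equality in Step 2 becomes ``$\le$'' via Cauchy–Schwarz, which still suffices.
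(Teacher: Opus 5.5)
This is essentially the paper's proof: you take the monotone transport $T=G^{-1}\circ F$ between the normalized densities $r^n/A$ and $(r^\circ)^n/B$, write down the Jacobian identity, apply \eqref{eq:polar-pointwise} at $(t,T(t))$, and integrate. The argument goes through, but two points in your write-up need correcting.

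\textbf{Positivity and absolute continuity.} A nonzero nonnegative concave $r$ on $[-1,1]$ is automatically positive on all of $(-1,1)$. Indeed, if $r(t_0)>0$ and $t$ lies strictly between $t_0$ and an endpoint, concavity gives $r(t)\ge\lambda r(t_0)$ for some $\lambda>0$.
\begin{itemize}
\item So $J=(-1,1)$, and $f,g$ are continuous and positive there.
\item Hence $T$ is $C^1$ on $(-1,1)$ with $T'=f/g(T)>0$. There is no singular part, and your Cauchy--Schwarz remark is unnecessary.
\item Your justification ``$T$ is constant outside $\bar J$'' is therefore vacuous and does not give absolute continuity on $[-1,1]$.
\end{itemize}
As in the paper, absolute continuity comes from continuity of $T$ at $\pm1$ together with monotone convergence. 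For $-1<a<x<1$ one has $T(x)-T(a)=\int_a^xT'$. Letting $a\downarrow-1$, and then $x\uparrow1$, gives $T(x)=T(-1)+\int_{-1}^xT'$ with $\int_{-1}^1T'=2$.

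\textbf{The Step~2 display.} The first equality is off by a factor. With normalized densities, $\sqrt{f(t)\,g(T(t))\,T'(t)}=f(t)=r(t)^n/A$, not $r(t)^n/\sqrt{AB}$. The correct pointwise identity is
\[
\sqrt{B/A}\;r(t)^n=\bigl(r(t)\,r^\circ(T(t))\bigr)^{n/2}\sqrt{T'(t)}.
\]
This integrates to the identity $\sqrt{AB}=\int_{-1}^1\bigl(r\,r^\circ(T)\bigr)^{n/2}\sqrt{T'}$ that you state. From there your conclusion is correct.
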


\begin{proof}
Set $f=r^n$, $g=(r^\circ)^n$, $A=\int f$, and $B=\int g$.  Let $T$ be the
increasing transport from $f(t)\dd t/A$ to $g(s)\dd s/B$. More precisely, 
since $r$ and $r^\circ$ are nonzero nonnegative concave functions,
both $f$ and $g$ are continuous and strictly positive on $(-1,1)$.
Define
\[
F(t)=\frac1A\int_{-1}^t f(\tau)\dd\tau,
\qquad
G(s)=\frac1B\int_{-1}^s g(\sigma)\dd\sigma.
\]
Then $F$ and $G$ are continuous increasing bijections from $[-1,1]$
onto $[0,1]$, and are $C^1$ with strictly positive derivative in
$(-1,1)$. Hence
\[
T=G^{-1}\circ F
\]
is continuous and increasing on $[-1,1]$, is $C^1$ on $(-1,1)$,
and satisfies
\begin{equation}\label{eq:transport-Jacobian}
	\frac{g(T(t))}{B} T'(t)=\frac{f(t)}A,
	\qquad -1<t<1.
\end{equation}
In particular $T'(t)>0$ in the interior. Since $T$ is locally
absolutely continuous on $(-1,1)$, monotone, and continuous at the
endpoints, it is absolutely continuous on $[-1,1]$.

By \eqref{eq:polar-pointwise} at $(t, T(t))$ we see
$f(t)g(T(t))\le(1-tT(t))^n$.  Substituting
in \eqref{eq:transport-Jacobian} gives
\[
 \sqrt{\frac BA}\,\frac{f(t)}{\sqrt{T'(t)}}
 \le(1-tT(t))^{n/2}.
\]
Multiplication by $\sqrt{T'(t)}$ and integration proves the claim.
\end{proof}

\begin{remark}
The only inequality in the proof is
\eqref{eq:polar-pointwise}. Consequently, equality in
Lemma~\ref{lem:transport} holds if and only if
\[
r(t)r^\circ(T(t))=1-tT(t)
\]
for almost every $t\in(-1,1)$.
\end{remark}

We next enlarge the class of admissible paths by forgetting that the
transport path is a graph.
   A
\emph{causal curve} is an absolutely continuous curve
\[
 \gamma(u)=(t(u),s(u))\in[-1,1]^2,
 \qquad 0\le u\le1,
\]
whose two coordinates are nondecreasing and whose endpoints are
$(-1,-1)$ and $(1,1)$.  Define
\begin{equation*}
 I_n(\gamma)
 =\int_0^1(1-t(u)s(u))^{n/2}
       \sqrt{t'(u)s'(u)}\dd u.
\end{equation*}
For the graph
$\gamma(t)=(t,T(t))$, this agrees with $I_n(T)$. Thus an upper bound for $I_n$ on all causal curves gives, in
particular, an upper bound for the original profile problem. 

The following observation is the ``calibration principle'' used throughout the proof and explains the role of a potential.

\begin{lemma}[Calibration principle]\label{lem:calibration-principle}
Let $U\subset\R^2$ be open and let $\Phi\in C^1(U)$ satisfy
\begin{equation*}
 \Phi_t\ge0,
 \qquad \Phi_s\ge0,
 \qquad
 4\Phi_t\Phi_s\ge(1-ts)^n.
\end{equation*}
Along every portion of a causal curve contained in $U$,
\begin{equation}\label{eq:calibration-bound}
 \frac{d}{du}\Phi(t(u),s(u))
 \ge(1-t(u)s(u))^{n/2}\sqrt{t'(u)s'(u)}
\end{equation}
for almost every $u$.
\end{lemma}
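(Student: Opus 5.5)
The proof is a chain rule followed by the arithmetic--geometric mean inequality. Let $\gamma(u)=(t(u),s(u))$ be a causal curve and let $J\subset[0,1]$ be a parameter interval with $\gamma(J)\subset U$. The plan has three steps.

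First, the composition $u\mapsto\Phi(t(u),s(u))$ is absolutely continuous on compact subintervals of $J$. This holds because $\Phi$ is $C^1$, hence locally Lipschitz, on $U$, and $\gamma$ is absolutely continuous with compact image inside $U$ on each compact subinterval. At almost every $u\in J$, both $t'(u)$ and $s'(u)$ exist, and the chain rule gives
\[
\frac{d}{du}\Phi(\gamma(u))=\Phi_t(\gamma(u))\,t'(u)+\Phi_s(\gamma(u))\,s'(u).
\]
To justify the chain rule for a $C^1$ function composed with an absolutely continuous curve at points where $\gamma$ is differentiable, I will use the standard fact that a $C^1$ map is differentiable at every point. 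Since $\Phi$ is differentiable at $\gamma(u)$ and $\gamma$ is differentiable at $u$, the classical chain rule applies.

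Second, causality gives $t'(u)\ge0$ and $s'(u)\ge0$ almost everywhere, and the hypothesis gives $\Phi_t,\Phi_s\ge0$. So both terms $\Phi_t t'$ and $\Phi_s s'$ are nonnegative. The AM--GM inequality $x+y\ge2\sqrt{xy}$ for $x,y\ge0$ then yields
\[
\Phi_t t'+\Phi_s s'\ge 2\sqrt{\Phi_t\Phi_s\,t's'}=\sqrt{4\Phi_t\Phi_s}\,\sqrt{t's'}.
\]

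Third, the hypothesis $4\Phi_t\Phi_s\ge(1-ts)^n$ gives $\sqrt{4\Phi_t\Phi_s}\ge(1-ts)^{n/2}$. Here $1-ts\ge0$ on $[-1,1]^2$, so the square root of the right side is well defined. This gives \eqref{eq:calibration-bound} at almost every $u$.

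There is no real obstacle. The only point needing care is that the derivative of the composition is taken in the almost-everywhere sense, and that the composition is absolutely continuous. Absolute continuity is what allows the pointwise bound to be integrated later to obtain $\Phi(\text{end})-\Phi(\text{start})\ge I_n$ on that portion of the curve. Portions where $\gamma$ leaves $U$ are excluded by restricting to maximal subintervals $J$ with $\gamma(J)\subset U$, which are relatively open in $[0,1]$ by continuity of $\gamma$.
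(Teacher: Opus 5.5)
Your proposal is correct and follows essentially the same route as the paper: the chain rule gives $\frac{d}{du}\Phi(\gamma(u)) = \Phi_t t' + \Phi_s s'$, and then the arithmetic--geometric mean inequality together with $4\Phi_t\Phi_s\ge(1-ts)^n$ gives the bound. Your added justification of absolute continuity of the composition and of the a.e.\ chain rule is sound, and it only makes explicit what the paper's one-line proof leaves implicit.
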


\begin{proof}
Since $t', s'\ge 0$ almost everywhere, the arithmetic-geometric mean inequality implies
\[
 \Phi_t t'+\Phi_s s'
 \ge2\sqrt{\Phi_t\Phi_s t's'}
 \ge(1-ts)^{n/2}\sqrt{t's'}.
\]
\end{proof}

Thus, on a region where $\Phi_t,\Phi_s\ge0$, a solution of the
Hamilton-Jacobi equation
\[
4\Phi_t\Phi_s=(1-ts)^n
\]
bounds the action of every causal arc by the corresponding increment of
$\Phi$.  We say that a causal curve\footnote{The adjective \emph{causal}
	is borrowed from Lorentzian geometry: the quadratic form
	$(\tau,\sigma)\mapsto\tau\sigma$ is indefinite, and the condition
	$\tau,\sigma\ge0$ selects one of the two cones on which it is
	non-negative.  The terminology is only mnemonic; no result from
	Lorentzian geometry is used below.}
is \emph{calibrated by $\Phi$} on an interval if equality  in \eqref{eq:calibration-bound} holds almost
everywhere in the interval.
In this case   calibration is equivalent to
\begin{equation}\label{eq:AMGM-equality}
	\Phi_t(t(u),s(u))\,t'(u)
	=
	\Phi_s(t(u),s(u))\,s'(u)
	\qquad\text{for almost every }u.
\end{equation}
A calibrated arc therefore has action equal to the increment of
$\Phi$ along it. Different portions of a curve may be calibrated by
different potentials; if the potentials agree at the switching
points, the corresponding increments telescope.

\section{The planar case: one global potential}\label{sec:n1}

When $n=1$, the path problem admits a single global calibration.
Consider
\begin{equation}\label{eq:planar-potential}
	\Phi_1(t,s)
	=\frac12\left(\arcsin t+s\sqrt{1-t^2}\right).
\end{equation}
For $-1<t<1$,
\[
(\Phi_1)_t=\frac{1-ts}{2\sqrt{1-t^2}},
\qquad
(\Phi_1)_s=\frac{\sqrt{1-t^2}}2.
\]
Thus both partial derivatives are nonnegative and
\[
4(\Phi_1)_t(\Phi_1)_s=1-ts.
\]
The function $\Phi_1$ itself extends continuously to the closed square.
Applying Lemma~\ref{lem:calibration-principle} away from the sides
$t=\pm1$ and then passing to the limit gives, for every causal curve,
\begin{equation}\label{eq:n1-path-bound}
	I_1(\gamma)
	\le \Phi_1(1,1)-\Phi_1(-1,-1)
	=\frac{\pi}{2}.
\end{equation}

For a graph $s=T(t)$, the same computation reads
\begin{align*}
 \sqrt{(1-tT(t))T'(t)}
 &\le\frac12\left(
 \frac{1-tT(t)}{\sqrt{1-t^2}}
 +\sqrt{1-t^2}\,T'(t)\right)=\frac{d}{dt}\Phi_1(t,T(t)).
\end{align*}
Equality in \eqref{eq:n1-path-bound} for such a graph therefore implies
equality in the arithmetic-geometric mean inequality almost everywhere,
namely
\begin{equation}\label{eq:n1-equality-ode}
	(1-t^2)T'(t)=1-tT(t)
	\qquad\text{for almost every }-1<t<1.
\end{equation}

 Every absolutely continuous solution of \eqref{eq:n1-equality-ode} is
 of the form
 \[
 T(t)=t+C\sqrt{1-t^2}.
 \]
 If $C>0$, then $T'$ is negative near $t=1$, while if $C<0$, then
 $T'$ is negative near $t=-1$. Hence an increasing transport must have
 $C=0$. Thus the diagonal
 \[
 T(t)=t
 \]
 is the unique maximizing transport graph.

Combining \eqref{eq:n1-path-bound} with Lemma~\ref{lem:transport} gives
$A(r)B(r)\le\pi^2/4$. 
Suppose equality holds. Then the transport graph is $T(t)=t$, and
equality holds in the pointwise polar estimate along this graph.
 From  \eqref{eq:transport-Jacobian}, $\frac{(r^\circ(t))^n}{B}
 =
 \frac{r(t)^n}{A}.$
 Since $n=1$, there is a constant $\lambda>0$ such that
 \[
 r^\circ(t)=\lambda r(t).
 \]
 Equality in \eqref{eq:polar-pointwise} then gives
 \[
 \lambda r(t)^2=1-t^2.
 \]
 Consequently, for some $c>0$,
 \[
 r(t)=c\sqrt{1-t^2},
 \qquad
 r^\circ(t)=c^{-1}\sqrt{1-t^2}.
 \]
 This proves the $n=1$ part of Theorem~\ref{thm:profile}.
 It also agrees with
the scalar definition of $M_1$.
The special feature of the planar case is that the Hamilton-Jacobi
equation admits a solution affine in one variable,
\[
\Phi(t,s)=A(t)+sB(t).
\]  
For such an ansatz, $4\Phi_t\Phi_s$ is affine in $s$, and hence cannot
equal $(1-ts)^n$ for all $s$ when $n>1$. This does not rule out other
global potentials, but our sharp higher-dimensional bound will instead
come from two potentials which are joined along a turning hyperbola.
The simplicity of the planar calibration is also what allows the
one-dimensional interpolation proved in
Section~\ref{sec:legendre-interpolation}.

\section{Higher dimensions: the two Hamilton-Jacobi branches}
\label{sec:noether}

We now prove the path bound needed for the higher-dimensional
profile inequality. We first construct two potentials and then
apply them to an arbitrary causal curve. This will establish
the upper bound; showing that it is attained is a separate step.
Although our main interest is $n\ge2$, the construction and
the path bound below are valid for every $n\ge1$.

\subsection{The separated Hamilton-Jacobi equation}

On each open quadrant, set
\[
 w=ts,
 \qquad
 \rho=\frac12\log\left|\frac ts\right|.
\]
Within each quadrant, $w$ specifies the hyperbola $ts=w$,
while $\rho$ records position along that hyperbola. The hyperbolic scaling
\[
 (t,s)\longmapsto(e^\lambda t,e^{-\lambda}s)
\]
fixes $w$ and sends $\rho$ to $\rho+\lambda$.  The path Lagrangian
\[
 L(t,s,t',s')=(1-ts)^{n/2}\sqrt{t's'}
\]
is invariant under this action (note that the square itself and its endpoints in particular are {\em not} preserved under this mapping). 

On smooth interior stationary arcs with
$t',s'>0$, it is the symmetry behind the conserved quantity in
Noether's principle \cite{Arnold1989}. Here we use this principle only to motivate looking for a potential with a  constant $\rho$-derivative.
 The conserved
quantity  following from this principle  will  be derived directly in graph coordinates in
\eqref{eq:first-integral-T}.

We therefore seek a potential which is a solution of
$
 4\Phi_t\Phi_s=(1-ts)^n
$
in the ``separated'' form
\[
 \Phi(w,\rho)=2p\rho+W(w),
 \qquad\text{equivalently}\qquad
 \Phi(t,s)=p\log\left|\frac ts\right|+W(ts).
\]
A direct calculation gives
\[
 \Phi_t=\frac pt+sW'(w),
 \qquad
 \Phi_s=-\frac ps+tW'(w). 
\]
Thus the Hamilton-Jacobi equation becomes
\begin{equation}\label{eq:Wprime}
 (W'(w))^2=\frac{4p^2+w(1-w)^n}{4w^2}.
\end{equation}
This gives the two choices of sign, which 
is the source of the two branches.

For a causal curve $\gamma = (t,s)$, define its turning parameter by
\[
 a(\gamma)=\max\left\{0,-\min_{u\in[0,1]}t(u)s(u)\right\}.
\]
Fix $a\in (0,1]$. For a curve with turning parameter  $a(\gamma) = a>0$,  the range of $w=ts$ is contained in $[-a,1]$. After interchanging $t$ and $s$ if
necessary, we may assume that the mixed-sign portion of the curve
lies in the fourth quadrant.
Because $x(1+x)^n$ is strictly increasing for $x\ge0$, the expression 
$ 4p^2+w(1-w)^n
$ is nonnegative throughout this interval if and only if $ 4p^2\ge a(1+a)^n$.
We choose the least positive value with this property, 
\begin{equation}\label{eq:p-a}
	p=p_a=\frac12\sqrt{a(1+a)^n}.
\end{equation}
With this choice, the square root vanishes at $w=-a$, so the
two derivative branches meet there. We will also choose their
additive constants so that the resulting potentials agree on
$ts=-a$.

\subsection{The two potentials}

For $0<a\le1$, with $p_a$ as in \eqref{eq:p-a}, we integrate the branch
\[
W'(w)=\frac{\sqrt{4p_a^2+w(1-w)^n}}{2w}
\]
by separating off its singular part $p_a/w$. The remainder
extends continuously across $w=0$. We integrate this remainder
across the whole interval $[-a,1]$ and choose the constant so
that $W_a(-a)=0$. Thus we define
\begin{equation}\label{eq:W-def}
	W_a(w)
	=
	p_a\log\frac{|w|}{a}
	+
	\frac12\int_{-a}^{w}
	\frac{(1-v)^n}
	{\sqrt{4p_a^2+v(1-v)^n}+2p_a}\dd v,
	\qquad w\in[-a,1]\setminus\{0\}.
\end{equation}
The integrand is continuous on $[-a,1]$, including at $v=0$
and $v=-a$. Only the explicit logarithmic term is singular at
$w=0$. Direct differentiation gives
\begin{equation}\label{eq:W-derivative}
	W_a(-a)=0,
	\qquad
	W_a'(w)=\frac{\sqrt{4p_a^2+w(1-w)^n}}{2w},
	\qquad -a<w<1,\quad w\ne0.
\end{equation}
Thus $W_a$ and $-W_a$ give the two choices in
\eqref{eq:Wprime}.

For $ts\ne0$ and $-a\le ts\le1$, set
\[
\Phi_{a,\pm}(t,s)
=
p_a\log\left|\frac ts\right|\pm W_a(ts).
\]
For $ts\ne0$ and $-a\le ts\le1$, their first derivatives are
given by the following formulas, with derivatives at the
boundary understood through the local $C^1$ extensions
described below:
\begin{equation}\label{eq:two-potentials-derivatives}
	\begin{aligned}
		(\Phi_{a,\pm})_t
		&=
		\frac{2p_a\pm\sqrt{4p_a^2+ts(1-ts)^n}}{2t},\\
		(\Phi_{a,\pm})_s
		&=
		\frac{-2p_a\pm\sqrt{4p_a^2+ts(1-ts)^n}}{2s},
	\end{aligned}
	\qquad
	4(\Phi_{a,\pm})_t(\Phi_{a,\pm})_s=(1-ts)^n.
\end{equation}

Within the square $[-1,1]^2$, we use $\Phi_{a,-}$ where
$s<0$ and $ts\ge-a$, and $\Phi_{a,+}$ where $t>0$ and
$ts\ge-a$. The square root in
\eqref{eq:two-potentials-derivatives} is at least $2p_a$
when $ts\ge0$, and at most $2p_a$ when $-a\le ts\le0$.
The derivative formulas therefore show that both partial
derivatives are nonnegative on the respective domains away
from the axes. The same holds at the relevant axis crossings
by the extensions below.
Indeed, the logarithmic singularities cancel in the combinations
needed to cross the axes: 
\begin{align*}
	\Phi_{a,-}(t,s)
	&=
	p_a\log\frac{a}{s^2}
	-
	\frac12\int_{-a}^{ts}
	\frac{(1-v)^n}
	{\sqrt{4p_a^2+v(1-v)^n}+2p_a}\dd v,\\
	\Phi_{a,+}(t,s)
	&=
	p_a\log\frac{t^2}{a}
	+
	\frac12\int_{-a}^{ts}
	\frac{(1-v)^n}
	{\sqrt{4p_a^2+v(1-v)^n}+2p_a}\dd v.
\end{align*}
These formulas extend $\Phi_{a,-}$ smoothly across $t=0$
when $s<0$, and $\Phi_{a,+}$ smoothly across $s=0$ when
$t>0$. In particular, the nonnegativity of the partial
derivatives and the Hamilton-Jacobi identity hold there as well.

The potentials also have local $C^1$ extensions at the turning
boundary $ts=-a$. To see this, extend the continuous integrand
in \eqref{eq:W-def} continuously beyond the endpoints of
$[-a,1]$. Its primitive is then $C^1$, and the preceding
formulas give the required extensions. This justifies the
chain rule along admissible absolutely continuous arcs,
including arcs which touch or follow the turning boundary. These extensions are used only to justify the chain rule;
the Hamilton-Jacobi identity is asserted on the original
admissible domains.

On the common turning boundary, where $ts=-a$ and $t>0>s$,
we have
\begin{equation}\label{eq:potentials-match}
	\Phi_{a,-}(t,s)=\Phi_{a,+}(t,s),
\end{equation}
because $W_a(-a)=0$. Their first derivatives agree there as well:
\[
(\Phi_{a,-})_t=(\Phi_{a,+})_t=\frac{p_a}{t},
\qquad
(\Phi_{a,-})_s=(\Phi_{a,+})_s=-\frac{p_a}{s}.
\]

Finally, the corner values give
\[
\Phi_{a,+}(1,1)-\Phi_{a,-}(-1,-1)
=
2W_a(1)
=
\Sigma_n(a),
\]
where the last equality follows from \eqref{eq:Sigma-def}.
Thus the scalar quantity introduced in Section~\ref{sec:statements}
is precisely the endpoint difference of these two potentials.
In the next subsection we use them to bound the action of a causal
curve with turning parameter $a$.

\begin{figure}[t]
\centering
\begin{tikzpicture}[scale=2.55,>=Latex]
  \draw[thick] (-1,-1) rectangle (1,1);
  \draw[dashed] (-1,0)--(1,0);
  \draw[dashed] (0,-1)--(0,1);
  \node[below] at (1,-1.02) {$t$};
  \node[left] at (-1.02,1) {$s$};
  \node[below left] at (-1,-1) {$(-1,-1)$};
  \node[above right] at (1,1) {$(1,1)$};

  \draw[gray,thick,domain=.25:1,samples=80]
       plot (\x,{-.25/\x});
  \node[gray,right] at (.82,-.32) {$ts=-a$};

  \draw[very thick,blue]
    (-1,-1) .. controls (-.75,-.76) and (-.28,-.62) .. (0,-.55)
    .. controls (.18,-.52) and (.37,-.51) .. (.5,-.5);
  \draw[very thick,red]
    (.5,-.5) .. controls (.64,-.36) and (.74,-.12) .. (.78,0)
    .. controls (.83,.28) and (.89,.63) .. (1,1);
  \fill (.5,-.5) circle (.025);
  \node[below right] at (.5,-.5) {$P$};
  \node[blue,align=center] at (-.46,-.18) {$\Phi_{a,-}$\\before $P$};
  \node[red,align=center] at (.53,.52) {$\Phi_{a,+}$\\after $P$};
  \draw[->,blue] (-.62,-.7)--(-.47,-.64);
  \draw[->,red] (.82,.18)--(.86,.39);
\end{tikzpicture}
\caption{A schematic causal path entering the fourth quadrant
	and reaching its minimum product $ts=-a$ at $P$.  We use $\Phi_{a,-}$ before $P$ and $\Phi_{a,+}$ after $P$.
	The former extends across $t=0$, the latter across $s=0$,
	and the two potentials agree on $ts=-a$.
	The curve is not assumed to be calibrated.}
\label{fig:two-layer}
\end{figure}
 
\FloatBarrier

\subsection{The path bound}\label{sec:two-layer}

We now return to an arbitrary causal curve $\gamma$ and use
$a=a(\gamma)$, which need not equal the maximizing parameter
$a_n$. When $a>0$, we split the curve at a point where $ts$
reaches its minimum, using the minus potential before that
point and the plus potential afterwards. We do not assume
that $ts$ is monotone on either portion. The agreement of the
potentials at the switching point makes the two increments
telescope. No equality curve is being assumed or constructed
at this stage. 

\begin{proposition}[Two-layer calibration and equality]
	\label{prop:two-layer-calibration}
	For every $n\ge1$ and every causal curve $\gamma$, with
	$a=a(\gamma)$,
	\[
	I_n(\gamma)\le\Sigma_n(a)\le M_n.
	\]
	If $a=0$, equality in the first inequality holds if and only if
	the image of $\gamma$ is the diagonal segment.
	
	If $a>0$, interchange the coordinates if necessary so that the
	mixed-sign portion lies in the fourth quadrant. Choose any
	$u_*\in(0,1)$ such that
	\[
	t(u_*)s(u_*)=-a.
	\]
	Equality in the first inequality holds if and only if the
	restriction of $\gamma$ to $[0,u_*]$ is calibrated by
	$\Phi_{a,-}$ and its restriction to $[u_*,1]$ is calibrated by
	$\Phi_{a,+}$.
	
	Finally, $I_n(\gamma)=M_n$ if and only if equality holds in the
	first inequality and $
	a(\gamma)\in\Argmax_{[0,1]}\Sigma_n.$  
\end{proposition}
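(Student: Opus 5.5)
The plan is to split into the cases $a=0$ and $a>0$, apply the calibration principle (Lemma~\ref{lem:calibration-principle}) on suitable pieces of the curve, and telescope the potential increments.

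\emph{Case $a=0$.} Then $ts\ge0$ along $\gamma$. Since both coordinates are nondecreasing from $(-1,-1)$ to $(1,1)$, the curve stays in the closed third and first quadrants and passes through the origin.

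I would not try to use $\Phi_{0,\pm}$, which degenerate at $p_0=0$. Instead I would build the comparison directly. On the quadrant pieces, take the potential
\[
\Psi(t,s)=\tfrac12\int_0^{ts}(1-v)^n\,\frac{dv}{2\sqrt{v(1-v)^n}}\cdot(\cdots),
\]
or, more simply, the $p\to0$ limit: $W_0'(w)=\sqrt{w(1-w)^n}/(2w)$ for $w>0$, with $\Phi=\pm W_0(ts)$ in the first and third quadrants. Its derivatives are
\[
\Phi_t=\frac{s(1-ts)^{n/2}}{2\sqrt{ts}},\qquad \Phi_s=\frac{t(1-ts)^{n/2}}{2\sqrt{ts}},
\]
both nonnegative in the first quadrant, with $4\Phi_t\Phi_s=(1-ts)^n$. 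In the third quadrant I would use $-W_0$ with the same checks.

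Each piece then contributes at most $W_0(1)-W_0(0)=\tfrac12\int_0^1(1-w)^{n/2}w^{-1/2}\,dw$. Substituting $w=\tau^2$, this equals $\int_0^1(1-\tau^2)^{n/2}\,d\tau$, so the total is $\Sigma_n(0)$. The singularity at the origin is handled by applying the lemma on subarcs avoiding it and passing to the limit by continuity of $W_0$.

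For equality, AM--GM equality $\Phi_tt'=\Phi_ss'$ reads $s^2t'=t^2s'$ in the first quadrant (and the same in the third). Integrating from the origin gives $1/t-1/s$ constant, which forces $t=s$ on subarcs issuing from $0$. Equivalently, $\rho$ is constant and $\rho\to0$ is needed to reach the corner $(1,1)$. Hence the curve is the diagonal. An arc sitting on an axis has zero action but positive increment, so it is excluded.

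\emph{Case $a>0$.} After swapping coordinates the mixed portion lies in the fourth quadrant. Fix $u_*$ with $t(u_*)s(u_*)=-a$.

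First I would check that $\gamma([0,u_*])$ lies in the domain of $\Phi_{a,-}$, namely $s<0$ or the closed third quadrant, and that $\gamma([u_*,1])$ lies in the domain of $\Phi_{a,+}$, namely $t>0$. This follows from monotonicity: $s(u)\le s(u_*)<0$ before $u_*$ and $t(u)\ge t(u_*)>0$ after. The portions near $(-1,-1)$, where $t<0$ and $s<0$, are covered since $\Phi_{a,-}$ is defined wherever $s<0$ and $ts\ge-a$, and it extends across $t=0$.

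On each portion the hypotheses of the calibration lemma hold via the stated $C^1$ extensions, including arcs touching $ts=-a$. Integrating and using \eqref{eq:potentials-match} at $\gamma(u_*)$, the increments telescope to $\Phi_{a,+}(1,1)-\Phi_{a,-}(-1,-1)=\Sigma_n(a)$. The endpoint values are finite because the logarithms evaluate at $|t|=|s|=1$. The equality characterization is then immediate: the total deficit is a sum of two nonnegative integrals, which vanishes iff both vanish a.e., i.e.\ calibration on each side. This holds for any choice of $u_*$.

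\emph{Final claim.} Since $\Sigma_n(a)\le M_n$, we have $I_n=M_n$ iff both inequalities are equalities, i.e.\ equality in the first inequality and $a\in\Argmax\Sigma_n$.

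The main obstacle I expect is the boundary and singularity bookkeeping. This means justifying the chain rule for merely absolutely continuous curves that run along the axes or the corners $t=\pm1$, and the $a=0$ case, which is not covered by the $p_a$ construction and needs its own potential plus a limiting argument near the origin.
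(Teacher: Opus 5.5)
Your argument follows the paper's proof. For $a=0$ you use the potentials $\mp W_0(ts)$ with a limiting argument. For $a>0$ you split at $u_*$, use $\Phi_{a,-}$ before and $\Phi_{a,+}$ after, telescope through their agreement on $ts=-a$, and read equality off the vanishing of the two nonnegative deficits. The last claim holds because $\Sigma_n(a)=M_n$ exactly on the set of maximizers of $\Sigma_n$. The bound and the $a>0$ equality criterion are fine as you state them. However, the equality part of the case $a=0$ contains two incorrect steps, plus one smaller point.

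\emph{The equality ODE.} With your formulas $\Phi_t=sW_0'(ts)$ and $\Phi_s=tW_0'(ts)$ (up to a common sign in the third quadrant), the condition $\Phi_tt'=\Phi_ss'$ reads $st'=ts'$. That is, $(s/t)'=0$ a.e.\ on each open-quadrant piece, not $s^2t'=t^2s'$. Moreover, issuing from the origin pins down nothing, since every ray $s=kt$ does. What forces $s/t\equiv1$ is monotonicity: $\{t<0,\,s<0\}$ is an initial interval $[0,u_0)$ containing the corner $(-1,-1)$, and $\{t>0,\,s>0\}$ is a terminal interval $(u_1,1]$ containing $(1,1)$. Your remark that $\rho$ is constant and must equal its corner value is the correct version.

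\emph{Excluding axis portions.} These are not excluded by a `positive increment'. The potentials $\pm W_0(ts)$ vanish on both axes, so such a portion changes neither the action nor the potential, and the inequality alone says nothing about it. The exclusion is geometric. Since both open-quadrant pieces lie on the diagonal, $\gamma(u_0)$ and $\gamma(u_1)$ are diagonal points lying outside the open quadrants, hence both equal $(0,0)$. Monotonicity then makes $\gamma$ constant on $[u_0,u_1]$.

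\emph{Where the potential is singular.} $W_0(ts)$ fails to be $C^1$ along the whole axes, not only at the origin; for instance $sW_0'(ts)\to\infty$ as $t\downarrow0$ with $s>0$ fixed. A curve with $a=0$ may also run along an axis before entering a quadrant. So apply the calibration on compact subarcs of the open quadrants, e.g.\ where $ts\ge\varepsilon$ as the paper does, rather than on arbitrary subarcs avoiding the origin.
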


\begin{proof}
	Suppose first that $a>0$, and choose the orientation and $u_*$
	as in the statement. Monotonicity of the coordinates gives
	\[
	s(u)\le s(u_*)<0\quad(0\le u\le u_*),
	\qquad
	t(u)\ge t(u_*)>0\quad(u_*\le u\le1).
	\]
	Also $ts\ge-a$ along the whole curve. Thus each portion lies
	in the domain of the corresponding potential and stays away
	from the axis on which that potential is singular. The local
	$C^1$ extensions constructed above justify the chain rule on
	both portions, including at points of $ts=-a$.
	
	Writing $\Phi$ for the potential used on the relevant portion,
	the Hamilton-Jacobi identity gives, almost everywhere,
	\[
	\frac{\dd}{\dd u}\Phi(\gamma(u))
	-(1-ts)^{n/2}\sqrt{t's'}=
	\Phi_t t'+\Phi_s s'
	-2\sqrt{\Phi_t\Phi_s\,t's'}=
	\left(\sqrt{\Phi_t t'}-\sqrt{\Phi_s s'}\right)^2.
	\]
	The potentials agree at $\gamma(u_*)$, and their corner values
	satisfy
	\[
	\Phi_{a,+}(1,1)-\Phi_{a,-}(-1,-1)=\Sigma_n(a).
	\]
	Integration therefore gives the exact gap identity
	\begin{align}
		\Sigma_n(a)-I_n(\gamma)
		&=
		\int_0^{u_*}
		\left(
		\sqrt{(\Phi_{a,-})_t t'}
		-
		\sqrt{(\Phi_{a,-})_s s'}
		\right)^2 
	 +
		\int_{u_*}^{1}
		\left(
		\sqrt{(\Phi_{a,+})_t t'}
		-
		\sqrt{(\Phi_{a,+})_s s'}
		\right)^2.
		\label{eq:two-layer-gap}
	\end{align}
	Here all partial derivatives are evaluated along $\gamma$.
	Both integrands are nonnegative. This proves the bound and,
	by \eqref{eq:AMGM-equality}, the stated equality criterion.
	
	Suppose now that $a=0$. Then $ts\ge0$ throughout the curve.
	By monotonicity, the curve consists of an initial portion in
	the third quadrant, a possible portion on the coordinate axes,
	and a terminal portion in the first quadrant. Setting $p=0$ in \eqref{eq:Wprime} on $w>0$ gives the primitive
	\[
	W_0(w)=\frac12\int_0^w
	\frac{(1-v)^{n/2}}{\sqrt v}\dd v,
	\qquad 0\le w\le1.
	\]
	On the portions with $ts>0$, use $-W_0(ts)$ in the third
	quadrant and $W_0(ts)$ in the first quadrant. These potentials
	have nonnegative partial derivatives and satisfy
	\[
	4\Phi_t\Phi_s=(1-ts)^n.
	\]
	Apply the calibration on the portions where $ts\ge\varepsilon$
	and then let $\varepsilon\downarrow0$. Since $W_0(0)=0$ and
	$W_0$ is continuous, the initial and terminal portions each
	have action at most $W_0(1)$. The portions on the coordinate
	axes contribute zero, since one coordinate is constant there.
	Consequently,
	\[
	I_n(\gamma)\le2W_0(1)=\Sigma_n(0).
	\]
	
	If equality holds, the calibration gap vanishes on each open
	same-sign portion, and hence
	\[
	s\,t'=t\,s'
	\qquad\text{almost everywhere where }ts>0.
	\]
	Thus $s/t$ is constant on each such portion. The initial corner
	$(-1,-1)$ and the terminal corner $(1,1)$ force both constants
	to be $1$. By continuity, both portions meet the axes at the
	origin, and monotonicity forces any intervening portion to
	remain there. The image of $\gamma$ is therefore the diagonal.
	
	Conversely, along a diagonal curve $\gamma(u)=(h(u),h(u))$,
	\[
	I_n(\gamma)
	=
	\int_0^1(1-h(u)^2)^{n/2}h'(u)\dd u
	=
	\int_{-1}^1(1-v^2)^{n/2}\dd v
	=
	\Sigma_n(0).
	\]
	
	The inequality $\Sigma_n(a)\le M_n$ follows from the definition
	of $M_n$. By Proposition~\ref{prop:scalar-maximizer}, equality
	holds in this last inequality exactly when $a=a_n$.
\end{proof}

\subsection{The form of equality curves}

The equality criterion in Proposition \ref{prop:two-layer-calibration} is conditional: it does not yet show
that a curve attaining $\Sigma_n(a)$ exists for a prescribed
$a$. We next describe the form that such a curve, when it exists, must have. In the next section we will complete the argument in showing that for $a$ which is critical for $\Sigma_n$, such a curve indeed exists. 

There is no equality curve with $a=1$. Indeed, a causal curve
with this parameter must pass through $(1,-1)$ or $(-1,1)$,
and monotonicity forces it to follow the corresponding two
sides of the square. Its action is zero, whereas
$\Sigma_n(1)>0$.

Suppose therefore that $0<a<1$ and
$I_n(\gamma)=\Sigma_n(a)$, with the fourth-quadrant orientation.
On each calibrated portion, the equality
\[
\Phi_t t'=\Phi_s s'
\]
implies
\[
(t',s')
=
\lambda(u)\bigl(\Phi_s(\gamma(u)),\Phi_t(\gamma(u))\bigr),
\qquad \lambda(u)\ge0.
\]
Here $\Phi$ is the corresponding potential. Its partial
derivatives have positive sum on the relevant domain, so we
may take
\[
\lambda(u)=
\frac{t'(u)+s'(u)}
{\Phi_t(\gamma(u))+\Phi_s(\gamma(u))}.
\]
The multiplier is allowed to vanish.

From
\eqref{eq:two-potentials-derivatives}, 
\[
t(\Phi_{a,-})_t+s(\Phi_{a,-})_s
=
-\sqrt{4p_a^2+ts(1-ts)^n},\quad 
t(\Phi_{a,+})_t+s(\Phi_{a,+})_s
=
\sqrt{4p_a^2+ts(1-ts)^n}.
\]
Therefore, considering   $w(u)=t(u)s(u)$ and using the above identity, 
\begin{equation}\label{eq:w'neg}
	w'
	=
	-\lambda
	\sqrt{4p_a^2+w(1-w)^n}\le0
	\qquad\text{on the first portion},
\end{equation}
and
\begin{equation}\label{eq:w'pos}
	w'
	=
	\lambda
	\sqrt{4p_a^2+w(1-w)^n}\ge0
	\qquad\text{on the second portion}.
\end{equation}
We see that for an equality curve, the product first decreases to
$-a$ and then increases. The set of parameters at which
$w=-a$ is an interval, possibly reduced to one point.

On a compact portion away from the turning hyperbola,
the characteristic vector field is smooth and nonzero,
and $\lambda$ is integrable. With
\[
\tau(u)=\int_{u_0}^u\lambda(v)\dd v,
\]
uniqueness for the differential equation gives locally
$\gamma(u)=\eta(\tau(u))$, where
$\eta'=(\Phi_s(\eta),\Phi_t(\eta))$.
Thus zeros of $\lambda$ affect the parametrization,
but not the characteristic image.
In other words, away from the interval where $w=-a$, the characteristic vector field is
smooth, and the oriented image of the curve follows an
incoming characteristic of $\Phi_{a,-}$ and an outgoing
characteristic of $\Phi_{a,+}$, respectively. 



On $ts=-a$, the
two characteristic directions agree and are tangent to the
hyperbola, since
\[
\Phi_t=\frac{p_a}{t},
\qquad
\Phi_s=-\frac{p_a}{s},
\qquad
s\Phi_s+t\Phi_t=0.
\]
Consequently, the image of an equality curve consists of an
incoming characteristic of $\Phi_{a,-}$, a possible increasing arc of
$ts=-a$, and an outgoing characteristic of $\Phi_{a,+}$. The hyperbola arc
may reduce to a point.

Conversely, any causal curve with this calibrated structure
makes both gaps in \eqref{eq:two-layer-gap} vanish and
therefore attains $\Sigma_n(a)$.

We have now proved the upper bound in the profile problem.
Indeed, for the transport map $T$ associated with any admissible
profile $r$,
\[
\sqrt{A(r)B(r)}
\le I_n(T)
\le \Sigma_n(a(T))
\le M_n.
\]
Thus $A(r)B(r)\le M_n^2$. What remains is to show that this
bound is attained.

For the upper bound, the parameter $a$ was read from an
arbitrary curve. For sharpness, we reverse the direction:
we start with the maximizing parameter $a_n$ and seek a
curve attaining $M_n$. The agreement of the two potentials
on $ts=-a_n$ does not by itself imply that their
characteristics meet. In Section~\ref{sec:sharpness} we prove
that, at this parameter, the characteristic of $\Phi_{-}$ from $(-1,-1)$
and the characteristic of $\Phi_{+}$  ending at $(1,1)$ do meet directly.
In Section~\ref{sec:equality-profiles} we then reconstruct a
concave profile for which the transport inequality is also
an equality.

\section{Closing the characteristics and attaining the path bound}
\label{sec:sharpness}

The preceding section gives $I_n(\gamma)\le M_n$ for every
causal curve. We now show that this bound is attained.
For a fixed $0<a<1$, we follow the characteristic of
$\Phi_{a,-}$ from $(-1,-1)$ to its first contact with
$ts=-a$, and the characteristic of $\Phi_{a,+}$ backwards
from $(1,1)$ to the same hyperbola. The question is whether
their contact points agree.

We will describe the separation of these points by a function
$\Psi_n(a)$ and show that they agree exactly when $\Psi_n(a)=0$.
The key identity is
\[
\Sigma_n'(a)=2p_a'\Psi_n(a).
\]
We will also show that the maximizing parameter (for $n\ge 2$) lies in $(0,1)$.
Thus the critical-point condition for the scalar bound forces
the two characteristics to join. At this parameter, they form
an increasing curve in the square with action $M_n$.
The reconstruction of a concave profile attaining the original
inequality is carried out in Section~\ref{sec:equality-profiles}.

The characteristic construction and the derivative identity
in the first three subsections are valid for every $n\ge1$.
In the final subsection we assume $n\ge2$ and show that
the maximizing parameter lies in $(0,1)$.

\subsection{The characteristic equations}

Fix $0<a<1$, and let $\Phi$ denote either $\Phi_{a,-}$ or
$\Phi_{a,+}$ on its corresponding domain. On an open
characteristic arc where $-a<ts<1$, both partial derivatives
are positive. The arc can therefore be written locally as
a smooth strictly increasing graph $s=T(t)$, and by \eqref{eq:AMGM-equality}
calibration
is equivalent to
\begin{equation}\label{eq:characteristic-equation}
T'(t)=\frac{\Phi_t(t,T(t))}{\Phi_s(t,T(t))}.
\end{equation}
We first derive a first integral of this equation, and then
a second-order equation which will be used both to join the
arcs smoothly and to reconstruct a concave profile.

The Hamilton-Jacobi identity in
\eqref{eq:two-potentials-derivatives}, together with 
\eqref{eq:characteristic-equation} gives
\[
\Phi_t(t,T(t))
=\frac{(1-tT)^{n/2}}2\sqrt{T'},
\qquad
\Phi_s(t,T(t))
=\frac{(1-tT)^{n/2}}{2\sqrt{T'}}.
\]
On the other hand, \eqref{eq:two-potentials-derivatives}
gives, for either choice of potential,
\[
t\Phi_t(t,T(t))-T(t)\Phi_s(t,T(t))=2p_a.
\]
Combining these identities yields
\begin{equation}\label{eq:first-integral-T}
(1-tT)^{n/2}\frac{tT'-T}{\sqrt{T'}}=4p_a.
\end{equation}
This derives directly the first integral suggested by the
hyperbolic symmetry. Since the derivation uses only first
derivatives of the potential, the same identity holds along
a calibrated graph on the turning hyperbola $tT=-a$.

To obtain the second-order equation, we differentiate the
Hamilton-Jacobi identity where the potential is smooth.
Indeed, along the characteristic graph,
\[
\frac{\dd}{\dd t}\Phi_s(t,T(t))
=
\Phi_{st}(t,T(t))
+
\Phi_{ss}(t,T(t))\,T'(t),
\]
and \eqref{eq:characteristic-equation} gives
\[
T'(t)=\frac{\Phi_t(t,T(t))}{\Phi_s(t,T(t))}.
\]
Therefore 
\[
\frac{\dd}{\dd t}\Phi_s(t,T(t))
=
\frac{\partial_s(\Phi_t\Phi_s)(t,T(t))}
{\Phi_s(t,T(t))}
=
-\frac{nt(1-tT)^{n-1}}{4\Phi_s(t,T(t))}.
\]
Substituting the expression for $\Phi_s(t,T(t))$ gives
\[
\frac{\dd}{\dd t}
\left(\frac{(1-tT)^{n/2}}{2\sqrt{T'}}\right)
+
\frac{nt}{2}(1-tT)^{n/2-1}\sqrt{T'}=0.
\]
This is the Euler-Lagrange equation for the path functional
in graph form. Simplifying, we obtain
\begin{equation}\label{eq:geodesic-equation}
\frac{T''}{T'}
=
n\frac{tT'-T}{1-tT}.
\end{equation}

\begin{remark}[Convexity of the characteristic arcs]
	\label{rem:characteristic-convexity}
	Equations \eqref{eq:first-integral-T} and
	\eqref{eq:geodesic-equation} imply
	\[
	T''
	=
	\frac{4np_a\,(T')^{3/2}}
	{(1-tT)^{n/2+1}}>0.
	\]
	Thus each incoming or outgoing characteristic is strictly
	convex as a graph on its open arc away from $tT=-a$.
	This does not apply to a possible segment of the turning
	hyperbola. Such a segment is calibrated and satisfies
	\eqref{eq:first-integral-T}, but it does not satisfy
	\eqref{eq:geodesic-equation}: indeed, $T(t)=-a/t$ has
	$T''(t)=-2a/t^3<0$ in the fourth quadrant.
\end{remark}

To locate the contact points with $ts=-a$, we now describe
the characteristics in the coordinates introduced in
Section~\ref{sec:noether}:
\[
w=tT(t),
\qquad
\rho=\frac12\log\left|\frac{t}{T(t)}\right|.
\]
Away from the coordinate axes,
\[
\frac{\dd w}{\dd t}
=
\frac{t\Phi_t+T\Phi_s}{\Phi_s},
\qquad
\frac{\dd\rho}{\dd t}
=
\frac{T-tT'}{2w}
=
-\frac{p_a}{w\Phi_s},
\]
where the partial derivatives are evaluated at $(t,T(t))$.
By \eqref{eq:two-potentials-derivatives},
\[
t\Phi_t+T\Phi_s
=
\begin{cases}
-\sqrt{4p_a^2+w(1-w)^n},&\Phi=\Phi_{a,-},\\
\sqrt{4p_a^2+w(1-w)^n},&\Phi=\Phi_{a,+}.
\end{cases}
\]
Since $\Phi_s>0$, the product $w$ strictly decreases along
the incoming characteristic and strictly increases along
the outgoing characteristic, as long as $-a<w<1$.
We may therefore use $w$ as a parameter on each open arc.
Dividing the preceding derivative formulas gives
\begin{equation}\label{eq:rho-w-characteristic}
\frac{\dd\rho}{\dd w}
=
\begin{cases}
\displaystyle
\frac{p_a}{w\sqrt{4p_a^2+w(1-w)^n}},
&\text{on the incoming characteristic},\\[1.2em]
\displaystyle
-\frac{p_a}{w\sqrt{4p_a^2+w(1-w)^n}},
&\text{on the outgoing characteristic}.
\end{cases}
\end{equation}
This formula is used for $-a<w<1$, $w\ne0$, separately on
the two sides of an axis crossing. The coordinate $\rho$
diverges there, although the characteristic itself is smooth.
The parameter $w$ is not used on a segment of the turning
hyperbola, where it is constant.

\subsection{The contact points and the closing condition}

We next construct the incoming and outgoing characteristic arcs
and locate their contact points with $ts=-a$. Each arc is
stopped at its first contact with this hyperbola when followed
from its prescribed corner. We will then measure the separation
between the two contact points.

For the moment, we follow each characteristic from its
prescribed corner until it reaches $ts=-a$, without requiring
it to remain in the square. The same formulas define
$\Phi_{a,-}$ wherever $s<0$ and $-a\le ts\le1$, and
$\Phi_{a,+}$ wherever $t>0$ and $-a\le ts\le1$.
When the two contact points agree, we will show that both
arcs lie entirely in $[-1,1]^2$. 

\begin{lemma}[Construction of the two characteristic arcs]
	\label{lem:characteristic-arcs}
	For every $0<a<1$, the incoming characteristic of $\Phi_{a,-}$
	from $(-1,-1)$ reaches $ts=-a$ at a finite point $P_-(a)$.
	Its oriented image, stopped at this first contact, is unique.
	Both coordinates strictly increase along the arc, which lies
	first in the third quadrant and then in the fourth quadrant,
	crossing $t=0$ exactly once. It admits an absolutely continuous
	parametrization up to both endpoints.
	
	The outgoing characteristic of $\Phi_{a,+}$ is obtained by
	following it backwards from $(1,1)$ to its first contact
	$P_+(a)$ with $ts=-a$, and then taking the forward orientation.
	It has the same properties, crossing $s=0$ exactly once.
	It is the image of the incoming arc under
	$(t,s)\mapsto(-s,-t)$, with the orientation reversed.
\end{lemma}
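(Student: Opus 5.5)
I would construct the incoming arc explicitly in the coordinates $(w,\rho)$, using \eqref{eq:rho-w-characteristic}, and treat the outgoing arc by symmetry.

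\textbf{Start of the arc.} At $(-1,-1)$ we have $w=1$ and $\rho=0$. Near this corner we are in the open third quadrant, where $\Phi_{a,-}$ is smooth. By \eqref{eq:two-potentials-derivatives} both partials are positive there: at $w=1$ the square root equals $2p_a$, so $(\Phi_{a,-})_t=-2p_a/(-1)\cdot\frac12\cdot 2$ type expressions are positive, and a short check near the corner confirms this. The characteristic vector field $(\Phi_s,\Phi_t)$ is therefore smooth and nonzero near the corner, so the orbit through $(-1,-1)$ exists and is unique.

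\textbf{Third-quadrant portion.} Along the orbit $w$ strictly decreases, since $t\Phi_t+s\Phi_s=-\sqrt{\cdot}<0$. I parametrize by $w\in(0,1]$ and set
\[
\rho(w)=-\int_w^1\frac{p_a\,dv}{v\sqrt{4p_a^2+v(1-v)^n}}.
\]
As $w\downarrow0$ this behaves like $\frac12\log w$, so $\rho\to-\infty$. Hence $|t/s|\to0$ while $ts\to0$. Writing $t=-\sqrt w\,e^{\rho}$ and $s=-\sqrt w\,e^{-\rho}$, I get $t\to0$. For $s$, note $s^2=w e^{-2\rho}$ and
\[
\log(s^2)=\log w-2\rho=\int_w^1\Bigl(\frac{2p_a}{v\sqrt{\cdot}}-\frac1v\Bigr)dv .
\]
The integrand is bounded near $0$, so $s$ tends to a finite negative limit $s_0$.

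\textbf{Crossing $t=0$ and fourth-quadrant portion.} The crossing of $t=0$ is handled by the smooth extension of $\Phi_{a,-}$ across $t=0$ for $s<0$. On the axis the field is $(\Phi_s,\Phi_t)$ with $\Phi_t=(2p_a-\sqrt{\cdot})/(2t)$. I would compute its limit from the explicit formula for $\Phi_{a,-}$: $\partial_t$ of the integral term is $-\frac{s}{2}(\ldots)$ evaluated at $ts=0$, which is positive. So the field is transverse to the axis and the orbit crosses exactly once, continuing into the fourth quadrant with $w<0$ decreasing. There I use $w\in(-a,0)$ and $\rho=\int\frac{p_a\,dw}{w\sqrt{\cdot}}$. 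As $w\downarrow-a$ the radicand vanishes linearly, because its $w$-derivative at $-a$ is $(1+a)^{n-1}(1+a+na)>0$. The integral therefore converges, $\rho$ has a finite limit, and the contact point $P_-(a)$ is a finite point with $t>0>s$.

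Monotonicity of both coordinates follows from the positivity of $\Phi_t$ and $\Phi_s$ on the open arc, as argued after Proposition~\ref{prop:two-layer-calibration}; those sign facts hold on the whole domain, not just inside the square. Absolute continuity up to the endpoints comes from parametrizing by $t$ (or by arclength): the coordinates are monotone and bounded, so they have limits, and each coordinate has bounded variation. Uniqueness of the image is simply uniqueness of the ODE orbit; "first contact" is unambiguous because $w$ is strictly monotone.

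\textbf{The outgoing arc.} Consider the map $\sigma(t,s)=(-s,-t)$. It preserves $ts$, sends $\log|t/s|$ to $-\log|t/s|$, and maps $s<0$ to $t>0$. A direct check gives $\Phi_{a,+}\circ\sigma=-\Phi_{a,-}+\text{const}$. Hence $\sigma$ carries the characteristics of $\Phi_{a,-}$ to those of $\Phi_{a,+}$ with the orientation reversed, and it maps $(-1,-1)$ to $(1,1)$. All the stated properties of the outgoing arc transfer, with the crossing of $t=0$ becoming a crossing of $s=0$.

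The \textbf{main obstacle} is the endpoint behaviour: the finite limit of $s$ at the axis, and the finiteness of $\rho$ at the degenerate hyperbola $w=-a$, where the vector field is tangent to the hyperbola. I would handle the latter via the square-root integrability estimate described above.
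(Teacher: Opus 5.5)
Your argument is correct in substance and follows the same plan as the paper. You integrate the characteristic equations explicitly in $w=ts$ from $w=1$. You use the first-order vanishing of $4p_a^2+w(1-w)^n$ at $w=-a$ to get convergence. You obtain the outgoing arc from $\Phi_{a,+}(-s,-t)=-\Phi_{a,-}(t,s)$; your additive constant is in fact zero.

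The one real difference is the unknown you integrate. You use $\rho$, which diverges at the axis. So you must build the two quadrant pieces separately and glue them through the smooth extension of $\Phi_{a,-}$ across $t=0$. The fourth-quadrant constant is then fixed only implicitly, by continuity.

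The paper integrates $\log(-s)$ instead; your identity $\log(s^2)=\log w-2\rho$ is exactly this quantity. Its $w$-derivative
\[
\frac{(1-w)^n}{2\sqrt{4p_a^2+w(1-w)^n}\,\bigl(\sqrt{4p_a^2+w(1-w)^n}+2p_a\bigr)}
\]
is continuous at $w=0$. This gives one closed formula for $s_-(w)$ on all of $[-a,1]$. The axis crossing then needs no separate argument, and monotonicity and uniqueness are read off directly. It also yields $P_-(a)$ explicitly, which is what is used immediately afterwards to obtain \eqref{eq:Psi-def}. Your route proves the lemma as stated, but it would have to redo this computation to get that formula.

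Some local details need repair:
\begin{itemize}
\item At the corner $w=1$ the square root equals $2p_a$, so $(\Phi_{a,-})_t=0$ there, not positive. The field is $(2p_a,0)$, which is still nonzero, so your conclusion stands.
\item Transversality of the orbit to $\{t=0\}$ is governed by the $t$-component of the field, $\Phi_s=-2p_a/s>0$, not by $\Phi_t$.
\item In the fourth quadrant you should say why the orbit reaches every $w\in(-a,0)$ rather than leaving the domain. The curve defined by your $\rho$-integral is a solution, so by uniqueness it is the orbit.
\item Monotone and bounded (hence of bounded variation) does not imply absolutely continuous. Instead, note that in the $t$-parametrization the slope $\Phi_t/\Phi_s$ is continuous and bounded up to both ends. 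It tends to $0$ at $t=-1$ and to $-s/t$ on $ts=-a$, so the graph is Lipschitz.
\end{itemize}
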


\begin{proof}
	Along an incoming characteristic, $s<0$ and $t=w/s$, so
	\[
	\rho=\frac12\log|w|-\log(-s).
	\]
	Consequently, considering $s$ as a function of $w$ where $w$ goes from $1$ to $-a$, \eqref{eq:rho-w-characteristic} gives
	\begin{align*}
	\frac{\dd}{\dd w}\log(-s)
	&=
	\frac1{2w}
	-
	\frac{p_a}{w\sqrt{4p_a^2+w(1-w)^n}}\\
	&=
	\frac{(1-w)^n}
	{2\sqrt{4p_a^2+w(1-w)^n}
		\bigl(\sqrt{4p_a^2+w(1-w)^n}+2p_a\bigr)}.
	\end{align*}
	The last expression extends continuously across $w=0$.
	At $w=-a$, its only singularity is of order
	$(w+a)^{-1/2}$, since
	\[
	\left.\frac{\dd}{\dd w}
	\bigl(4p_a^2+w(1-w)^n\bigr)\right|_{w=-a}
	=
	(1+a)^{n-1}\bigl(1+(n+1)a\bigr)>0.
	\]
At the initial corner $(t,s)=(-1,-1)$ we have $w=1$,
so the initial condition in the $w$-parametrization is
$s_-(1)=-1$.
This leads to
	\[
	s_-(w)
	=
	-\exp\left[
	-\frac12\int_w^1
	\frac{(1-v)^n}
	{\sqrt{4p_a^2+v(1-v)^n}
		\bigl(\sqrt{4p_a^2+v(1-v)^n}+2p_a\bigr)}
	\dd v
	\right],
	\qquad
	t_-(w)=\frac{w}{s_-(w)}.
	\]
	These formulas are defined for $-a\le w\le1$, with the integral
	at $w=-a$ understood as a convergent improper integral.
	In particular, $s_-(w)$ stays negative and bounded away from
	zero. We have
	\[
	(t_-(1),s_-(1))=(-1,-1),
	\qquad
	t_-(0)=0,\quad s_-(0)<0.
	\]
	
	For $-a<w<1$, differentiation gives
	\begin{align*}
	s_-'(w)
	&=
	\frac{s_-(w)(1-w)^n}
	{2\sqrt{4p_a^2+w(1-w)^n}
		\bigl(\sqrt{4p_a^2+w(1-w)^n}+2p_a\bigr)}
	<0,\\
	t_-'(w)
	&=
	\frac{\sqrt{4p_a^2+w(1-w)^n}+2p_a}
	{2\sqrt{4p_a^2+w(1-w)^n}\,s_-(w)}
	<0.
	\end{align*}
	Thus both coordinates strictly increase when $w$ is traversed
	from $1$ down to $-a$. The derivatives are integrable at
	$w=-a$, so this parametrization is absolutely continuous
	on the closed interval.
	
	Moreover,
	\[
	\frac{\dd s_-}{\dd t_-}
	=
	\frac{s_-(w)^2(1-w)^n}
	{\bigl(\sqrt{4p_a^2+w(1-w)^n}+2p_a\bigr)^2}
	=
	\frac{(\Phi_{a,-})_t(t_-(w),s_-(w))}
	{(\Phi_{a,-})_s(t_-(w),s_-(w))}.
	\]
	This proves calibration, including across $t=0$ by continuity.
	The arc reaches the hyperbola for the first time at
	\[
	P_-(a)=(t_-(-a),s_-(-a)).
	\]
	Conversely, every incoming characteristic with the stated
	initial point satisfies the equation for $\log(-s)$ above
	until its first contact with the hyperbola. That equation
	determines $s$ uniquely as a function of $w$, and then $t=w/s$.
	This proves uniqueness of its oriented image.
	
	Finally,
	\[
	\Phi_{a,+}(-s,-t)=-\Phi_{a,-}(t,s).
	\]
	Reflection by $(t,s)\mapsto(-s,-t)$, together with reversal of
	orientation, therefore produces the outgoing characteristic.
	Its contact point is
	\[
	P_+(a)=(-s_-(-a),-t_-(-a)).
	\]
	The corresponding existence and uniqueness assertions follow
	from those for the incoming arc.
\end{proof}

We next define the closing mismatch geometrically:
\[
\Psi_n(a):=\rho(P_+(a))-\rho(P_-(a)).
\]
The reflection relating the two arcs changes the sign of $\rho$.
Hence
\begin{align}
\rho(P_-(a))&=-\frac12\Psi_n(a),
\label{eq:rho-turn}\quad \text{and}\quad 
\rho(P_+(a))=\frac12\Psi_n(a).
\end{align}
On the fourth-quadrant branch of $ts=-a$,
\[
t=\sqrt a\,e^\rho,
\qquad
s=-\sqrt a\,e^{-\rho}.
\]
Both coordinates increase with $\rho$, so the sign of
$\Psi_n(a)$ records the order of the two contact points.

Using
\[
\rho(P_-(a))
=
\frac12\log a-\log(-s_-(-a)),
\]
and the formula for $s_-$, we obtain
\begin{align}
\Psi_n(a)
&=
\log\frac1a
-
\int_{-a}^{1}
\frac{(1-w)^n}
{\sqrt{4p_a^2+w(1-w)^n}
	\bigl(\sqrt{4p_a^2+w(1-w)^n}+2p_a\bigr)}
\dd w
\nonumber\\
&=
\log\frac1a
+
\int_{-a}^{1}
\frac{\displaystyle
	\frac{2p_a}{\sqrt{4p_a^2+w(1-w)^n}}-1}
{w}\dd w,
\qquad 0<a<1.
\label{eq:Psi-def}
\end{align}
The first expression shows that this is an ordinary convergent
improper integral. In the second expression, obtained by
rationalization, the apparent singularity at $w=0$ is assigned
its limiting value $-1/(8p_a^2)$.

\begin{lemma}[Direct-closing criterion]
	\label{lem:closing}
	For $0<a<1$, the contact points of the two arcs in
	Lemma~\ref{lem:characteristic-arcs} agree if and only if
	\begin{equation}\label{eq:Psi-zero}
	\Psi_n(a)=0.
	\end{equation}
	In that case the arcs meet at
	\[
	P=(\sqrt a,-\sqrt a)
	\]
	and remain in $[-1,1]^2$. Together they form the graph of a
	strictly increasing absolutely continuous map
	$T:[-1,1]\to[-1,1]$, smooth on $(-1,1)$, with
	\[
	T(-1)=-1,\qquad T(1)=1,\qquad
	a(T)=a,\qquad I_n(T)=\Sigma_n(a).
	\]
\end{lemma}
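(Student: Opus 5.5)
The plan is to read the closing condition directly off the parametrization of the turning hyperbola, and then check the remaining claims (location in the square, graph structure, action) from Lemma~\ref{lem:characteristic-arcs} and Proposition~\ref{prop:two-layer-calibration}.

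\emph{The equivalence.} On the fourth-quadrant branch of $ts=-a$, the map $\rho\mapsto(\sqrt a\,e^\rho,-\sqrt a\,e^{-\rho})$ is a bijection. Both contact points $P_\pm(a)$ lie on this branch: $P_-$ has $s<0$ and $w=-a<0$, so $t>0$, and $P_+$ is its reflection under $(t,s)\mapsto(-s,-t)$. Hence $P_-(a)=P_+(a)$ if and only if their $\rho$-values agree, which is exactly $\Psi_n(a)=0$. By \eqref{eq:rho-turn}, in that case $\rho(P_\pm)=0$, so the common point is $P=(\sqrt a,-\sqrt a)$.

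\emph{The arcs stay in the square.} Along the incoming arc both coordinates increase strictly from $(-1,-1)$ to $P$, so $-1\le t\le\sqrt a<1$ and $-1\le s\le-\sqrt a$. The same holds for the outgoing arc by reflection. Concatenating the two arcs gives a curve whose coordinates both increase strictly. It is therefore the graph of a strictly increasing continuous $T:[-1,1]\to[-1,1]$ with $T(\pm1)=\pm1$. Along the incoming arc $w$ decreases strictly from $1$ to $-a$, and along the outgoing arc it increases back to $1$, so $\min tT=-a$ and $a(T)=a$.

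\emph{Regularity.} I expect this to be the main technical point. On each open arc, away from $P$, the graph is smooth; its slope is $\Phi_t/\Phi_s$, which is positive and finite, and it is smooth across the axis crossings by the extensions of Section~\ref{sec:noether}. At $P$, both potentials have the same gradient $(p_a/t,-p_a/s)$, so both one-sided slopes equal $\Phi_t/\Phi_s=s^2/t^2\cdot(t/s)\cdot(-1)\cdot(-1)$, which simplifies to $-s/t=1$ at $P$. Hence $T$ is $C^1$ at $P$.

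For higher smoothness at $P$, I would use \eqref{eq:first-integral-T}, which holds on both arcs with the same constant $4p_a$. At $P$, where $tT=-a$ and $T'=1$, it is consistent. Approaching $P$, \eqref{eq:geodesic-equation} gives one-sided second derivatives, and these agree because the right-hand side depends only on $(t,T,T')$. Then $T$ solves the smooth second-order ODE \eqref{eq:geodesic-equation} on each side with matching $(T,T')$ at $P$. Since the vector field is regular there ($1-tT=1+a>0$, $T'>0$), uniqueness for that ODE makes $T$ a single smooth solution through $P$.

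There is a worry that $w$ is stationary at $P$ and the characteristics are only tangent to the hyperbola there, but the ODE in $t$ is nonsingular, so this causes no problem. Absolute continuity on $[-1,1]$ follows from monotonicity and continuity at the endpoints, as in Lemma~\ref{lem:transport}.

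\emph{The action.} The restriction of the curve $u\mapsto(t,T(t))$ to the part before $P$ is calibrated by $\Phi_{a,-}$, and the part after $P$ by $\Phi_{a,+}$, by Lemma~\ref{lem:characteristic-arcs}. Taking $u_*$ at $P$, the gap identity \eqref{eq:two-layer-gap} vanishes. By Proposition~\ref{prop:two-layer-calibration} (the converse remark following it), this gives $I_n(T)=\Sigma_n(a)$.
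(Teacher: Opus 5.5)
Your proposal is correct and follows the paper's proof essentially step by step. The equivalence comes from the bijective $\rho$-parametrization of the fourth-quadrant branch together with \eqref{eq:rho-turn}. Containment in the square and $a(T)=a$ come from strict monotonicity of the coordinates and of $w$ along each arc. The arcs join in $C^1$ because both potentials have the gradient $(p_a/t,-p_a/s)$ at $P$, and full smoothness then follows from the regular second-order equation \eqref{eq:geodesic-equation}. The action comes from telescoping the two calibrated potentials.

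One small slip: the correct slope computation is $\Phi_t/\Phi_s=(p_a/t)/(-p_a/s)=-s/t$. The product $s^2/t^2\cdot(t/s)\cdot(-1)\cdot(-1)$ you wrote equals $s/t$, which is $-1$ at $P$, but your final value $-s/t=1$ is the right one.
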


\begin{proof}
	On the fourth-quadrant branch of $ts=-a$, the coordinate
	$\rho$ determines the point uniquely. Equation \eqref{eq:rho-turn} shows
	that the contact points agree exactly when $\Psi_n(a)=0$.
	Their common value of $\rho$ is then zero, giving
	$P=(\sqrt a,-\sqrt a)$.
	
	Each coordinate is strictly increasing along the incoming
	arc from $(-1,-1)$ to $P$, and along the outgoing arc from
	$P$ to $(1,1)$. Thus both arcs remain in the square and
	together form a strictly increasing graph $s=T(t)$.
	Their product decreases from $1$ to $-a$ and then increases
	to $1$, so $a(T)=a$.
	
	At $P$, the first derivatives of both potentials satisfy
	\[
	\Phi_t(P)=\Phi_s(P)=\frac{p_a}{\sqrt a}.
	\]
	The characteristic equation therefore gives
	\[
	\lim_{t\to\sqrt a-}T'(t)
	=
	\lim_{t\to\sqrt a+}T'(t)=1.
	\]
	Hence the two arcs join to a $C^1$ graph. On either side,
	\eqref{eq:geodesic-equation} gives
	\[
	T''=nT'\frac{tT'-T}{1-tT}.
	\]
	Its right-hand side is smooth near
	$(t,T,T')=(\sqrt a,-\sqrt a,1)$, since $1-tT=1+a>0$ there.
	In particular, both one-sided limits of $T''$ equal
	$2n\sqrt a/(1+a)$. The joined graph is therefore a $C^2$
	solution of this equation, and hence is smooth across the
	meeting point.
	
	The graph is continuous at $t=\pm1$ and has $T'>0$ in the
	interior. Integrating $T'$ on compact subintervals and passing
	to the endpoint limits gives
	\[
	\int_{-1}^{1}T'(t)\dd t=T(1)-T(-1)=2,
	\]
	so $T$ is absolutely continuous on $[-1,1]$.
	Finally, calibration on the two arcs and
	\eqref{eq:potentials-match} give
	\[
	I_n(T)
	=
	\Phi_{a,+}(1,1)-\Phi_{a,-}(-1,-1)
	=
	2W_a(1)=\Sigma_n(a).
	\]
\end{proof}

\begin{remark}[Equality curves with a hyperbola segment]
	\label{rem:hyperbola-bridge}
	When $\Psi_n(a)>0$, the contact points occur in the order
	$P_-(a)$ before $P_+(a)$ along the increasing branch of
	$ts=-a$. Both arcs lie in the square: for the incoming
	contact point,
	\[
	-1<s_-(-a)<0,
	\qquad
	t_-(-a)=\sqrt a\,e^{-\Psi_n(a)/2}<\sqrt a<1,
	\]
	and the outgoing arc is its reflection. Joining the contact
	points by the intervening hyperbola arc therefore gives a
	causal curve attaining $\Sigma_n(a)$, since this arc is
	calibrated by both potentials.
	
	When $\Psi_n(a)<0$, the contact points occur in the opposite
	order. The equality criterion in
	Proposition~\ref{prop:two-layer-calibration} then rules out
	a causal curve attaining $\Sigma_n(a)$. This concerns
	attainment of this particular bound, not existence of a
	maximizer for the fixed-$a$ problem, as explained in the following remark.
\end{remark}

\begin{remark}[The fixed-$a$ problem beyond the closing regime]
	The failure of the $p_a$-characteristics to close when
	$\Psi_n(a)<0$ does not mean that the relaxed problem with prescribed
	turning parameter $a$ has no maximizer.  If the momentum in the
	Hamilton-Jacobi construction is allowed to vary, $p\ge p_a$, one obtains
	a sharper family of calibration bounds.  Minimizing these bounds in $p$
	gives the exact fixed-$a$ value.
	When $\Psi_n(a)\ge0$, the minimizing momentum is $p_a$: for
	$\Psi_n(a)>0$ the maximizing curve contains the hyperbola bridge described
	above, while for $\Psi_n(a)=0$ the two characteristic arcs meet smoothly.
	When $\Psi_n(a)<0$, the minimizing momentum is strictly larger than
	$p_a$; the corresponding two characteristic arcs meet at
	$(\sqrt a,-\sqrt a)$ with a genuine corner.  In particular, in this last
	case the true fixed-$a$ maximum is strictly smaller than $\Sigma_n(a)$.
	We do not pursue this auxiliary fixed-parameter problem further here.
\end{remark}

We have reduced direct closing to the scalar equation
$\Psi_n(a)=0$. The next subsection relates this equation
to the critical-point equation for $\Sigma_n$.

\definecolor{curvefive}{RGB}{148,103,189}
\begin{figure}[t]
	\centering
	\begin{minipage}[t]{0.48\textwidth}
		\centering
		\begin{tikzpicture}[x=2.25cm,y=2.25cm,>=Latex]
			\draw[blue] (-1,-1) rectangle (1,1);
			\draw[blue!55,dashed] (-1,0)--(1,0);
			\draw[blue!55,dashed] (0,-1)--(0,1);
			\draw[curvefive!45,domain=.25:1,samples=80]
			plot (\x,{-.25/\x});
			\draw[revisionred,very thick]
			(-1,-1) .. controls (-.65,-.92) and (-.16,-.82) .. (0,-.78)
			.. controls (.16,-.76) and (.28,-.73) .. (.352,-.710);
			\draw[revisionred,very thick,densely dashed,domain=.352:.710,samples=50]
			plot (\x,{-.25/\x});
			\draw[revisionred,very thick]
			(.710,-.352) .. controls (.74,-.24) and (.77,-.10) .. (.79,0)
			.. controls (.84,.28) and (.91,.67) .. (1,1);
			\fill[revisionred] (.352,-.710) circle (.022);
			\fill[revisionred] (.710,-.352) circle (.022);
			\node[blue,below left] at (-1,-1) {$(-1,-1)$};
			\node[blue,above right] at (1,1) {$(1,1)$};
			\node[black,below] at (.53,-1.04) {$\Psi_n(a)>0$};
			\node[revisionred,scale=.8,below left] at (.352,-.710) {$P_-$};
			\node[revisionred,scale=.8,above right] at (.710,-.352) {$P_+$};
		\end{tikzpicture}
		
		\small Noncritical fixed-$a$ equality path: the dashed part is the
		increasing arc of $ts=-a$.
	\end{minipage}\hfill
	\begin{minipage}[t]{0.48\textwidth}
		\centering
		\begin{tikzpicture}[x=2.25cm,y=2.25cm,>=Latex]
			\draw[blue] (-1,-1) rectangle (1,1);
			\draw[blue!55,dashed] (-1,0)--(1,0);
			\draw[blue!55,dashed] (0,-1)--(0,1);
			\draw[curvefive!45,domain=.25:1,samples=80]
			plot (\x,{-.25/\x});
			\draw[revisionred,very thick]
			(-1,-1) .. controls (-.66,-.91) and (-.17,-.78) .. (0,-.72)
			.. controls (.20,-.66) and (.39,-.61) .. (.5,-.5);
			\draw[revisionred,very thick]
			(.5,-.5) .. controls (.62,-.38) and (.71,-.18) .. (.75,0)
			.. controls (.81,.29) and (.90,.68) .. (1,1);
			\fill[revisionred] (.5,-.5) circle (.024);
			\node[blue,below left] at (-1,-1) {$(-1,-1)$};
			\node[blue,above right] at (1,1) {$(1,1)$};
			\node[black,below] at (.53,-1.04) {$\Psi_n(a)=0$};
			\node[revisionred,scale=.8,below right] at (.5,-.5) {$P_-=P_+$};
		\end{tikzpicture}
		
		\small Critical path: the bridge collapses and the two characteristic arcs
		close directly.
	\end{minipage}
	\caption{
		Schematic equality configurations for the bound
		$I_n(\gamma)\le\Sigma_n(a)$ obtained from the $p_a$-potentials.
		When $\Psi_n(a)>0$, the characteristic arcs are joined by
		an increasing arc of $ts=-a$; when $\Psi_n(a)=0$, they meet
		smoothly at $(\sqrt a,-\sqrt a)$. When $\Psi_n(a)<0$,
		this bound is not attained. }
	\label{fig:bridge-versus-direct}
\end{figure}

\subsection{The derivative of the scalar function}
\label{sec:derivSigma}

The preceding subsection identified $\Psi_n(a)$ as the signed
separation of the two contact points on $ts=-a$. We now show
that this same quantity determines the derivative of the
calibration bound. Thus the condition that the characteristics
close is also the critical-point condition for $\Sigma_n$.

\begin{lemma}[Derivative of $\Sigma_n$]
	\label{lem:Sigma-derivative}
	For every $n\ge1$, the function $\Sigma_n$ is continuously
	differentiable on $(0,1)$ and
	\begin{equation}\label{eq:Sigma-derivative}
	\Sigma_n'(a)=2p_a'\Psi_n(a),
	\qquad 0<a<1.
	\end{equation}
	Since $p_a'>0$, the critical-point equation
	$\Sigma_n'(a)=0$ is equivalent to the direct-closing
	condition \eqref{eq:Psi-zero}.
\end{lemma}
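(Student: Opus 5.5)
The identity should follow by differentiating the defining formula \eqref{eq:Sigma-def} directly, after rewriting the integrand so that its dependence on $a$ is simple. Write $R_a(w)=\sqrt{4p_a^2+w(1-w)^n}$ and $g(w)=w(1-w)^n$. Since $4p_a^2=a(1+a)^n=-g(-a)$, we have $R_a(w)^2=g(w)-g(-a)$. By rationalization,
\[
\frac{(1-w)^n}{R_a(w)+2p_a}=\frac{R_a(w)-2p_a}{w},
\]
so
\[
\Sigma_n(a)=2p_a\log\frac1a+\int_{-a}^1\frac{R_a(w)-2p_a}{w}\dd w .
\]
Here the left-hand form shows the integrand is continuous at $w=0$, and the right-hand form shows its simple dependence on $p_a$. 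The key is to treat $p=p_a$ as the parameter: for fixed $w$,
\[
\partial_p\frac{R-2p}{w}=\frac{2}{w}\left(\frac{2p}{R}-1\right),
\]
which is exactly the integrand in the second expression \eqref{eq:Psi-def} for $\Psi_n$.

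Formally the computation has three pieces. The term $2p_a\log(1/a)$ contributes $2p_a'\log(1/a)-2p_a/a$. The moving lower limit contributes the integrand at $w=-a$, where $R_a(-a)=0$; this value is $(0-2p_a)/(-a)=2p_a/a$. The interior differentiation contributes $2p_a'\int_{-a}^1\bigl(2p_a/R_a-1\bigr)\dd w/w$. The two $2p_a/a$ terms cancel, and what remains is $2p_a'\bigl(\log(1/a)+\int_{-a}^1(2p_a/R_a-1)\dd w/w\bigr)=2p_a'\Psi_n(a)$. Finally, $p_a'>0$ holds because $a(1+a)^n$ is strictly increasing, and this gives the equivalence of $\Sigma_n'(a)=0$ with $\Psi_n(a)=0$.

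The main obstacle is justifying differentiation under the integral. The $p$-derivative of the integrand behaves like $R_a^{-1}\sim(w+a)^{-1/2}$ at the moving endpoint $w=-a$, where the integrand itself is only Hölder-$\tfrac12$. I would handle this by splitting. Fix $a_0\in(0,1)$ and a small $\delta>0$, and split the integral at $w=-a_0+\delta$.

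On the outer piece $[-a_0+\delta,1]$, for $a$ near $a_0$ we have $R_a^2\ge c>0$ uniformly. Here the integrand, written in the form $(1-w)^n/(R_a+2p_a)$ to avoid the $w=0$ singularity, is $C^1$ in $a$ uniformly, so the standard theorem applies.

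On the inner piece, I would use that $g'(-a)=(1+a)^{n-1}(1+(n+1)a)>0$, as computed in Lemma~\ref{lem:characteristic-arcs}. This gives $R_a(w)^2=g(w)-g(-a)\ge c(w+a)$ uniformly for $a$ near $a_0$ and $w$ near $-a$. Then I would substitute $w=-a+x$, so that the domain $x\in[0,\,a-a_0+\delta]$ has a smoothly moving upper endpoint and a fixed lower endpoint. The integrand and its $a$-derivative are then dominated by $C(1+x^{-1/2})$, which is integrable, and the Leibniz rule applies. Reassembling the pieces reproduces the formal computation.

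Continuity of $\Sigma_n'$ follows from the same domination, which shows $\Psi_n$ is continuous on $(0,1)$. This gives \eqref{eq:Sigma-derivative}.
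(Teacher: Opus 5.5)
Your proposal is correct and follows the paper's proof. It uses the same rationalized integrand, the same observation that its $p$-derivative is twice the $\Psi_n$ integrand, and the same cancellation of the endpoint value $2p_a/a$ against the derivative of $2p_a\log(1/a)$.

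The only difference is in the justification. The paper uses the single substitution $w=-a+(1+a)y^2$, which fixes the domain and removes the square-root singularity outright. The transformed integrand and its $a$-derivative are then continuous on $[0,1]$, with no splitting or $L^1$ majorant needed. Your split-and-shift $w=-a+x$ instead keeps an integrable $x^{-1/2}$ singularity. When you reassemble the pieces, you should note one step you glossed over: differentiating at fixed $x$ produces an extra $-\partial_w$ term. By the fundamental theorem of calculus, its integral converts the boundary value at $w=-a_0+\delta$ into the endpoint value at $w=-a$.
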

The proof is a technical differentiation procedure which includes a delicate justification of differentiation at a
moving square-root singularity, and is deferred to
Appendix~\ref{sec:Sigma-derivative-proof}.

By Lemma~\ref{lem:Sigma-derivative}, at every interior critical point
of $\Sigma_n$, the incoming and outgoing characteristics meet
at the same point of $ts=-a$ and join smoothly.
For $n\ge2$, to obtain a curve attaining $M_n$, it remains to show that
the maximum of $\Sigma_n$ is attained in $(0,1)$.

\subsection{Attainment of the path bound}

In this section we show that the endpoints of the interval $[0,1]$ are not maximizing parameters for $\Sigma_n, n\ge 2$.
With this in hand, the derivative identity shows that at a maximizing
parameter, the two characteristics meet and produce a curve
attaining $M_n$.

\begin{proposition}[Attainment of the path bound]
	\label{prop:path-bound-attained}
	For every $n\ge2$, every maximizer of $\Sigma_n$ on $[0,1]$
	lies in $(0,1)$. At any such parameter $a$, the incoming and
	outgoing characteristics meet at $(\sqrt a,-\sqrt a)$ and
	form the graph of a strictly increasing absolutely continuous
	map $T:[-1,1]\to[-1,1]$, smooth on $(-1,1)$, with
	\[
	T(-1)=-1,\qquad T(1)=1,\qquad
	a(T)=a,\qquad I_n(T)=M_n.
	\]
	Consequently,
	\[
	\max_{\gamma\text{ causal}} I_n(\gamma)=M_n.
	\]
\end{proposition}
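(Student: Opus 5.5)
The plan is to show that neither endpoint of $[0,1]$ maximizes $\Sigma_n$ when $n\ge2$. Once that is done, Lemma~\ref{lem:Sigma-derivative} and Lemma~\ref{lem:closing} finish the argument.

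Suppose every maximizer $a$ lies in $(0,1)$. Then $\Sigma_n$ is $C^1$ there, so $\Sigma_n'(a)=0$. Since $p_a'>0$, the identity $\Sigma_n'(a)=2p_a'\Psi_n(a)$ gives $\Psi_n(a)=0$. Lemma~\ref{lem:closing} then produces the strictly increasing absolutely continuous graph $T$, smooth on $(-1,1)$. It passes through $(\sqrt a,-\sqrt a)$ and satisfies $a(T)=a$ and $I_n(T)=\Sigma_n(a)=M_n$. Proposition~\ref{prop:two-layer-calibration} gives $I_n(\gamma)\le M_n$ for every causal curve, so the maximum over causal curves equals $M_n$ and is attained by $T$. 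All the work is therefore in excluding the endpoints.

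\emph{The endpoint $a=1$.} I will use the first expression in \eqref{eq:Psi-def}:
\[
\Psi_n(a)=\log\frac1a-\int_{-a}^1\frac{(1-w)^n}{\sqrt{X}\,(\sqrt X+2p_a)}\,\dd w,
\qquad X=4p_a^2+w(1-w)^n .
\]
The integrand is positive. As $a\to1^-$, $\log(1/a)\to0$, while the integral stays bounded below by a positive constant; for instance, its restriction to $w\in[0,1/2]$ is bounded below uniformly in $a$. Hence $\Psi_n<0$ on some interval $(1-\delta,1)$. Because $p_a'>0$ and $\Sigma_n$ is continuous on $[0,1]$ (Appendix), $\Sigma_n$ is strictly decreasing on $[1-\delta,1]$. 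Consequently $\Sigma_n(1)<\Sigma_n(1-\delta)\le M_n$, and $a=1$ is not a maximizer.

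\emph{The endpoint $a=0$.} This is the main obstacle, and it is exactly where $n=1$ and $n\ge2$ must part ways, since $\Sigma_1$ is decreasing. I will compute the limit $\ell_n=\lim_{a\to0^+}\Psi_n(a)$ and show that $\ell_n>0$ for $n\ge2$.

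Both terms of $\Psi_n$ contain a divergent $\log(1/a)$, and these must cancel. To isolate them, I compare the integrand with the model obtained by freezing $(1-w)^n\approx1$ near $w=0$, using $4p_a^2=a(1+a)^n\approx a$. The model integral is explicit: the substitution $x=\sqrt{a+w}$ gives
\[
\int_{-a}^{c}\frac{\dd w}{\sqrt{a+w}\,(\sqrt{a+w}+\sqrt a)}=2\log\frac{\sqrt{a+c}+\sqrt a}{\sqrt a},
\]
which equals $\log(c/a)+o(1)$ as $a\to0$. Subtracting the model, the remainder converges by dominated convergence to a fixed integral over $[0,1]$ involving $(1-w)^{n/2}$ and $(1-w)^n$. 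This yields a closed form for $\ell_n$: a constant such as $\log4$ plus an explicit Beta- or digamma-type integral in $n$. I then expect to verify directly that $\ell_1<0$, consistent with Proposition~\ref{prop:scalar-maximizer}, and $\ell_n>0$ for $n\ge2$.

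If the sign check is awkward for general $n$, I have two fallbacks. One is to show that $\ell_n$ is increasing in $n$ and check $n=2$ by hand. The other is to bypass derivatives entirely and expand $\Sigma_n(a)=\Sigma_n(0)+c_n\sqrt a+o(\sqrt a)$ directly from \eqref{eq:Sigma-def}, then show $c_n>0$.

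Either way, once $\ell_n>0$ is established, $\Psi_n>0$ on some interval $(0,\delta)$. Then $\Sigma_n$ is strictly increasing on $[0,\delta]$, which gives $\Sigma_n(0)<M_n$, so $a=0$ is not a maximizer either.
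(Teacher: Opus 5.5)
Your reduction to endpoint exclusion and your argument at $a=1$ are fine. The gap is at $a=0$. There the limit $\ell_n=\lim_{a\to0^+}\Psi_n(a)$ equals $0$ for every $n$, so it cannot be shown to be positive.

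At $a=0$ the integrand $(1-w)^n/\bigl(\sqrt X(\sqrt X+2p_a)\bigr)$ reduces exactly to $1/w$, because $(1-w)^n$ cancels against $X=w(1-w)^n$. Your model integrand also tends to $1/w$, and both are bounded by $1/w$ for $w>0$. So after subtracting the model, the remainder tends to $0$, not to a nontrivial integral in $(1-w)^{n/2}$ and $(1-w)^n$. This gives $\Psi_n(a)\to-\lim_{a\to0}2\log(\sqrt{1+a}+\sqrt a)=0$. It is consistent with $\Psi_1(a)=-\log(1+2a)\to0$, so your expected check $\ell_1<0$ is also wrong, and the fallback of showing $\ell_n$ increasing in $n$ collapses too.

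The sign of $\Psi_n$ near $0$ is decided by lower-order terms that your $w\approx0$ model does not see. They come from a boundary layer at $w=1$, of width about $a^{1/n}$, where $\sqrt X$ degenerates to $2p_a$. It contributes a positive term of order $a^{1/n}$ for $n\ge3$, and about $\tfrac12\sqrt a\log(1/a)$ for $n=2$; this dominates the $O(\sqrt a)$ terms coming from $w\approx0$.

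Your second fallback fails for the same reason. Since $\Sigma_n'=2p_a'\Psi_n$ with $2p_a'\sim1/(2\sqrt a)$, one gets $\Sigma_n(a)-\Sigma_n(0)=o(\sqrt a)$: it is of order $a^{1/2+1/n}$ for $n\ge3$ and of order $a\log(1/a)$ for $n=2$. Hence $c_n=0$.

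The paper avoids scalar asymptotics at $a=0$ altogether. It uses the bound $\sqrt{A(r)B(r)}\le M_n$, already available from Lemma~\ref{lem:transport} and Proposition~\ref{prop:two-layer-calibration}, and exhibits profiles that beat the ball value $\Sigma_n(0)$.

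\begin{itemize}
\item The cone $r(t)=1+t$ gives $\sqrt{AB}=2^{(n+2)/2}/(n+1)$. This equals $\Sigma_2(0)=4/3$ at $n=2$ and exceeds $\Sigma_n(0)$ for $n\ge3$, because the cone value increases in $n$ while $\Sigma_n(0)$ decreases.
\item For $n=2$, the kinked cone $r_\varepsilon$ gives $AB=\tfrac{16}{9}+\tfrac{2\varepsilon(1-\varepsilon)(3-\varepsilon)}{9}>\tfrac{16}{9}$.
\end{itemize}

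Hence $M_n>\Sigma_n(0)$, and $a=0$ is not a maximizer. If you want to stay with the scalar route, you must carry out the $w=1$ boundary-layer asymptotics. What you need is positivity of $\Psi_n$ on some interval $(0,\delta)$, not a positive limit.
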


\begin{proof}
	By Lemma~\ref{lem:Sigma-continuity}, $\Sigma_n$ attains its
	maximum on $[0,1]$.  Appendix~\ref{sec:endpoint-exclusion}
	shows that, for $n\ge2$, neither endpoint is a maximizing
	parameter.  Hence every maximizer $a$ lies in $(0,1)$.
	
	Lemma~\ref{lem:Sigma-derivative} then gives
	\[
	0=\Sigma_n'(a)=2p_a'\Psi_n(a).
	\]
	Since $p_a'>0$, we have $\Psi_n(a)=0$.  By
	Lemma~\ref{lem:closing}, the incoming and outgoing
	characteristics meet at $(\sqrt a,-\sqrt a)$ and form a
	smooth increasing graph $T$ in the square, with
	\[
	I_n(T)=\Sigma_n(a)=M_n.
	\]
	Together with Proposition~\ref{prop:two-layer-calibration},
	this proves
	\[
	\max_{\gamma\text{ causal}} I_n(\gamma)=M_n.
	\]
\end{proof}

We write $T_n$ for the graph corresponding to the unique
maximizing parameter $a_n$; uniqueness of the parameter is
proved in Appendix~\ref{sec:unique-maximizer}.
The argument above establishes attainment without using
that uniqueness.

We have so far attained the sharp bound in the class of causal
curves. To complete the proof of Theorem \ref{thm:profile} 
 it remains to
reconstruct from the maximizing curve $T_n$ an admissible concave profile for which
the transport inequality is also an equality. This is done
in the next Section~\ref{sec:equality-profiles}.

 \section{Equality profiles}\label{sec:equality-profiles}
 
 The preceding section constructs a smooth increasing graph
 attaining the path bound. We now show that it comes from an
 admissible concave profile and that equality holds in the
 transport reduction. We then prove that every equality
 profile arises in this way.

 The first three subsections concern $n\ge2$, since the
 planar profile problem was settled in Section~\ref{sec:n1}.
 The final subsection completes the geometric theorem
 for every $n\ge1$.

 \subsection{Reconstruction and attainment}
 
 Let $T=T_n$ be the graph constructed at the maximizing
 parameter $a_n$, with its mixed-sign portion in the fourth
 quadrant. Define, for $-1<t<1$,
 \begin{equation}\label{eq:r-balanced}
 	r(t)=(1-tT(t))^{1/2}T'(t)^{1/(2n)}.
 \end{equation}
 This is the formula forced by simultaneous equality in
 \eqref{eq:polar-pointwise} and
 \eqref{eq:transport-Jacobian} when the horizontal scale is
 chosen so that $A(r)=B(r)$. We must verify that it defines
 a concave profile and that its polar has the required
 contact with $r$.
 
 The function $r$ is smooth and positive on $(-1,1)$.
 Differentiating its logarithm and using
 \eqref{eq:geodesic-equation} gives
 \begin{equation}\label{eq:log-r}
 	\frac{r'}r
 	=
 	-\frac{T+tT'}{2(1-tT)}
 	+\frac1{2n}\frac{T''}{T'}
 	=
 	-\frac{T}{1-tT}.
 \end{equation}
 Put $D=r-tr'$, the value at zero of the tangent line to
 $r$ at $t$. Then
 \begin{equation}\label{eq:contact-identities}
 	D=\frac{r}{1-tT}>0,
 	\qquad
 	T=-\frac{r'}D,
 	\qquad
 	T'=-\frac{rr''}{D^2}.
 \end{equation}
 The last identity follows by differentiating the second,
 using $D'=-tr''$. Since $T'>0$, it follows that $r''<0$.
 
 Thus $r$ is strictly concave in the interior. It is
 nonnegative and bounded above by any fixed tangent line,
 so concavity gives finite nonnegative one-sided limits at
 both endpoints. We define $r(\pm1)$ by these limits.
 This extends $r$ to an admissible profile on $[-1,1]$.
 
 We next identify its polar profile. For $t,u\in(-1,1)$,
 the tangent-line inequality gives
 \[
 r(u)
 \le r(t)+r'(t)(u-t)
 =D(t)(1-uT(t)).
 \]
 Therefore
 \[
 \frac{1-uT(t)}{r(u)}\ge\frac1{D(t)},
 \]
 with equality when $u=t$. Taking the infimum over $u$
 shows that 
 \[
r^\circ(T(t))
 =
 \frac1{D(t)}
 =
 \frac{1-tT(t)}{r(t)}.
 \]
 In particular, the pointwise polar inequality is an
 equality along the entire graph of $T$.
 
 Using \eqref{eq:r-balanced}, we obtain
 \begin{equation}\label{eq:r-q-densities}
 	r(t)^n=(1-tT(t))^{n/2}\sqrt{T'(t)},
 	\qquad
 	(r^\circ(T(t)))^n
 	=
 	\frac{(1-tT(t))^{n/2}}{\sqrt{T'(t)}}.
 \end{equation}
 Consequently,
 \[
 A(r)=I_n(T),
 \qquad
 B(r)
 =
 \int_{-1}^1(r^\circ(T(t)))^nT'(t)\dd t
 =
 I_n(T).
 \]
 The change of variables is first applied on compact
 subintervals of $(-1,1)$ and then extended to the endpoints
 by monotone convergence. Since $I_n(T)=M_n$, we have
 \[
 A(r)=B(r)=M_n,
 \qquad
 A(r)B(r)=M_n^2.
 \]
 This proves attainment of the profile bound.
 
 The profile just constructed is $r_n$ from
 \eqref{eq:rn-definition}. We call this choice of horizontal
 scale the \emph{balanced normalization}: the two profile
 integrals are equal. Indeed, multiplication of a profile
 by $c>0$ multiplies its polar by $c^{-1}$, so the two
 integrals are multiplied by $c^n$ and $c^{-n}$, respectively.

 \subsection{The equality classification}
 
 Now let $r$ be any profile satisfying
 $A(r)B(r)=M_n^2$, and let $T$ be its monotone transport map.
 Equality must hold throughout
 \[
 \sqrt{A(r)B(r)}
 \le I_n(T)
 \le \Sigma_n(a(T))
 \le M_n.
 \]
 In particular,
 \[
 I_n(T)=M_n,
 \qquad
 a(T)=a_n,
 \]
 where we use the uniqueness of the scalar maximizer from
 Proposition~\ref{prop:scalar-maximizer}.
 
 Reflection of the profile, $r(t)\mapsto r(-t)$, reflects
 its polar in the same way and replaces the transport by
 $t\mapsto-T(-t)$. Thus, after reflecting $r$ if necessary,
 we may assume that the mixed-sign portion of the transport
 graph lies in the fourth quadrant.
 
 By the equality criterion in
 Proposition~\ref{prop:two-layer-calibration} and the
 subsequent description of equality curves, the graph
 consists of an incoming characteristic of $\Phi_{a_n,-}$,
 a possible arc of $ts=-a_n$, and an outgoing characteristic
 of $\Phi_{a_n,+}$. Lemma~\ref{lem:characteristic-arcs}
 determines the incoming and outgoing arcs uniquely.
 Since $\Psi_n(a_n)=0$, their contact points agree, and the
 intervening hyperbola arc reduces to one point. Hence
 $T=T_n$.
 
 Write $A=A(r)$ and $B=B(r)$. Equality in the transport
 reduction gives, almost everywhere in $(-1,1)$,
 \[
 r(t)r^\circ(T(t))=1-tT(t),
 \qquad
 (r^\circ(T(t)))^nT'(t)=\frac BA\,r(t)^n.
 \]
 Eliminating $r^\circ(T(t))$ yields
 \[
 r(t)^{2n}
 =
 \frac AB(1-tT(t))^nT'(t).
 \]
 Therefore
 \[
 r(t)
 =
 \left(\frac AB\right)^{1/(2n)}
 (1-tT_n(t))^{1/2}T_n'(t)^{1/(2n)}
 =
 c\,r_n(t)
 \]
 for some $c>0$. Continuity gives this identity throughout
 the open interval, and the endpoint convention for profiles
 extends it to $[-1,1]$.
 
 Restoring the possible reflection, every equality profile
 has the form
 \[
 r(t)=c\,r_n(t)
 \qquad\text{or}\qquad
 r(t)=c\,r_n(-t),
 \qquad c>0.
 \]
 Conversely, multiplication by a positive constant and
 reflection preserve the product $A(r)B(r)$, so all these
 profiles attain equality. This completes the proof of
 Theorem~\ref{thm:profile}.
 
 \subsection{The profile equation and polarity}
 
 We return to the balanced profile $r=r_n$, with $T=T_n$
 and $a=a_n$, and record its differential equation and
 endpoint behaviour.

 \begin{proposition}[Equation, endpoints, and polarity of the equality profile]
 	\label{prop:equality-profile-properties}
 	Let $n\ge2$, and let $r_n$ be the balanced equality profile
 	in the orientation fixed above. Set
 	\[
 	k_n=4p_{a_n}=2\sqrt{a_n(1+a_n)^n}.
 	\]
 	Then on $(-1,1)$,
 \begin{equation}\label{eq:profile-ODE}
 	-r_n''=r_n^{n-1}(r_n-tr_n')^{n+2},
 		\end{equation}
 	and
 \begin{equation}\label{eq:profile-first-integral}
  		\frac{r_n'}{(r_n-tr_n')^{n+1}}+tr_n^n=k_n.
  	\end{equation}
 	Moreover,
 	\[
 	r_n(-1)=0,\qquad r_n'(-1+)=k_n^{-1/n},
 	\qquad
 	r_n(1)=k_n^{1/n},\qquad r_n'(1-)=-\infty,
 	\]
 	and
 	\[
 	r_n^\circ(t)=r_n(-t),\qquad -1\le t\le1.
 	\]
 	The reflected profile satisfies the same second-order equation,
 	with the sign of the constant in the first integral reversed.
 \end{proposition}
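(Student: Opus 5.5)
The plan is to work entirely from the identities \eqref{eq:contact-identities}, the first integral \eqref{eq:first-integral-T} and the explicit description of $T_n$ in Lemmas~\ref{lem:characteristic-arcs} and~\ref{lem:closing}.

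\emph{The ODE and first integral.} From \eqref{eq:contact-identities} we have $D=r-tr'$, $T=-r'/D$ and $T'=-rr''/D^2$. We also need
\[
1-tT=\frac{D+tr'}{D}=\frac rD.
\]
Insert this into the definition $r^{2n}=(1-tT)^nT'$ from \eqref{eq:r-balanced}. This gives $r^{2n}=(r/D)^n\cdot(-rr''/D^2)$, that is, $-r''=r^{n-1}D^{n+2}$, which is \eqref{eq:profile-ODE}. For the first integral, compute $tT'-T=(r'D-trr'')/D^2$ and $\sqrt{T'}=\sqrt{-rr''}/D$. Then insert these into \eqref{eq:first-integral-T}, using $\sqrt{-rr''}=r^{n/2}D^{n/2+1}$ from the ODE. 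This should collapse to $r'/D^{n+1}+tr^n=4p_{a_n}=k_n$. The sign is positive because $p_a>0$ in the orientation with fourth-quadrant mixing. The check is routine algebra.

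\emph{Endpoints.} I would take $t\to\pm1$ along $T_n$, where $T(\pm1)=\pm1$, and read off the limits from the first integral.

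At $t=-1$, the relation $r^{2n}=(1-tT)^nT'$ with $1-tT\to0$ suggests $r(-1)=0$, provided $T'$ stays bounded near $-1$. Using $w$ as the parameter in Lemma~\ref{lem:characteristic-arcs}, we have $dt/dw$ finite and nonzero at $w=1$, and $ds/dw\to0$ like $(1-w)^n$. Hence $T'(-1+)=0$ for $n\ge1$, so indeed $r(-1)=0$. With $r(-1)=0$ we get $D(-1)=r'(-1)$, and the first integral gives $r'^{-n}=k_n$, so $r'(-1+)=k_n^{-1/n}$.

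At $t=1$ the same parametrization gives $T'(1-)=\infty$. The cleaner route is to note that $r'(1-)=-\infty$ would follow from $T=-r'/D\to1$ together with $D\to\infty$; alternatively, use concavity and $D=r-tr'$. I expect $D\to+\infty$, and then the first integral gives $r'/D^{n+1}\to0$, so $r(1)^n=k_n$. Establishing that $D\to\infty$ at $t=1$ from the asymptotics of the outgoing arc near $w=1$ is the delicate point. I would do it via $D=r/(1-tT)$, where $r$ stays bounded while $1-tT\to0$, provided $r(1)>0$. That argument is circular, so instead I would compute $r^n=(1-tT)^{n/2}\sqrt{T'}$ directly from the reflected $w$-parametrization: near $w=1$, $1-tT=1-w$ and $T'\sim c(1-w)^{-n}$, which yields a positive finite limit.

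\emph{Polarity.} The reflection $(t,s)\mapsto(-s,-t)$ maps $T_n$ to itself, because Lemma~\ref{lem:closing} glues an arc to its reflection. Hence $T^{-1}(s)=-T(-s)$. From the reconstruction, $r^\circ(T(t))=(1-tT)^{1/2}T'(t)^{-1/(2n)}$. Substitute $s=T(t)$, so that $t=-T(-s)$ and $T'(t)^{-1}=(T^{-1})'(s)=T'(-s)$. Then
\[
r^\circ(s)=(1-sT(-s)\cdot(-1))^{1/2}\,T'(-s)^{1/(2n)}=r_n(-s),
\]
with $1-tT(t)=1+T(-s)s=1-(-s)T(-s)$. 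The endpoints follow by upper semicontinuity. Finally, $t\mapsto r(-t)$ satisfies \eqref{eq:profile-ODE}, since both sides are invariant under $t\to-t$, $r'\to-r'$. In the first integral both terms flip sign, so the constant becomes $-k_n$. The main obstacle is the endpoint asymptotics at $t=1$.
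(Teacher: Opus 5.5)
Your proposal is correct and follows the paper's proof essentially step for step. You obtain the ODE by equating $T'=r_n^nD^n$ with $T'=-r_nr_n''/D^2$, the first integral by rewriting \eqref{eq:first-integral-T}, the endpoint values from the corner asymptotics of the characteristic arcs, and the polarity from the reflection invariance $T(-T(t))=-t$.

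The only variation is that you read off $r_n'(-1+)$ and $r_n(1)$ through the first integral, whereas the paper reads them directly from $T'/(1-tT)^n\to k_n^{-2}$ near $t=-1$ and $(1-tT)^nT'\to k_n^2$ near $t=1$. Your constant $c$ at $t=1$ is exactly $k_n^2$. At $t=-1$ you need one extra line: concavity gives $r_n'(-1+)\in(0,\infty]$, and the first integral rules out the value $+\infty$.
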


 \begin{proof}
 	For brevity, write $r=r_n$, $T=T_n$, $a=a_n$ and $k=k_n$.
 	With $D=r-tr'$, equations \eqref{eq:contact-identities}
 	and \eqref{eq:r-balanced} give
 	\[
 	T'=\frac{r^{2n}}{(1-tT)^n}=r^nD^n,
 	\qquad
 	T'=-\frac{rr''}{D^2}.
 	\]
 	Equating these expressions gives \eqref{eq:profile-ODE}.
 	Likewise, the path first integral
 	\eqref{eq:first-integral-T} becomes
 	\[
 	k
 	=
 	(1-tT)^{n/2}\frac{tT'-T}{\sqrt{T'}}
 	=
 	tr^n-\frac{T}{D^n}
 	=
 	tr^n+\frac{r'}{D^{n+1}},
 	\]
 	which proves \eqref{eq:profile-first-integral}.
 	
 	Near $t=-1$, we use the incoming characteristic.
 	Rationalizing the quotient of the derivatives in
 	\eqref{eq:two-potentials-derivatives} gives
 	\[
 	\frac{T'(t)}{(1-tT(t))^n}
 	=
 	\frac{T(t)^2}
 	{\bigl(\sqrt{4p_a^2+tT(t)(1-tT(t))^n}+2p_a\bigr)^2}
 	\longrightarrow\frac1{k^2}.
 	\]
 	In particular, $T'(t)\to0$. Since $T(-1)=-1$, this implies
 	$T(t)+1=o(1+t)$, and hence
 	\[
 	1-tT(t)\sim1+t.
 	\]
 	Formula \eqref{eq:r-balanced} now gives
 	\[
 	r(t)\sim k^{-1/n}(1+t).
 	\]
 	Moreover, \eqref{eq:log-r} yields
 	\[
 	r'(t)=-\frac{r(t)T(t)}{1-tT(t)}
 	\longrightarrow k^{-1/n}.
 	\]
 	This proves the assertions at $t=-1$.
 	
 	Near $t=1$, the outgoing characteristic similarly gives
 	\[
 	(1-tT(t))^nT'(t)
 	=
 	\frac{
 		\bigl(\sqrt{4p_a^2+tT(t)(1-tT(t))^n}+2p_a\bigr)^2}
 	{t^2}
 	\longrightarrow k^2.
 	\]
 	Since
 	\[
 	r(t)^n=(1-tT(t))^{n/2}\sqrt{T'(t)},
 	\]
 	we obtain $r(t)^n\to k$, so $r(1)=k^{1/n}$.
 	Finally, $T(t)\to1$ and $1-tT(t)\to0$, and
 	\eqref{eq:log-r} gives $r'(t)\to-\infty$.
 	
 	Under the reflection $r(t)\mapsto r(-t)$, the second-order
 	equation is unchanged, while the left-hand side of
 	\eqref{eq:profile-first-integral} changes sign.

To show the polarity property of the balanced equality profile we note that 
 by the construction in Lemma~\ref{lem:characteristic-arcs},
 the outgoing arc is the reflection of the incoming arc
 under $(t,s)\mapsto(-s,-t)$. Since the two arcs meet, the
 whole graph is invariant under this reflection. Equivalently,
 \[
 T(-T(t))=-t.
 \]
 Differentiating gives
 \[
 T'(-T(t))T'(t)=1.
 \]
 Thus, using \eqref{eq:r-balanced} and
 \eqref{eq:r-q-densities},
 \begin{align*}
 	r(-T(t))
 	&=
 	(1-tT(t))^{1/2}T'(-T(t))^{1/(2n)}\\
 	&=
 	(1-tT(t))^{1/2}T'(t)^{-1/(2n)}\\
 	&=
 	r^\circ(T(t)).
 \end{align*}
 Since $T$ maps $(-1,1)$ onto itself, and both profiles
 are continuous at the endpoints, we conclude that
 \[
 r_n^\circ(s)=r_n(-s),
 \qquad -1\le s\le1.
 \]
 In particular, the two orientations are polar to one
 another in the balanced normalization. They are distinct
 when $n\ge2$, since $r_n(-1)=0$ and $r_n(1)>0$.
  \end{proof}

\begin{figure}[t]
\centering
\begin{tikzpicture}[scale=2.1]
  \draw[blue][->] (-1.25,0)--(1.3,0) node[right] {$x$};
  \draw[blue][->] (0,-1.25)--(0,1.3) node[above] {$t$};
  \draw[dashed][blue] (-1.05,1)--(1.05,1);
  \draw[dashed][blue] (-.25,-1)--(.25,-1);
  \draw[thick][revisionred]
    (0,-1) .. controls (.42,-.75) and (.88,-.08) .. (.78,.56)
            .. controls (.74,.82) and (.58,1) .. (.50,1);
  \draw[thick][revisionred]
    (0,-1) .. controls (-.42,-.75) and (-.88,-.08) .. (-.78,.56)
            .. controls (-.74,.82) and (-.58,1) .. (-.50,1);
  \draw[thick][revisionred] (-.50,1)--(.50,1);
  \fill (0,-1) circle (.025);
  \node[left] at (-.03,-1) {$-1$};
  \node[left] at (-.03,1) {$1$};
  \node[right,align=left] at (.82,.55) {strictly concave\\profile};
  \node[above] at (0,1.02) {positive-radius face};
\end{tikzpicture}
\caption{Schematic meridian of an equality body for $n\ge2$.  The profile vanishes at $t=-1$ and has a positive value at
	$t=1$. Since $r'(1-)=-\infty$, the curved side has a horizontal
	tangent where it meets the top face in these coordinates.
}
\label{fig:profile-shape}
\end{figure}

\subsection{Completion of the geometric theorem}
\label{sec:bodies}

We now complete the proof of Theorem~\ref{thm:body-main},
for every $n\ge1$. The bound and its attainment follow from
the sectionwise estimate in Section~\ref{sec:statements}
and the profile theorem. The remaining point is to show
that all horizontal sections of an equality body are
homothetic copies of one fixed centered ellipsoid.

\begin{proof}[Proof of Theorem~\ref{thm:body-main}]
	Let $K$ satisfy the hypotheses of the theorem, and let
	\[
	r(t)=\left(\frac{\vol_n(K_t)}{\kappa_n}\right)^{1/n}
	\]
	be its volume-radius profile. Recall from
	Section~\ref{sec:statements} that
	\[
	\vol_{n+1}(K)=\kappa_n A(r),
	\qquad
	\vol_n((K^\circ)_s)
	\le\kappa_n(r^\circ(s))^n,
	\qquad -1<s<1.
	\]
	Integration and Theorem~\ref{thm:profile} give
	\[
	\vol_{n+1}(K)\vol_{n+1}(K^\circ)
	\le\kappa_n^2A(r)B(r)
	\le\kappa_n^2M_n^2.
	\]
	Equality is attained by
	\[
	K=\{(x,t):\|x\|_2\le r_n(t),\ -1\le t\le1\}.
	\]
	Invertible horizontal linear transformations and reflection
	in $t$ preserve the volume product, so every body in
	\eqref{eq:equality-body-form} is an equality case.
	
	Suppose now that equality holds. Then
	\[
	A(r)B(r)=M_n^2,
	\qquad
	\vol_{n+1}(K^\circ)=\kappa_n B(r).
	\]
	Consequently, the sectionwise estimate is an equality
	for almost every $s\in(-1,1)$. Both sides of that estimate
	are continuous on $(-1,1)$: the continuity of
	$\vol_n((K^\circ)_s)$ follows from Brunn's concavity
	principle, and $r^\circ$ is a finite concave function.
	Thus
	\[
	\vol_n((K^\circ)_s)=\kappa_n(r^\circ(s))^n
	\qquad\text{for every }-1<s<1.
	\]
	
	By the profile equality classification, $r$ is smooth
	and positive in the interior, and its transport map $T$
	is a smooth strictly increasing bijection of $(-1,1)$
	onto itself. For $n=1$, this is $T(t)=t$; for $n\ge2$,
	these properties were established in the preceding
	subsections. Moreover,
	\[
	r(t)r^\circ(T(t))=1-tT(t),
	\qquad -1<t<1.
	\]
	For each such $t$, we therefore have equality throughout
	\[
	\vol_n((K^\circ)_{T(t)})
	\le
	\vol_n((1-tT(t))K_t^\circ)
	\le
	\kappa_n\left(\frac{1-tT(t)}{r(t)}\right)^n
	=
	\kappa_n(r^\circ(T(t)))^n.
	\]
	Equality in the symmetric Blaschke-Santal\'o inequality
	\cite{MeyerPajor1990,Schneider2014} implies that $K_t$
	is a centered ellipsoid. The first comparison comes
	from an inclusion of convex bodies with equal positive
	volume, so it is also an equality of sets:
	\[
	(K^\circ)_{T(t)}=(1-tT(t))K_t^\circ.
	\]
	Using \eqref{eq:polar-section}, we obtain
	\begin{equation}\label{eq:contact-inclusion-body}
	(1-tT(t))K_t^\circ
	\subseteq
	(1-uT(t))K_u^\circ,
	\qquad -1<t,u<1.
	\end{equation}
	
	Write
	\[
	K_t=r(t)E_t,
	\qquad
	\vol_n(E_t)=\kappa_n.
	\]
	We must show that the centered ellipsoid $E_t$ does not
	depend on $t$. For fixed $t$, set
	\[
	F_t(u)=\log\frac{1-uT(t)}{r(u)}.
	\]
	The contact relation
	$r(t)r^\circ(T(t))=1-tT(t)$ shows that $F_t$ is minimized
	at $u=t$. In particular,
	\[
	F_t'(t)=0.
	\]
	Fix a compact interval $J\subset(-1,1)$. Since $r$ is
	smooth and positive, the second derivative of $F_t$
	with respect to $u$ is uniformly bounded for $t,u\in J$.
	Taylor's theorem therefore gives a constant $C_J$ such that
	\[
	0\le F_t(u)-F_t(t)\le C_J|u-t|^2,
	\qquad t,u\in J.
	\]
	Normalizing \eqref{eq:contact-inclusion-body} now yields
	\[
	E_t^\circ
	\subseteq
	e^{F_t(u)-F_t(t)}E_u^\circ
	\subseteq
	e^{C_J|u-t|^2}E_u^\circ.
	\]
	Interchanging $t$ and $u$ gives the reverse comparison.
	
	The quadratic error forces these normalized sections
	to be identical. Indeed, fix $t_0<t_1$ in $J$ and divide
	$[t_0,t_1]$ into $m$ equal subintervals. Composing the
	inclusions along the subdivision gives
	\[
	E_{t_0}^\circ
	\subseteq
	\exp\left(\frac{C_J(t_1-t_0)^2}{m}\right)
	E_{t_1}^\circ.
	\]
	Letting $m\to\infty$ and using closedness gives
	$E_{t_0}^\circ\subseteq E_{t_1}^\circ$. The reverse
	inclusion follows in the same way. Hence $E_t$ is
	constant on $J$, and therefore on $(-1,1)$.
	
	Let $E$ denote this fixed centered ellipsoid. We have
	$K_t=r(t)E$ for every interior level. The body $K$ is
	the closure of its interior-level sections: every point
	at an endpoint level can be approached along the segment
	joining it to $(0,0)$. Since $r$ is continuous on
	$[-1,1]$, it follows that
	\[
	K=\{(x,t):x\in r(t)E,\ -1\le t\le1\}.
	\]
	Finally, the profile equality classification gives
	$r(t)=c\,r_n(\varepsilon t)$ for some $c>0$ and
	$\varepsilon\in\{-1,1\}$. Writing $cE=AB_2^n$ with
	$A\in GL(n)$ gives exactly \eqref{eq:equality-body-form}.
\end{proof}

\begin{remark}
	The above direct section-wise argument is why no separate Steiner symmetrization~\cite{MeyerPajor1989,MeyerPajor1990}
	step is needed for Theorem~\ref{thm:body-main}, although one can indeed begin by taking Steiner symmetrizations with respect
	to horizontal directions (namely with respect to hyperplanes which include the special vertical one). Each of these symmetrizations  does not decrease the volume product (since the corresponding hyperplane passes through the center of mass), and taking a limit reduces the convex bodies question to the one for profiles. We find the current technique better suited for our goals as the equality cases are simpler to handle this way. 
\end{remark}

\section{Functional reformulation}\label{sec:functional}

In this section we prove Theorem~\ref{thm:functional-main} by establishing
the projective correspondence between geometric convex
functions satisfying the conditions in the theorem, and  convex bodies of the form we considered above. Under this
correspondence, the functional volume becomes ordinary volume
up to a dimensional constant, and the Legendre transform
becomes ordinary polarity. Functional polarity gives the same
polar body followed by reflection in the last coordinate.
Thus the geometric theorem gives both functional inequalities
and their equality cases.

 \subsection{The projective correspondence and volume}

As in \cite{ArtsteinICM} we  associate with a geometric convex function $\varphi$ the
homogeneous epigraph cone
\begin{equation*}
 C_\varphi
 =\overline{\bigl\{(sx,s,sz):s>0,\ (x,z)\in\operatorname{epi}\varphi\bigr\}}
 \subset\R^n\times\R\times\R.
\end{equation*}
Its section by $s=1$ is the epigraph of $\varphi$. We use instead the section
by the diagonal hyperplane
\[
 H=\{(\xi,s,z):s+z=1\}.
\]
Using the affine coordinates
\[
 (\xi,s,z)\in H
 \longmapsto (\sqrt2\,\xi,z-s)\in\R^n\times\R,
\]
we denote the corresponding image of $C_\varphi\cap H$ by $K_\varphi$.
For $(x,z)\in\operatorname{epi}\varphi$, the ray through
$(x,1,z)$ meets $H$ after multiplication by $(1+z)^{-1}$.
In the chosen coordinates, this gives the projective map
\begin{equation}\label{eq:projective-map}
  P(x,z)
=
\left(\frac{\sqrt2\,x}{1+z},\frac{z-1}{z+1}\right),
\qquad z\ge0,
\end{equation}
and hence
\[
K_\varphi
=
\overline{ P(\operatorname{epi}\varphi)}.
\]
 The diagonal-section description shows that $K_\varphi$ is closed and
convex, lies in $\R^n\times[-1,1]$, and contains
$\{0\}\times[-1,1]$.  If $\varphi$ is even, 
the cone is invariant under
$(\xi,s,z)\mapsto(-\xi,s,z)$, so
every horizontal section of $K_\varphi$ is
centrally symmetric.  

To show that $K_\varphi$ is a convex body, we use the mapping $P$ to compute the volume of $K_\varphi$. The map
$P$ is a homeomorphism from
$\R^n\times[0,\infty)$ onto $\R^n\times[-1,1)$.
Since $\operatorname{epi}\varphi$ is closed, its image is
relatively closed in the latter set. Thus the closure
defining $K_\varphi$ can add points only on the hyperplane
$t=1$, which has zero $(n+1)$-dimensional volume.
The Jacobian of $P$ is
\[
|\det DP(x,z)|
=
\frac{2^{n/2+1}}{(1+z)^{n+2}}.
\]
By change of variables, 
\begin{equation}\label{eq:body-functional-volume}
\vol_{n+1}(K_\varphi)
=
2^{n/2+1}
\int_{\R^n}
\left(
\int_{\varphi(x)}^\infty
\frac{\dd z}{(1+z)^{n+2}}
\right)\dd x = c_nV(\varphi),
\qquad
c_n=\frac{2^{n/2+1}}{n+1}.
\end{equation}

In particular, under the assumption $0<V(\varphi)<\infty$ (present in Theorem \ref{thm:functional-main}), the closed convex set
$K_\varphi$ has positive finite volume, and therefore is not merely a closed convex set but is a convex body.  
In particular, we see that it satisfies all the requirements in 
Theorem~\ref{thm:body-main}.

Conversely, let $K$ satisfy the hypotheses of
Theorem~\ref{thm:body-main}, and set
\[
E=\{(x,z)\in\R^n\times[0,\infty):
 P(x,z)\in K\}.
\]
The set $E$ is closed by continuity of $P$, and convex since $P$ is fractional linear. To see that $E$ is an epigraph, observe that
for $(x,z)\in E$ and $h\ge0$,
\[
P(x,z+h)
=
\frac{1+z}{1+z+h}P(x,z)
+
\frac{h}{1+z+h}(0_n,1)
\in K.
\]
 Thus $E$ is upward closed and is
the epigraph of a lower semicontinuous convex function
$\varphi:\R^n\to[0,\infty]$, with $\varphi(x)=+\infty$
when the corresponding vertical fiber is empty.
Since $  P(0_n,0)=(0_n,-1)\in K$, we have
$\varphi(0)=0$. The horizontal
symmetry of $K$ implies that $\varphi$ is even.

By construction,
\[
  P(\operatorname{epi}\varphi)
=
K\cap\{t<1\}.
\]
Every point of $K$ at height $t=1$ is a limit of points
on the segment joining it to $(0,-1)$, so taking closures
gives $K_\varphi=K$. The function $\varphi$ is uniquely
determined, since $ P$ is invertible on $z\ge0$.
Finally, the same computation as above gives
\[
0<V(\varphi)
=c_n^{-1}\vol_{n+1}(K)<\infty.
\]
The corresponding positivity and finiteness of
$V(\Leg\varphi)$ will follow from the polarity identity
proved below.

\subsection{The two dualities}

Legendre transform corresponds to
ordinary polarity. Indeed, for $x,y\in\R^n$ and $z,w\ge0$, a direct
calculation gives
\[
	\langle P(x,z), P(y,w)\rangle\le1
	\quad\Longleftrightarrow\quad
	\langle x,y\rangle\le z+w.
\]
Consequently, for $y\in\R^n$ and $w\ge0$,
\begin{align*}
	P(y,w)\in K_\varphi^\circ
	&\quad\Longleftrightarrow\quad
	w\ge\sup_{x\in\R^n}
	\bigl(\langle x,y\rangle-\varphi(x)\bigr)
	=\Leg\varphi(y).
\end{align*}
This identifies $P(\operatorname{epi}\Leg\varphi)$ with
$K_\varphi^\circ\cap\{t<1\}$. Taking closures, and using
$(0_n,-1)\in K_\varphi^\circ$ to recover its top section,
we obtain
\begin{equation}\label{eq:projective-polar}
	K_{\Leg\varphi}=K_\varphi^\circ.
\end{equation}

For the polarity transform, we can use that the cone symmetry $(s,z)\mapsto (z,s)$ corresponds to switching $\varphi$ and ${\cal J}\varphi$ (see \cite{ArtsteinICM}) and therefore, letting in $\R^n\times \R$  the linear reflection be given by $R(x,t) = (x, -t)$, we have that 
\begin{equation*}
	K_{\mathcal A\varphi}=  R K_{\Leg\varphi}
	=  R K_\varphi^\circ.
\end{equation*}
Alternatively, note that 
\[
\langle P(x,z), R P(y,w)\rangle\le1
\quad\Longleftrightarrow\quad
\langle x,y\rangle\le1+zw.
\]
Consequently, for $y\in\R^n$ and $w\ge0$,
\begin{align*}
  R P(y,w)\in K_\varphi^\circ
	&\quad\Longleftrightarrow\quad
	\langle x,y\rangle\le1+w\varphi(x)
	\quad\text{for every }x\in\operatorname{dom}\varphi\\
	&\quad\Longleftrightarrow\quad
	w\ge\mathcal A\varphi(y).
\end{align*}
Here the middle condition includes
$\langle x,y\rangle\le1$ whenever $\varphi(x)=0$,
as required by the definition of $\mathcal A$.

Reflection preserves volume, so
\[
V(\mathcal A\varphi)
=
c_n^{-1}\vol_{n+1}(K_\varphi^\circ)
=
V(\Leg\varphi).
\]
This proves \eqref{eq:A-L-volume}. In particular, both
transformed volumes are positive and finite.

\subsection{The functional inequality and its equality cases}

\begin{proof}[Proof of Theorem~\ref{thm:functional-main}]
	
By \eqref{eq:body-functional-volume},
\eqref{eq:projective-polar}, and Theorem~\ref{thm:body-main},
\begin{align*}
	V(\varphi)V(\Leg\varphi)
	&=
	\frac{1}{c_n^2}
	\vol_{n+1}(K_\varphi)\vol_{n+1}(K_\varphi^\circ)\\
	&\le
	\frac{\kappa_n^2M_n^2}{c_n^2}
	=
	\frac{(n+1)^2}{2^{n+2}}\kappa_n^2M_n^2.
\end{align*}
This proves \eqref{eq:functional-bound}.
The identity \eqref{eq:A-L-volume} gives the same inequality,
with the same equality cases, when $\Leg\varphi$ is replaced
by $\mathcal A\varphi$.

The only inequality above is the geometric inequality.
Thus equality holds if and only if $K_\varphi$ is one of
the equality bodies in Theorem~\ref{thm:body-main}.
The converse construction shows that every such body
corresponds to a unique function satisfying the hypotheses
of the functional theorem.

\end{proof}

\begin{remark}[Explicit projective form of the equality functions]
Let an equality body have horizontal sections $r(t)E$, where
$\vol_n(E)=\kappa_n$. 
At height $z\ge0$, the projective
map \eqref{eq:projective-map} gives
\[
\varphi(x)\le z
\quad\Longleftrightarrow\quad
\frac{\sqrt2\,x}{1+z}
\in r\!\left(\frac{z-1}{z+1}\right)E.
\]
This specifies the entire function in terms of its
equality profile and the ellipsoid $E$. In particular,
its positive sublevel sets are homothetic centered
ellipsoids, and
\[
\{\varphi=0\}=\frac{r(-1)}{\sqrt2}E.
\]
For $n\ge2$, the orientation $r=c\,r_n$, with $c>0$,
therefore gives a function vanishing only at the origin,
whereas $r(t)=c\,r_n(-t)$ gives a function vanishing on
a full-dimensional ellipsoid.
 
\end{remark}
\input{functional_extremizers_2026-09-29.tex}

\FloatBarrier

\section{One-dimensional Legendre interpolation}
\label{sec:legendre-interpolation}

In this section we prove Theorem~\ref{thm:legendre-interpolation}, a
one-parameter interpolation between the functional volume $V$ in dimension one and
the exponential volume
\cite{BallThesis1986,ArtsteinKlartagMilman2004,FradeliziMeyer2007}.  The proof
follows the same idea
as the planar case in Section~\ref{sec:n1}: monotone
transport reduces the inequality to a path problem,
which we solve using a single global potential.
For finite $\beta$, a projective change of the level
variables gives a weighted version of the planar path
functional. The diagonal remains the unique maximizing
graph, and the equality functions remain $cx^2$.  We conclude with a weighted geometric interpretation
using the correspondence from Section~\ref{sec:functional}.

\subsection{Sublevel sets and monotone transport}

Fix $1\le\beta<\infty$ and set
\begin{equation*}
 F_\beta(u)=\left(1+\frac{u}{\beta}\right)^{-\beta-1},
 \qquad
 h_\beta(u)=-F_\beta'(u)
 =\frac{\beta+1}{\beta}
  \left(1+\frac{u}{\beta}\right)^{-\beta-2}.
\end{equation*}
For $u\ge0$, let $x(u)$ and $y(u)$ denote the radii of the sublevel sets of
$\varphi$ and $\Leg\varphi$:
\begin{equation}\label{eq:sublevel-radii}
 \{x\in\R:\varphi(x)\le u\}=[-x(u),x(u)],
 \qquad
 \{y\in\R:\Leg\varphi(y)\le u\}=[-y(u),y(u)].
\end{equation}

The finiteness assumptions imply that these intervals are bounded.  Moreover, $x(u),y(u)>0$ for every $u>0$, and both $x$ and
$y$ are nondecreasing concave functions on $[0,\infty)$.
In particular, they are continuous on $(0,\infty)$.  

Since $F_\beta(u)=\int_u^\infty h_\beta(v)\dd v$, we see that
\begin{equation}\label{eq:layer-cake-beta}
 V_\beta(\varphi)=2A,
 \qquad
 V_\beta(\Leg\varphi)=2B,
\end{equation}
where
\begin{equation*}
 A=\int_0^\infty h_\beta(u)x(u)\dd u,
 \qquad
 B=\int_0^\infty h_\beta(v)y(v)\dd v.
\end{equation*}
The definitions of $x,y$ and the Legendre transform imply
\begin{equation}\label{eq:fenchel-level-set}
 x(u)y(v)\le \varphi(x(u))+\Leg\varphi(y(v))\le u+v,
 \qquad\text{for}\,\,\, u,v\ge0.
\end{equation}

Let $T:[0,\infty)\to[0,\infty)$ be the increasing transport sending
\[
 \frac{h_\beta(u)x(u)}{A}\dd u
 \quad\text{to}\quad
 \frac{h_\beta(v)y(v)}{B}\dd v.
\]
Equivalently,
\[
 \frac1A\int_0^u h_\beta(r)x(r)\dd r
 =\frac1B\int_0^{T(u)}h_\beta(r)y(r)\dd r.
\]
Since the two densities are positive and continuous on $(0,\infty)$,
$T$ is increasing and locally absolutely continuous there,
and
\begin{equation}\label{eq:transport-identity-beta}
 \frac{h_\beta(T(u))y(T(u))}{B}T'(u)
 =\frac{h_\beta(u)x(u)}{A}
 \qquad\text{for almost every }u>0.
\end{equation}
Combining \eqref{eq:transport-identity-beta} with
\eqref{eq:fenchel-level-set}, evaluated at $v=T(u)$, gives
\begin{equation}\label{eq:transport-pointwise-beta}
 \sqrt{\frac BA}\,h_\beta(u)x(u) = \sqrt{h_\beta(u)x(u) h_\beta(T(u))y(T(u))T'(u) }
 \le
 \sqrt{(u+T(u))h_\beta(u)h_\beta(T(u))T'(u)}.
\end{equation}
Integrating over $u\in[0,\infty)$  we get
\begin{equation}\label{eq:transport-bound-beta}
 \sqrt{AB}
 \le
 \int_0^\infty
 \sqrt{(u+T(u))h_\beta(u)h_\beta(T(u))T'(u)}\dd u.
\end{equation}

\subsection{Compactification and the sharp path inequality}

We now compactify the level variables in order to return to the
square $[-1,1]^2$, where the calibration argument becomes transparent.
For finite $\beta$, the natural projective change of variables turns
the transport bound above into a weighted version of the planar path
functional.  The diagonal will again be the unique maximizing path.

Introduce the decreasing projective change of variables $t,s:[0, \infty)\to (-1,1]$
\begin{equation}\label{eq:beta-compactification}
 t=\frac{\beta-u}{\beta+u},
 \qquad
 s=\frac{\beta-v}{\beta+v},\qquad u=\beta\frac{1-t}{1+t},
 \qquad
 v=\beta\frac{1-s}{1+s}.
\end{equation}
If $v=T(u)$, define the ``compactified transport'' by composing $T$ with the variable changes
\[
 \sigma(t)=
 \frac{\beta-T\!\left(\beta\frac{1-t}{1+t}\right)}
      {\beta+T\!\left(\beta\frac{1-t}{1+t}\right)}.
\]
Both coordinate changes are decreasing, while $T$ is increasing;
therefore $\sigma$ is increasing and locally absolutely continuous on
$(-1,1)$, with endpoint limits $\sigma(-1)=-1$ and $\sigma(1)=1$.  The
identities
\begin{align*}
 u+v
 &=\frac{2\beta(1-ts)}{(1+t)(1+s)}, \quad
 h_\beta(u)
 =\frac{\beta+1}{\beta}
   \left(\frac{1+t}{2}\right)^{\beta+2},\quad
 T'(u)
 =\frac{(1+t)^2}{(1+s)^2}\sigma'(t),
\end{align*}
and $|d u|=2\beta(1+t)^{-2}\dd t$ show that the right-hand side of
\eqref{eq:transport-bound-beta} is
\begin{equation}\label{eq:c-beta-J-beta}
 c_\beta J_\beta(\sigma),
 \qquad
 c_\beta=\frac{(\beta+1)\sqrt\beta}{2^{\beta+1/2}},
\end{equation}
where 
\begin{equation*}
 J_\beta(\sigma)
 =\int_{-1}^1
 \bigl[(1+t)(1+\sigma(t))\bigr]^{(\beta-1)/2}
 \sqrt{(1-t\sigma(t))\sigma'(t)}\dd t.
\end{equation*}
Notice that $J_1=I_1$, the planar path functional from
Section~\ref{sec:n1}.

\begin{proposition}[Diagonal calibration]
\label{prop:diagonal-calibration-beta}
For every $1\le\beta<\infty$ and every increasing, locally absolutely
continuous $\sigma:(-1,1)\to(-1,1)$ with endpoint limits $\sigma(-1)=-1$ and
$\sigma(1)=1$, one has
\begin{equation}\label{eq:J-beta-sharp}
J_\beta(\sigma)\le D_\beta,
\end{equation}
where
\begin{equation*}
 D_\beta
 =\int_{-1}^1(1+t)^{\beta-1}\sqrt{1-t^2}\dd t
 =2^\beta\sqrt\pi\,
   \frac{\Gamma\!\left(\beta+\frac12\right)}{\Gamma(\beta+2)}.
\end{equation*}
Equality holds if and only if $\sigma(t)=t$.
\end{proposition}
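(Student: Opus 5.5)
The plan is to copy the planar argument of Section~\ref{sec:n1}: build one global potential $\Phi_\beta$ on the open square and use the calibration principle. Lemma~\ref{lem:calibration-principle} goes through verbatim when $(1-ts)^n$ is replaced by the weight
\[
\omega_\beta(t,s)=\bigl[(1+t)(1+s)\bigr]^{\beta-1}(1-ts).
\]
So it suffices to find $\Phi_\beta$ with
\[
(\Phi_\beta)_t\ge0,\qquad (\Phi_\beta)_s\ge0,\qquad 4(\Phi_\beta)_t(\Phi_\beta)_s\ge\omega_\beta .
\]
Equality should hold on the diagonal, and $\Phi_\beta(1,1)-\Phi_\beta(-1,-1)$ should equal $D_\beta$. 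The graph version of the calibration inequality then reads
\[
\sqrt{\omega_\beta(t,\sigma)\,\sigma'}\le\tfrac{d}{dt}\Phi_\beta(t,\sigma(t)).
\]
Applying it on compact subintervals of $(-1,1)$ and passing to the endpoint limits, as for \eqref{eq:n1-path-bound}, gives \eqref{eq:J-beta-sharp}. The value of $D_\beta$ is a Beta integral: substituting $t=2x-1$ gives $2^{\beta+1}B(\beta+\tfrac12,\tfrac32)$, which simplifies to the stated Gamma expression.

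For the potential I would take an ansatz affine in a power of $1+s$, generalizing $\Phi_1$:
\[
\Phi_\beta(t,s)=\tfrac12\Bigl(\alpha(t)+b(t)\,\frac{(1+s)^\beta}{\beta}\Bigr),\qquad b(t)=\sqrt{1-t^2}.
\]
Then $(\Phi_\beta)_s=\tfrac12 b(t)(1+s)^{\beta-1}\ge0$. The required inequality reduces to
\[
g_t(s):=b\alpha'+bb'\frac{(1+s)^\beta}{\beta}-(1+t)^{\beta-1}(1-ts)\ge0 \quad\text{for all } s\in[-1,1],
\]
with equality at $s=t$. The choice $b^2=1-t^2$ is exactly what makes $g_t'(t)=0$, since $bb'=-t$. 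The function $\alpha$ is then fixed by $g_t(t)=0$:
\[
\alpha'(t)=\frac{(1+t)^{\beta-1}(1-t^2)+t(1+t)^\beta/\beta}{\sqrt{1-t^2}}.
\]
One should check that $\alpha$ is integrable at $\pm1$. Along the diagonal the integrand equals $(1+t)^{\beta-1}\sqrt{1-t^2}$, so the endpoint difference is automatically $D_\beta$ (equality in AM--GM on the diagonal). For $\beta=1$ this recovers $\Phi_1$ from \eqref{eq:planar-potential}.

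The main obstacle is the global inequality $g_t\ge0$ and the sign $(\Phi_\beta)_t\ge0$.
\begin{itemize}
\item For $t\le0$, $bb'\ge0$ and $\beta\ge1$, so $g_t$ is convex in $s$. A convex function vanishing with zero derivative at $s=t$ is nonnegative, and this case is immediate.
\item For $t>0$, $g_t$ is concave in $s$, and the tangency argument fails. There I would check the endpoints $s=\pm1$ directly; concavity then gives $g_t\ge0$ on the interval between $s=t$ and each endpoint, with zero derivative at $t$. I expect this to be a one-variable inequality in $t$, to be verified by hand.
\item If the endpoint check fails, I would use the symmetry $(t,s)\mapsto(s,t)$ of $\omega_\beta$. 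On $\{t>0\}$ I would use the mirrored potential $\tfrac12\bigl(\alpha(s)+b(s)(1+t)^\beta/\beta\bigr)$ on the appropriate side of the diagonal, and glue along the diagonal. The two potentials agree there, so the increments telescope as in the two-branch argument of Section~\ref{sec:noether}.
\end{itemize}

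For the equality case, equality in \eqref{eq:J-beta-sharp} forces two things almost everywhere. First, $g_t(\sigma(t))=0$, which by strict convexity (or concavity) of $g_t$ away from $s=t$ forces $\sigma(t)=t$. Second, equality holds in AM--GM. Hence $\sigma(t)=t$, and conversely the diagonal attains $D_\beta$.
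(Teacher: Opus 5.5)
\textbf{The potential fails on the half $\{t>0\}$.} Your reduction to a calibrating potential is fine, and so are the Beta-integral evaluation of $D_\beta$ and the case $t\le 0$. The problem is $t>0$.

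For $t>0$ and $\beta>1$, your $g_t$ is strictly concave in $s$, and you have arranged $g_t(t)=0$ and $g_t'(t)=0$. A concave function attains its maximum at a critical point. Hence $g_t(s)\le g_t(t)=0$ for every $s$, with strict inequality for $s\ne t$. So your $\Phi_\beta$ is a strict \emph{sub}solution on the whole half-square $t>0$ off the diagonal, and the endpoint check cannot succeed. For instance, $\beta=2$, $t=\tfrac12$ gives $g_t(1)=-\tfrac1{16}$ and $g_t(-1)=-\tfrac9{16}$.

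This is intrinsic to your ansatz. Tangency forces $bb'=-t$ for any choice of $b$, and then $g_t''(s)=-(\beta-1)t(1+s)^{\beta-2}$ has the wrong sign whenever $t>0$.

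\textbf{The mirror fallback does not repair it.} The mirrored potential $\Phi_\beta(s,t)$ is a supersolution at $(t,s)$ exactly when $g_s(t)\ge 0$, i.e.\ when $s\le 0$ or $s=t$. So on the open first quadrant off the diagonal, both candidates are strict subsolutions. No assignment of potentials to the two sides of the diagonal helps there, and increasing paths $\sigma$ are free to pass through that region. The bound \eqref{eq:J-beta-sharp} is therefore not established.

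\textbf{The missing idea.} Split the weight $[(1+t)(1+s)]^{\beta-1}$ symmetrically between the two partial derivatives. The paper uses $\Phi_\beta=A(t)+B(t)(1+s)^{\theta}$ with $\theta=(\beta+1)/2$. It fixes $A$ and $B$ by requiring $\Phi_t=\Phi_s=\tfrac12(1+t)^{\beta-1}\sqrt{1-t^2}$ on the diagonal, which gives
\[
B(t)=\frac{(1+t)^{\beta/2}\sqrt{1-t}}{\beta+1},
\qquad
A'(t)=\frac{(1+t)^{\beta-1/2}}{(\beta+1)\sqrt{1-t}} .
\]
With $\xi=1+t$ and $\eta=1+s$, one then gets
\[
4\Phi_t\Phi_s-(1-ts)\xi^{\beta-1}\eta^{\beta-1}
=\frac{\xi^{\beta-1}\eta^{(\beta-1)/2}}{\theta}\bigl[\xi^\theta-\eta^\theta-\theta\eta^{\theta-1}(\xi-\eta)\bigr]\ge 0 .
\]
This holds on the whole square by convexity of $r\mapsto r^\theta$. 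The defect is a convexity remainder in the $t$-variable rather than a tangency condition in $s$, so no sign problem arises in any quadrant. For $\beta>1$ it is strictly positive off the diagonal. This yields uniqueness, once equality in AM--GM together with $\Phi_t>0$ has forced $\sigma'>0$ almost everywhere; without that step, the Hamilton--Jacobi gap says nothing where $\sigma'=0$.

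\textbf{The case $\beta=1$ in your equality argument.} Here your $g_t$ vanishes identically, so "strict convexity of $g_t$" gives no information. You need the AM--GM equality ODE $(1-t^2)\sigma'=1-t\sigma$. Its solutions are $t+C\sqrt{1-t^2}$, and only $C=0$ is increasing, as in Section~\ref{sec:n1}.
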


\begin{proof}
The calibration argument will work if we can find a potential
$\Phi$ with positive partial derivatives such that
\[
4\Phi_t(t,s)\Phi_s(t,s)
\ge
(1-ts)(1+t)^{\beta-1}(1+s)^{\beta-1}.
\]
We want equality along the diagonal $s=t$.	
We first explain how to find the potential.  
The planar potential in Section~\ref{sec:n1} was affine
in $s$, so its $s$-derivative was independent of $s$.
For the weighted functional $J_\beta$, we seek a potential
whose $s$-derivative instead depends on $s$ through
the factor $(1+s)^{(\beta-1)/2}$ appearing in the
integrand. Integrating this factor suggests the form
\[
\Phi(t,s)=A(t)+B(t)(1+s)^\theta,
\qquad
\theta=\frac{\beta+1}{2}.
\]
We determine $A$ and $B$ by requiring the diagonal to
be calibrated, and then verify that the resulting
potential bounds the action throughout the square.

Along the diagonal, equality both in the Hamilton-Jacobi bound and
in the arithmetic-geometric mean step requires
\[
\Phi_t(t,t)=\Phi_s(t,t)
=\frac12(1+t)^{\beta-1}\sqrt{1-t^2}.
\]
Evaluating $\Phi_s$ and $\Phi_t$ on the diagonal gives, respectively,
\[
B(t)=\frac{(1+t)^{\beta/2}\sqrt{1-t}}{\beta+1},\qquad A'(t)=\frac1{\beta+1}
\frac{(1+t)^{\beta-1/2}}{\sqrt{1-t}}.
\]
Choosing $A(-1)=0$ leads to
\begin{equation}\label{eq:Phi-beta-definition}
	\Phi_\beta(t,s)
	=
	\frac1{\beta+1}
	\int_{-1}^{t}
	\frac{(1+r)^{\beta-1/2}}{\sqrt{1-r}}\dd r
	+
	\frac{(1+t)^{\beta/2}\sqrt{1-t}}{\beta+1}
	(1+s)^{(\beta+1)/2}.
\end{equation}

The function $\Phi_\beta$ is continuous on the closed square and $C^1$ in
its interior.  At $\beta=1$, it reduces, up to the additive constant
$\pi/4$, to the potential in Section~\ref{sec:n1}:
\begin{equation*}
 \Phi_1(t,s)
 =\frac\pi4+\frac12\left(\arcsin t+s\sqrt{1-t^2}\right).
\end{equation*}

Abbreviating 
$ \xi=1+t$ and $
 \eta=1+s, $ 
differentiating \eqref{eq:Phi-beta-definition} gives
\begin{equation}\label{eq:Phi-beta-s-derivative}
 (\Phi_\beta)_s(t,s)
 =\frac12\xi^{\theta-1/2}(2-\xi)^{1/2}\eta^{\theta-1}
 =\frac12(\xi\eta)^{(\beta-1)/2}\sqrt{1-t^2},
\end{equation}
and
\begin{equation*}
 (\Phi_\beta)_t(t,s)
 =\frac{\xi^{\theta-3/2}}{2\theta\sqrt{2-\xi}}
 \left[\xi^\theta+\eta^\theta(2\theta-1-\theta\xi)\right].
\end{equation*}
A direct simplification yields
\begin{align}
 &4(\Phi_\beta)_t(t,s)(\Phi_\beta)_s(t,s)
 -(1-ts)(1+t)^{\beta-1}(1+s)^{\beta-1}
 \nonumber\\
 &\quad=
 \frac{\xi^{\beta-1}\eta^{(\beta-1)/2}}{\theta}
 \left[\xi^\theta-\eta^\theta
 -\theta\eta^{\theta-1}(\xi-\eta)\right].
 \label{eq:Phi-beta-HJ-gap}
\end{align}
Since $\theta\ge1$, the function $r\mapsto r^\theta$ is convex on
$[0,\infty)$, so the term in square brackets is nonnegative.  Hence, throughout the open square,
\begin{equation}\label{eq:Phi-beta-HJ-super}
 4(\Phi_\beta)_t(\Phi_\beta)_s
 \ge(1-ts)(1+t)^{\beta-1}(1+s)^{\beta-1}.
\end{equation}
Thus, for $\beta>1$, the potential does not solve the
Hamilton-Jacobi equation away from the diagonal: the convexity
remainder makes it a strict supersolution there, while equality
holds on the diagonal.
Equations \eqref{eq:Phi-beta-s-derivative} and
\eqref{eq:Phi-beta-HJ-super} also imply that both partial derivatives are
positive in the open square.

For almost every $t\in(-1,1)$, the arithmetic-geometric mean inequality and
\eqref{eq:Phi-beta-HJ-super} give
\begin{align*}
 &\bigl[(1+t)(1+\sigma(t))\bigr]^{(\beta-1)/2}
  \sqrt{(1-t\sigma(t))\sigma'(t)}\le
 2\sqrt{(\Phi_\beta)_t(t,\sigma(t))(\Phi_\beta)_s(t,\sigma(t))\sigma'(t)}\\
 &\qquad\le
 (\Phi_\beta)_t(t,\sigma(t))
 +(\Phi_\beta)_s(t,\sigma(t))\sigma'(t)=\frac{d}{dt}\Phi_\beta(t,\sigma(t)).
\end{align*}
Apply this estimate on compact subintervals of $(-1,1)$ and then let the
endpoints tend to $-1$ and $1$.  By continuity of $\Phi_\beta$,
\begin{equation*}
 J_\beta(\sigma)
 \le\Phi_\beta(1,1)-\Phi_\beta(-1,-1).
\end{equation*}
The second term in \eqref{eq:Phi-beta-definition} vanishes at both corners,
and the beta-function identity gives
\begin{align*}
 \Phi_\beta(1,1)-\Phi_\beta(-1,-1)
 &=\frac1{\beta+1}
   \int_{-1}^1
   (1+t)^{\beta-1/2}(1-t)^{-1/2}\dd t\\
 &=2^{\beta+1}
   B\!\left(\beta+\frac12,\frac32\right)=D_\beta.
\end{align*}
Thus \eqref{eq:J-beta-sharp} holds.  The diagonal attains equality, since
\begin{equation*}
 (\Phi_\beta)_t(t,t)
 =(\Phi_\beta)_s(t,t)
 =\frac12(1+t)^{\beta-1}\sqrt{1-t^2}.
\end{equation*}

It remains to prove uniqueness.  Suppressing the arguments
$(t,\sigma(t))$, the pointwise gap in the calibration is
\begin{align}
 &(\Phi_\beta)_t+(\Phi_\beta)_s\sigma'
 -\sqrt{(1-t\sigma)(1+t)^{\beta-1}
              (1+\sigma)^{\beta-1}\sigma'}
 \nonumber\\
 &\quad=
 \left(\sqrt{(\Phi_\beta)_t}
       -\sqrt{(\Phi_\beta)_s\sigma'}\right)^2
+
 \left(
 2\sqrt{(\Phi_\beta)_t(\Phi_\beta)_s}
 -\sqrt{(1-t\sigma)(1+t)^{\beta-1}(1+\sigma)^{\beta-1}}
 \right)\sqrt{\sigma'}.
 \label{eq:calibration-gap-decomposition}
\end{align}
If equality holds in the integrated calibration bound, then the integral of
the nonnegative pointwise gap in
\eqref{eq:calibration-gap-decomposition} is zero.  Hence both nonnegative
terms in that decomposition vanish almost everywhere.

If $\beta>1$, then $\theta>1$ and
$r\mapsto r^\theta$ is strictly convex.  Equality in
\eqref{eq:Phi-beta-HJ-gap} therefore forces $\xi=\eta$, equivalently $\sigma(t)=t$, whenever $\sigma'>0$.  
Since $(\Phi_\beta)_t>0$ in the interior, vanishing of the first
term implies $\sigma'(t)>0$ for almost every $t\in(-1,1)$.
 Thus an
equality path satisfies $\sigma(t)=t$ almost everywhere, and hence everywhere by continuity.

When $\beta=1$, the Hamilton-Jacobi inequality
\eqref{eq:Phi-beta-HJ-super} is an identity throughout the square, and
\[
 (\Phi_1)_t(t,s)=\frac{1-ts}{2\sqrt{1-t^2}},
 \qquad
 (\Phi_1)_s(t,s)=\frac{\sqrt{1-t^2}}2.
\]
Equality in the arithmetic-geometric mean step is therefore equivalent to
\begin{equation}\label{eq:equality-ODE-beta-one}
 (1-t^2)\sigma'(t)=1-t\sigma(t)
 \qquad\text{for almost every }t\in(-1,1).
\end{equation}
Every absolutely continuous solution of \eqref{eq:equality-ODE-beta-one} is
of the form
\begin{equation*}
 \sigma(t)=t+C\sqrt{1-t^2}.
\end{equation*}
If $C>0$, then $\sigma'$ is negative near $t=1$, and if $C<0$, then $\sigma'$ is
negative near $t=-1$.  Since $\sigma$ is increasing, $C=0$.  The diagonal is
therefore the unique equality path also when $\beta=1$.
\end{proof}

We now return to the functional inequality. Combining
\eqref{eq:transport-bound-beta}, \eqref{eq:c-beta-J-beta}, and
Proposition~\ref{prop:diagonal-calibration-beta}, we obtain
\begin{equation*}
 \sqrt{AB}\le c_\beta D_\beta.
\end{equation*}
Using \eqref{eq:layer-cake-beta} and the explicit formulas for $c_\beta$ and
$D_\beta$, we find
\begin{align*}
 \sqrt{V_\beta(\varphi)V_\beta(\Leg\varphi)}
 &=2\sqrt{AB}\\
 &\le2c_\beta D_\beta\\
 &=\sqrt{2\beta\pi}\,
   \frac{\Gamma\!\left(\beta+\frac12\right)}{\Gamma(\beta+1)}.
\end{align*}
This proves \eqref{eq:legendre-interpolation} and
\eqref{eq:legendre-constant}.   The value of $V_\beta(q)$ also follows directly from the change of
variables $x=\sqrt{2\beta}\,r$ and
\[
\int_{\R}(1+r^2)^{-\beta-1}\dd r
=\sqrt\pi\,
\frac{\Gamma\!\left(\beta+\frac12\right)}{\Gamma(\beta+1)}.
\]

Suppose equality holds.  Then equality holds both in
\eqref{eq:transport-bound-beta} and in
Proposition~\ref{prop:diagonal-calibration-beta}.  Hence the compactified
transport path is the diagonal.  Since the map in
\eqref{eq:beta-compactification} is injective, the original transport map is
\begin{equation}\label{eq:identity-transport-equality}
 T(u)=u.
\end{equation}
The transport identity \eqref{eq:transport-identity-beta} gives
\begin{equation*}
 y(u)=\lambda x(u)
 \quad\text{for almost every }u>0,
 \qquad
 \lambda=\frac BA>0.
\end{equation*}
Moreover, equality in \eqref{eq:transport-pointwise-beta}, together with
$T(u)=u$, gives
\begin{equation}\label{eq:equality-radius-formula}
 \lambda x(u)^2=2u
 \qquad\text{for almost every }u>0.
\end{equation}
By continuity of the sublevel radii, \eqref{eq:equality-radius-formula} holds
for every $u>0$.  Consequently
\[
 x(u)=\sqrt{\frac{2u}{\lambda}},
\]
and the definition of $x(u)$ implies
\[
 \varphi(x)=\frac{\lambda x^2}{2}.
\]
Conversely, if $\varphi(x)=cx^2/2$, then
$\Leg\varphi(y)=y^2/(2c)$ and a change of variables gives
\[
 V_\beta(\varphi)=c^{-1/2}V_\beta(q),
 \qquad
 V_\beta(\Leg\varphi)=c^{1/2}V_\beta(q).
\]
This proves the theorem, including the equality statement, for finite
$\beta$.

\subsection{The exponential endpoint}

The case $\beta=\infty$ is the classical even functional
Santal\'o inequality.  Although it has several known proofs
\cite{BallThesis1986,ArtsteinKlartagMilman2004,
	FradeliziMeyer2007,Lehec}, the same transport argument gives
a particularly short proof in the present setting. 
Let $h_\infty(u)=e^{-u}$.  The same layer-cake and monotone transport
argument gives
\[
 \sqrt{AB}
 \le\int_0^\infty
 e^{-(u+T(u))/2}\sqrt{(u+T(u))T'(u)}\dd u,
\]
where
\[
 A=\int_0^\infty e^{-u}x(u)\dd u,
 \qquad
 B=\int_0^\infty e^{-v}y(v)\dd v,
\]
and $V_\infty(\varphi)=2A$,
$V_\infty(\Leg\varphi)=2B$.  Define
\[
 \Phi_\infty(u,v)
 =\frac12\int_0^{u+v}e^{-r/2}\sqrt r\dd r.
\]
Then
\[
 (\Phi_\infty)_u=(\Phi_\infty)_v
 =\frac12e^{-(u+v)/2}\sqrt{u+v}.
\]
Consequently,
\begin{align*}
 e^{-(u+T)/2}\sqrt{(u+T)T'}
 &=2\sqrt{(\Phi_\infty)_u(\Phi_\infty)_vT'}\\
 &\le(\Phi_\infty)_u+(\Phi_\infty)_vT'\\
 &=\frac{d}{du}\Phi_\infty(u,T(u)).
\end{align*}
Integration yields
\[
 \sqrt{AB}
 \le\frac12\int_0^\infty e^{-r/2}\sqrt r\dd r
 =\sqrt{\frac\pi2},
\]
and hence
\[
 V_\infty(\varphi)V_\infty(\Leg\varphi)\le2\pi.
\]
Since $T(0)=0$, equality in the arithmetic-geometric mean step,
which gives $T'=1$ almost everywhere, implies $T(u)=u$. The equality
argument following \eqref{eq:identity-transport-equality} then applies
verbatim and yields $\varphi(x)=cx^2/2$.  This completes the proof of
Theorem~\ref{thm:legendre-interpolation}.

\subsection{A weighted geometric form}
The projective correspondence of Section~\ref{sec:functional}
also gives a geometric form of the whole interpolation family.
In dimension one, \eqref{eq:projective-polar} reads
\[
K_\varphi^\circ=K_{\Leg\varphi}.
\]
We now identify the measure on the strip for which
$V_\beta(\varphi)$ becomes the weighted area of $K_\varphi$.

For $1\le\beta<\infty$, define
\begin{equation*}
 \omega_\beta(t)
 =4(\beta+1)\beta^{\beta+1}
 \frac{(1-t)^{\beta-1}}
 {\bigl[\beta+1-(\beta-1)t\bigr]^{\beta+2}},
 \qquad -1<t<1.
\end{equation*}
and at the endpoint set
\begin{equation*}
 \omega_\infty(t)
 =\frac{4}{(1-t)^3}
 \exp\!\left(-\frac{1+t}{1-t}\right).
\end{equation*}
Notice that $\omega_1\equiv1$.  Since
\[
 \left|\det D P(x,z)\right|
 =\frac{2\sqrt2}{(1+z)^3},
\]
one obtains
\begin{equation}\label{eq:weighted-projective-volume}
 {
 \int_{K_\varphi}\omega_\beta(t)\dd X\,\dd t
 =\sqrt2\,V_\beta(\varphi)},
 \qquad 1\le\beta\le\infty.
\end{equation}
Indeed, for finite $\beta$,
\begin{align*}
 &\frac{2\sqrt2}{(1+z)^3}
 \omega_\beta\!\left(\frac{z-1}{z+1}\right)=\sqrt2\,\frac{\beta+1}{\beta}
 \left(1+\frac z\beta\right)^{-\beta-2},
\end{align*}
and integration in $z$ from $\varphi(x)$ to $\infty$ gives
\eqref{eq:weighted-projective-volume}.
For $\beta=\infty$, similarly,
\[
\frac{2\sqrt2}{(1+z)^3}
\omega_\infty\!\left(\frac{z-1}{z+1}\right)
=\sqrt2\,e^{-z}.
\]
Define
\begin{equation*}
 { \mu_\beta(K)=\int_K\omega_\beta(t)d Xd t.}
\end{equation*}
The same vertical weight is used on $K_\varphi$ and on its polar.
The weight $\omega_\beta$ is precisely the pullback density for which
$\mu_\beta(K_\varphi)=\sqrt2\,V_\beta(\varphi)$. 

\begin{corollary}[The disk remains extremal]
\label{cor:weighted-disk}
For every $1\le\beta\le\infty$ and every even $\varphi$ as in
Theorem~\ref{thm:legendre-interpolation},
\begin{equation*}
 \mu_\beta(K_\varphi)\mu_\beta(K_\varphi^\circ)
 \le\mu_\beta(B_2^2)^2.
\end{equation*}
Equality holds if and only if $K_\varphi$ is a horizontal linear image of
$B_2^2$.
\end{corollary}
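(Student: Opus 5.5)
The plan is to transfer Theorem~\ref{thm:legendre-interpolation} to the geometric side through the projective correspondence, using \eqref{eq:weighted-projective-volume} and \eqref{eq:projective-polar}.

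First, by \eqref{eq:weighted-projective-volume},
\[
\mu_\beta(K_\varphi)\mu_\beta(K_\varphi^\circ)=\mu_\beta(K_\varphi)\mu_\beta(K_{\Leg\varphi})=2V_\beta(\varphi)V_\beta(\Leg\varphi).
\]
Theorem~\ref{thm:legendre-interpolation} then gives the upper bound $2V_\beta(q)^2$, with equality exactly for $\varphi=cx^2/2$. The hypotheses of that theorem are met: positivity and finiteness of $V_\beta(\varphi)$ and $V_\beta(\Leg\varphi)$ are exactly what is assumed, via the statement ``every even $\varphi$ as in'' the theorem.

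Second, identify the right-hand side. I compute $K_q$ directly from \eqref{eq:projective-map}. Set $X=\sqrt2 x/(1+z)$ and $t=(z-1)/(z+1)$, so that $1+z=2/(1-t)$ and $z=(1+t)/(1-t)$. The condition $x^2/2\le z$ becomes
\[
X^2(1+z)^2/4\le z,
\]
that is, $X^2\le 4z/(1+z)^2=(1+t)(1-t)=1-t^2$. Hence $K_q=B_2^2$, and $\mu_\beta(B_2^2)=\sqrt2V_\beta(q)$. The bound therefore reads $\mu_\beta(B_2^2)^2$.

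For $\varphi=cx^2/2$ the same computation gives $K_\varphi=\{X^2\le c^{-1}(1-t^2)\}$, a horizontal linear image of the disk. This proves the ``if'' direction. One can also note directly that horizontal scaling $X\mapsto\lambda X$ multiplies $\mu_\beta(K)$ by $\lambda$ and $\mu_\beta(K^\circ)$ by $\lambda^{-1}$, because the weight depends only on $t$.

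For the converse, equality forces $\varphi=cx^2/2$, so $K_\varphi$ is a horizontal image of $B_2^2$. The correspondence $\varphi\mapsto K_\varphi$ is injective, as shown in Section~\ref{sec:functional}, so no other body arises.

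The only step needing care is the case $\beta=\infty$. There I check that \eqref{eq:weighted-projective-volume} and the computation of $\mu_\infty(B_2^2)$ are consistent, and that $V_\infty(q)^2=2\pi$ matches. This is immediate from the stated identity, so I do not expect a genuine obstacle; the main work is the bookkeeping of the factor $\sqrt2$.
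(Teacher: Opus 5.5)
Your proposal is correct and follows the paper's proof essentially verbatim. The steps match: $\mu_\beta(K_\varphi)=\sqrt2\,V_\beta(\varphi)$ and $K_\varphi^\circ=K_{\Leg\varphi}$ turn the product into $2V_\beta(\varphi)V_\beta(\Leg\varphi)$; Theorem~\ref{thm:legendre-interpolation} bounds this by $2V_\beta(q)^2=\mu_\beta(K_q)^2$ with $K_q=B_2^2$; and the computation $cX^2+t^2\le1$ for $\varphi=cx^2/2$ transfers the equality cases. One small bookkeeping point: that computation, combined with the theorem, gives the ``only if'' direction, whereas your horizontal-scaling remark (or injectivity of $\varphi\mapsto K_\varphi$) is what supplies the ``if'' direction, not the other way round as you labelled them.
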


\begin{proof}
Equations \eqref{eq:projective-polar} and
\eqref{eq:weighted-projective-volume}, followed by
Theorem~\ref{thm:legendre-interpolation}, give
\begin{align*}
 \mu_\beta(K_\varphi)\mu_\beta(K_\varphi^\circ)
 &=2V_\beta(\varphi)V_\beta(\Leg\varphi)\\
 &\le2V_\beta(q)^2
 =\mu_\beta(K_q)^2.
\end{align*}
For $q(x)=x^2/2$, the condition $z\ge q(x)$ in the coordinates
\eqref{eq:projective-map} is equivalent to $X^2+t^2\le1$, and hence
$K_q=B_2^2$.  More generally, $\varphi(x)=cx^2/2$ gives
$cX^2+t^2\le1$.  The equality classification in
Theorem~\ref{thm:legendre-interpolation} completes the proof.
\end{proof}

\section{Comparison with earlier functional Santal\'o inequalities}
\label{sec:earlier-functional-santalo}

 We conclude by comparing the functional inequalities proved above with
 several earlier Santal\'o-type results.  The closest one for the present
 purposes is an inequality of Bobkov for powers of convex functions and the
 ordinary Legendre transform.  Our main theorem gives its sharp form at the
 exponent $n+1$ in dimension $n$, while
 Theorem~\ref{thm:legendre-interpolation} gives the sharp constant throughout
 the corresponding one-dimensional family.
 
 We also briefly discuss Rotem's sharp inequality for negatively concave
 functions, which uses a different duality, and the positive-concavity
 polarity appearing in the functional Santal\'o inequality of
 Artstein-Avidan, Klartag and Milman.

 \subsection{Bobkov's inequality}
 
 Let $d$ denote the dimension in this subsection.  Bobkov proved the
 following inequality for the ordinary Legendre transform.
 
 \begin{theorem}[Bobkov \cite{Bobkov2010}]
 	\label{thm:bobkov-literature}
 	Let $\gamma>d$, and let
 	$\psi:\R^d\to[0,\infty]$ be even, lower semicontinuous and convex, with
 	$\psi(0)=0$. Then
 	\begin{equation}\label{eq:bobkov-unscaled}
 		\left(\int_{\R^d}(1+\psi(x))^{-\gamma}\dd x\right)
 		\left(\int_{\R^d}(1+\Leg\psi(y))^{-\gamma}\dd y\right)
 		\le
 		\left(
 		\int_{\R^d}(1+|z|^2)^{-\gamma/2}\dd z
 		\right)^2.
 	\end{equation}
 	Equivalently,
 	\begin{equation}\label{eq:bobkov-scaled}
 		\left(\int_{\R^d}
 		\left(1+\frac{\psi(x)}{\gamma}\right)^{-\gamma}\dd x\right)
 		\left(\int_{\R^d}
 		\left(1+\frac{\Leg\psi(y)}{\gamma}\right)^{-\gamma}\dd y\right)
 		\le
 		\left(
 		\int_{\R^d}
 		\left(1+\frac{|z|^2}{\gamma}\right)^{-\gamma/2}\dd z
 		\right)^2.
 	\end{equation}
 \end{theorem}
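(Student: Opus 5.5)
The plan is to deduce \eqref{eq:bobkov-unscaled} from the general even functional Santal\'o principle of Ball and Fradelizi--Meyer \cite{BallThesis1986,FradeliziMeyer2007}. Its form is as follows. Let $\rho:[0,\infty)\to[0,\infty)$ be decreasing with $\int_{\R^d}\rho(|z|^2)\dd z<\infty$. Let $f,g\ge0$ be even measurable functions with
\[
f(x)g(y)\le\rho(\langle x,y\rangle)^2\qquad\text{whenever }\langle x,y\rangle>0 .
\]
Then
\[
\int_{\R^d} f\int_{\R^d} g\le\Bigl(\int_{\R^d}\rho(|z|^2)\dd z\Bigr)^2 .
\]
I take this as a known result and do not reprove it. Its proof goes through the symmetric Blaschke--Santal\'o inequality applied to level sets, with a one-dimensional multiplicative estimate for the radial layers. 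This is close in spirit to the transport argument of Section~\ref{sec:legendre-interpolation}.

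\textbf{Step 1 (pointwise duality).} Set $f=(1+\psi)^{-\gamma}$ and $g=(1+\Leg\psi)^{-\gamma}$. Both are even, since $\psi$ is even and $\Leg$ preserves evenness. Both $\psi$ and $\Leg\psi$ are nonnegative, because $\psi(0)=0$ forces $\Leg\psi\ge0$, and $\Leg\psi(0)=-\inf\psi=0$. Expanding the product and discarding the nonnegative term $\psi(x)\Leg\psi(y)$, then applying the Fenchel--Young inequality, gives
\[
(1+\psi(x))(1+\Leg\psi(y))\ge1+\psi(x)+\Leg\psi(y)\ge1+\langle x,y\rangle .
\]
For $\langle x,y\rangle>0$ this yields $f(x)g(y)\le(1+\langle x,y\rangle)^{-\gamma}=\rho(\langle x,y\rangle)^2$ with $\rho(u)=(1+u)^{-\gamma/2}$.

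\textbf{Step 2 (apply the principle).} The function $\rho$ is decreasing. Moreover $\int(1+|z|^2)^{-\gamma/2}\dd z<\infty$ exactly when $\gamma>d$, which is where that hypothesis enters. The principle then gives \eqref{eq:bobkov-unscaled} directly. I expect the only delicate point to be checking that the cited theorem really applies with no centering or normalization hypotheses beyond evenness. In the even case it does, because the symmetric Santal\'o inequality needs no centering. If necessary, $\psi$ can first be approximated by finite-valued functions and the result recovered by monotone convergence.

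\textbf{Step 3 (equivalence of the two forms).} Given $\psi$, put $\chi(u)=\psi(\sqrt\gamma\,u)/\gamma$; this is again even, convex and geometric. A direct computation gives $\Leg\psi(y)=\gamma\,\Leg\chi(y/\sqrt\gamma)$. Substituting $x=\sqrt\gamma\,u$ and $y=\sqrt\gamma\,v$ multiplies each integral on the left of \eqref{eq:bobkov-scaled} by $\gamma^{d/2}$ and turns it into the corresponding integral for $\chi$ in \eqref{eq:bobkov-unscaled}. The substitution $z=\sqrt\gamma\,w$ does the same to the right-hand side. Since $\psi\leftrightarrow\chi$ is a bijection of the admissible class, the two inequalities are equivalent.
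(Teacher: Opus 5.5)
Your argument is correct and follows the route the paper indicates. The paper does not reprove this theorem; it cites Bobkov, who derived it from the Fradelizi--Meyer (Ball) even functional Santal\'o principle, which is exactly what you do via the pointwise bound $(1+\psi(x))(1+\Leg\psi(y))\ge 1+\langle x,y\rangle$ with $\rho(u)=(1+u)^{-\gamma/2}$. Your rescaling $\chi(u)=\psi(\sqrt\gamma\,u)/\gamma$, with $\Leg\psi(y)=\gamma\,\Leg\chi(y/\sqrt\gamma)$, is the rescaling the paper alludes to for the equivalence of the two forms.
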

 
 This is Theorem~7.1 of \cite{Bobkov2010}.  The scaled form follows from
 \eqref{eq:bobkov-unscaled} by rescaling the convex function and the
 variables.  Bobkov derived the result from the functional
 Blaschke-Santal\'o principle of Fradelizi and Meyer
 \cite[Proposition~3]{FradeliziMeyer2007}.  He also pointed out that, in
 contrast with the log-concave functional Santal\'o inequality, this estimate
 does not in general identify an extremal function.

 \begin{remark}[Relation with Rotem's notation]
 	Rotem \cite{Rotem2014Santalo} rewrites this framework in terms of
 	negatively $\alpha$-concave functions, with
 	\[
 	\alpha=-\frac1\gamma,
 	\qquad
 	f(x)=\left(1+\frac{\psi(x)}{\gamma}\right)^{-\gamma}.
 	\]
 	The ordinary Legendre transform of the convex base gives
 	\[
 	f^*(y)
 	=
 	\left(1+\frac{\Leg\psi(y)}{\gamma}\right)^{-\gamma},
 	\]
 	so Bobkov's inequality is an inequality for the product
 	\[
 	\left(\int_{\R^d}f\right)
 	\left(\int_{\R^d}f^*\right).
 	\]
 	
 	Rotem also introduced a different duality, the $\sharp$-transform, for
 	which he obtained a sharp Santal\'o inequality.  At finite $\gamma$ this is
 	a fractional, or projective, transform of the convex base rather than the
 	ordinary Legendre transform.  The present paper keeps the latter duality.
 	The two transforms have the same log-concave limit as
 	$\gamma\to\infty$.
 \end{remark}

\subsection{The sharp case \texorpdfstring{$\gamma=n+1$}{gamma=n+1}}

Our main theorem gives the sharp form of Bobkov's inequality at the
exponent $\gamma=n+1$ in dimension $n$.  Indeed, taking $d=n$,
$\gamma=n+1$, and $\psi=\varphi$ in \eqref{eq:bobkov-unscaled} gives
\begin{equation}\label{eq:bobkov-n-plus-one-bound}
	V(\varphi)V(\Leg\varphi)
	\le
	\left(
	\int_{\R^n}(1+|z|^2)^{-(n+1)/2}\dd z
	\right)^2
	=
	\frac{\pi^{n+1}}
	{\Gamma\!\left((n+1)/2\right)^2}.
\end{equation}
This is a valid comparison bound, but it is not in general sharp.

For precisely the same functional volume product and the same Legendre
duality, Theorem~\ref{thm:functional-main} gives the optimal constant
\begin{equation}\label{eq:sharp-bobkov-n-plus-one-bound}
	V(\varphi)V(\Leg\varphi)
	\le
	\frac{(n+1)^2}{2^{n+2}}\kappa_n^2M_n^2
	=
	\left(\frac{M_n}{\Sigma_n(0)}\right)^2 V(q)^2.
\end{equation}
Thus, at the exponent $\gamma=n+1$, we determine both the sharp
constant and all equality cases in Bobkov's inequality.

For $n=1$, one has $M_1=\Sigma_1(0)=\pi/2$, and
\[
V(\varphi)V(\Leg\varphi)\le V(q)^2=\frac{\pi^2}{2},
\]
with equality exactly for
\[
\varphi(x)=\frac{c x^2}{2},
\qquad c>0.
\]
For every $n\ge2$, however,
\[
M_n>\Sigma_n(0).
\]
Consequently the self-dual quadratic $q$ is no longer extremal.
The equality functions are the two nonquadratic Legendre-dual
families described in Theorem~\ref{thm:functional-main}.

\subsection{The sharp one-dimensional family}

Let $d=1$ and $\gamma=\beta+1$, with $\beta\ge1$.
Applying \eqref{eq:bobkov-unscaled} to $\psi=\varphi/\beta$ and using
\[
\Leg\!\left(\frac{\varphi}{\beta}\right)(y)
=
\frac1\beta\,\Leg\varphi(\beta y),
\]
we obtain, after a change of variables,
\begin{align*}
	V_\beta(\varphi)V_\beta(\Leg\varphi)
	&\le
	\beta
	\left(
	\int_\R (1+z^2)^{-(\beta+1)/2}\dd z
	\right)^2\\
	&=
	\beta\pi
	\left(
	\frac{\Gamma(\beta/2)}
	{\Gamma((\beta+1)/2)}
	\right)^2.
\end{align*}

The quadratic expression on the right-hand side should not be confused
with evaluating the volume product at the self-dual quadratic
$q(x)=x^2/2$.  Indeed, the comparison density in Bobkov's bound is
\[
(1+z^2)^{-(\beta+1)/2},
\]
whereas the density associated with $q$ in our normalization is
\[
\left(1+\frac{q(x)}{\beta}\right)^{-\beta-1}
=
\left(1+\frac{x^2}{2\beta}\right)^{-\beta-1}.
\]

For the same functional volume product and the same Legendre duality,
Theorem~\ref{thm:legendre-interpolation} gives the sharp inequality
\[
V_\beta(\varphi)V_\beta(\Leg\varphi)
\le
V_\beta(q)^2
=
2\beta\pi
\left(
\frac{\Gamma\!\left(\beta+\frac12\right)}
{\Gamma(\beta+1)}
\right)^2,
\]
with equality if and only if
\[
\varphi(x)=\frac{c x^2}{2},
\qquad c>0.
\]
Thus the one-dimensional interpolation determines the optimal constant
and all equality cases for Bobkov's product when
$\gamma=\beta+1\ge2$.  At $\beta=1$, for example, Bobkov's bound is
$\pi^2$, whereas the sharp value is $\pi^2/2$.  As
$\beta\to\infty$, the family tends to the classical even functional
Santal\'o inequality.

\subsection{Positive-concavity polarity}

There is also a connection with the positive-concavity version of the
functional Santal\'o inequality from
\cite[Corollary~4.1]{ArtsteinKlartagMilman2004}.
Let $m$ be a positive integer and let
$h:\R^d\to[0,\infty)$ be such that $h^{1/m}$ is concave on its support.
Define
\begin{equation}\label{eq:positive-s-polarity}
	\Leg_m h(y)
	=
	\inf_{\{x:\,h(x)>0\}}
	\frac{\left(1-\langle x,y\rangle/m\right)_+^m}{h(x)}.
\end{equation}
If the barycenter of $h$ is at the origin, then
\begin{equation}\label{eq:positive-s-santalo}
	\left(\int_{\R^d}h\right)
	\left(\int_{\R^d}\Leg_m h\right)
	\le
	\frac{m^d\kappa_{d+m}^2}{\kappa_m^2}.
\end{equation}

This contains an interesting special case of our profile problem.
Take $d=1$, $m=n$, and, for a concave profile $r$ on $[-1,1]$, define
\[
h(\sqrt n\,t)=r(t)^n,
\qquad -1\le t\le1,
\]
and set $h=0$ outside $[-\sqrt n,\sqrt n]$.  Then
$h^{1/n}$ is concave and a direct calculation gives
\[
\Leg_n h(\sqrt n\,u)=r^\circ(u)^n,
\qquad -1\le u\le1.
\]
Moreover, the barycenter condition for $h$ is exactly
\[
\int_{-1}^1 t\,r(t)^n\dd t=0.
\]
After the change of variables in \eqref{eq:positive-s-santalo}, we obtain
\[
\left(\int_{-1}^1r(t)^n\dd t\right)
\left(\int_{-1}^1r^\circ(t)^n\dd t\right)
\le
\left(\frac{\kappa_{n+1}}{\kappa_n}\right)^2
=
\Sigma_n(0)^2.
\]

Thus the positive-concavity inequality gives precisely the ball value
for our profile problem under the additional vertical barycenter
condition.  Our theorem removes this condition.  For $n\ge2$ this
changes the sharp value: the unrestricted maximum is
$M_n^2>\Sigma_n(0)^2$.

When $n=1$, the same observation gives
\[
\left(\int_{-1}^1r(t)\dd t\right)
\left(\int_{-1}^1r^\circ(t)\dd t\right)
\le\frac{\pi^2}{4}
\]
under the barycenter condition
$\int_{-1}^1t\,r(t)\dd t=0$; our planar profile theorem shows that
this barycenter assumption is in fact unnecessary.

\appendix
\section{Scalar analysis of the calibration bound}
\label{sec:scalar-analysis}

This appendix collects the scalar analytic facts about the function
$\Sigma_n$ and the closing mismatch $\Psi_n$, which were used in the
proof of the sharp bound. These include continuity, the derivative identity,
exclusion of the endpoint parameters, and uniqueness of the
maximizing parameter.

\subsection{Continuity of \texorpdfstring{$\Sigma_n$}{Sigma n}}\label{sec:proofofLemmaSigma-cont}

\begin{lemma}\label{lem:Sigma-continuity}
	The function $\Sigma_n$ is continuous on $[0,1]$.
\end{lemma}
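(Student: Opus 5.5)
We prove continuity separately on $(0,1]$ and at the endpoint $a=0$. Write
\[
\Sigma_n(a)=2p_a\log\tfrac1a+J(a),\qquad J(a)=\int_{-a}^1 g(a,w)\,\dd w,\qquad g(a,w)=\frac{(1-w)^n}{\sqrt{4p_a^2+w(1-w)^n}+2p_a}.
\]
The map $a\mapsto p_a=\frac12\sqrt{a(1+a)^n}$ is continuous on $[0,1]$, and $p_a\to0$ as $a\downarrow0$.

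\textbf{Continuity on $(0,1]$.} Fix $a_0\in(0,1]$ and restrict to $a$ in a compact neighbourhood $[a_0/2,1]$.

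The term $2p_a\log(1/a)$ is plainly continuous there.

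For $J$, note that on the domain $\{(a,w):a\in[a_0/2,1],\ -a\le w\le1\}$ the radicand $4p_a^2+w(1-w)^n$ is nonnegative. This holds because $4p_a^2=a(1+a)^n$ and $x\mapsto x(1+x)^n$ is increasing. The denominator is therefore at least $2p_a\ge 2p_{a_0/2}>0$, so $g$ is jointly continuous and bounded by $2^n/(2p_{a_0/2})$. To handle the moving endpoint, we extend $g(a,\cdot)$ to $w\in[-1,-a)$ by replacing the radicand with its positive part. The extension is still continuous and bounded. We then write $J(a)=\int_{-1}^1 \mathbf 1_{[-a,1]}(w)\,g(a,w)\,\dd w$, and dominated convergence gives continuity of $J$.

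\textbf{Continuity at $a=0$.} This is the main step. Here $p_a\to0$, so the uniform lower bound on the denominator is lost near $w=0$, and we must show $\Sigma_n(a)\to\int_{-1}^1(1-t^2)^{n/2}\,\dd t$.

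The logarithmic term is harmless: $2p_a\log(1/a)\sim\sqrt a\log(1/a)\to0$.

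For $J$, split the integral into $[-a,0]$ and $[0,1]$.
\begin{itemize}
\item On $[-a,0]$: the radicand is nonnegative, so the denominator is at least $2p_a$ and the integrand is at most $2^n/(2p_a)$. The naive bound is then $a\cdot 2^n/(2p_a)=O(\sqrt a)\to0$.
\item On $[0,1]$: the integrand increases pointwise as $p\downarrow0$, since the denominator decreases in $p$. It converges to $(1-w)^n/\sqrt{w(1-w)^n}=(1-w)^{n/2}w^{-1/2}$, which is integrable. Monotone convergence (or domination by the limit) gives
\[
\int_0^1 g(a,w)\,\dd w\;\longrightarrow\;\int_0^1(1-w)^{n/2}w^{-1/2}\,\dd w=2W_0(1).
\]
\end{itemize}

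Finally we identify the limit. The substitution $w=t^2$ gives $\int_0^1(1-w)^{n/2}w^{-1/2}\,\dd w=2\int_0^1(1-t^2)^{n/2}\,\dd t=\int_{-1}^1(1-t^2)^{n/2}\,\dd t=\Sigma_n(0)$, which is consistent with the identity $2W_0(1)=\Sigma_n(0)$ used in Proposition~\ref{prop:two-layer-calibration}.

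The only delicate points are the uniform control of the denominator on the shrinking negative interval and the monotonicity of the integrand in $p$. Both follow from the elementary bound that the denominator is at least $2p_a$ when the radicand is nonnegative, together with the increasing dependence of the square root on $p$.
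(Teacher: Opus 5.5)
Your proof is correct and follows essentially the same route as the paper. At $a=0$ the paper also sends the logarithmic term to zero and bounds the negative part using that the denominator is at least $2p_a$; it gets $a(1+a)^n/(2p_a)=2p_a$ where you get $2^n a/(2p_a)$. It then applies dominated convergence on $[0,1]$ with the majorant $(1-w)^{n/2}w^{-1/2}$, exactly as you do.

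The only variation is on $(0,1]$. There the paper fixes the moving endpoint by the substitution $w=-a+(1+a)y^2$, which it reuses for the derivative identity, where the $p$-derivative of the integrand has a $(w+a)^{-1/2}$ singularity. Your extension by the positive part of the radicand together with an indicator function is equally valid, because for continuity alone the integrand is already bounded by $2^n/(2p_a)$.
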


\begin{proof}[Proof of Lemma~\ref{lem:Sigma-continuity}]
	Write the negative part of the integral with $w=-u$.  Rationalizing both
	parts gives
	\begin{align*}
		\Sigma_n(a)
		&=2p_a\log\frac1a
		+\int_0^1
		\frac{(1-w)^n}
		{\sqrt{4p_a^2+w(1-w)^n}+2p_a}\dd w\\
		&\quad+
		\int_0^a
		\frac{2p_a-\sqrt{4p_a^2-u(1+u)^n}}{u}\dd u.
	\end{align*}
	As $a\downarrow0$, the first term tends to zero.  The last integrand equals
	\[
	\frac{(1+u)^n}
	{2p_a+\sqrt{4p_a^2-u(1+u)^n}},
	\]
	so its integral is at most
	$a(1+a)^n/(2p_a)=2p_a\to0$.  The middle integrand converges pointwise to
	$(1-w)^{n/2}/\sqrt w$ and is bounded by that integrable function.  Dominated
	convergence therefore gives the value $\Sigma_n(0)$ stated in
	\eqref{eq:Sigma-def}.
	
	For $a$ in a compact subinterval of $(0,1]$, the substitution
	\[
	w=-a+(1+a)y^2,
	\qquad 0\le y\le1,
	\]
	fixes the moving endpoint.  The factor
	$4p_a^2+w(1-w)^n$ becomes $y^2$ times a positive jointly continuous
	function of $(a,y)$, and the factor $y$ from $dw=2(1+a)y\dd y$ removes the
	square-root singularity.  The transformed integrand is therefore jointly
	continuous on a fixed compact domain.  Dominated convergence gives
	continuity on $(0,1]$.  Together with the limit at zero, this proves the
	lemma.  The differentiability needed later is established separately in
	Lemma~\ref{lem:Sigma-derivative}.
\end{proof}

\subsection{The derivative identity}\label{sec:Sigma-derivative-proof}

Here we prove Lemma \ref{lem:Sigma-derivative}, namely that   on $(0,1)$  the function $\Sigma_n$ is continuously
differentiable  and  $
	\Sigma_n'(a)=2p_a'\Psi_n(a)$.

\begin{proof}[Proof of Lemma \ref{lem:Sigma-derivative}]
	Write
	\[
	G(p,w)
	=
	\frac{(1-w)^n}
	{\sqrt{4p^2+w(1-w)^n}+2p}.
	\]
	Then the definition of $\Sigma_n$ in \eqref{eq:Sigma-def} reads
	\[
	\Sigma_n(a)
	=
	2p_a\log\frac1a
	+
	\int_{-a}^{1}G(p_a,w)\dd w.
	\]
	For $-a<w\le1$, differentiation with respect to $p$ gives
	\begin{align*}
		\partial_pG(p_a,w)
		&=
		-\frac{2(1-w)^n}
		{\sqrt{4p_a^2+w(1-w)^n}
			\bigl(\sqrt{4p_a^2+w(1-w)^n}+2p_a\bigr)}\\
		&=
		\frac{\displaystyle
			\frac{4p_a}{\sqrt{4p_a^2+w(1-w)^n}}-2}{w},
		\qquad w\ne0.
	\end{align*}
	The first expression is regular at $w=0$. At $w=-a$,
	the derivative has an integrable singularity of order
	$(w+a)^{-1/2}$.
	
	To justify differentiation of the integral with its moving
	endpoint, set
	\[
	w=-a+(1+a)y^2,
	\qquad 0\le y\le1.
	\]
	Using $4p_a^2=a(1+a)^n$, we obtain
	\[
	4p_a^2+w(1-w)^n
	=
	(1+a)^n y^2
	\left[
	a\sum_{j=0}^{n-1}(1-y^2)^j
	+(1+a)(1-y^2)^n
	\right].
	\]
	The expression in brackets is smooth and positive for
	$a>0$ and $0\le y\le1$. Consequently, in the representation
	\[
	\int_{-a}^{1}G(p_a,w)\dd w
	=
	2(1+a)\int_0^1
	yG\bigl(p_a,-a+(1+a)y^2\bigr)\dd y,
	\]
	the transformed integrand and its $a$-derivative extend
	continuously to $y=0$, locally uniformly in $a$. This proves
	that the integral is $C^1$ on $(0,1)$.
	
	Applying the ordinary Leibniz rule after truncating to
	$y\ge\varepsilon$, and then letting $\varepsilon\downarrow0$,
	gives
	\[
	\frac{\dd}{\dd a}
	\int_{-a}^{1}G(p_a,w)\dd w
	=
	G(p_a,-a)
	+
	p_a'\int_{-a}^{1}\partial_pG(p_a,w)\dd w.
	\]
	The passage to the limit is justified by continuity of $G$
	at the lower endpoint and the locally uniform bound
	$|\partial_pG(p_a,w)|\le C(w+a)^{-1/2}$.
	
	At the lower endpoint,
	\[
	G(p_a,-a)
	=
	\frac{(1+a)^n}{2p_a}
	=
	\frac{2p_a}{a}.
	\]
	Thus the moving-endpoint contribution cancels the term
	$-2p_a/a$ from differentiating $2p_a\log(1/a)$. We obtain
	\begin{align*}
		\Sigma_n'(a)
		&=
		2p_a'\log\frac1a
		-
		\frac{2p_a}{a}
		+
		G(p_a,-a)
		+
		p_a'\int_{-a}^{1}\partial_pG(p_a,w)\dd w\\
		&=
		2p_a'
		\left[
		\log\frac1a
		+
		\int_{-a}^{1}
		\frac{\displaystyle
			\frac{2p_a}{\sqrt{4p_a^2+w(1-w)^n}}-1}
		{w}\dd w
		\right]\\
		&=
		2p_a'\Psi_n(a),
	\end{align*}
	where the last equality is \eqref{eq:Psi-def}.
	Finally,
	\[
	p_a'
	=
	\frac{p_a}{2}
	\left(\frac1a+\frac{n}{1+a}\right)>0,
	\]
	which proves the last assertion.
\end{proof}

\subsection{Endpoint behavior}
\label{sec:endpoint-exclusion}

\begin{lemma}[Endpoint exclusion]\label{lem:endpoint-exclusion}
	For every $n\ge2$, neither $a=0$ nor $a=1$ is a
	maximizer of $\Sigma_n$.
\end{lemma}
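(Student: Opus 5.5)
The plan treats the two endpoints separately. At $a=1$ the derivative identity is used directly. At $a=0$ a competitor path beats the diagonal.

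\emph{The endpoint $a=1$.} By Lemma~\ref{lem:Sigma-derivative}, $\Sigma_n'(a)=2p_a'\Psi_n(a)$ with $p_a'>0$. It therefore suffices to show $\Psi_n(a)<0$ for $a$ close to $1$, since then $\Sigma_n$ is strictly decreasing on some interval $[1-\delta,1]$ and $a=1$ is not a maximizer. In the first expression of \eqref{eq:Psi-def}, $\log(1/a)\to0$ as $a\uparrow1$. The subtracted integral stays bounded below by a positive constant: its integrand is nonnegative and is bounded below on, say, $w\in[0,1/2]$ uniformly in $a$ near $1$. This gives $\liminf_{a\uparrow 1}(-\Psi_n(a))>0$, which is all we need. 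Continuity of $\Psi_n$ up to $a=1$ is not required.

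\emph{The endpoint $a=0$.} This is the main obstacle: at $a=0$ the derivative identity is unavailable, and for $n=1$ the endpoint actually is the maximizer. Here I would use the path problem rather than the formula for $\Sigma_n$. By Proposition~\ref{prop:two-layer-calibration}, $I_n(\gamma)\le\Sigma_n(a(\gamma))\le M_n$, and the diagonal has action exactly $\Sigma_n(0)$. So it suffices to exhibit, for $n\ge2$, a causal curve $\gamma$ with $I_n(\gamma)>\Sigma_n(0)$; this forces $M_n>\Sigma_n(0)$.

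The competitor will be a small perturbation of the diagonal, $T_\varepsilon(t)=t+\varepsilon h(t)$, with $h$ smooth, compactly supported in $(-1,1)$, and $\varepsilon$ small. I would compute the second variation of $I_n(T)=\int(1-tT)^{n/2}\sqrt{T'}\,dt$ at $T(t)=t$. The diagonal satisfies the Euler--Lagrange equation \eqref{eq:geodesic-equation} (both sides vanish for $T=t$), so the first variation is zero. The second variation is a quadratic form
\[
Q(h)=\int_{-1}^1\Bigl(\alpha(t)h'^2+\beta(t)hh'+\gamma(t)h^2\Bigr)dt .
\]
Expanding, the coefficients should come out as $\alpha=-\tfrac14(1-t^2)^{n/2}$, a mixed term $-\tfrac n2 t(1-t^2)^{n/2-1}$, and a zero-order term $\tfrac{n(n-2)}{8}t^2(1-t^2)^{n/2-2}$. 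I would check these before relying on them.

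To show $Q(h)>0$ for some $h$, the natural first try is $h$ concentrated near an endpoint $t=\pm1$, where $(1-t^2)$ is small and the zero-order and mixed terms can dominate the kinetic term. If that fails, I would fall back on a nonperturbative comparison. One option is a direct test curve with turning parameter $a>0$, for instance the two characteristic arcs joined through a hyperbola bridge for small $a$ as in Remark~\ref{rem:hyperbola-bridge}. Another is to expand $\Sigma_n(a)$ as $a\downarrow0$: the contributions $2p_a\log(1/a)\approx\sqrt a\log(1/a)$ and the terms of order $\sqrt a$ should give $\Sigma_n(a)-\Sigma_n(0)\sim c_n\sqrt a$, and the aim is to show $c_n>0$ for $n\ge2$. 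I expect this sign check, which distinguishes $n\ge2$ from $n=1$, to be the genuinely delicate computation.
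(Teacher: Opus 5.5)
Your treatment of $a=1$ is correct, and slightly more economical than the paper's. The paper evaluates $\Psi_n(1)<0$ and uses that $\Sigma_n'$ extends continuously to $a=1$. You need only $\Psi_n(a)\le\log(1/a)-c$ near $1$, the mean value theorem, and continuity of $\Sigma_n$ at $1$. Your reduction at $a=0$ is also right: it suffices to exhibit a causal curve with $I_n(\gamma)>\Sigma_n(0)$.

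\textbf{The gap is at $a=0$.} No such curve is produced, and none of the routes you list produces one as described.

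First, your quadratic form needs a correction. The zero-order coefficient of the second derivative is $\tfrac{n(n-2)}{4}t^2(1-t^2)^{n/2-2}$. Integrating the mixed term by parts then gives, for compactly supported $h$,
\[
Q(h)=\frac14\int_{-1}^1(1-t^2)^{n/2-1}\bigl(n\,h^2-(1-t^2)\,h'^2\bigr)\,dt .
\]
Your ``first try'' fails. A bump localized at scale $\delta$ near $t=1$ has a kinetic part of order $\delta^{n/2-1}$ against a potential part of order $\delta^{n/2}$, so $Q<0$ for small $\delta$.

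The fallbacks do not close the gap either.
\begin{itemize}
\item The hyperbola-bridge curve of Remark~\ref{rem:hyperbola-bridge} has action exactly $\Sigma_n(a)$. A curve attaining $\Sigma_n(a)$ exists only when $\Psi_n(a)\ge0$. So using it presupposes sign information that, through $\Sigma_n'=2p_a'\Psi_n$, would already settle the question.
\item The expansion you predict is wrong. For every $n$, $\Psi_n(a)\to0$ as $a\downarrow0$, because the two $\log(1/a)$ terms cancel. Hence $\Sigma_n(a)-\Sigma_n(0)=\int_0^a2p_b'\Psi_n(b)\,db=o(\sqrt a)$. There is no $c_n\sqrt a$ term; the sign sits at a higher, $n$-dependent order.
\end{itemize}

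\textbf{How to fix it.} Your perturbative route can be rescued by the opposite choice of test function: take $h\equiv1$ on $[-1+\delta,1-\delta]$, cut off near $\pm1$. Then $Q(h)\to\frac n4\int_{-1}^1(1-t^2)^{n/2-1}dt>0$, provided the cutoff energy $\int(1-t^2)^{n/2}h'^2$ tends to zero.
\begin{itemize}
\item This happens exactly when $n\ge2$: a linear cutoff works for $n\ge3$, and a logarithmic one for $n=2$.
\item For $n=1$ it fails, as it must. The substitution $t=\cos\theta$ turns $Q$ into $\frac14\int_0^\pi(h^2-h_\theta^2)\,d\theta\le0$ by Wirtinger, matching the fact that $a=0$ is then the maximizer.
\end{itemize}
Since $h$ has compact support, $t+\varepsilon h$ is admissible and
\[
I_n(t+\varepsilon h)=\Sigma_n(0)+\tfrac{\varepsilon^2}{2}Q(h)+O(\varepsilon^3),
\]
which gives the required curve for small $\varepsilon$.

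The paper instead uses explicit competitors through Lemma~\ref{lem:transport}, which avoids all asymptotics.
\begin{itemize}
\item The cone $r(t)=1+t$ gives $\sqrt{A(r)B(r)}=2^{(n+2)/2}/(n+1)$. This strictly exceeds $\Sigma_n(0)$ for $n\ge3$ and equals $\Sigma_2(0)=4/3$.
\item For $n=2$, the slope-broken cone ($1+(1-\varepsilon)t$ on $[0,1]$) gives $A(r_\varepsilon)B(r_\varepsilon)=\frac{16}{9}+\frac{2\varepsilon(1-\varepsilon)(3-\varepsilon)}{9}>\Sigma_2(0)^2$.
\end{itemize}
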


\begin{proof}
	We exclude the two endpoints separately.

		To exclude $a=1$, observe that the fixed-domain substitution
	in the proof of Lemma~\ref{lem:Sigma-derivative} remains
	regular up to $a=1$. Thus $\Sigma_n'$ and $\Psi_n$ extend
	continuously to this endpoint from the left, and
	\[
	\Sigma_n'(1-)=2p_1'\Psi_n(1).
	\]
	Since $4p_1^2=2^n$, the rationalized formula in
	\eqref{eq:Psi-def} gives
	\[
	\Psi_n(1)
	=
	-\int_{-1}^{1}
	\frac{(1-w)^n}
	{\sqrt{2^n+w(1-w)^n}
		\bigl(\sqrt{2^n+w(1-w)^n}+2^{n/2}\bigr)}
	\dd w<0.
	\]
	This integral converges: its only singularity is of order
	$(w+1)^{-1/2}$ at $w=-1$. Since $p_1'>0$, we have
	$\Sigma_n'(1-)<0$. Hence values just to the left of $1$
	are larger than $\Sigma_n(1)$, and $1$ is not a maximizer.
	
	To exclude $a=0$, we use the already proved bound
	\[
	\sqrt{A(r)B(r)}\le M_n,
	\]
	which follows from Lemma~\ref{lem:transport} and
	Proposition~\ref{prop:two-layer-calibration}.
	It suffices to find an admissible profile whose product is
	strictly larger than $\Sigma_n(0)^2$.
	
	Begin with the cone profile $r(t)=1+t$. Its polar profile is
	$r^\circ(s)=(1-s)/2$, so
	\[
	A(r)=\frac{2^{n+1}}{n+1},
	\qquad
	B(r)=\frac{2}{n+1},
	\qquad
	\sqrt{A(r)B(r)}=\frac{2^{(n+2)/2}}{n+1}.
	\]
	When $n=2$, this last quantity equals
	$\Sigma_2(0)=4/3$. The quantity
	\[
	\Sigma_n(0)=\int_{-1}^{1}(1-t^2)^{n/2}\dd t
	\]
	strictly decreases as $n$ increases. In contrast, the
	successive ratio of the cone values is
	\[
	\frac{2^{(n+3)/2}/(n+2)}{2^{(n+2)/2}/(n+1)}
	=
	\sqrt2\,\frac{n+1}{n+2}
	\ge\frac{3\sqrt2}{4}>1,
	\qquad n\ge2.
	\]
	Therefore
	\[
	M_n\ge\frac{2^{(n+2)/2}}{n+1}>\Sigma_n(0)
	\qquad(n\ge3).
	\]
	
	For $n=2$, the cone has exactly the ball value, so we perturb
	it by setting
	\[
	r_\varepsilon(t)=
	\begin{cases}
		1+t,&-1\le t\le0,\\
		1+(1-\varepsilon)t,&0\le t\le1,
	\end{cases}
	\qquad 0<\varepsilon<1.
	\]
	This is a nonnegative concave profile, since its slope
	decreases from $1$ to $1-\varepsilon$ at $t=0$.
	On each linear piece, the quotient
	$(1-st)/r_\varepsilon(t)$ is monotone in $t$. Its infimum is therefore the smaller of its value at
	$t=0$ and its limit as $t\uparrow1$. Thus
	\[
	r_\varepsilon^\circ(s)
	=
	\min\left\{1,\frac{1-s}{2-\varepsilon}\right\}
	=
	\begin{cases}
		1,&-1\le s\le-1+\varepsilon,\\
		\dfrac{1-s}{2-\varepsilon},
		&-1+\varepsilon\le s\le1.
	\end{cases}
	\]
	Computing the two integrals with $n=2$ gives
	\[
	A(r_\varepsilon)=\frac{8-5\varepsilon+\varepsilon^2}{3},
	\qquad
	B(r_\varepsilon)=\frac{2+2\varepsilon}{3}.
	\]
	Consequently,
	\[
	A(r_\varepsilon)B(r_\varepsilon)
	=
	\frac{16}{9}
	+
	\frac{2\varepsilon(1-\varepsilon)(3-\varepsilon)}{9}
	>
	\frac{16}{9}
	=
	\Sigma_2(0)^2.
	\]
	It follows that $M_2>\Sigma_2(0)$. This excludes $a=0$
	for every $n\ge2$.
	
\end{proof}  
  
\subsection{Uniqueness of the maximizing parameter}
\label{sec:unique-maximizer}

In this section we prove that the scalar function $\Sigma_n$ has a unique
maximizer.  The key point is that the normalized closing mismatch
$\Psi_n(a)/\sqrt a$ is strictly decreasing.  The comparison is with
the planar case, where the closing mismatch can be computed explicitly.

\begin{lemma}[The planar closing mismatch]
	\label{lem:planar-closing-mismatch}
	For $0<a<1$,
	\[
	\Psi_1(a)=-\log(1+2a).
	\]
	Consequently, $\Psi_1(a)<0$ and $\Sigma_1$ is strictly decreasing
	on $(0,1)$.
\end{lemma}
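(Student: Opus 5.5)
The plan is to compute $\Psi_1$ in closed form from the rationalized expression \eqref{eq:Psi-def}, and then to read off the monotonicity of $\Sigma_1$ from Lemma~\ref{lem:Sigma-derivative}.

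For $n=1$ we have $4p_a^2=a(1+a)$, so the radicand factors:
\[
4p_a^2+w(1-w)=(w+a)(1+a-w).
\]
Its roots are $-a<0<1+a$. In the second form of \eqref{eq:Psi-def}, the integrand is continuous at $w=0$, so the integral equals the limit of integrals over $\{|w|\ge\varepsilon\}$. I would split off the two pieces separately on that set.

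The piece $-\int dw/w$ over $[-a,-\varepsilon]\cup[\varepsilon,1]$ equals exactly $\log a$. It therefore cancels the term $\log\frac1a$. Hence
\[
\Psi_1(a)=\mathrm{PV}\!\int_{-a}^{1}\frac{2p_a\,dw}{w\sqrt{(w+a)(1+a-w)}}.
\]

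Next I would use the classical fact that the principal value
\[
\mathrm{PV}\!\int_\alpha^\beta \frac{dx}{(x-c)\sqrt{(x-\alpha)(\beta-x)}}
\]
vanishes for every $c\in(\alpha,\beta)$. This can be seen via the substitution $x=\frac{\alpha+\beta}2+\frac{\beta-\alpha}2\cos\theta$, or directly from the explicit primitive below. Applying it with $\alpha=-a$, $\beta=1+a$, $c=0$ leaves only the piece beyond the upper limit:
\[
\Psi_1(a)=-2p_a\int_1^{1+a}\frac{dw}{w\sqrt{(w+a)(1+a-w)}}.
\]
This integral has only an integrable square-root singularity at $1+a$.

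To evaluate it, write $Q(w)=c+w-w^2$ with $c=a(1+a)=4p_a^2$. The standard primitive is
\[
\int\frac{dw}{w\sqrt{Q}}=-\frac1{\sqrt c}\log\left|\frac{2c+w+2\sqrt c\sqrt{Q(w)}}{w}\right|.
\]
Since $\sqrt c=2p_a$, the prefactor cancels, and $\Psi_1(a)$ becomes the increment of $\log\frac{2c+w+2\sqrt c\sqrt Q}{w}$ from $w=1$ to $w=1+a$.

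At $w=1+a$ we have $Q=0$, and the quantity inside the logarithm is $\frac{2a(1+a)+1+a}{1+a}=1+2a$. At $w=1$ we have $Q(1)=c$, and it is $4c+1=(1+2a)^2$. Thus $\Psi_1(a)=\log(1+2a)-2\log(1+2a)=-\log(1+2a)$.

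The same primitive also confirms the vanishing principal value on $[-a,1+a]$. The logarithmic blow-ups at $w=0^\pm$ cancel in the principal value. At both endpoints the quantity inside the logarithm equals $|2a+1|$ up to sign, since $\frac{2c-a}{-a}=-(1+2a)$. I would present this check rather than cite the general fact.

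For the consequence, $-\log(1+2a)<0$ for $a>0$. Lemma~\ref{lem:Sigma-derivative} gives $\Sigma_1'=2p_a'\Psi_1$ with $p_a'>0$, so $\Sigma_1'<0$ on $(0,1)$. Together with continuity on $[0,1]$ from Lemma~\ref{lem:Sigma-continuity}, this gives strict decrease.

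The main obstacle is the bookkeeping at $w=0$: making sure the principal-value manipulation is legitimate. This holds because the original rationalized integrand is regular there, so passing to symmetric truncations is harmless. A second point of care is the choice of branch signs in the logarithmic primitive on the two sides of $w=0$.
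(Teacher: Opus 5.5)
Your proof is correct and follows the paper's route. You cancel $\log\frac1a$ against the symmetric truncation of $-\int dw/w$, evaluate the remaining principal-value integral of $2p_a/(w\sqrt{(w+a)(1+a-w)})$ with an explicit elementary primitive, and conclude via $\Sigma_1'=2p_a'\Psi_1$. The only difference is the substitution: the paper uses $w=-a+(1+2a)\sin^2\theta$ and $z=\sqrt{(1+a)/a}\,\tan\theta$ to reach the primitive $\log\left|\frac{z-1}{z+1}\right|$ on $z\in[0,(1+a)/a]$, while you use the standard primitive of $1/(w\sqrt{Q})$; your finite-Hilbert-transform detour to $[1,1+a]$ is optional, since that primitive applied directly on $[-a,1]$ already gives $-\log(1+2a)$.
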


\begin{proof}
	Pairing the negative and positive parts in the definition
	\eqref{eq:Psi-def}, and then using
	\[
	w=-a+(1+2a)\sin^2\theta,
	\qquad
	z=\sqrt{\frac{1+a}{a}}\tan\theta,
	\]
	gives
	\[
	\Psi_1(a)
	=
	\left.
	\log\left|\frac{z-1}{z+1}\right|
	\right|_{0}^{(1+a)/a}
	=
	-\log(1+2a).
	\]
	Since $p_a'>0$, the derivative identity
	\eqref{eq:Sigma-derivative} implies
	\[
	\Sigma_1'(a)=2p_a'\Psi_1(a)<0.
	\]
	Thus $\Sigma_1$ is strictly decreasing on $(0,1)$.
\end{proof}

\begin{lemma}[Monotonicity of the normalized closing mismatch]
	\label{lem:scaled-Psi-monotonicity}
	For every integer $n\ge1$, the function $\Psi_n$ defined in
	\eqref{eq:Psi-def} belongs to $C^1((0,1))$, and
	\begin{equation}\label{eq:app-scaled-Psi-derivative}
	\frac{d}{da}\left(\frac{\Psi_n(a)}{\sqrt a}\right)
	\le
	\frac{d}{da}\left(\frac{\Psi_1(a)}{\sqrt a}\right)
	=\frac{\displaystyle
		\log(1+2a)-\frac{4a}{1+2a}}{2a^{3/2}}
	<0,
	\qquad 0<a<1.
	\end{equation}
	The first inequality is strict when $n>1$.
\end{lemma}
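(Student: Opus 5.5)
The plan is to treat the planar identity and the comparison separately, and to reduce the comparison to a pointwise inequality between integrands on a fixed domain.

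\emph{The planar identity.} By Lemma~\ref{lem:planar-closing-mismatch}, $\Psi_1(a)=-\log(1+2a)$. Differentiating $-\log(1+2a)/\sqrt a$ gives exactly $\bigl(\log(1+2a)-\frac{4a}{1+2a}\bigr)/(2a^{3/2})$. For its sign, put $g(a)=\log(1+2a)-4a/(1+2a)$. Then $g(0)=0$ and $g'(a)=(4a-2)/(1+2a)^2$. So $g$ decreases on $(0,\tfrac12)$ and increases on $(\tfrac12,1)$, while $g(1)=\log3-\tfrac43<0$. Hence $g<0$ on $(0,1)$, which proves the last inequality in \eqref{eq:app-scaled-Psi-derivative}.

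\emph{Regularity of $\Psi_n$.} I would reuse the fixed-domain substitution $w=-a+(1+a)y^2$ from the proof of Lemma~\ref{lem:Sigma-derivative}, applied now to the first (rationalized) expression in \eqref{eq:Psi-def}. The factorization $4p_a^2+w(1-w)^n=(1+a)^ny^2Q_n(a,y)$, with $Q_n(a,y)=a\sum_{j<n}(1-y^2)^j+(1+a)(1-y^2)^n>0$, cancels the $y^{-1}$ singularity against $dw=2(1+a)y\,dy$. The transformed integrand is then smooth in $a$, locally uniformly in $y\in[0,1]$, which gives $\Psi_n\in C^1((0,1))$ and lets me differentiate under the integral sign.

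\emph{The comparison.} This is the heart of the proof, and I expect it to be the main obstacle. The aim is to write
\[
\frac{d}{da}\Bigl(\frac{\Psi_n(a)}{\sqrt a}\Bigr)=\frac{1}{\sqrt a}\Bigl(\Psi_n'(a)-\frac{\Psi_n(a)}{2a}\Bigr)
\]
as one integral over $y\in[0,1]$ whose integrand is a function of $a$, $y$ and the single quantity $Q_n(a,y)/(1+a)$ (together with $(1-y^2)$-powers). For $n=1$ this ratio is $\frac{a}{1+a}+(1-y^2)$. I would then show that the integrand is monotone in that ratio, with a sign making the derivative smaller for $n\ge2$. For this I would use $1-y^2\le1$ and the elementary bound $\sum_{j<n}(1-y^2)^j+(1-y^2)^n\bigl(\tfrac{1+a}{a}\bigr)\ge\dots$, comparing term by term with the $n=1$ case. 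The inequality is strict on a set of positive measure as soon as $n>1$, because $(1-y^2)^j<1$ for $y>0$.

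If this direct route does not close, I would try the geometric representation instead. From the proof of Lemma~\ref{lem:closing}, $\Psi_n(a)=2\log(-s_-(-a))-\log a$. Replacing $\log(-s_-)$ by its integral formula and using $\rho$ as a variable along the characteristic, the factor $a^{-1/2}$ is natural, since $t=\sqrt a\,e^\rho$ on the turning hyperbola. I would then compare the $n$-dependent characteristic ODEs via the monotonicity of $x\mapsto x(1+x)^n$ already used to choose $p_a$.

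Either way, the delicate point is to organize the difference $\Psi_n'-\Psi_n/(2a)$ so that the boundary term from the moving endpoint cancels. This is analogous to the cancellation of $G(p_a,-a)$ against $-2p_a/a$ in Lemma~\ref{lem:Sigma-derivative}, and I expect this bookkeeping to need care.
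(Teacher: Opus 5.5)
Your planar computation is fine: the sign of $g$ via $g'(a)=(4a-2)/(1+2a)^2$ and $g(1)=\log3-\tfrac43<0$ is a valid alternative to the paper's convexity argument. The $C^1$ regularity via $w=-a+(1+a)y^2$ is also fine. The gap is the comparison step. It is the whole content of the lemma, and you leave it at ``$\ge\dots$''.

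The mechanism you describe cannot close as stated, because your $Q_n(a,y)$ and $Q_1(a,y)$ are not pointwise ordered. With $x=1-y^2$,
\[
Q_2(a,y)-Q_1(a,y)=x\bigl((1+a)x-1\bigr),
\]
which changes sign at $x=1/(1+a)$, that is, at $w=0$. Moreover, the integrand of $\frac{d}{da}(\Psi_n/\sqrt a)$ depends on $n$ also through the numerator $x^n$ and through $\partial_aQ_n=\sum_{j\le n}x^j$. So neither a one-sided bound on $Q_n$ nor the observation $(1-y^2)^j<1$ determines the sign of the difference. Two smaller points: once the domain is fixed there is no moving-endpoint term left to cancel, and $a^{-1/2}\log(1/a)$ is independent of $n$, so it simply drops out of the difference of derivatives.

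The missing idea is that all of the $n$-dependence is carried by the single number $s=x^n$, and $x^n\le x$. Work in the variable $z=(1-w)/(1+a)$, which is your $1-y^2$, and set $d=1-(1+a)z$; note $d$ does not depend on $n$ (in fact $d=w$). Then $4p_a^2+w(1-w)^n=(1+a)^n(a+z^nd)$, and after rationalizing,
\[
\frac{\Psi_n(a)}{\sqrt a}=\frac{1}{\sqrt a}\log\frac1a-\int_0^1G(a,z,z^n)\,dz,
\qquad
G(a,z,s)=\frac{1+a}{2}\int_0^s\frac{du}{(a+ud)^{3/2}}.
\]
A direct computation gives
\[
G_{as}(a,z,s)=\frac{s\bigl((1+a)z+2\bigr)-(a+3)}{4(a+sd)^{5/2}}.
\]
For $0\le s\le z<1$ the numerator is at most $(z-1)\bigl((1+a)z+a+3\bigr)<0$. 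Hence $G_a(a,z,\cdot)$ is strictly decreasing, and so $\partial_aG(a,z,z^n)\ge\partial_aG(a,z,z)$, strictly when $n>1$ and $0<z<1$. Differentiating under the integral is justified by an integrable majorant of order $(1-z)^{-1/2}$, or by your smooth $y$-form. Integrating then gives the first inequality, with strictness for $n>1$.

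This also explains why your $Q$-based route fails. Since $a+sd$ is affine in $s$ with slope $d$, monotonicity of the integrand in $Q$ flips direction with the sign of $d$. That happens exactly where $Q_n-Q_1$ flips sign, so you would at least have to split the integral at $w=0$.
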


\begin{proof}
	We first put the integral defining $\Psi_n$ on a fixed interval.
	In \eqref{eq:Psi-def}, substitute
	\[
	z=\frac{1-w}{1+a},
	\qquad d=1-(1+a)z,
	\qquad
	Q_n(a,z)=a+z^n d.
	\]
	Since $4p_a^2=a(1+a)^n$, we have
	\[
	4p_a^2+w(1-w)^n=(1+a)^nQ_n(a,z),
	\qquad
	\frac{2p_a}{\sqrt{4p_a^2+w(1-w)^n}}
	=\frac{\sqrt a}{\sqrt{Q_n(a,z)}}.
	\]
	Using $Q_n(a,z)-a=z^n d$ to rationalize the integrand gives
	\begin{equation}\label{eq:app-Psi-fixed-domain}
	\Psi_n(a)
	=\log\frac1a
	-(1+a)\int_0^1
	\frac{z^n}
	{\sqrt{Q_n(a,z)}\bigl(\sqrt a+\sqrt{Q_n(a,z)}\bigr)}\dd z.
	\end{equation}
	In particular, the apparent singularity at $d=0$ has disappeared.
	
	For the remainder of the proof, write
	\[
	F_n(a)=\frac{\Psi_n(a)}{\sqrt a},
	\qquad
	G_n(a,z)=
	\frac{(1+a)z^n}
	{\sqrt a\,\sqrt{Q_n(a,z)}
		\bigl(\sqrt a+\sqrt{Q_n(a,z)}\bigr)}.
	\]
	Then
	\begin{equation}\label{eq:app-scaled-Psi-representation}
	F_n(a)=\frac1{\sqrt a}\log\frac1a-\int_0^1G_n(a,z)\dd z.
	\end{equation}
	Differentiation under this integral is legitimate.  Indeed,
	\begin{equation}\label{eq:app-Q-factorization}
	Q_n(a,z)
	=(1-z)\left(a\sum_{j=0}^{n}z^j+z^n\right),
	\qquad
	\partial_a Q_n(a,z)=1-z^{n+1}.
	\end{equation}
	For $a$ in a compact subinterval of $(0,1)$, the factor in parentheses
	is bounded above and bounded away from zero, uniformly for $0\le z\le1$,
	and $\partial_a Q_n(a,z)=O(1-z)$.  Consequently
	\[
	|G_n(a,z)|+|\partial_aG_n(a,z)|
	\le C(1-z)^{-1/2},
	\qquad 0<z<1,
	\]
	with $C$ independent of $a$ in that compact subinterval.  This integrable
	majorant proves that $F_n\in C^1((0,1))$ and permits differentiation of
	\eqref{eq:app-scaled-Psi-representation}.
	
	To compare the derivatives for different $n$, introduce, for
	$0<z<1$ and $0\le s\le z$, the auxiliary function
	\begin{equation}\label{eq:app-G-auxiliary}
	G(a,z,s)
	=\frac{1+a}{2}\int_0^s
	\frac{\dd u}{(a+u d)^{3/2}}
	=\frac{1+a}{d}
	\left(\frac1{\sqrt a}-\frac1{\sqrt{a+s d}}\right),
	\end{equation}
	where the integral defines the expression also when $d=0$.
	The denominators are positive: $a>0$ and
	\[
	a+z d=(1-z)\bigl(a+(1+a)z\bigr)>0,
	\]
	so $a+u d>0$ for $0\le u\le z$.
	Because $z^n\le z$, rationalization shows that
	\[
	G_n(a,z)=G(a,z,z^n).
	\]
	Differentiating first with respect to $s$ gives
	\[
	G_s(a,z,s)=\frac{1+a}{2(a+s d)^{3/2}},
	\]
	and differentiating with respect to $a$, with $z,s$ fixed, yields
	\begin{equation}\label{eq:app-G-mixed-derivative}
	G_{as}(a,z,s)
	=\frac{s\bigl((1+a)z+2\bigr)-(a+3)}
	{4\bigl(a+s(1-(1+a)z)\bigr)^{5/2}}.
	\end{equation}
	The numerator is strictly negative, since
	\[
	\begin{aligned}
	s\bigl((1+a)z+2\bigr)-(a+3)
	&\le z\bigl((1+a)z+2\bigr)-(a+3)\\
	&=(z-1)\bigl((1+a)z+a+3\bigr)<0.
	\end{aligned}
	\]
	Thus $G_a(a,z,s)$ is strictly decreasing in $s$.  It follows that
	\[
	\partial_aG_n(a,z)
	=G_a(a,z,z^n)
	\ge G_a(a,z,z)
	=\partial_aG_1(a,z),
	\]
	with strict inequality for $n>1$.  Differentiating
	\eqref{eq:app-scaled-Psi-representation} and integrating this comparison
	therefore gives
	\[
	F_n'(a)\le F_1'(a),
	\]
	again strictly when $n>1$.
	
	Finally,  Lemma~\ref{lem:planar-closing-mismatch} gives
	$\Psi_1(a)=-\log(1+2a)$, and hence
	\[
	F_1'(a)
	=\frac{1}{2a^{3/2}}
	\left(\log(1+2a)-\frac{4a}{1+2a}\right).
	\]
	Strict convexity of $u\mapsto1/u$ gives
	\[
	\log(1+2a)
	=\int_1^{1+2a}\frac{\dd u}{u}
	<a\left(1+\frac1{1+2a}\right)
	=\frac{2a(1+a)}{1+2a}
	\le\frac{4a}{1+2a},
	\qquad 0<a\le1.
	\]
	Thus $F_1'(a)<0$, completing the proof.
\end{proof}

 \begin{proof}[Proof of Proposition~\ref{prop:scalar-maximizer}]
 	For $n=1$, Lemma~\ref{lem:planar-closing-mismatch} and
 	\eqref{eq:Sigma-derivative} show that
 	$\Sigma_1'(a)<0$ for every $0<a<1$.  Together with continuity
 	at $a=0$, this proves the assertion for $n=1$.
 	
Let $n\ge2$. By Lemma~\ref{lem:Sigma-continuity} and
 	Lemma~\ref{lem:endpoint-exclusion}, $\Sigma_n$ has a maximizer
 	$a_n\in(0,1)$. Lemma~\ref{lem:Sigma-derivative} gives
 	\[
 	0=\Sigma_n'(a_n)=2p_{a_n}'\Psi_n(a_n),
 	\]
 	and hence $\Psi_n(a_n)=0$.
 	
 	By Lemma~\ref{lem:scaled-Psi-monotonicity},
 	$\Psi_n(a)/\sqrt a$ is strictly decreasing.  Thus this zero is
 	unique and
 	\[
 	\Psi_n(a)>0\quad(0<a<a_n),
 	\qquad
 	\Psi_n(a)<0\quad(a_n<a<1).
 	\]
 	Since $p_a'>0$, the identity
 	\[
 	\Sigma_n'(a)=2p_a'\Psi_n(a)
 	\]
 	shows that $\Sigma_n$ is strictly increasing on $[0,a_n]$
 	and strictly decreasing on $[a_n,1]$.  This proves
 	Proposition~\ref{prop:scalar-maximizer}.
 \end{proof}

\begin{remark}
	The maximizing critical point is nondegenerate. Indeed,
	Lemma~\ref{lem:scaled-Psi-monotonicity} gives
	\[
	\Psi_n'(a_n)<0,
	\]
	and therefore
	\[
	\Sigma_n''(a_n)
	=2p_{a_n}'\Psi_n'(a_n)<0.
	\]
\end{remark}

\section*{Acknowledgments}

Supported by the Israel Science Foundation (grant 1626/25)

\paragraph{AI-assisted tools.}
The author used ChatGPT (OpenAI) as an AI-assisted tool during the
development and preparation of this work, including for exploring
possible approaches, checking computations and arguments, and improving
the exposition. All mathematical statements and proofs were independently
verified by the author, who takes full responsibility for the content of
the manuscript.

\end{document}

%% file: sharp_santalo_optimal_profiles_tikz_fragment_2026-09-01.tex
\begingroup
\definecolor{curveone}{RGB}{31,119,180}
\definecolor{curvetwo}{RGB}{230,126,34}
\definecolor{curvethree}{RGB}{44,160,44}
\definecolor{curvefour}{RGB}{214,39,40}
\definecolor{curvefive}{RGB}{148,103,189}

\def\ProfileDataTwo{%
    (0.000000000000,-1.000000000000) (0.042712745298,-0.953912809426) (0.064048241728,-0.930869214139) (0.085358798641,-0.907825618852)
    (0.106636283437,-0.884782023565) (0.117260121298,-0.873260225921) (0.127872708756,-0.861738428278) (0.138473074538,-0.850216630634)
    (0.149060259614,-0.838694832991) (0.159633317826,-0.827173035347) (0.170191316440,-0.815651237704) (0.180733336648,-0.804129440060)
    (0.191258474001,-0.792607642416) (0.201765838782,-0.781085844773) (0.212254556322,-0.769564047129) (0.222723767259,-0.758042249486)
    (0.233172627740,-0.746520451842) (0.243600309571,-0.734998654199) (0.254006000322,-0.723476856555) (0.264388903370,-0.711955058912)
    (0.274748237917,-0.700433261268) (0.285083238941,-0.688911463625) (0.295393157127,-0.677389665981) (0.305677258748,-0.665867868338)
    (0.315934825514,-0.654346070694) (0.326165154387,-0.642824273051) (0.336367557365,-0.631302475407) (0.346541361241,-0.619780677764)
    (0.356685907327,-0.608258880120) (0.366800551167,-0.596737082476) (0.376884662215,-0.585215284833) (0.386937623503,-0.573693487189)
    (0.396958831284,-0.562171689546) (0.406947694669,-0.550649891902) (0.416903635232,-0.539128094259) (0.426826086626,-0.527606296615)
    (0.436714494164,-0.516084498972) (0.446568314411,-0.504562701328) (0.456387014751,-0.493040903685) (0.466170072958,-0.481519106041)
    (0.475916976756,-0.469997308398) (0.485627223376,-0.458475510754) (0.495300319106,-0.446953713111) (0.504935778840,-0.435431915467)
    (0.514533125626,-0.423910117824) (0.524091890211,-0.412388320180) (0.533611610579,-0.400866522536) (0.543091831501,-0.389344724893)
    (0.547816989151,-0.383583826071) (0.552532104068,-0.377822927249) (0.557237121053,-0.372062028428) (0.561931985244,-0.366301129606)
    (0.566616642101,-0.360540230784) (0.571291037397,-0.354779331962) (0.575955117198,-0.349018433141) (0.580608827851,-0.343257534319)
    (0.585252115971,-0.337496635497) (0.589884928425,-0.331735736675) (0.594507212318,-0.325974837854) (0.599118914978,-0.320213939032)
    (0.603719983945,-0.314453040210) (0.608310366954,-0.308692141388) (0.612890011920,-0.302931242566) (0.617458866929,-0.297170343745)
    (0.622016880215,-0.291409444923) (0.626564000157,-0.285648546101) (0.631100175253,-0.279887647279) (0.635625354116,-0.274126748458)
    (0.640139485454,-0.268365849636) (0.644642518058,-0.262604950814) (0.649134400788,-0.256844051992) (0.653615082556,-0.251083153171)
    (0.658084512316,-0.245322254349) (0.662542639048,-0.239561355527) (0.666989411744,-0.233800456705) (0.671424779391,-0.228039557884)
    (0.675848690963,-0.222278659062) (0.680261095401,-0.216517760240) (0.684661941602,-0.210756861418) (0.689051178403,-0.204995962596)
    (0.693428754567,-0.199235063775) (0.697794618771,-0.193474164953) (0.702148719589,-0.187713266131) (0.706491005476,-0.181952367309)
    (0.710821424758,-0.176191468488) (0.715139925615,-0.170430569666) (0.719446456067,-0.164669670844) (0.723740963958,-0.158908772022)
    (0.728023396943,-0.153147873201) (0.732293702472,-0.147386974379) (0.736551827777,-0.141626075557) (0.740797719854,-0.135865176735)
    (0.745031325451,-0.130104277914) (0.749252591051,-0.124343379092) (0.753461462857,-0.118582480270) (0.757657886777,-0.112821581448)
    (0.761841808410,-0.107060682626) (0.766013173026,-0.101299783805) (0.770171925556,-0.095538884983) (0.774318010571,-0.089777986161)
    (0.778451372271,-0.084017087339) (0.782571954463,-0.078256188518) (0.786679700550,-0.072495289696) (0.790774553512,-0.066734390874)
    (0.794856455888,-0.060973492052) (0.798925349764,-0.055212593231) (0.802981176749,-0.049451694409) (0.807023877965,-0.043690795587)
    (0.811053394024,-0.037929896765) (0.815069665014,-0.032168997944) (0.819072630477,-0.026408099122) (0.823062229395,-0.020647200300)
    (0.827038400170,-0.014886301478) (0.831001080605,-0.009125402656) (0.834950207886,-0.003364503835) (0.838885718560,0.002396394987)
    (0.842807548520,0.008157293809) (0.846715632980,0.013918192631) (0.850609906460,0.019679091452) (0.854490302761,0.025439990274)
    (0.858356754948,0.031200889096) (0.862209195324,0.036961787918) (0.866047555415,0.042722686739) (0.869871765940,0.048483585561)
    (0.873681756796,0.054244484383) (0.877477457031,0.060005383205) (0.881258794819,0.065766282026) (0.885025697444,0.071527180848)
    (0.888778091265,0.077288079670) (0.892515901700,0.083048978492) (0.896239053196,0.088809877314) (0.899947469205,0.094570776135)
    (0.903641072159,0.100331674957) (0.907319783437,0.106092573779) (0.910983523345,0.111853472601) (0.914632211084,0.117614371422)
    (0.918265764721,0.123375270244) (0.921884101158,0.129136169066) (0.925487136107,0.134897067888) (0.929074784053,0.140657966709)
    (0.932646958226,0.146418865531) (0.936203570567,0.152179764353) (0.939744531695,0.157940663175) (0.943269750871,0.163701561996)
    (0.946779135968,0.169462460818) (0.950272593426,0.175223359640) (0.953750028225,0.180984258462) (0.957211343840,0.186745157284)
    (0.960656442203,0.192506056105) (0.964085223665,0.198266954927) (0.967497586954,0.204027853749) (0.970893429130,0.209788752571)
    (0.974272645546,0.215549651392) (0.977635129797,0.221310550214) (0.980980773679,0.227071449036) (0.984309467137,0.232832347858)
    (0.987621098220,0.238593246679) (0.990915553027,0.244354145501) (0.994192715655,0.250115044323) (0.997452468149,0.255875943145)
    (1.000694690441,0.261636841966) (1.003919260295,0.267397740788) (1.007126053249,0.273158639610) (1.010314942553,0.278919538432)
    (1.013485799104,0.284680437254) (1.016638491385,0.290441336075) (1.019772885393,0.296202234897) (1.022888844572,0.301963133719)
    (1.025986229740,0.307724032541) (1.029064899015,0.313484931362) (1.032124707737,0.319245830184) (1.035165508389,0.325006729006)
    (1.038187150515,0.330767627828) (1.041189480629,0.336528526649) (1.044172342133,0.342289425471) (1.047135575223,0.348050324293)
    (1.050079016788,0.353811223115) (1.053002500319,0.359572121936) (1.055905855801,0.365333020758) (1.058788909605,0.371093919580)
    (1.061651484382,0.376854818402) (1.064493398942,0.382615717224) (1.067314468139,0.388376616045) (1.070114502743,0.394137514867)
    (1.072893309310,0.399898413689) (1.075650690050,0.405659312511) (1.078386442681,0.411420211332) (1.081100360288,0.417181110154)
    (1.083792231169,0.422942008976) (1.086461838671,0.428702907798) (1.089108961031,0.434463806619) (1.091733371197,0.440224705441)
    (1.094334836651,0.445985604263) (1.096913119217,0.451746503085) (1.099467974865,0.457507401906) (1.101999153504,0.463268300728)
    (1.104506398768,0.469029199550) (1.106989447787,0.474790098372) (1.108743097751,0.478893300728) (1.110484236752,0.482996503085)
    (1.112212763377,0.487099705441) (1.113928574321,0.491202907798) (1.115631564334,0.495306110154) (1.117321626168,0.499409312511)
    (1.118998650523,0.503512514867) (1.120662525995,0.507615717224) (1.122313139010,0.511718919580) (1.123950373771,0.515822121936)
    (1.125574112192,0.519925324293) (1.127184233834,0.524028526649) (1.128780615838,0.528131729006) (1.130363132856,0.532234931362)
    (1.131931656977,0.536338133719) (1.133486057653,0.540441336075) (1.135026201621,0.544544538432) (1.136551952822,0.548647740788)
    (1.138063172320,0.552750943145) (1.139559718207,0.556854145501) (1.141041445521,0.560957347858) (1.142508206144,0.565060550214)
    (1.143959848709,0.569163752571) (1.145396218495,0.573266954927) (1.146817157322,0.577370157284) (1.148222503436,0.581473359640)
    (1.149612091399,0.585576561996) (1.150985751962,0.589679764353) (1.152343311946,0.593782966709) (1.153684594105,0.597886169066)
    (1.155009416989,0.601989371422) (1.156317594805,0.606092573779) (1.157608937264,0.610195776135) (1.158883249425,0.614298978492)
    (1.160140331530,0.618402180848) (1.161379978837,0.622505383205) (1.162601981434,0.626608585561) (1.163806124058,0.630711787918)
    (1.164992185895,0.634814990274) (1.166159940371,0.638918192631) (1.167309154941,0.643021394987) (1.168439590858,0.647124597344)
    (1.169551002936,0.651227799700) (1.170643139297,0.655331002056) (1.171715741111,0.659434204413) (1.172768542316,0.663537406769)
    (1.173801269327,0.667640609126) (1.174813640726,0.671743811482) (1.175805366944,0.675847013839) (1.176776149913,0.679950216195)
    (1.177725682708,0.684053418552) (1.178653649165,0.688156620908) (1.179559723481,0.692259823265) (1.180443569784,0.696363025621)
    (1.181304841688,0.700466227978) (1.182143181807,0.704569430334) (1.182958221260,0.708672632691) (1.183749579124,0.712775835047)
    (1.184516861872,0.716879037404) (1.185259662762,0.720982239760) (1.185977561196,0.725085442116) (1.186670122032,0.729188644473)
    (1.187336894857,0.733291846829) (1.187977413202,0.737395049186) (1.188591193718,0.741498251542) (1.189177735280,0.745601453899)
    (1.189736518039,0.749704656255) (1.190267002405,0.753807858612) (1.190768627953,0.757911060968) (1.191240812252,0.762014263325)
    (1.191682949605,0.766117465681) (1.192094409696,0.770220668038) (1.192474536129,0.774323870394) (1.192822644853,0.778427072751)
    (1.193138022464,0.782530275107) (1.193419924363,0.786633477464) (1.193667572763,0.790736679820) (1.193880154529,0.794839882176)
    (1.194056818828,0.798943084533) (1.194196674572,0.803046286889) (1.194298787629,0.807149489246) (1.194362177783,0.811252691602)
    (1.194379032037,0.813304292781) (1.194385815395,0.815355893959) (1.194382390833,0.817407495137) (1.194368617759,0.819459096315)
    (1.194344351886,0.821510697494) (1.194309445084,0.823562298672) (1.194263745235,0.825613899850) (1.194207096079,0.827665501028)
    (1.194139337044,0.829717102206) (1.194060303081,0.831768703385) (1.193969824478,0.833820304563) (1.193867726669,0.835871905741)
    (1.193753830034,0.837923506919) (1.193627949685,0.839975108098) (1.193489895245,0.842026709276) (1.193339470606,0.844078310454)
    (1.193176473682,0.846129911632) (1.193000696142,0.848181512811) (1.192811923133,0.850233113989) (1.192609932979,0.852284715167)
    (1.192394496868,0.854336316345) (1.192165378518,0.856387917524) (1.191922333822,0.858439518702) (1.191665110470,0.860491119880)
    (1.191393447549,0.862542721058) (1.191107075116,0.864594322236) (1.190805713744,0.866645923415) (1.190489074033,0.868697524593)
    (1.190156856096,0.870749125771) (1.189808749002,0.872800726949) (1.189444430186,0.874852328128) (1.189063564813,0.876903929306)
    (1.188665805096,0.878955530484) (1.188250789570,0.881007131662) (1.187818142301,0.883058732841) (1.187367472048,0.885110334019)
    (1.186898371345,0.887161935197) (1.186410415527,0.889213536375) (1.185903161664,0.891265137554) (1.185376147414,0.893316738732)
    (1.184828889783,0.895368339910) (1.184260883771,0.897419941088) (1.183671600910,0.899471542266) (1.183060487667,0.901523143445)
    (1.182426963704,0.903574744623) (1.181770419975,0.905626345801) (1.181090216651,0.907677946979) (1.180385680832,0.909729548158)
    (1.179656104050,0.911781149336) (1.178900739500,0.913832750514) (1.178118799006,0.915884351692) (1.177309449649,0.917935952871)
    (1.176471810034,0.919987554049) (1.175604946147,0.922039155227) (1.174707866729,0.924090756405) (1.173779518111,0.926142357584)
    (1.172818778428,0.928193958762) (1.171824451110,0.930245559940) (1.170795257543,0.932297161118) (1.169729828758,0.934348762296)
    (1.168626695998,0.936400363475) (1.168060501738,0.937426164064) (1.167484279957,0.938451964653) (1.166897813947,0.939477765242)
    (1.166300878453,0.940503565831) (1.165693239177,0.941529366420) (1.165074652257,0.942555167009) (1.164444863689,0.943580967599)
    (1.163803608717,0.944606768188) (1.163150611169,0.945632568777) (1.162485582735,0.946658369366) (1.161808222193,0.947684169955)
    (1.161118214559,0.948709970544) (1.160415230173,0.949735771133) (1.159698923699,0.950761571722) (1.158968933032,0.951787372311)
    (1.158224878110,0.952813172901) (1.157466359609,0.953838973490) (1.156692957515,0.954864774079) (1.155904229555,0.955890574668)
    (1.155099709464,0.956916375257) (1.154278905083,0.957942175846) (1.153441296242,0.958967976435) (1.152586332428,0.959993777024)
    (1.151713430179,0.961019577614) (1.150821970190,0.962045378203) (1.149911294070,0.963071178792) (1.148980700719,0.964096979381)
    (1.148029442241,0.965122779970) (1.147056719343,0.966148580559) (1.146061676123,0.967174381148) (1.145043394153,0.968200181737)
    (1.144000885727,0.969225982326) (1.142933086126,0.970251782916) (1.141838844732,0.971277583505) (1.140716914744,0.972303384094)
    (1.139565941243,0.973329184683) (1.138384447257,0.974354985272) (1.137170817382,0.975380785861) (1.135923278442,0.976406586450)
    (1.134639876457,0.977432387039) (1.133318449060,0.978458187629) (1.132642735046,0.978971087923) (1.131956592151,0.979483988218)
    (1.131259675433,0.979996888512) (1.130551619275,0.980509788807) (1.129832035560,0.981022689101) (1.129100511641,0.981535589396)
    (1.128356608057,0.982048489690) (1.127599855972,0.982561389985) (1.126829754283,0.983074290280) (1.126045766344,0.983587190574)
    (1.125247316250,0.984100090869) (1.124433784589,0.984612991163) (1.123604503591,0.985125891458) (1.122758751533,0.985638791752)
    (1.121895746285,0.986151692047) (1.121014637803,0.986664592341) (1.120114499371,0.987177492636) (1.119194317310,0.987690392931)
    (1.118252978814,0.988203293225) (1.117289257476,0.988716193520) (1.116301795943,0.989229093814) (1.115289084953,0.989741994109)
    (1.114249437793,0.990254894403) (1.113180958867,0.990767794698) (1.112081504619,0.991280694992) (1.110948634372,0.991793595287)
    (1.109779547690,0.992306495582) (1.109180430229,0.992562945729) (1.108571003392,0.992819395876) (1.107950775737,0.993075846023)
    (1.107319213120,0.993332296171) (1.106675733178,0.993588746318) (1.106019698856,0.993845196465) (1.105350410755,0.994101646613)
    (1.104667098026,0.994358096760) (1.103968907457,0.994614546907) (1.103254890281,0.994870997054) (1.102523986105,0.995127447202)
    (1.101775003134,0.995383897349) (1.101006593573,0.995640347496) (1.100217222698,0.995896797644) (1.099405129432,0.996153247791)
    (1.098568275375,0.996409697938) (1.097704277818,0.996666148085) (1.096810320062,0.996922598233) (1.095883028743,0.997179048380)
    (1.094918301851,0.997435498527) (1.094420360076,0.997563723601) (1.093911060476,0.997691948674) (1.093389554981,0.997820173748)
    (1.092854877853,0.997948398822) (1.092305920870,0.998076623895) (1.091741401233,0.998204848969) (1.091159819394,0.998333074043)
    (1.090559402590,0.998461299116) (1.089938027601,0.998589524190) (1.089293112456,0.998717749264) (1.088621460125,0.998845974337)
    (1.087919024920,0.998974199411) (1.087180548445,0.999102424485) (1.086795680455,0.999166537021) (1.086398961795,0.999230649558)
    (1.085989055642,0.999294762095) (1.085564336283,0.999358874632) (1.085122790672,0.999422987169) (1.084661871316,0.999487099705)
    (1.084178266989,0.999551212242) (1.083667526229,0.999615324779) (1.083123396470,0.999679437316) (1.082836075456,0.999711493584)
    (1.082536556715,0.999743549853) (1.082222754834,0.999775606121) (1.081891866214,0.999807662390) (1.081539952809,0.999839718658)
    (1.081161143847,0.999871774926) (1.080958903730,0.999887803061) (1.080745902779,0.999903831195) (1.080519660501,0.999919859329)
    (1.080276478967,0.999935887463) (1.080010351125,0.999951915597) (1.079865558187,0.999959929664) (1.079710095693,0.999967943732)
    (1.079540179671,0.999975957799) (1.079348766718,0.999983971866) (1.079240595363,0.999987978899) (1.079118882610,0.999991985933)
    (1.078972935632,0.999995992966) (1.078880430850,0.999997996483) (1.078821874745,0.999998998242) (1.078784845930,0.999999499121)
    (1.078761448884,0.999999749560) (1.078746674505,0.999999874780) (1.078721427414,1.000000000000)
}

\def\ProfileDataThree{%
    (0.000000000000,-1.000000000000) (0.079945220714,-0.898428291952) (0.119911920382,-0.847642437928) (0.159867225672,-0.796856583904)
    (0.199802471871,-0.746070729880) (0.239706578940,-0.695284875856) (0.259642846260,-0.669891948844) (0.279566109599,-0.644499021832)
    (0.299474333515,-0.619106094820) (0.319365345681,-0.593713167808) (0.339236840093,-0.568320240795) (0.359086380410,-0.542927313783)
    (0.378911403402,-0.517534386771) (0.398709222435,-0.492141459759) (0.418477030944,-0.466748532747) (0.438211905833,-0.441355605735)
    (0.457910810735,-0.415962678723) (0.467745795360,-0.403266215217) (0.477570599063,-0.390569751711) (0.487384811758,-0.377873288205)
    (0.497188016797,-0.365176824699) (0.506979791042,-0.352480361193) (0.516759704923,-0.339783897687) (0.526527322491,-0.327087434181)
    (0.536282201463,-0.314390970675) (0.546023893261,-0.301694507169) (0.555751943032,-0.288998043663) (0.565465889672,-0.276301580157)
    (0.575165265829,-0.263605116651) (0.584849597903,-0.250908653145) (0.594518406026,-0.238212189639) (0.604171204036,-0.225515726133)
    (0.613807499440,-0.212819262627) (0.623426793359,-0.200122799121) (0.633028580460,-0.187426335615) (0.642612348874,-0.174729872109)
    (0.652177580105,-0.162033408603) (0.661723748912,-0.149336945097) (0.671250323182,-0.136640481591) (0.680756763784,-0.123944018085)
    (0.690242524407,-0.111247554579) (0.699707051376,-0.098551091073) (0.709149783449,-0.085854627567) (0.718570151596,-0.073158164061)
    (0.727967578754,-0.060461700555) (0.737341479560,-0.047765237049) (0.746691260062,-0.035068773543) (0.756016317404,-0.022372310037)
    (0.765316039487,-0.009675846531) (0.774589804597,0.003020616975) (0.783836981014,0.015717080481) (0.793056926581,0.028413543987)
    (0.802248988250,0.041110007493) (0.811412501587,0.053806470999) (0.820546790244,0.066502934505) (0.829651165398,0.079199398011)
    (0.838724925145,0.091895861517) (0.847767353847,0.104592325023) (0.856777721448,0.117288788529) (0.865755282727,0.129985252035)
    (0.874699276509,0.142681715541) (0.883608924815,0.155378179047) (0.892483431959,0.168074642553) (0.901321983577,0.180771106059)
    (0.910123745588,0.193467569565) (0.918887863088,0.206164033071) (0.927613459155,0.218860496577) (0.936299633576,0.231556960083)
    (0.940627649650,0.237905191836) (0.944945461481,0.244253423589) (0.949252949367,0.250601655342) (0.953549991875,0.256949887095)
    (0.957836465791,0.263298118848) (0.962112246061,0.269646350601) (0.966377205744,0.275994582354) (0.970631215946,0.282342814107)
    (0.974874145763,0.288691045861) (0.979105862218,0.295039277614) (0.983326230198,0.301387509367) (0.987535112383,0.307735741120)
    (0.991732369179,0.314083972873) (0.995917858644,0.320432204626) (1.000091436413,0.326780436379) (1.004252955618,0.333128668132)
    (1.008402266809,0.339476899885) (1.012539217864,0.345825131638) (1.016663653906,0.352173363391) (1.020775417205,0.358521595144)
    (1.024874347086,0.364869826897) (1.028960279827,0.371218058650) (1.033033048555,0.377566290403) (1.037092483137,0.383914522156)
    (1.041138410063,0.390262753909) (1.045170652333,0.396610985662) (1.049189029327,0.402959217415) (1.053193356681,0.409307449168)
    (1.057183446144,0.415655680921) (1.061159105445,0.422003912674) (1.065120138138,0.428352144427) (1.069066343448,0.434700376180)
    (1.072997516111,0.441048607933) (1.076913446200,0.447396839686) (1.080813918949,0.453745071439) (1.084698714564,0.460093303192)
    (1.088567608027,0.466441534945) (1.092420368887,0.472789766698) (1.096256761047,0.479137998451) (1.100076542531,0.485486230204)
    (1.103879465248,0.491834461957) (1.107665274736,0.498182693710) (1.111433709895,0.504530925463) (1.115184502710,0.510879157216)
    (1.118917377951,0.517227388969) (1.122632052863,0.523575620722) (1.126328236832,0.529923852475) (1.130005631041,0.536272084228)
    (1.133663928096,0.542620315981) (1.137302811637,0.548968547734) (1.140921955923,0.555316779487) (1.144521025394,0.561665011240)
    (1.148099674205,0.568013242993) (1.151657545728,0.574361474746) (1.155194272029,0.580709706499) (1.158709473305,0.587057938252)
    (1.162202757290,0.593406170005) (1.165673718619,0.599754401758) (1.169121938146,0.606102633511) (1.172546982223,0.612450865264)
    (1.175948401925,0.618799097017) (1.179325732217,0.625147328770) (1.182419978505,0.631004401758) (1.185492916505,0.636861474746)
    (1.188544140420,0.642718547734) (1.191573230034,0.648575620722) (1.194579749928,0.654432693710) (1.197563248641,0.660289766698)
    (1.200523257772,0.666146839686) (1.203459291012,0.672003912674) (1.206370843107,0.677860985662) (1.209257388732,0.683718058650)
    (1.212118381293,0.689575131638) (1.214953251612,0.695432204626) (1.217761406526,0.701289277614) (1.220542227352,0.707146350601)
    (1.223295068234,0.713003423589) (1.226019254339,0.718860496577) (1.228714079898,0.724717569565) (1.231378806069,0.730574642553)
    (1.234012658603,0.736431715541) (1.236614825287,0.742288788529) (1.239184453147,0.748145861517) (1.241720645365,0.754002934505)
    (1.244222457894,0.759860007493) (1.246688895720,0.765717080481) (1.249118908718,0.771574153469) (1.251511387070,0.777431226457)
    (1.253865156163,0.783288299445) (1.255027138566,0.786216835939) (1.256178970900,0.789145372433) (1.257320485470,0.792073908927)
    (1.258451509344,0.795002445421) (1.259571864102,0.797930981915) (1.260681365586,0.800859518409) (1.261779823620,0.803788054903)
    (1.262867041723,0.806716591397) (1.263942816798,0.809645127891) (1.265006938801,0.812573664385) (1.266059190390,0.815502200879)
    (1.267099346551,0.818430737373) (1.268127174195,0.821359273867) (1.269142431726,0.824287810361) (1.270144868588,0.827216346855)
    (1.271134224767,0.830144883349) (1.272110230264,0.833073419843) (1.273072604527,0.836001956337) (1.274021055839,0.838930492831)
    (1.274955280662,0.841859029325) (1.275874962921,0.844787565819) (1.276779773245,0.847716102313) (1.277669368129,0.850644638807)
    (1.278543389042,0.853573175301) (1.279401461445,0.856501711795) (1.280243193737,0.859430248289) (1.281068176097,0.862358784783)
    (1.281875979227,0.865287321277) (1.282666152978,0.868215857771) (1.283438224845,0.871144394265) (1.284191698316,0.874072930759)
    (1.284926051063,0.877001467253) (1.285640732938,0.879930003747) (1.286335163778,0.882858540241) (1.287008730960,0.885787076735)
    (1.287660786700,0.888715613229) (1.288290645045,0.891644149723) (1.288897578520,0.894572686217) (1.289480814378,0.897501222711)
    (1.290039530396,0.900429759205) (1.290572850140,0.903358295698) (1.291079837623,0.906286832192) (1.291559491251,0.909215368686)
    (1.292010736935,0.912143905180) (1.292432420226,0.915072441674) (1.292823297291,0.918000978168) (1.293182024522,0.920929514662)
    (1.293507146494,0.923858051156) (1.293797081954,0.926786587650) (1.294050107412,0.929715124144) (1.294264337820,0.932643660638)
    (1.294356266074,0.934107928885) (1.294437703657,0.935572197132) (1.294508358569,0.937036465379) (1.294567923567,0.938500733626)
    (1.294616074983,0.939965001873) (1.294652471423,0.941429270120) (1.294676752323,0.942893538367) (1.294688536352,0.944357806614)
    (1.294687419633,0.945822074861) (1.294672973756,0.947286343108) (1.294644743568,0.948750611355) (1.294602244676,0.950214879602)
    (1.294544960654,0.951679147849) (1.294472339877,0.953143416096) (1.294383791941,0.954607684343) (1.294278683585,0.956071952590)
    (1.294156334037,0.957536220837) (1.294016009673,0.959000489084) (1.293856917862,0.960464757331) (1.293678199841,0.961929025578)
    (1.293478922415,0.963393293825) (1.293258068245,0.964857562072) (1.293014524399,0.966321830319) (1.292747068783,0.967786098566)
    (1.292454353929,0.969250366813) (1.292134887487,0.970714635060) (1.291787008538,0.972178903307) (1.291408858564,0.973643171554)
    (1.290998345522,0.975107439801) (1.290553098837,0.976571708048) (1.290070412345,0.978035976295) (1.289814074094,0.978768110419)
    (1.289547170883,0.979500244542) (1.289269233700,0.980232378666) (1.288979754341,0.980964512789) (1.288678180485,0.981696646913)
    (1.288363909939,0.982428781036) (1.288036283841,0.983160915160) (1.287694578616,0.983893049283) (1.287337996356,0.984625183407)
    (1.286965653246,0.985357317530) (1.286576565510,0.986089451654) (1.286169632187,0.986821585777) (1.285743613808,0.987553719901)
    (1.285297105694,0.988285854024) (1.284828504085,0.989017988148) (1.284335962569,0.989750122271) (1.283817335130,0.990482256395)
    (1.283270100327,0.991214390518) (1.282691258215,0.991946524642) (1.282077186797,0.992678658765) (1.281755589304,0.993044725827)
    (1.281423436294,0.993410792889) (1.281079992398,0.993776859950) (1.280724424158,0.994142927012) (1.280355779906,0.994508994074)
    (1.279972963891,0.994875061136) (1.279574702445,0.995241128197) (1.279159498943,0.995607195259) (1.278725572570,0.995973262321)
    (1.278270773043,0.996339329383) (1.277792458379,0.996705396444) (1.277287313691,0.997071463506) (1.276751071219,0.997437530568)
    (1.276469587780,0.997620564099) (1.276178055095,0.997803597630) (1.275875415351,0.997986631160) (1.275560391191,0.998169664691)
    (1.275231413072,0.998352698222) (1.274886511570,0.998535731753) (1.274523150980,0.998718765284) (1.274137958735,0.998901798815)
    (1.273726255748,0.999084832346) (1.273281167429,0.999267865877) (1.273042930177,0.999359382642) (1.272791724003,0.999450899407)
    (1.272524767737,0.999542416173) (1.272237975061,0.999633932938) (1.271924825130,0.999725449704) (1.271755036380,0.999771208086)
    (1.271573458069,0.999816966469) (1.271376213527,0.999862724852) (1.271156251794,0.999908483235) (1.271033453712,0.999931362426)
    (1.270897150352,0.999954241617) (1.270737577419,0.999977120809) (1.270639941156,0.999988560404) (1.270580510010,0.999994280202)
    (1.270544484094,0.999997140101) (1.270522719050,0.999998570051) (1.270489947667,1.000000000000)
}

\def\ProfileDataFour{%
    (0.000000000000,-1.000000000000) (0.159783570902,-0.786660098391) (0.239665374879,-0.679990147586) (0.279597238360,-0.626655172184)
    (0.319518038449,-0.573320196782) (0.359422386917,-0.519985221379) (0.399303394174,-0.466650245977) (0.439152498873,-0.413315270575)
    (0.459061914918,-0.386647782874) (0.478959304739,-0.359980295173) (0.498843056488,-0.333312807471) (0.518711424446,-0.306645319770)
    (0.538562524432,-0.279977832069) (0.558394329206,-0.253310344368) (0.578204663868,-0.226642856667) (0.597991201165,-0.199975368966)
    (0.617751456667,-0.173307881265) (0.637482783734,-0.146640393564) (0.657182368190,-0.119972905862) (0.676847222600,-0.093305418161)
    (0.696474180047,-0.066637930460) (0.706272407000,-0.053304186610) (0.716059887277,-0.039970442759) (0.725836171354,-0.026636698908)
    (0.735600797054,-0.013302955058) (0.745353289254,0.000030788793) (0.755093159575,0.013364532643) (0.764819906041,0.026698276494)
    (0.774533012729,0.040032020344) (0.784231949380,0.053365764195) (0.793916170994,0.066699508046) (0.803585117383,0.080033251896)
    (0.813238212703,0.093366995747) (0.822874864946,0.106700739597) (0.832494465390,0.120034483448) (0.842096388011,0.133368227298)
    (0.851679988853,0.146701971149) (0.861244605342,0.160035715000) (0.870789555549,0.173369458850) (0.880314137400,0.186703202701)
    (0.889817627813,0.200036946551) (0.899299281776,0.213370690402) (0.908758331344,0.226704434252) (0.918193984555,0.240038178103)
    (0.927605424261,0.253371921954) (0.936991806852,0.266705665804) (0.946352260889,0.280039409655) (0.955685885602,0.293373153505)
    (0.964991749272,0.306706897356) (0.974268887470,0.320040641206) (0.983516301133,0.333374385057) (0.992732954482,0.346708128908)
    (1.001917772738,0.360041872758) (1.011069639635,0.373375616609) (1.020187394700,0.386709360459) (1.029269830272,0.400043104310)
    (1.038315688235,0.413376848160) (1.047323656429,0.426710592011) (1.056292364699,0.440044335862) (1.060761551267,0.446711207787)
    (1.065220380537,0.453378079712) (1.069668662913,0.460044951637) (1.074106204276,0.466711823563) (1.078532805798,0.473378695488)
    (1.082948263760,0.480045567413) (1.087352369347,0.486712439339) (1.091744908438,0.493379311264) (1.096125661388,0.500046183189)
    (1.100494402789,0.506713055114) (1.104850901222,0.513379927040) (1.109194918998,0.520046798965) (1.113526211874,0.526713670890)
    (1.117844528766,0.533380542816) (1.122149611427,0.540047414741) (1.126441194126,0.546714286666) (1.130719003287,0.553381158591)
    (1.134982757120,0.560048030517) (1.139232165222,0.566714902442) (1.143466928150,0.573381774367) (1.147686736973,0.580048646293)
    (1.151891272790,0.586715518218) (1.156080206212,0.593382390143) (1.160253196812,0.600049262068) (1.164409892541,0.606716133994)
    (1.168549929092,0.613383005919) (1.172672929226,0.620049877844) (1.176778502043,0.626716749770) (1.180866242207,0.633383621695)
    (1.184935729098,0.640050493620) (1.188986525911,0.646717365545) (1.193018178678,0.653384237471) (1.197030215208,0.660051109396)
    (1.201022143947,0.666717981321) (1.204993452737,0.673384853247) (1.208943607468,0.680051725172) (1.212872050617,0.686718597097)
    (1.216778199653,0.693385469022) (1.220661445295,0.700052340948) (1.224521149611,0.706719212873) (1.227160141490,0.711301725172)
    (1.229787469763,0.715884237471) (1.232402900592,0.720466749770) (1.235006191989,0.725049262068) (1.237597093377,0.729631774367)
    (1.240175345126,0.734214286666) (1.242740678058,0.738796798965) (1.245292812904,0.743379311264) (1.247831459735,0.747961823563)
    (1.250356317342,0.752544335862) (1.252867072560,0.757126848160) (1.255363399560,0.761709360459) (1.257844959058,0.766291872758)
    (1.260311397481,0.770874385057) (1.262762346048,0.775456897356) (1.265197419780,0.780039409655) (1.267616216423,0.784621921954)
    (1.270018315267,0.789204434252) (1.272403275871,0.793786946551) (1.274770636657,0.798369458850) (1.277119913372,0.802951971149)
    (1.279450597410,0.807534483448) (1.281762153948,0.812116995747) (1.284054019908,0.816699508046) (1.286325601697,0.821282020344)
    (1.288576272702,0.825864532643) (1.290805370513,0.830447044942) (1.293012193829,0.835029557241) (1.295195999000,0.839612069540)
    (1.297355996154,0.844194581839) (1.299491344852,0.848777094138) (1.301601149181,0.853359606436) (1.303684452202,0.857942118735)
    (1.305740229654,0.862524631034) (1.307767382764,0.867107143333) (1.309764730021,0.871689655632) (1.311730997707,0.876272167931)
    (1.313664808962,0.880854680230) (1.315564671061,0.885437192529) (1.317428960567,0.890019704827) (1.319255905859,0.894602217126)
    (1.320154775267,0.896893473276) (1.321043566472,0.899184729425) (1.321922006495,0.901475985575) (1.322789808467,0.903767241724)
    (1.323646670567,0.906058497873) (1.324492274857,0.908349754023) (1.325326285994,0.910641010172) (1.326148349797,0.912932266322)
    (1.326958091663,0.915223522471) (1.327755114802,0.917514778621) (1.328538998259,0.919806034770) (1.329309294706,0.922097290919)
    (1.330065527957,0.924388547069) (1.330807190167,0.926679803218) (1.331533738661,0.928971059368) (1.332244592333,0.931262315517)
    (1.332939127546,0.933553571667) (1.333616673419,0.935844827816) (1.334276506430,0.938136083965) (1.334917844151,0.940427340115)
    (1.335539837990,0.942718596264) (1.336141564691,0.945009852414) (1.336722016349,0.947301108563) (1.337280088599,0.949592364713)
    (1.337814566523,0.951883620862) (1.338324107743,0.954174877011) (1.338807221929,0.956466133161) (1.339262245755,0.958757389310)
    (1.339687311977,0.961048645460) (1.340080310804,0.963339901609) (1.340438841066,0.965631157759) (1.340760147566,0.967922413908)
    (1.340905864524,0.969068041983) (1.341041039480,0.970213670057) (1.341165187789,0.971359298132) (1.341277782081,0.972504926207)
    (1.341378246536,0.973650554282) (1.341465950122,0.974796182356) (1.341540198522,0.975941810431) (1.341600224447,0.977087438506)
    (1.341645175899,0.978233066580) (1.341674101817,0.979378694655) (1.341685934362,0.980524322730) (1.341679466779,0.981669950805)
    (1.341653325410,0.982815578879) (1.341605933788,0.983961206954) (1.341535465860,0.985106835029) (1.341439783918,0.986252463103)
    (1.341316354537,0.987398091178) (1.341162131959,0.988543719253) (1.340973391688,0.989689347328) (1.340864669858,0.990262161365)
    (1.340745484964,0.990834975402) (1.340615049941,0.991407789440) (1.340472461424,0.991980603477) (1.340316672914,0.992553417514)
    (1.340146459168,0.993126231552) (1.339960367975,0.993699045589) (1.339756653236,0.994271859626) (1.339533179488,0.994844673664)
    (1.339287281010,0.995417487701) (1.339015545271,0.995990301738) (1.338713462883,0.996563115776) (1.338549192823,0.996849522795)
    (1.338374824069,0.997135929813) (1.338189131399,0.997422336832) (1.337990585236,0.997708743851) (1.337777227341,0.997995150869)
    (1.337546469844,0.998281557888) (1.337294748752,0.998567964907) (1.337016873197,0.998854371925) (1.336704647921,0.999140778944)
    (1.336531515172,0.999283982453) (1.336343373407,0.999427185963) (1.336135517206,0.999570389472) (1.335899506123,0.999713592981)
    (1.335765874650,0.999785194736) (1.335616078194,0.999856796491) (1.335439054918,0.999928398245) (1.335330266804,0.999964199123)
    (1.335264228131,0.999982099561) (1.335224521113,0.999991049781) (1.335200826673,0.999995524890) (1.335166767080,1.000000000000)
}

\def\ProfileDataFive{%
    (0.000000000000,-1.000000000000) (0.322239124185,-0.560336990017) (0.402778672919,-0.450421237521) (0.443036343966,-0.395463361273)
    (0.483280718177,-0.340505485025) (0.523506308857,-0.285547608777) (0.563706094631,-0.230589732529) (0.603871263122,-0.175631856282)
    (0.623937554970,-0.148152918158) (0.643990935690,-0.120673980034) (0.664029673978,-0.093195041910) (0.684051870981,-0.065716103786)
    (0.704055449513,-0.038237165662) (0.724038142681,-0.010758227538) (0.743997481780,0.016720710586) (0.763930783352,0.044199648710)
    (0.783835135205,0.071678586834) (0.803707381213,0.099157524958) (0.823544104626,0.126636463082) (0.833448001852,0.140375932144)
    (0.843341609578,0.154115401206) (0.853224418821,0.167854870268) (0.863095900433,0.181594339330) (0.872955504300,0.195333808392)
    (0.882802658493,0.209073277454) (0.892636768354,0.222812746516) (0.902457215518,0.236552215578) (0.912263356865,0.250291684640)
    (0.922054523380,0.264031153702) (0.931830018941,0.277770622764) (0.941589118995,0.291510091826) (0.951331069126,0.305249560888)
    (0.961055083513,0.318989029950) (0.970760343239,0.332728499012) (0.980445994463,0.346467968074) (0.990111146419,0.360207437136)
    (0.999754869231,0.373946906197) (1.009376191525,0.387686375259) (1.018974097808,0.401425844321) (1.028547525586,0.415165313383)
    (1.038095362195,0.428904782445) (1.047616441301,0.442644251507) (1.057109539035,0.456383720569) (1.066573369704,0.470123189631)
    (1.076006581033,0.483862658693) (1.085407748862,0.497602127755) (1.094775371230,0.511341596817) (1.104107861753,0.525081065879)
    (1.113403542180,0.538820534941) (1.122660634026,0.552560004003) (1.127274123741,0.559429738534) (1.131877249104,0.566299473065)
    (1.136469755192,0.573169207596) (1.141051378805,0.580038942127) (1.145621848049,0.586908676658) (1.150180881884,0.593778411189)
    (1.154728189644,0.600648145720) (1.159263470522,0.607517880251) (1.163786413013,0.614387614782) (1.168296694325,0.621257349313)
    (1.172793979734,0.628127083844) (1.177277921895,0.634996818375) (1.181748160102,0.641866552906) (1.186204319477,0.648736287437)
    (1.190646010111,0.655606021968) (1.195072826113,0.662475756499) (1.199484344592,0.669345491030) (1.203880124548,0.676215225561)
    (1.208259705661,0.683084960092) (1.212622606974,0.689954694623) (1.216968325450,0.696824429154) (1.221296334401,0.703694163685)
    (1.225606081753,0.710563898216) (1.229896988152,0.717433632747) (1.234168444865,0.724303367278) (1.238419811474,0.731173101809)
    (1.242650413317,0.738042836340) (1.246859538654,0.744912570871) (1.251046435509,0.751782305402) (1.255210308153,0.758652039933)
    (1.259754209014,0.766194163685) (1.264268151564,0.773736287437) (1.268750863314,0.781278411189) (1.273200978913,0.788820534941)
    (1.277617029654,0.796362658693) (1.281997431348,0.803904782445) (1.286340470252,0.811446906197) (1.288497403704,0.815217968074)
    (1.290644286651,0.818989029950) (1.292780860660,0.822760091826) (1.294906855579,0.826531153702) (1.297021988732,0.830302215578)
    (1.299125964036,0.834073277454) (1.301218471043,0.837844339330) (1.303299183876,0.841615401206) (1.305367760074,0.845386463082)
    (1.307423839304,0.849157524958) (1.309467041954,0.852928586834) (1.311496967564,0.856699648710) (1.313513193092,0.860470710586)
    (1.315515270979,0.864241772462) (1.317502726996,0.868012834338) (1.319475057827,0.871783896214) (1.321431728366,0.875554958090)
    (1.323372168657,0.879326019966) (1.325295770451,0.883097081842) (1.327201883289,0.886868143718) (1.329089810042,0.890639205595)
    (1.330958801814,0.894410267471) (1.332808052077,0.898181329347) (1.334636689903,0.901952391223) (1.336443772110,0.905723453099)
    (1.338228274093,0.909494514975) (1.339989079056,0.913265576851) (1.341724965290,0.917036638727) (1.343434591032,0.920807700603)
    (1.345116476307,0.924578762479) (1.346768980966,0.928349824355) (1.348390277892,0.932120886231) (1.349978319961,0.935891948107)
    (1.351530798861,0.939663009983) (1.353045093111,0.943434071859) (1.353787000157,0.945319602797) (1.354518201541,0.947205133735)
    (1.355238248627,0.949090664673) (1.355946656794,0.950976195611) (1.356642901040,0.952861726549) (1.357326410858,0.954747257487)
    (1.357996564212,0.956632788425) (1.358652680420,0.958518319363) (1.359294011680,0.960403850301) (1.359919732917,0.962289381239)
    (1.360528929490,0.964174912178) (1.361120582196,0.966060443116) (1.361693548783,0.967945974054) (1.362246540900,0.969831504992)
    (1.362778094996,0.971717035930) (1.363286535081,0.973602566868) (1.363769924300,0.975488097806) (1.364226000830,0.977373628744)
    (1.364652091277,0.979259159682) (1.365044990849,0.981144690620) (1.365400792873,0.983030221558) (1.365563310227,0.983972987027)
    (1.365714638038,0.984915752496) (1.365853957391,0.985858517965) (1.365980330371,0.986801283434) (1.366092672927,0.987744048903)
    (1.366189718941,0.988686814372) (1.366269971693,0.989629579841) (1.366331636687,0.990572345310) (1.366372526089,0.991515110779)
    (1.366389918139,0.992457876248) (1.366380341848,0.993400641717) (1.366339230305,0.994343407186) (1.366304984100,0.994814789920)
    (1.366260325614,0.995286172655) (1.366204028550,0.995757555389) (1.366134567200,0.996228938124) (1.366049995684,0.996700320858)
    (1.365947753479,0.997171703593) (1.365824331613,0.997643086327) (1.365674649026,0.998114469062) (1.365490740721,0.998585851796)
    (1.365381958654,0.998821543164) (1.365258451566,0.999057234531) (1.365115751032,0.999292925898) (1.364945915321,0.999528617265)
    (1.364846101271,0.999646462949) (1.364731064376,0.999764308633) (1.364590771188,0.999882154316) (1.364502181916,0.999941077158)
    (1.364447537931,0.999970538579) (1.364414407981,0.999985269290) (1.364366421013,1.000000000000)
}

\begin{figure}[t]
\centering
\begin{tikzpicture}[
  x=3.25cm,y=3.25cm,
  >=Latex,
  line cap=round,
  line join=round,
  curve1/.style={curveone,very thick},
  curve2/.style={curvetwo,very thick},
  curve3/.style={curvethree,very thick},
  curve4/.style={curvefour,very thick},
  curve5/.style={curvefive,very thick},
  conecurve/.style={black!65,densely dashed,thick}
]
  \draw[->,black!65] (-1.52,0)--(1.53,0) node[right] {$x$};
  \draw[->,black!65] (0,-1.10)--(0,1.12) node[above] {$t$};
  \foreach \x in {-1,1} {
    \draw[black!65] (\x,-.018)--(\x,.018);
    \node[below=2pt] at (\x,0) {$\x$};
  }
  \foreach \y in {-1,1} {
    \draw[black!65] (-.018,\y)--(.018,\y);
    \node[left=2pt] at (0,\y) {$\y$};
  }
  \node[below left=2pt] at (0,0) {$0$};

  \draw[conecurve]
    (-1.414213562373,1)--(0,-1)--(1.414213562373,1)--cycle;

  \draw[curve5] plot coordinates {\ProfileDataFive};
  \begin{scope}[xscale=-1]\draw[curve5] plot coordinates {\ProfileDataFive};\end{scope}
  \draw[curve5] (-1.364366421013,1)--(1.364366421013,1);

  \draw[curve4] plot coordinates {\ProfileDataFour};
  \begin{scope}[xscale=-1]\draw[curve4] plot coordinates {\ProfileDataFour};\end{scope}
  \draw[curve4] (-1.335166767080,1)--(1.335166767080,1);

  \draw[curve3] plot coordinates {\ProfileDataThree};
  \begin{scope}[xscale=-1]\draw[curve3] plot coordinates {\ProfileDataThree};\end{scope}
  \draw[curve3] (-1.270489947667,1)--(1.270489947667,1);

  \draw[curve2] plot coordinates {\ProfileDataTwo};
  \begin{scope}[xscale=-1]\draw[curve2] plot coordinates {\ProfileDataTwo};\end{scope}
  \draw[curve2] (-1.078721427414,1)--(1.078721427414,1);

  \draw[curve1] (0,0) circle[radius=1];

  \draw[curve1] (1.60,.74)--(1.82,.74);
  \node[anchor=west] at (1.88,.74) {$n=1$};
  \draw[curve2] (1.60,.49)--(1.82,.49);
  \node[anchor=west] at (1.88,.49) {$n=2$};
  \draw[curve3] (1.60,.24)--(1.82,.24);
  \node[anchor=west] at (1.88,.24) {$n=3$};
  \draw[curve4] (1.60,-.01)--(1.82,-.01);
  \node[anchor=west] at (1.88,-.01) {$n=4$};
  \draw[curve5] (1.60,-.26)--(1.82,-.26);
  \node[anchor=west] at (1.88,-.26) {$n=5$};
  \draw[conecurve] (1.60,-.51)--(1.82,-.51);
  \node[anchor=west] at (1.88,-.51) {balanced cone};
\end{tikzpicture}
\caption{Approximate extremal profiles for $n=1,\ldots,5$.
The dashed triangle is the balanced cone
$r_{\rm cone}(t)=(1+t)/\sqrt2$. }
\label{fig:numerical-optimal-profiles}
\end{figure}
\endgroup

%% file: sharp_santalo_optimal_paths_tikz_fragment_2026-09-01.tex
\begingroup
\definecolor{curveone}{RGB}{31,119,180}
\definecolor{curvetwo}{RGB}{230,126,34}
\definecolor{curvethree}{RGB}{44,160,44}
\definecolor{curvefour}{RGB}{214,39,40}
\definecolor{curvefive}{RGB}{148,103,189}

\def\PathDataTwo{%
    (-1.000000000000,-1.000000000000) (-0.990445273387,-0.999999785280) (-0.980929192993,-0.999998292911) (-0.976186015552,-0.999996676515)
    (-0.971452934073,-0.999994275739) (-0.966730082831,-0.999990940239) (-0.962017591067,-0.999986522015) (-0.957315582996,-0.999980875470)
    (-0.952624177836,-0.999973857457) (-0.947943489835,-0.999965327333) (-0.943273628305,-0.999955146996) (-0.938614697667,-0.999943180925)
    (-0.933966797490,-0.999929296216) (-0.929330022545,-0.999913362605) (-0.924704462861,-0.999895252500) (-0.920090203777,-0.999874840994)
    (-0.915487326013,-0.999852005888) (-0.910895905730,-0.999826627700) (-0.906316014604,-0.999798589674) (-0.901747719896,-0.999767777787)
    (-0.897191084527,-0.999734080748) (-0.892646167159,-0.999697389999) (-0.888113022272,-0.999657599707) (-0.883591700249,-0.999614606759)
    (-0.879082247458,-0.999568310751) (-0.874584706340,-0.999518613970) (-0.870099115494,-0.999465421386) (-0.865625509764,-0.999408640626)
    (-0.861163920334,-0.999348181954) (-0.856714374809,-0.999283958253) (-0.852276897312,-0.999215884994) (-0.847851508572,-0.999143880214)
    (-0.843438226015,-0.999067864483) (-0.839037063855,-0.998987760875) (-0.834648033184,-0.998903494938) (-0.830271142063,-0.998814994660)
    (-0.825906395611,-0.998722190432) (-0.821553796095,-0.998625015017) (-0.817213343020,-0.998523403508) (-0.812885033212,-0.998417293298)
    (-0.808568860913,-0.998306624032) (-0.804264817857,-0.998191337579) (-0.799972893365,-0.998071377982) (-0.795693074421,-0.997946691426)
    (-0.791425345759,-0.997817226194) (-0.787169689943,-0.997682932626) (-0.782926087446,-0.997543763079) (-0.778694516732,-0.997399671884)
    (-0.774474954328,-0.997250615309) (-0.770267374904,-0.997096551512) (-0.766071751346,-0.996937440504) (-0.761888054828,-0.996773244104)
    (-0.757716254883,-0.996603925901) (-0.753556319475,-0.996429451213) (-0.749408215063,-0.996249787043) (-0.745271906672,-0.996064902041)
    (-0.741147357954,-0.995874766463) (-0.737034531252,-0.995679352132) (-0.732933387664,-0.995478632399) (-0.728843887101,-0.995272582099)
    (-0.724765988344,-0.995061177521) (-0.720699649103,-0.994844396362) (-0.716644826074,-0.994622217694) (-0.712601474987,-0.994394621924)
    (-0.708569550663,-0.994161590759) (-0.704549007063,-0.993923107172) (-0.700539797337,-0.993679155362) (-0.696541873873,-0.993429720721)
    (-0.692555188338,-0.993174789801) (-0.688579691731,-0.992914350280) (-0.684615334418,-0.992648390926) (-0.680662066180,-0.992376901570)
    (-0.676719836250,-0.992099873068) (-0.672788593353,-0.991817297275) (-0.668868285743,-0.991529167010) (-0.664958861241,-0.991235476030)
    (-0.661060267271,-0.990936218999) (-0.657172450890,-0.990631391457) (-0.653295358825,-0.990320989796) (-0.649428937501,-0.990005011231)
    (-0.645573133076,-0.989683453771) (-0.641727891465,-0.989356316195) (-0.637893158372,-0.989023598028) (-0.634068879315,-0.988685299512)
    (-0.630254999655,-0.988341421584) (-0.626451464616,-0.987991965852) (-0.622658219315,-0.987636934572) (-0.618875208781,-0.987276330624)
    (-0.615102377978,-0.986910157492) (-0.611339671828,-0.986538419241) (-0.607587035229,-0.986161120496) (-0.600111750281,-0.985389862709)
    (-0.592676082589,-0.984596431582) (-0.585279592412,-0.983780882285) (-0.577921841323,-0.982943277128) (-0.570602392683,-0.982083685013)
    (-0.563320812064,-0.981202180921) (-0.556076667637,-0.980298845422) (-0.548869530517,-0.979373764219) (-0.541698975088,-0.978427027710)
    (-0.534564579274,-0.977458730582) (-0.527465924802,-0.976468971432) (-0.520402597423,-0.975457852398) (-0.513374187114,-0.974425478825)
    (-0.506380288253,-0.973371958947) (-0.499420499770,-0.972297403587) (-0.492494425289,-0.971201925875) (-0.485601673234,-0.970085640990)
    (-0.478741856931,-0.968948665910) (-0.471914594690,-0.967791119186) (-0.465119509869,-0.966613120724) (-0.458356230931,-0.965414791584)
    (-0.451624391482,-0.964196253797) (-0.444923630301,-0.962957630185) (-0.438253591360,-0.961699044198) (-0.431613923835,-0.960420619767)
    (-0.425004282104,-0.959122481157) (-0.418424325745,-0.957804752838) (-0.411873719517,-0.956467559364) (-0.405352133341,-0.955111025255)
    (-0.398859242279,-0.953735274897) (-0.392394726495,-0.952340432443) (-0.385958271223,-0.950926621720) (-0.379549566728,-0.949493966148)
    (-0.373168308258,-0.948042588663) (-0.366814196001,-0.946572611645) (-0.360486935028,-0.945084156854) (-0.354186235249,-0.943577345368)
    (-0.347911811348,-0.942052297530) (-0.341663382734,-0.940509132896) (-0.335440673476,-0.938947970189) (-0.329243412246,-0.937368927258)
    (-0.323071332255,-0.935772121037) (-0.316924171192,-0.934157667516) (-0.310801671160,-0.932525681703) (-0.304703578610,-0.930876277598)
    (-0.298629644281,-0.929209568170) (-0.292579623128,-0.927525665333) (-0.286553274265,-0.925824679924) (-0.280550360893,-0.924106721686)
    (-0.274570650239,-0.922371899257) (-0.268613913488,-0.920620320148) (-0.262679925722,-0.918852090740) (-0.256768465852,-0.917067316267)
    (-0.250879316557,-0.915266100813) (-0.245012264217,-0.913448547303) (-0.239167098855,-0.911614757499) (-0.233343614070,-0.909764831994)
    (-0.227541606976,-0.907898870213) (-0.221760878145,-0.906016970408) (-0.216001231541,-0.904119229659) (-0.210262474464,-0.902205743874)
    (-0.204544417492,-0.900276607793) (-0.198846874418,-0.898331914986) (-0.193169662198,-0.896371757859) (-0.187512600893,-0.894396227657)
    (-0.181875513612,-0.892405414469) (-0.176258226460,-0.890399407232) (-0.170660568481,-0.888378293738) (-0.165082371607,-0.886342160637)
    (-0.159523470607,-0.884291093447) (-0.153983703035,-0.882225176558) (-0.148462909177,-0.880144493244) (-0.142960932005,-0.878049125663)
    (-0.137477617130,-0.875939154873) (-0.132012812747,-0.873814660834) (-0.126566369597,-0.871675722419) (-0.121138140916,-0.869522417425)
    (-0.115727982390,-0.867354822576) (-0.110335752114,-0.865173013539) (-0.104961310544,-0.862977064925) (-0.099604520460,-0.860767050308)
    (-0.094265246919,-0.858543042226) (-0.088943357217,-0.856305112194) (-0.083638720850,-0.854053330714) (-0.078351209470,-0.851787767283)
    (-0.073080696853,-0.849508490405) (-0.067827058855,-0.847215567597) (-0.062590173380,-0.844909065404) (-0.057369920342,-0.842589049401)
    (-0.052166181626,-0.840255584212) (-0.046978841059,-0.837908733511) (-0.041807784374,-0.835548560038) (-0.036652899174,-0.833175125604)
    (-0.031514074903,-0.830788491105) (-0.026391202810,-0.828388716526) (-0.021284175922,-0.825975860957) (-0.016192889012,-0.823549982596)
    (-0.011117238565,-0.821111138763) (-0.006057122755,-0.818659385906) (-0.001012441411,-0.816194779613) (0.004016904008,-0.813717374619)
    (0.009031010443,-0.811227224816) (0.014029973259,-0.808724383259) (0.019013886277,-0.806208902182) (0.023982841792,-0.803680832998)
    (0.028936930605,-0.801140226313) (0.033876242045,-0.798587131933) (0.038800863995,-0.796021598874) (0.043710882913,-0.793443675366)
    (0.048606383857,-0.790853408865) (0.053487450510,-0.788250846061) (0.058354165196,-0.785636032883) (0.063206608910,-0.783009014512)
    (0.068044861333,-0.780369835382) (0.072869000856,-0.777718539193) (0.077679104599,-0.775055168917) (0.082475248433,-0.772379766804)
    (0.087257506999,-0.769692374390) (0.092025953724,-0.766993032506) (0.096780660844,-0.764281781282) (0.101521699423,-0.761558660158)
    (0.106249139364,-0.758823707884) (0.110963049437,-0.756076962535) (0.115663497287,-0.753318461512) (0.120350549456,-0.750548241550)
    (0.125024271400,-0.747766338724) (0.129684727500,-0.744972788457) (0.134331981084,-0.742167625523) (0.138966094438,-0.739350884057)
    (0.143587128826,-0.736522597556) (0.148195144499,-0.733682798890) (0.152790200712,-0.730831520304) (0.157372355742,-0.727968793424)
    (0.161941666895,-0.725094649266) (0.166498190526,-0.722209118236) (0.171041982047,-0.719312230139) (0.175573095944,-0.716404014181)
    (0.180091585789,-0.713484498980) (0.184597504251,-0.710553712563) (0.189090903108,-0.707611682377) (0.193571833263,-0.704658435291)
    (0.198040344749,-0.701693997601) (0.202496486748,-0.698718395034) (0.206940307596,-0.695731652754) (0.211371854798,-0.692733795366)
    (0.215791175035,-0.689724846918) (0.220198314179,-0.686704830906) (0.224593317300,-0.683673770281) (0.228976228677,-0.680631687450)
    (0.233347091807,-0.677578604279) (0.237705949418,-0.674514542099) (0.242052843473,-0.671439521709) (0.246387815183,-0.668353563379)
    (0.250710905016,-0.665256686854) (0.255022152705,-0.662148911356) (0.259321597255,-0.659030255589) (0.263609276953,-0.655900737742)
    (0.267885229378,-0.652760375490) (0.272149491406,-0.649609186000) (0.276402099218,-0.646447185932) (0.280643088311,-0.643274391441)
    (0.284872493502,-0.640090818181) (0.289090348936,-0.636896481309) (0.293296688095,-0.633691395485) (0.297491543804,-0.630475574874)
    (0.301674948235,-0.627249033153) (0.305846932920,-0.624011783506) (0.310007528751,-0.620763838633) (0.314156765992,-0.617505210749)
    (0.318294674279,-0.614235911585) (0.322421282633,-0.610955952392) (0.326536619460,-0.607665343941) (0.330640712561,-0.604364096527)
    (0.334733589135,-0.601052219970) (0.338815275785,-0.597729723614) (0.342885798526,-0.594396616332) (0.346945182786,-0.591052906524)
    (0.350993453414,-0.587698602124) (0.355030634683,-0.584333710593) (0.359056750299,-0.580958238928) (0.363071823399,-0.577572193658)
    (0.367075876564,-0.574175580848) (0.371068931813,-0.570768406098) (0.375051010619,-0.567350674545) (0.379022133903,-0.563922390862)
    (0.382982322046,-0.560483559261) (0.386931594888,-0.557034183494) (0.390869971732,-0.553574266850) (0.394797471353,-0.550103812159)
    (0.398714111995,-0.546622821789) (0.402619911379,-0.543131297651) (0.406514886703,-0.539629241195) (0.410399054650,-0.536116653410)
    (0.414272431388,-0.532593534829) (0.418135032572,-0.529059885523) (0.421986873351,-0.525515705105) (0.425827968367,-0.521960992727)
    (0.429658331761,-0.518395747081) (0.433477977172,-0.514819966400) (0.437286917746,-0.511233648454) (0.441085166130,-0.507636790555)
    (0.444872734481,-0.504029389549) (0.448649634468,-0.500411441823) (0.452415877269,-0.496782943298) (0.456171473580,-0.493143889431)
    (0.459916433612,-0.489494275216) (0.463650767095,-0.485834095178) (0.467374483279,-0.482163343376) (0.471087590939,-0.478482013400)
    (0.474790098372,-0.474790098372) (0.478482013400,-0.471087590939) (0.482163343376,-0.467374483279) (0.485834095178,-0.463650767095)
    (0.489494275216,-0.459916433612) (0.493143889431,-0.456171473580) (0.496782943298,-0.452415877269) (0.500411441823,-0.448649634468)
    (0.504029389549,-0.444872734481) (0.507636790555,-0.441085166130) (0.511233648454,-0.437286917746) (0.514819966400,-0.433477977172)
    (0.518395747081,-0.429658331761) (0.521960992727,-0.425827968367) (0.525515705105,-0.421986873351) (0.529059885523,-0.418135032572)
    (0.532593534829,-0.414272431388) (0.536116653410,-0.410399054650) (0.539629241195,-0.406514886703) (0.543131297651,-0.402619911379)
    (0.546622821789,-0.398714111995) (0.550103812159,-0.394797471353) (0.553574266850,-0.390869971732) (0.557034183494,-0.386931594888)
    (0.560483559261,-0.382982322046) (0.563922390862,-0.379022133903) (0.567350674545,-0.375051010619) (0.570768406098,-0.371068931813)
    (0.574175580848,-0.367075876564) (0.577572193658,-0.363071823399) (0.580958238928,-0.359056750299) (0.584333710593,-0.355030634683)
    (0.587698602124,-0.350993453414) (0.591052906524,-0.346945182786) (0.594396616332,-0.342885798526) (0.597729723614,-0.338815275785)
    (0.601052219970,-0.334733589135) (0.604364096527,-0.330640712561) (0.607665343941,-0.326536619460) (0.610955952392,-0.322421282633)
    (0.614235911585,-0.318294674279) (0.617505210749,-0.314156765992) (0.620763838633,-0.310007528751) (0.624011783506,-0.305846932920)
    (0.627249033153,-0.301674948235) (0.630475574874,-0.297491543804) (0.633691395485,-0.293296688095) (0.636896481309,-0.289090348936)
    (0.640090818181,-0.284872493502) (0.643274391441,-0.280643088311) (0.646447185932,-0.276402099218) (0.649609186000,-0.272149491406)
    (0.652760375490,-0.267885229378) (0.655900737742,-0.263609276953) (0.659030255589,-0.259321597255) (0.662148911356,-0.255022152705)
    (0.665256686854,-0.250710905016) (0.668353563379,-0.246387815183) (0.671439521709,-0.242052843473) (0.674514542099,-0.237705949418)
    (0.677578604279,-0.233347091807) (0.680631687450,-0.228976228677) (0.683673770281,-0.224593317300) (0.686704830906,-0.220198314179)
    (0.689724846918,-0.215791175035) (0.692733795366,-0.211371854798) (0.695731652754,-0.206940307596) (0.698718395034,-0.202496486748)
    (0.701693997601,-0.198040344749) (0.704658435291,-0.193571833263) (0.707611682377,-0.189090903108) (0.710553712563,-0.184597504251)
    (0.713484498980,-0.180091585789) (0.716404014181,-0.175573095944) (0.719312230139,-0.171041982047) (0.722209118236,-0.166498190526)
    (0.725094649266,-0.161941666895) (0.727968793424,-0.157372355742) (0.730831520304,-0.152790200712) (0.733682798890,-0.148195144499)
    (0.736522597556,-0.143587128826) (0.739350884057,-0.138966094438) (0.742167625523,-0.134331981084) (0.744972788457,-0.129684727500)
    (0.747766338724,-0.125024271400) (0.750548241550,-0.120350549456) (0.753318461512,-0.115663497287) (0.756076962535,-0.110963049437)
    (0.758823707884,-0.106249139364) (0.761558660158,-0.101521699423) (0.764281781282,-0.096780660844) (0.766993032506,-0.092025953724)
    (0.769692374390,-0.087257506999) (0.772379766804,-0.082475248433) (0.775055168917,-0.077679104599) (0.777718539193,-0.072869000856)
    (0.780369835382,-0.068044861333) (0.783009014512,-0.063206608910) (0.785636032883,-0.058354165196) (0.788250846061,-0.053487450510)
    (0.790853408865,-0.048606383857) (0.793443675366,-0.043710882913) (0.796021598874,-0.038800863995) (0.798587131933,-0.033876242045)
    (0.801140226313,-0.028936930605) (0.803680832998,-0.023982841792) (0.806208902182,-0.019013886277) (0.808724383259,-0.014029973259)
    (0.811227224816,-0.009031010443) (0.813717374619,-0.004016904008) (0.816194779613,0.001012441411) (0.818659385906,0.006057122755)
    (0.821111138763,0.011117238565) (0.823549982596,0.016192889012) (0.825975860957,0.021284175922) (0.828388716526,0.026391202810)
    (0.830788491105,0.031514074903) (0.833175125604,0.036652899174) (0.835548560038,0.041807784374) (0.837908733511,0.046978841059)
    (0.840255584212,0.052166181626) (0.842589049401,0.057369920342) (0.844909065404,0.062590173380) (0.847215567597,0.067827058855)
    (0.849508490405,0.073080696853) (0.851787767283,0.078351209470) (0.854053330714,0.083638720850) (0.856305112194,0.088943357217)
    (0.858543042226,0.094265246919) (0.860767050308,0.099604520460) (0.862977064925,0.104961310544) (0.865173013539,0.110335752114)
    (0.867354822576,0.115727982390) (0.869522417425,0.121138140916) (0.871675722419,0.126566369597) (0.873814660834,0.132012812747)
    (0.875939154873,0.137477617130) (0.878049125663,0.142960932005) (0.880144493244,0.148462909177) (0.882225176558,0.153983703035)
    (0.884291093447,0.159523470607) (0.886342160637,0.165082371607) (0.888378293738,0.170660568481) (0.890399407232,0.176258226460)
    (0.892405414469,0.181875513612) (0.894396227657,0.187512600893) (0.896371757859,0.193169662198) (0.898331914986,0.198846874418)
    (0.900276607793,0.204544417492) (0.902205743874,0.210262474464) (0.904119229659,0.216001231541) (0.906016970408,0.221760878145)
    (0.907898870213,0.227541606976) (0.909764831994,0.233343614070) (0.911614757499,0.239167098855) (0.913448547303,0.245012264217)
    (0.915266100813,0.250879316557) (0.917067316267,0.256768465852) (0.918852090740,0.262679925722) (0.920620320148,0.268613913488)
    (0.922371899257,0.274570650239) (0.924106721686,0.280550360893) (0.925824679924,0.286553274265) (0.927525665333,0.292579623128)
    (0.929209568170,0.298629644281) (0.930876277598,0.304703578610) (0.932525681703,0.310801671160) (0.934157667516,0.316924171192)
    (0.935772121037,0.323071332255) (0.937368927258,0.329243412246) (0.938947970189,0.335440673476) (0.940509132896,0.341663382734)
    (0.942052297530,0.347911811348) (0.943577345368,0.354186235249) (0.945084156854,0.360486935028) (0.946572611645,0.366814196001)
    (0.948042588663,0.373168308258) (0.949493966148,0.379549566728) (0.950926621720,0.385958271223) (0.952340432443,0.392394726495)
    (0.953735274897,0.398859242279) (0.955111025255,0.405352133341) (0.956467559364,0.411873719517) (0.957804752838,0.418424325745)
    (0.959122481157,0.425004282104) (0.960420619767,0.431613923835) (0.961699044198,0.438253591360) (0.962957630185,0.444923630301)
    (0.964196253797,0.451624391482) (0.965414791584,0.458356230931) (0.966613120724,0.465119509869) (0.967791119186,0.471914594690)
    (0.968948665910,0.478741856931) (0.970085640990,0.485601673234) (0.971201925875,0.492494425289) (0.972297403587,0.499420499770)
    (0.973371958947,0.506380288253) (0.974425478825,0.513374187114) (0.975457852398,0.520402597423) (0.976468971432,0.527465924802)
    (0.977458730582,0.534564579274) (0.978427027710,0.541698975088) (0.979373764219,0.548869530517) (0.980298845422,0.556076667637)
    (0.981202180921,0.563320812064) (0.982083685013,0.570602392683) (0.982943277128,0.577921841323) (0.983780882285,0.585279592412)
    (0.984596431582,0.592676082589) (0.985389862709,0.600111750281) (0.986161120496,0.607587035229) (0.986538419241,0.611339671828)
    (0.986910157492,0.615102377978) (0.987276330624,0.618875208781) (0.987636934572,0.622658219315) (0.987991965852,0.626451464616)
    (0.988341421584,0.630254999655) (0.988685299512,0.634068879315) (0.989023598028,0.637893158372) (0.989356316195,0.641727891465)
    (0.989683453771,0.645573133076) (0.990005011231,0.649428937501) (0.990320989796,0.653295358825) (0.990631391457,0.657172450890)
    (0.990936218999,0.661060267271) (0.991235476030,0.664958861241) (0.991529167010,0.668868285743) (0.991817297275,0.672788593353)
    (0.992099873068,0.676719836250) (0.992376901570,0.680662066180) (0.992648390926,0.684615334418) (0.992914350280,0.688579691731)
    (0.993174789801,0.692555188338) (0.993429720721,0.696541873873) (0.993679155362,0.700539797337) (0.993923107172,0.704549007063)
    (0.994161590759,0.708569550663) (0.994394621924,0.712601474987) (0.994622217694,0.716644826074) (0.994844396362,0.720699649103)
    (0.995061177521,0.724765988344) (0.995272582099,0.728843887101) (0.995478632399,0.732933387664) (0.995679352132,0.737034531252)
    (0.995874766463,0.741147357954) (0.996064902041,0.745271906672) (0.996249787043,0.749408215063) (0.996429451213,0.753556319475)
    (0.996603925901,0.757716254883) (0.996773244104,0.761888054828) (0.996937440504,0.766071751346) (0.997096551512,0.770267374904)
    (0.997250615309,0.774474954328) (0.997399671884,0.778694516732) (0.997543763079,0.782926087446) (0.997682932626,0.787169689943)
    (0.997817226194,0.791425345759) (0.997946691426,0.795693074421) (0.998071377982,0.799972893365) (0.998191337579,0.804264817857)
    (0.998306624032,0.808568860913) (0.998417293298,0.812885033212) (0.998523403508,0.817213343020) (0.998625015017,0.821553796095)
    (0.998722190432,0.825906395611) (0.998814994660,0.830271142063) (0.998903494938,0.834648033184) (0.998987760875,0.839037063855)
    (0.999067864483,0.843438226015) (0.999143880214,0.847851508572) (0.999215884994,0.852276897312) (0.999283958253,0.856714374809)
    (0.999348181954,0.861163920334) (0.999408640626,0.865625509764) (0.999465421386,0.870099115494) (0.999518613970,0.874584706340)
    (0.999568310751,0.879082247458) (0.999614606759,0.883591700249) (0.999657599707,0.888113022272) (0.999697389999,0.892646167159)
    (0.999734080748,0.897191084527) (0.999767777787,0.901747719896) (0.999798589674,0.906316014604) (0.999826627700,0.910895905730)
    (0.999852005888,0.915487326013) (0.999874840994,0.920090203777) (0.999895252500,0.924704462861) (0.999913362605,0.929330022545)
    (0.999929296216,0.933966797490) (0.999943180925,0.938614697667) (0.999955146996,0.943273628305) (0.999965327333,0.947943489835)
    (0.999973857457,0.952624177836) (0.999980875470,0.957315582996) (0.999986522015,0.962017591067) (0.999990940239,0.966730082831)
    (0.999994275739,0.971452934073) (0.999996676515,0.976186015552) (0.999998292911,0.980929192993) (0.999999785280,0.990445273387)
    (1.000000000000,1.000000000000)
}

\def\PathDataThree{%
    (-1.000000000000,-1.000000000000) (-0.994572457197,-0.999999999948) (-0.989155526138,-0.999999999178) (-0.983749208613,-0.999999995854)
    (-0.978353507546,-0.999999986948) (-0.972968426932,-0.999999968261) (-0.967593971788,-0.999999934444) (-0.962230148099,-0.999999879027)
    (-0.956876962763,-0.999999794437) (-0.951534423544,-0.999999672027) (-0.946202539022,-0.999999502093) (-0.940881318541,-0.999999273899)
    (-0.935570772165,-0.999998975705) (-0.930270910630,-0.999998594779) (-0.924981745297,-0.999998117433) (-0.919703288107,-0.999997529032)
    (-0.914435551541,-0.999996814028) (-0.909178548573,-0.999995955976) (-0.903932292630,-0.999994937556) (-0.898696797548,-0.999993740600)
    (-0.893472077536,-0.999992346109) (-0.888258147136,-0.999990734279) (-0.883055021182,-0.999988884520) (-0.877862714764,-0.999986775480)
    (-0.872681243193,-0.999984385066) (-0.867510621962,-0.999981690466) (-0.862350866714,-0.999978668170) (-0.857201993209,-0.999975293992)
    (-0.852064017287,-0.999971543091) (-0.846936954838,-0.999967389995) (-0.841820821773,-0.999962808618) (-0.836715633989,-0.999957772284)
    (-0.831621407343,-0.999952253746) (-0.826538157621,-0.999946225210) (-0.821465900513,-0.999939658352) (-0.816404651582,-0.999932524341)
    (-0.811354426240,-0.999924793860) (-0.806315239723,-0.999916437124) (-0.801287107066,-0.999907423901) (-0.796270043080,-0.999897723533)
    (-0.791264062325,-0.999887304955) (-0.786269179094,-0.999876136716) (-0.781285407387,-0.999864186994) (-0.776312760894,-0.999851423623)
    (-0.771351252972,-0.999837814104) (-0.766400896627,-0.999823325629) (-0.761461704498,-0.999807925099) (-0.756533688836,-0.999791579139)
    (-0.751616861490,-0.999774254121) (-0.746711233892,-0.999755916181) (-0.741816817038,-0.999736531232) (-0.736933621476,-0.999716064989)
    (-0.732061657293,-0.999694482980) (-0.727200934101,-0.999671750565) (-0.722351461025,-0.999647832957) (-0.717513246690,-0.999622695230)
    (-0.712686299212,-0.999596302344) (-0.707870626190,-0.999568619155) (-0.703066234689,-0.999539610432) (-0.698273131240,-0.999509240876)
    (-0.693491321825,-0.999477475130) (-0.688720811873,-0.999444277798) (-0.683961606253,-0.999409613455) (-0.679213709266,-0.999373446666)
    (-0.674477124639,-0.999335741999) (-0.669751855522,-0.999296464036) (-0.665037904483,-0.999255577388) (-0.660335273501,-0.999213046710)
    (-0.655643963968,-0.999168836710) (-0.650963976679,-0.999122912165) (-0.646295311837,-0.999075237930) (-0.641637969044,-0.999025778953)
    (-0.636991947307,-0.998974500285) (-0.632357245031,-0.998921367089) (-0.627733860019,-0.998866344657) (-0.623121789478,-0.998809398414)
    (-0.618521030012,-0.998750493931) (-0.613931577626,-0.998689596938) (-0.609353427730,-0.998626673328) (-0.604786575136,-0.998561689170)
    (-0.600231014062,-0.998494610719) (-0.595686738136,-0.998425404421) (-0.591153740396,-0.998354036924) (-0.586632013297,-0.998280475087)
    (-0.582121548709,-0.998204685986) (-0.577622337927,-0.998126636923) (-0.573134371671,-0.998046295431) (-0.568657640092,-0.997963629284)
    (-0.564192132776,-0.997878606503) (-0.559737838750,-0.997791195360) (-0.555294746486,-0.997701364387) (-0.550862843909,-0.997609082381)
    (-0.546442118399,-0.997514318409) (-0.542032556802,-0.997417041815) (-0.537634145431,-0.997317222222) (-0.533246870074,-0.997214829540)
    (-0.528870716005,-0.997109833970) (-0.524505667984,-0.997002206006) (-0.520151710269,-0.996891916442) (-0.515808826620,-0.996778936373)
    (-0.511477000308,-0.996663237202) (-0.507156214124,-0.996544790639) (-0.502846450380,-0.996423568709) (-0.498547690927,-0.996299543750)
    (-0.494259917151,-0.996172688421) (-0.489983109992,-0.996042975698) (-0.485717249945,-0.995910378882) (-0.481462317067,-0.995774871598)
    (-0.477218290993,-0.995636427797) (-0.472985150936,-0.995495021760) (-0.468762875699,-0.995350628094) (-0.464551443683,-0.995203221741)
    (-0.460350832894,-0.995052777972) (-0.456161020954,-0.994899272391) (-0.451981985106,-0.994742680936) (-0.447813702226,-0.994582979878)
    (-0.443656148828,-0.994420145823) (-0.439509301075,-0.994254155712) (-0.435373134786,-0.994084986818) (-0.431247625445,-0.993912616751)
    (-0.427132748209,-0.993737023454) (-0.423028477916,-0.993558185203) (-0.418934789096,-0.993376080608) (-0.414851655975,-0.993190688612)
    (-0.410779052486,-0.993001988487) (-0.406716952280,-0.992809959839) (-0.402665328726,-0.992614582602) (-0.398624154927,-0.992415837038)
    (-0.394593403727,-0.992213703738) (-0.390573047714,-0.992008163617) (-0.386563059234,-0.991799197915) (-0.382563410395,-0.991586788195)
    (-0.378574073077,-0.991370916341) (-0.374595018938,-0.991151564557) (-0.370626219425,-0.990928715363) (-0.366667645776,-0.990702351596)
    (-0.362719269035,-0.990472456406) (-0.358781060053,-0.990239013254) (-0.354852989499,-0.990002005911) (-0.350935027866,-0.989761418455)
    (-0.347027145478,-0.989517235268) (-0.343129312499,-0.989269441035) (-0.339241498938,-0.989018020742) (-0.331495809377,-0.988504243398)
    (-0.323789834048,-0.987975789020) (-0.316123328174,-0.987432548029) (-0.308496045159,-0.986874415397) (-0.300907736789,-0.986301290558)
    (-0.293358153434,-0.985713077319) (-0.285847044233,-0.985109683767) (-0.278374157282,-0.984491022175) (-0.270939239812,-0.983857008909)
    (-0.263542038358,-0.983207564333) (-0.256182298926,-0.982542612709) (-0.248859767152,-0.981862082104) (-0.241574188452,-0.981165904292)
    (-0.234325308173,-0.980454014657) (-0.227112871724,-0.979726352096) (-0.219936624721,-0.978982858923) (-0.212796313105,-0.978223480775)
    (-0.205691683272,-0.977448166511) (-0.198622482182,-0.976656868126) (-0.191588457476,-0.975849540650) (-0.184589357577,-0.975026142059)
    (-0.177624931794,-0.974186633185) (-0.170694930411,-0.973330977622) (-0.163799104782,-0.972459141640) (-0.156937207414,-0.971571094095)
    (-0.150108992046,-0.970666806347) (-0.143314213726,-0.969746252168) (-0.136552628880,-0.968809407668) (-0.129823995381,-0.967856251206)
    (-0.123128072610,-0.966886763313) (-0.116464621516,-0.965900926614) (-0.109833404668,-0.964898725748) (-0.103234186309,-0.963880147297)
    (-0.096666732401,-0.962845179710) (-0.090130810670,-0.961793813230) (-0.083626190645,-0.960726039829) (-0.077152643698,-0.959641853131)
    (-0.070709943078,-0.958541248354) (-0.064297863939,-0.957424222238) (-0.057916183372,-0.956290772984) (-0.051564680431,-0.955140900195)
    (-0.045243136155,-0.953974604809) (-0.038951333587,-0.952791889046) (-0.032689057798,-0.951592756348) (-0.026456095901,-0.950377211325)
    (-0.020252237060,-0.949145259699) (-0.014077272513,-0.947896908254) (-0.007930995571,-0.946632164781) (-0.001813201637,-0.945351038034)
    (0.004276311796,-0.944053537676) (0.010337745134,-0.942739674237) (0.016371296682,-0.941409459063) (0.022377162638,-0.940062904280)
    (0.028355537095,-0.938700022744) (0.034306612043,-0.937320828003) (0.040230577365,-0.935925334256) (0.046127620846,-0.934513556314)
    (0.051997928174,-0.933085509565) (0.057841682947,-0.931641209931) (0.063659066674,-0.930180673840) (0.069450258790,-0.928703918186)
    (0.075215436656,-0.927210960302) (0.080954775574,-0.925701817920) (0.086668448791,-0.924176509147) (0.092356627512,-0.922635052430)
    (0.098019480913,-0.921077466531) (0.103657176145,-0.919503770495) (0.109269878355,-0.917913983626) (0.114857750692,-0.916308125458)
    (0.120420954320,-0.914686215734) (0.125959648436,-0.913048274373) (0.131473990280,-0.911394321457) (0.136964135150,-0.909724377198)
    (0.142430236417,-0.908038461924) (0.147872445539,-0.906336596052) (0.153290912076,-0.904618800069) (0.158685783706,-0.902885094513)
    (0.164057206242,-0.901135499952) (0.169405323643,-0.899370036966) (0.174730278037,-0.897588726131) (0.180032209730,-0.895791587995)
    (0.185311257226,-0.893978643071) (0.190567557244,-0.892149911812) (0.195801244732,-0.890305414601) (0.201012452885,-0.888445171730)
    (0.206201313161,-0.886569203392) (0.211367955300,-0.884677529664) (0.216512507336,-0.882770170490) (0.221635095617,-0.880847145674)
    (0.226735844822,-0.878908474863) (0.231814877975,-0.876954177534) (0.236872316464,-0.874984272987) (0.241908280058,-0.872998780328)
    (0.246922886921,-0.870997718461) (0.251916253629,-0.868981106077) (0.256888495191,-0.866948961643) (0.261839725057,-0.864901303392)
    (0.266770055143,-0.862838149313) (0.271679595842,-0.860759517144) (0.276568456039,-0.858665424359) (0.281436743133,-0.856555888162)
    (0.286284563047,-0.854430925479) (0.291112020246,-0.852290552948) (0.295919217753,-0.850134786913) (0.300706257166,-0.847963643412)
    (0.305473238668,-0.845777138176) (0.310220261049,-0.843575286619) (0.314947421717,-0.841358103828) (0.319654816713,-0.839125604560)
    (0.324342540727,-0.836877803237) (0.329010687115,-0.834614713933) (0.333659347909,-0.832336350376) (0.338288613834,-0.830042725936)
    (0.342898574321,-0.827733853620) (0.347489317524,-0.825409746071) (0.352060930330,-0.823070415558) (0.356613498375,-0.820715873971)
    (0.361147106059,-0.818346132817) (0.365661836555,-0.815961203217) (0.370157771827,-0.813561095897) (0.374634992642,-0.811145821185)
    (0.379093578581,-0.808715389008) (0.383533608052,-0.806269808885) (0.387955158307,-0.803809089926) (0.392358305448,-0.801333240821)
    (0.396743124446,-0.798842269844) (0.401109689146,-0.796336184844) (0.405458072287,-0.793814993242) (0.409788345507,-0.791278702026)
    (0.414100579358,-0.788727317750) (0.418394843319,-0.786160846527) (0.422671205802,-0.783579294027) (0.426929734170,-0.780982665474)
    (0.431170494744,-0.778370965639) (0.435393552812,-0.775744198841) (0.439598972648,-0.773102368940) (0.443786817511,-0.770445479335)
    (0.447957149666,-0.767773532961) (0.452110030389,-0.765086532285) (0.456245519977,-0.762384479300) (0.460363677759,-0.759667375528)
    (0.464464562107,-0.756935222011) (0.468548230445,-0.754188019309) (0.472614739257,-0.751425767498) (0.476664144099,-0.748648466167)
    (0.480696499607,-0.745856114413) (0.484711859504,-0.743048710838) (0.488710276613,-0.740226253548) (0.492691802865,-0.737388740148)
    (0.496656489305,-0.734536167737) (0.500604386103,-0.731668532910) (0.504535542562,-0.728785831748) (0.508450007126,-0.725888059822)
    (0.512347827388,-0.722975212183) (0.516229050099,-0.720047283365) (0.520093721176,-0.717104267376) (0.523941885708,-0.714146157698)
    (0.527773587966,-0.711172947284) (0.531588871408,-0.708184628554) (0.535387778690,-0.705181193389) (0.539170351670,-0.702162633133)
    (0.542936631416,-0.699128938586) (0.546686658216,-0.696080099999) (0.550420471579,-0.693016107076) (0.554138110248,-0.689936948966)
    (0.557839612204,-0.686842614260) (0.561525014673,-0.683733090989) (0.565194354131,-0.680608366620) (0.568847666313,-0.677468428052)
    (0.572484986218,-0.674313261613) (0.576106348114,-0.671142853052) (0.579711785549,-0.667957187544) (0.583301331350,-0.664756249678)
    (0.586875017633,-0.661540023456) (0.590432875811,-0.658308492290) (0.593974936593,-0.655061638999) (0.597501229997,-0.651799445798)
    (0.601011785350,-0.648521894305) (0.604506631297,-0.645228965527) (0.607985795804,-0.641920639862) (0.611449306166,-0.638596897090)
    (0.614897189007,-0.635257716373) (0.618329470293,-0.631903076247) (0.621746175329,-0.628532954621) (0.625147328770,-0.625147328770)
    (0.628532954621,-0.621746175329) (0.631903076247,-0.618329470293) (0.635257716373,-0.614897189007) (0.638596897090,-0.611449306166)
    (0.641920639862,-0.607985795804) (0.645228965527,-0.604506631297) (0.648521894305,-0.601011785350) (0.651799445798,-0.597501229997)
    (0.655061638999,-0.593974936593) (0.658308492290,-0.590432875811) (0.661540023456,-0.586875017633) (0.664756249678,-0.583301331350)
    (0.667957187544,-0.579711785549) (0.671142853052,-0.576106348114) (0.674313261613,-0.572484986218) (0.677468428052,-0.568847666313)
    (0.680608366620,-0.565194354131) (0.683733090989,-0.561525014673) (0.686842614260,-0.557839612204) (0.689936948966,-0.554138110248)
    (0.693016107076,-0.550420471579) (0.696080099999,-0.546686658216) (0.699128938586,-0.542936631416) (0.702162633133,-0.539170351670)
    (0.705181193389,-0.535387778690) (0.708184628554,-0.531588871408) (0.711172947284,-0.527773587966) (0.714146157698,-0.523941885708)
    (0.717104267376,-0.520093721176) (0.720047283365,-0.516229050099) (0.722975212183,-0.512347827388) (0.725888059822,-0.508450007126)
    (0.728785831748,-0.504535542562) (0.731668532910,-0.500604386103) (0.734536167737,-0.496656489305) (0.737388740148,-0.492691802865)
    (0.740226253548,-0.488710276613) (0.743048710838,-0.484711859504) (0.745856114413,-0.480696499607) (0.748648466167,-0.476664144099)
    (0.751425767498,-0.472614739257) (0.754188019309,-0.468548230445) (0.756935222011,-0.464464562107) (0.759667375528,-0.460363677759)
    (0.762384479300,-0.456245519977) (0.765086532285,-0.452110030389) (0.767773532961,-0.447957149666) (0.770445479335,-0.443786817511)
    (0.773102368940,-0.439598972648) (0.775744198841,-0.435393552812) (0.778370965639,-0.431170494744) (0.780982665474,-0.426929734170)
    (0.783579294027,-0.422671205802) (0.786160846527,-0.418394843319) (0.788727317750,-0.414100579358) (0.791278702026,-0.409788345507)
    (0.793814993242,-0.405458072287) (0.796336184844,-0.401109689146) (0.798842269844,-0.396743124446) (0.801333240821,-0.392358305448)
    (0.803809089926,-0.387955158307) (0.806269808885,-0.383533608052) (0.808715389008,-0.379093578581) (0.811145821185,-0.374634992642)
    (0.813561095897,-0.370157771827) (0.815961203217,-0.365661836555) (0.818346132817,-0.361147106059) (0.820715873971,-0.356613498375)
    (0.823070415558,-0.352060930330) (0.825409746071,-0.347489317524) (0.827733853620,-0.342898574321) (0.830042725936,-0.338288613834)
    (0.832336350376,-0.333659347909) (0.834614713933,-0.329010687115) (0.836877803237,-0.324342540727) (0.839125604560,-0.319654816713)
    (0.841358103828,-0.314947421717) (0.843575286619,-0.310220261049) (0.845777138176,-0.305473238668) (0.847963643412,-0.300706257166)
    (0.850134786913,-0.295919217753) (0.852290552948,-0.291112020246) (0.854430925479,-0.286284563047) (0.856555888162,-0.281436743133)
    (0.858665424359,-0.276568456039) (0.860759517144,-0.271679595842) (0.862838149313,-0.266770055143) (0.864901303392,-0.261839725057)
    (0.866948961643,-0.256888495191) (0.868981106077,-0.251916253629) (0.870997718461,-0.246922886921) (0.872998780328,-0.241908280058)
    (0.874984272987,-0.236872316464) (0.876954177534,-0.231814877975) (0.878908474863,-0.226735844822) (0.880847145674,-0.221635095617)
    (0.882770170490,-0.216512507336) (0.884677529664,-0.211367955300) (0.886569203392,-0.206201313161) (0.888445171730,-0.201012452885)
    (0.890305414601,-0.195801244732) (0.892149911812,-0.190567557244) (0.893978643071,-0.185311257226) (0.895791587995,-0.180032209730)
    (0.897588726131,-0.174730278037) (0.899370036966,-0.169405323643) (0.901135499952,-0.164057206242) (0.902885094513,-0.158685783706)
    (0.904618800069,-0.153290912076) (0.906336596052,-0.147872445539) (0.908038461924,-0.142430236417) (0.909724377198,-0.136964135150)
    (0.911394321457,-0.131473990280) (0.913048274373,-0.125959648436) (0.914686215734,-0.120420954320) (0.916308125458,-0.114857750692)
    (0.917913983626,-0.109269878355) (0.919503770495,-0.103657176145) (0.921077466531,-0.098019480913) (0.922635052430,-0.092356627512)
    (0.924176509147,-0.086668448791) (0.925701817920,-0.080954775574) (0.927210960302,-0.075215436656) (0.928703918186,-0.069450258790)
    (0.930180673840,-0.063659066674) (0.931641209931,-0.057841682947) (0.933085509565,-0.051997928174) (0.934513556314,-0.046127620846)
    (0.935925334256,-0.040230577365) (0.937320828003,-0.034306612043) (0.938700022744,-0.028355537095) (0.940062904280,-0.022377162638)
    (0.941409459063,-0.016371296682) (0.942739674237,-0.010337745134) (0.944053537676,-0.004276311796) (0.945351038034,0.001813201637)
    (0.946632164781,0.007930995571) (0.947896908254,0.014077272513) (0.949145259699,0.020252237060) (0.950377211325,0.026456095901)
    (0.951592756348,0.032689057798) (0.952791889046,0.038951333587) (0.953974604809,0.045243136155) (0.955140900195,0.051564680431)
    (0.956290772984,0.057916183372) (0.957424222238,0.064297863939) (0.958541248354,0.070709943078) (0.959641853131,0.077152643698)
    (0.960726039829,0.083626190645) (0.961793813230,0.090130810670) (0.962845179710,0.096666732401) (0.963880147297,0.103234186309)
    (0.964898725748,0.109833404668) (0.965900926614,0.116464621516) (0.966886763313,0.123128072610) (0.967856251206,0.129823995381)
    (0.968809407668,0.136552628880) (0.969746252168,0.143314213726) (0.970666806347,0.150108992046) (0.971571094095,0.156937207414)
    (0.972459141640,0.163799104782) (0.973330977622,0.170694930411) (0.974186633185,0.177624931794) (0.975026142059,0.184589357577)
    (0.975849540650,0.191588457476) (0.976656868126,0.198622482182) (0.977448166511,0.205691683272) (0.978223480775,0.212796313105)
    (0.978982858923,0.219936624721) (0.979726352096,0.227112871724) (0.980454014657,0.234325308173) (0.981165904292,0.241574188452)
    (0.981862082104,0.248859767152) (0.982542612709,0.256182298926) (0.983207564333,0.263542038358) (0.983857008909,0.270939239812)
    (0.984491022175,0.278374157282) (0.985109683767,0.285847044233) (0.985713077319,0.293358153434) (0.986301290558,0.300907736789)
    (0.986874415397,0.308496045159) (0.987432548029,0.316123328174) (0.987975789020,0.323789834048) (0.988504243398,0.331495809377)
    (0.989018020742,0.339241498938) (0.989269441035,0.343129312499) (0.989517235268,0.347027145478) (0.989761418455,0.350935027866)
    (0.990002005911,0.354852989499) (0.990239013254,0.358781060053) (0.990472456406,0.362719269035) (0.990702351596,0.366667645776)
    (0.990928715363,0.370626219425) (0.991151564557,0.374595018938) (0.991370916341,0.378574073077) (0.991586788195,0.382563410395)
    (0.991799197915,0.386563059234) (0.992008163617,0.390573047714) (0.992213703738,0.394593403727) (0.992415837038,0.398624154927)
    (0.992614582602,0.402665328726) (0.992809959839,0.406716952280) (0.993001988487,0.410779052486) (0.993190688612,0.414851655975)
    (0.993376080608,0.418934789096) (0.993558185203,0.423028477916) (0.993737023454,0.427132748209) (0.993912616751,0.431247625445)
    (0.994084986818,0.435373134786) (0.994254155712,0.439509301075) (0.994420145823,0.443656148828) (0.994582979878,0.447813702226)
    (0.994742680936,0.451981985106) (0.994899272391,0.456161020954) (0.995052777972,0.460350832894) (0.995203221741,0.464551443683)
    (0.995350628094,0.468762875699) (0.995495021760,0.472985150936) (0.995636427797,0.477218290993) (0.995774871598,0.481462317067)
    (0.995910378882,0.485717249945) (0.996042975698,0.489983109992) (0.996172688421,0.494259917151) (0.996299543750,0.498547690927)
    (0.996423568709,0.502846450380) (0.996544790639,0.507156214124) (0.996663237202,0.511477000308) (0.996778936373,0.515808826620)
    (0.996891916442,0.520151710269) (0.997002206006,0.524505667984) (0.997109833970,0.528870716005) (0.997214829540,0.533246870074)
    (0.997317222222,0.537634145431) (0.997417041815,0.542032556802) (0.997514318409,0.546442118399) (0.997609082381,0.550862843909)
    (0.997701364387,0.555294746486) (0.997791195360,0.559737838750) (0.997878606503,0.564192132776) (0.997963629284,0.568657640092)
    (0.998046295431,0.573134371671) (0.998126636923,0.577622337927) (0.998204685986,0.582121548709) (0.998280475087,0.586632013297)
    (0.998354036924,0.591153740396) (0.998425404421,0.595686738136) (0.998494610719,0.600231014062) (0.998561689170,0.604786575136)
    (0.998626673328,0.609353427730) (0.998689596938,0.613931577626) (0.998750493931,0.618521030012) (0.998809398414,0.623121789478)
    (0.998866344657,0.627733860019) (0.998921367089,0.632357245031) (0.998974500285,0.636991947307) (0.999025778953,0.641637969044)
    (0.999075237930,0.646295311837) (0.999122912165,0.650963976679) (0.999168836710,0.655643963968) (0.999213046710,0.660335273501)
    (0.999255577388,0.665037904483) (0.999296464036,0.669751855522) (0.999335741999,0.674477124639) (0.999373446666,0.679213709266)
    (0.999409613455,0.683961606253) (0.999444277798,0.688720811873) (0.999477475130,0.693491321825) (0.999509240876,0.698273131240)
    (0.999539610432,0.703066234689) (0.999568619155,0.707870626190) (0.999596302344,0.712686299212) (0.999622695230,0.717513246690)
    (0.999647832957,0.722351461025) (0.999671750565,0.727200934101) (0.999694482980,0.732061657293) (0.999716064989,0.736933621476)
    (0.999736531232,0.741816817038) (0.999755916181,0.746711233892) (0.999774254121,0.751616861490) (0.999791579139,0.756533688836)
    (0.999807925099,0.761461704498) (0.999823325629,0.766400896627) (0.999837814104,0.771351252972) (0.999851423623,0.776312760894)
    (0.999864186994,0.781285407387) (0.999876136716,0.786269179094) (0.999887304955,0.791264062325) (0.999897723533,0.796270043080)
    (0.999907423901,0.801287107066) (0.999916437124,0.806315239723) (0.999924793860,0.811354426240) (0.999932524341,0.816404651582)
    (0.999939658352,0.821465900513) (0.999946225210,0.826538157621) (0.999952253746,0.831621407343) (0.999957772284,0.836715633989)
    (0.999962808618,0.841820821773) (0.999967389995,0.846936954838) (0.999971543091,0.852064017287) (0.999975293992,0.857201993209)
    (0.999978668170,0.862350866714) (0.999981690466,0.867510621962) (0.999984385066,0.872681243193) (0.999986775480,0.877862714764)
    (0.999988884520,0.883055021182) (0.999990734279,0.888258147136) (0.999992346109,0.893472077536) (0.999993740600,0.898696797548)
    (0.999994937556,0.903932292630) (0.999995955976,0.909178548573) (0.999996814028,0.914435551541) (0.999997529032,0.919703288107)
    (0.999998117433,0.924981745297) (0.999998594779,0.930270910630) (0.999998975705,0.935570772165) (0.999999273899,0.940881318541)
    (0.999999502093,0.946202539022) (0.999999672027,0.951534423544) (0.999999794437,0.956876962763) (0.999999879027,0.962230148099)
    (0.999999934444,0.967593971788) (0.999999968261,0.972968426932) (0.999999986948,0.978353507546) (0.999999995854,0.983749208613)
    (0.999999999178,0.989155526138) (0.999999999948,0.994572457197) (1.000000000000,1.000000000000)
}

\def\PathDataFour{%
    (-1.000000000000,-1.000000000000) (-0.994148485402,-1.000000000000) (-0.988308410719,-0.999999999996) (-0.982479775970,-0.999999999967)
    (-0.976662581207,-0.999999999863) (-0.970856826524,-0.999999999584) (-0.965062512073,-0.999999998969) (-0.959279638074,-0.999999997783)
    (-0.953508204829,-0.999999995698) (-0.947748212729,-0.999999992287) (-0.941999662266,-0.999999987001) (-0.936262554039,-0.999999979169)
    (-0.930536888763,-0.999999967973) (-0.924822667275,-0.999999952447) (-0.919119890542,-0.999999931459) (-0.913428559665,-0.999999903702)
    (-0.907748675881,-0.999999867686) (-0.902080240575,-0.999999821723) (-0.896423255272,-0.999999763923) (-0.890777721651,-0.999999692179)
    (-0.885143641539,-0.999999604163) (-0.879521016915,-0.999999497314) (-0.873909849914,-0.999999368832) (-0.868310142822,-0.999999215669)
    (-0.862721898079,-0.999999034521) (-0.857145118278,-0.999998821823) (-0.851579806165,-0.999998573742) (-0.846025964634,-0.999998286166)
    (-0.840483596728,-0.999997954706) (-0.834952705634,-0.999997574682) (-0.829433294682,-0.999997141123) (-0.823925367343,-0.999996648763)
    (-0.818428927221,-0.999996092028) (-0.812943978051,-0.999995465043) (-0.807470523697,-0.999994761618) (-0.802008568142,-0.999993975250)
    (-0.796558115490,-0.999993099118) (-0.791119169952,-0.999992126079) (-0.785691735849,-0.999991048666) (-0.780275817599,-0.999989859085)
    (-0.774871419716,-0.999988549212) (-0.769478546800,-0.999987110595) (-0.764097203532,-0.999985534444) (-0.758727394668,-0.999983811639)
    (-0.753369125029,-0.999981932721) (-0.748022399499,-0.999979887897) (-0.742687223011,-0.999977667035) (-0.737363600544,-0.999975259665)
    (-0.732051537115,-0.999972654980) (-0.726751037769,-0.999969841836) (-0.721462107574,-0.999966808749) (-0.716184751611,-0.999963543900)
    (-0.710918974965,-0.999960035133) (-0.705664782720,-0.999956269956) (-0.700422179949,-0.999952235546) (-0.695191171704,-0.999947918744)
    (-0.689971763010,-0.999943306062) (-0.684763958858,-0.999938383682) (-0.679567764192,-0.999933137462) (-0.674383183904,-0.999927552933)
    (-0.669210222825,-0.999921615303) (-0.664048885717,-0.999915309463) (-0.658899177263,-0.999908619987) (-0.653761102060,-0.999901531135)
    (-0.648634664609,-0.999894026857) (-0.643519869311,-0.999886090797) (-0.638416720454,-0.999877706295) (-0.633325222206,-0.999868856392)
    (-0.628245378609,-0.999859523833) (-0.623177193568,-0.999849691071) (-0.618120670847,-0.999839340274) (-0.613075814055,-0.999828453324)
    (-0.608042626645,-0.999817011828) (-0.603021111904,-0.999804997115) (-0.598011272941,-0.999792390248) (-0.593013112687,-0.999779172025)
    (-0.588026633882,-0.999765322984) (-0.583051839072,-0.999750823409) (-0.578088730597,-0.999735653336) (-0.573137310589,-0.999719792555)
    (-0.568197580963,-0.999703220620) (-0.563269543410,-0.999685916850) (-0.558353199392,-0.999667860339) (-0.553448550134,-0.999649029957)
    (-0.548555596619,-0.999629404360) (-0.543674339583,-0.999608961991) (-0.538804779507,-0.999587681093) (-0.533946916612,-0.999565539706)
    (-0.529100750854,-0.999542515680) (-0.524266281919,-0.999518586679) (-0.519443509217,-0.999493730186) (-0.514632431880,-0.999467923509)
    (-0.509833048752,-0.999441143789) (-0.505045358388,-0.999413368004) (-0.500269359050,-0.999384572978) (-0.495505048701,-0.999354735383)
    (-0.490752425003,-0.999323831750) (-0.486011485311,-0.999291838473) (-0.481282226671,-0.999258731813) (-0.476564645815,-0.999224487909)
    (-0.471858739161,-0.999189082781) (-0.467164502806,-0.999152492338) (-0.462481932525,-0.999114692380) (-0.457811023771,-0.999075658612)
    (-0.453151771668,-0.999035366644) (-0.448504171009,-0.998993791998) (-0.443868216260,-0.998950910118) (-0.439243901550,-0.998906696370)
    (-0.434631220675,-0.998861126055) (-0.430030167094,-0.998814174410) (-0.425440733929,-0.998765816615) (-0.420862913963,-0.998716027801)
    (-0.416296699638,-0.998664783056) (-0.411742083058,-0.998612057426) (-0.407199055983,-0.998557825928) (-0.402667609835,-0.998502063552)
    (-0.398147735690,-0.998444745265) (-0.393639424287,-0.998385846021) (-0.389142666020,-0.998325340765) (-0.384657450943,-0.998263204436)
    (-0.380183768768,-0.998199411976) (-0.375721608869,-0.998133938335) (-0.371270960277,-0.998066758474) (-0.366831811687,-0.997997847373)
    (-0.362404151456,-0.997927180034) (-0.357987967605,-0.997854731488) (-0.353583247818,-0.997780476801) (-0.349189979448,-0.997704391074)
    (-0.344808149515,-0.997626449454) (-0.340437744709,-0.997546627135) (-0.336078751391,-0.997464899364) (-0.331731155599,-0.997381241446)
    (-0.327394943043,-0.997295628748) (-0.323070099115,-0.997208036704) (-0.318756608885,-0.997118440819) (-0.314454457109,-0.997026816672)
    (-0.310163628227,-0.996933139924) (-0.305884106368,-0.996837386318) (-0.301615875354,-0.996739531686) (-0.297358918700,-0.996639551951)
    (-0.293113219620,-0.996537423132) (-0.288878761026,-0.996433121348) (-0.284655525539,-0.996326622819) (-0.280443495482,-0.996217903875)
    (-0.276242652892,-0.996106940952) (-0.272052979520,-0.995993710604) (-0.267874456833,-0.995878189498) (-0.263707066022,-0.995760354423)
    (-0.259550788002,-0.995640182292) (-0.255405603415,-0.995517650141) (-0.251271492639,-0.995392735139) (-0.247148435788,-0.995265414583)
    (-0.243036412715,-0.995135665908) (-0.238935403018,-0.995003466684) (-0.234845386045,-0.994868794621) (-0.230766340897,-0.994731627574)
    (-0.226698246430,-0.994591943538) (-0.222641081262,-0.994449720658) (-0.218594823777,-0.994304937229) (-0.214559452128,-0.994157571693)
    (-0.210534944242,-0.994007602650) (-0.206521277824,-0.993855008851) (-0.202518430361,-0.993699769207) (-0.198526379129,-0.993541862786)
    (-0.194545101194,-0.993381268816) (-0.190574573416,-0.993217966688) (-0.186614772457,-0.993051935957) (-0.182665674783,-0.992883156340)
    (-0.178727256669,-0.992711607723) (-0.174799494203,-0.992537270157) (-0.170882363291,-0.992360123865) (-0.166975839662,-0.992180149236)
    (-0.163079898869,-0.991997326830) (-0.159194516298,-0.991811637382) (-0.155319667172,-0.991623061796) (-0.151455326550,-0.991431581151)
    (-0.147601469338,-0.991237176699) (-0.143758070291,-0.991039829867) (-0.139925104014,-0.990839522258) (-0.136102544974,-0.990636235649)
    (-0.132290367496,-0.990429951996) (-0.128488545773,-0.990220653429) (-0.124697053867,-0.990008322256) (-0.120915865716,-0.989792940962)
    (-0.117144955136,-0.989574492210) (-0.113384295828,-0.989352958841) (-0.109633861378,-0.989128323871) (-0.105893625264,-0.988900570496)
    (-0.102163560862,-0.988669682090) (-0.098443641446,-0.988435642202) (-0.094733840193,-0.988198434562) (-0.091034130190,-0.987958043074)
    (-0.087344484436,-0.987714451821) (-0.083664875843,-0.987467645062) (-0.079995277247,-0.987217607232) (-0.072686001004,-0.986707776982)
    (-0.065416436927,-0.986184840068) (-0.058186365200,-0.985648678278) (-0.050995565094,-0.985099176178) (-0.043843815081,-0.984536221086)
    (-0.036730892947,-0.983959703054) (-0.029656575907,-0.983369514836) (-0.022620640710,-0.982765551867) (-0.015622863745,-0.982147712231)
    (-0.008663021143,-0.981515896629) (-0.001740888879,-0.980870008348) (0.005143757136,-0.980209953225) (0.011991140957,-0.979535639614)
    (0.018801486519,-0.978846978345) (0.025575017547,-0.978143882690) (0.032311957472,-0.977426268323) (0.039012529351,-0.976694053279)
    (0.045676955786,-0.975947157912) (0.052305458852,-0.975185504857) (0.058898260022,-0.974409018984) (0.065455580100,-0.973617627358)
    (0.071977639150,-0.972811259196) (0.078464656436,-0.971989845820) (0.084916850359,-0.971153320618) (0.091334438400,-0.970301618994)
    (0.097717637060,-0.969434678331) (0.104066661813,-0.968552437941) (0.110381727049,-0.967654839021) (0.116663046031,-0.966741824612)
    (0.122910830846,-0.965813339551) (0.129125292362,-0.964869330431) (0.135306640187,-0.963909745553) (0.141455082632,-0.962934534882)
    (0.147570826671,-0.961943650009) (0.153654077909,-0.960937044101) (0.159705040547,-0.959914671863) (0.165723917355,-0.958876489491)
    (0.171710909641,-0.957822454636) (0.177666217224,-0.956752526355) (0.183590038410,-0.955666665077) (0.189482569970,-0.954564832555)
    (0.195344007117,-0.953446991832) (0.201174543485,-0.952313107196) (0.206974371115,-0.951163144144) (0.212743680435,-0.949997069343)
    (0.218482660245,-0.948814850589) (0.224191497705,-0.947616456773) (0.229870378322,-0.946401857841) (0.235519485937,-0.945171024758)
    (0.241139002719,-0.943923929474) (0.246729109153,-0.942660544886) (0.252289984035,-0.941380844804) (0.257821804463,-0.940084803917)
    (0.263324745834,-0.938772397760) (0.268798981841,-0.937443602680) (0.274244684467,-0.936098395802) (0.279662023983,-0.934736755001)
    (0.285051168947,-0.933358658867) (0.290412286207,-0.931964086678) (0.295745540895,-0.930553018366) (0.301051096432,-0.929125434489)
    (0.306329114531,-0.927681316202) (0.311579755195,-0.926220645230) (0.316803176724,-0.924743403837) (0.321999535718,-0.923249574803)
    (0.327168987080,-0.921739141392) (0.332311684023,-0.920212087331) (0.337427778075,-0.918668396780) (0.342517419082,-0.917108054310)
    (0.347580755219,-0.915531044876) (0.352617932996,-0.913937353794) (0.357629097263,-0.912326966719) (0.362614391219,-0.910699869619)
    (0.367573956421,-0.909056048753) (0.372507932793,-0.907395490649) (0.377416458632,-0.905718182084) (0.382299670620,-0.904024110061)
    (0.387157703834,-0.902313261789) (0.391990691751,-0.900585624660) (0.396798766265,-0.898841186234) (0.401582057690,-0.897079934215)
    (0.406340694777,-0.895301856434) (0.411074804719,-0.893506940831) (0.415784513167,-0.891695175435) (0.420469944237,-0.889866548345)
    (0.425131220522,-0.888021047717) (0.429768463105,-0.886158661741) (0.434381791570,-0.884279378631) (0.438971324010,-0.882383186602)
    (0.443537177046,-0.880470073856) (0.448079465830,-0.878540028568) (0.452598304063,-0.876593038869) (0.457093804004,-0.874629092831)
    (0.461566076484,-0.872648178453) (0.466015230915,-0.870650283644) (0.470441375306,-0.868635396211) (0.474844616272,-0.866603503844)
    (0.479225059045,-0.864554594104) (0.483582807491,-0.862488654407) (0.487917964116,-0.860405672011) (0.492230630084,-0.858305634006)
    (0.496520905225,-0.856188527296) (0.500788888047,-0.854054338594) (0.505034675752,-0.851903054400) (0.509258364243,-0.849734660999)
    (0.513460048140,-0.847549144443) (0.517639820788,-0.845346490539) (0.521797774275,-0.843126684841) (0.525933999436,-0.840889712637)
    (0.530048585872,-0.838635558937) (0.534141621958,-0.836364208465) (0.538213194855,-0.834075645644) (0.542263390521,-0.831769854589)
    (0.546292293728,-0.829446819095) (0.550299988064,-0.827106522626) (0.554286555953,-0.824748948309) (0.558252078663,-0.822374078917)
    (0.562196636318,-0.819981896867) (0.566120307908,-0.817572384201) (0.570023171300,-0.815145522588) (0.573905303253,-0.812701293303)
    (0.577766779425,-0.810239677226) (0.581607674385,-0.807760654828) (0.585428061625,-0.805264206163) (0.589228013569,-0.802750310861)
    (0.593007601587,-0.800218948116) (0.596766896001,-0.797670096678) (0.600505966100,-0.795103734846) (0.604224880148,-0.792519840456)
    (0.607923705396,-0.789918390875) (0.611602508088,-0.787299362991) (0.615261353481,-0.784662733206) (0.618900305842,-0.782008477425)
    (0.622519428471,-0.779336571049) (0.626118783701,-0.776646988967) (0.629698432915,-0.773939705547) (0.633258436551,-0.771214694628)
    (0.636798854114,-0.768471929511) (0.640319744187,-0.765711382952) (0.643821164438,-0.762933027153) (0.647303171631,-0.760136833752)
    (0.650765821635,-0.757322773820) (0.654209169434,-0.754490817847) (0.657633269138,-0.751640935737) (0.661038173986,-0.748773096799)
    (0.664423936364,-0.745887269741) (0.667790607808,-0.742983422657) (0.671138239014,-0.740061523023) (0.674466879851,-0.737121537690)
    (0.677776579362,-0.734163432870) (0.681067385783,-0.731187174134) (0.684339346543,-0.728192726400) (0.687592508279,-0.725180053928)
    (0.690826916841,-0.722149120308) (0.694042617301,-0.719099888455) (0.697239653964,-0.716032320599) (0.700418070376,-0.712946378277)
    (0.703577909328,-0.709842022326) (0.706719212873,-0.706719212873) (0.709842022326,-0.703577909328) (0.712946378277,-0.700418070376)
    (0.716032320599,-0.697239653964) (0.719099888455,-0.694042617301) (0.722149120308,-0.690826916841) (0.725180053928,-0.687592508279)
    (0.728192726400,-0.684339346543) (0.731187174134,-0.681067385783) (0.734163432870,-0.677776579362) (0.737121537690,-0.674466879851)
    (0.740061523023,-0.671138239014) (0.742983422657,-0.667790607808) (0.745887269741,-0.664423936364) (0.748773096799,-0.661038173986)
    (0.751640935737,-0.657633269138) (0.754490817847,-0.654209169434) (0.757322773820,-0.650765821635) (0.760136833752,-0.647303171631)
    (0.762933027153,-0.643821164438) (0.765711382952,-0.640319744187) (0.768471929511,-0.636798854114) (0.771214694628,-0.633258436551)
    (0.773939705547,-0.629698432915) (0.776646988967,-0.626118783701) (0.779336571049,-0.622519428471) (0.782008477425,-0.618900305842)
    (0.784662733206,-0.615261353481) (0.787299362991,-0.611602508088) (0.789918390875,-0.607923705396) (0.792519840456,-0.604224880148)
    (0.795103734846,-0.600505966100) (0.797670096678,-0.596766896001) (0.800218948116,-0.593007601587) (0.802750310861,-0.589228013569)
    (0.805264206163,-0.585428061625) (0.807760654828,-0.581607674385) (0.810239677226,-0.577766779425) (0.812701293303,-0.573905303253)
    (0.815145522588,-0.570023171300) (0.817572384201,-0.566120307908) (0.819981896867,-0.562196636318) (0.822374078917,-0.558252078663)
    (0.824748948309,-0.554286555953) (0.827106522626,-0.550299988064) (0.829446819095,-0.546292293728) (0.831769854589,-0.542263390521)
    (0.834075645644,-0.538213194855) (0.836364208465,-0.534141621958) (0.838635558937,-0.530048585872) (0.840889712637,-0.525933999436)
    (0.843126684841,-0.521797774275) (0.845346490539,-0.517639820788) (0.847549144443,-0.513460048140) (0.849734660999,-0.509258364243)
    (0.851903054400,-0.505034675752) (0.854054338594,-0.500788888047) (0.856188527296,-0.496520905225) (0.858305634006,-0.492230630084)
    (0.860405672011,-0.487917964116) (0.862488654407,-0.483582807491) (0.864554594104,-0.479225059045) (0.866603503844,-0.474844616272)
    (0.868635396211,-0.470441375306) (0.870650283644,-0.466015230915) (0.872648178453,-0.461566076484) (0.874629092831,-0.457093804004)
    (0.876593038869,-0.452598304063) (0.878540028568,-0.448079465830) (0.880470073856,-0.443537177046) (0.882383186602,-0.438971324010)
    (0.884279378631,-0.434381791570) (0.886158661741,-0.429768463105) (0.888021047717,-0.425131220522) (0.889866548345,-0.420469944237)
    (0.891695175435,-0.415784513167) (0.893506940831,-0.411074804719) (0.895301856434,-0.406340694777) (0.897079934215,-0.401582057690)
    (0.898841186234,-0.396798766265) (0.900585624660,-0.391990691751) (0.902313261789,-0.387157703834) (0.904024110061,-0.382299670620)
    (0.905718182084,-0.377416458632) (0.907395490649,-0.372507932793) (0.909056048753,-0.367573956421) (0.910699869619,-0.362614391219)
    (0.912326966719,-0.357629097263) (0.913937353794,-0.352617932996) (0.915531044876,-0.347580755219) (0.917108054310,-0.342517419082)
    (0.918668396780,-0.337427778075) (0.920212087331,-0.332311684023) (0.921739141392,-0.327168987080) (0.923249574803,-0.321999535718)
    (0.924743403837,-0.316803176724) (0.926220645230,-0.311579755195) (0.927681316202,-0.306329114531) (0.929125434489,-0.301051096432)
    (0.930553018366,-0.295745540895) (0.931964086678,-0.290412286207) (0.933358658867,-0.285051168947) (0.934736755001,-0.279662023983)
    (0.936098395802,-0.274244684467) (0.937443602680,-0.268798981841) (0.938772397760,-0.263324745834) (0.940084803917,-0.257821804463)
    (0.941380844804,-0.252289984035) (0.942660544886,-0.246729109153) (0.943923929474,-0.241139002719) (0.945171024758,-0.235519485937)
    (0.946401857841,-0.229870378322) (0.947616456773,-0.224191497705) (0.948814850589,-0.218482660245) (0.949997069343,-0.212743680435)
    (0.951163144144,-0.206974371115) (0.952313107196,-0.201174543485) (0.953446991832,-0.195344007117) (0.954564832555,-0.189482569970)
    (0.955666665077,-0.183590038410) (0.956752526355,-0.177666217224) (0.957822454636,-0.171710909641) (0.958876489491,-0.165723917355)
    (0.959914671863,-0.159705040547) (0.960937044101,-0.153654077909) (0.961943650009,-0.147570826671) (0.962934534882,-0.141455082632)
    (0.963909745553,-0.135306640187) (0.964869330431,-0.129125292362) (0.965813339551,-0.122910830846) (0.966741824612,-0.116663046031)
    (0.967654839021,-0.110381727049) (0.968552437941,-0.104066661813) (0.969434678331,-0.097717637060) (0.970301618994,-0.091334438400)
    (0.971153320618,-0.084916850359) (0.971989845820,-0.078464656436) (0.972811259196,-0.071977639150) (0.973617627358,-0.065455580100)
    (0.974409018984,-0.058898260022) (0.975185504857,-0.052305458852) (0.975947157912,-0.045676955786) (0.976694053279,-0.039012529351)
    (0.977426268323,-0.032311957472) (0.978143882690,-0.025575017547) (0.978846978345,-0.018801486519) (0.979535639614,-0.011991140957)
    (0.980209953225,-0.005143757136) (0.980870008348,0.001740888879) (0.981515896629,0.008663021143) (0.982147712231,0.015622863745)
    (0.982765551867,0.022620640710) (0.983369514836,0.029656575907) (0.983959703054,0.036730892947) (0.984536221086,0.043843815081)
    (0.985099176178,0.050995565094) (0.985648678278,0.058186365200) (0.986184840068,0.065416436927) (0.986707776982,0.072686001004)
    (0.987217607232,0.079995277247) (0.987467645062,0.083664875843) (0.987714451821,0.087344484436) (0.987958043074,0.091034130190)
    (0.988198434562,0.094733840193) (0.988435642202,0.098443641446) (0.988669682090,0.102163560862) (0.988900570496,0.105893625264)
    (0.989128323871,0.109633861378) (0.989352958841,0.113384295828) (0.989574492210,0.117144955136) (0.989792940962,0.120915865716)
    (0.990008322256,0.124697053867) (0.990220653429,0.128488545773) (0.990429951996,0.132290367496) (0.990636235649,0.136102544974)
    (0.990839522258,0.139925104014) (0.991039829867,0.143758070291) (0.991237176699,0.147601469338) (0.991431581151,0.151455326550)
    (0.991623061796,0.155319667172) (0.991811637382,0.159194516298) (0.991997326830,0.163079898869) (0.992180149236,0.166975839662)
    (0.992360123865,0.170882363291) (0.992537270157,0.174799494203) (0.992711607723,0.178727256669) (0.992883156340,0.182665674783)
    (0.993051935957,0.186614772457) (0.993217966688,0.190574573416) (0.993381268816,0.194545101194) (0.993541862786,0.198526379129)
    (0.993699769207,0.202518430361) (0.993855008851,0.206521277824) (0.994007602650,0.210534944242) (0.994157571693,0.214559452128)
    (0.994304937229,0.218594823777) (0.994449720658,0.222641081262) (0.994591943538,0.226698246430) (0.994731627574,0.230766340897)
    (0.994868794621,0.234845386045) (0.995003466684,0.238935403018) (0.995135665908,0.243036412715) (0.995265414583,0.247148435788)
    (0.995392735139,0.251271492639) (0.995517650141,0.255405603415) (0.995640182292,0.259550788002) (0.995760354423,0.263707066022)
    (0.995878189498,0.267874456833) (0.995993710604,0.272052979520) (0.996106940952,0.276242652892) (0.996217903875,0.280443495482)
    (0.996326622819,0.284655525539) (0.996433121348,0.288878761026) (0.996537423132,0.293113219620) (0.996639551951,0.297358918700)
    (0.996739531686,0.301615875354) (0.996837386318,0.305884106368) (0.996933139924,0.310163628227) (0.997026816672,0.314454457109)
    (0.997118440819,0.318756608885) (0.997208036704,0.323070099115) (0.997295628748,0.327394943043) (0.997381241446,0.331731155599)
    (0.997464899364,0.336078751391) (0.997546627135,0.340437744709) (0.997626449454,0.344808149515) (0.997704391074,0.349189979448)
    (0.997780476801,0.353583247818) (0.997854731488,0.357987967605) (0.997927180034,0.362404151456) (0.997997847373,0.366831811687)
    (0.998066758474,0.371270960277) (0.998133938335,0.375721608869) (0.998199411976,0.380183768768) (0.998263204436,0.384657450943)
    (0.998325340765,0.389142666020) (0.998385846021,0.393639424287) (0.998444745265,0.398147735690) (0.998502063552,0.402667609835)
    (0.998557825928,0.407199055983) (0.998612057426,0.411742083058) (0.998664783056,0.416296699638) (0.998716027801,0.420862913963)
    (0.998765816615,0.425440733929) (0.998814174410,0.430030167094) (0.998861126055,0.434631220675) (0.998906696370,0.439243901550)
    (0.998950910118,0.443868216260) (0.998993791998,0.448504171009) (0.999035366644,0.453151771668) (0.999075658612,0.457811023771)
    (0.999114692380,0.462481932525) (0.999152492338,0.467164502806) (0.999189082781,0.471858739161) (0.999224487909,0.476564645815)
    (0.999258731813,0.481282226671) (0.999291838473,0.486011485311) (0.999323831750,0.490752425003) (0.999354735383,0.495505048701)
    (0.999384572978,0.500269359050) (0.999413368004,0.505045358388) (0.999441143789,0.509833048752) (0.999467923509,0.514632431880)
    (0.999493730186,0.519443509217) (0.999518586679,0.524266281919) (0.999542515680,0.529100750854) (0.999565539706,0.533946916612)
    (0.999587681093,0.538804779507) (0.999608961991,0.543674339583) (0.999629404360,0.548555596619) (0.999649029957,0.553448550134)
    (0.999667860339,0.558353199392) (0.999685916850,0.563269543410) (0.999703220620,0.568197580963) (0.999719792555,0.573137310589)
    (0.999735653336,0.578088730597) (0.999750823409,0.583051839072) (0.999765322984,0.588026633882) (0.999779172025,0.593013112687)
    (0.999792390248,0.598011272941) (0.999804997115,0.603021111904) (0.999817011828,0.608042626645) (0.999828453324,0.613075814055)
    (0.999839340274,0.618120670847) (0.999849691071,0.623177193568) (0.999859523833,0.628245378609) (0.999868856392,0.633325222206)
    (0.999877706295,0.638416720454) (0.999886090797,0.643519869311) (0.999894026857,0.648634664609) (0.999901531135,0.653761102060)
    (0.999908619987,0.658899177263) (0.999915309463,0.664048885717) (0.999921615303,0.669210222825) (0.999927552933,0.674383183904)
    (0.999933137462,0.679567764192) (0.999938383682,0.684763958858) (0.999943306062,0.689971763010) (0.999947918744,0.695191171704)
    (0.999952235546,0.700422179949) (0.999956269956,0.705664782720) (0.999960035133,0.710918974965) (0.999963543900,0.716184751611)
    (0.999966808749,0.721462107574) (0.999969841836,0.726751037769) (0.999972654980,0.732051537115) (0.999975259665,0.737363600544)
    (0.999977667035,0.742687223011) (0.999979887897,0.748022399499) (0.999981932721,0.753369125029) (0.999983811639,0.758727394668)
    (0.999985534444,0.764097203532) (0.999987110595,0.769478546800) (0.999988549212,0.774871419716) (0.999989859085,0.780275817599)
    (0.999991048666,0.785691735849) (0.999992126079,0.791119169952) (0.999993099118,0.796558115490) (0.999993975250,0.802008568142)
    (0.999994761618,0.807470523697) (0.999995465043,0.812943978051) (0.999996092028,0.818428927221) (0.999996648763,0.823925367343)
    (0.999997141123,0.829433294682) (0.999997574682,0.834952705634) (0.999997954706,0.840483596728) (0.999998286166,0.846025964634)
    (0.999998573742,0.851579806165) (0.999998821823,0.857145118278) (0.999999034521,0.862721898079) (0.999999215669,0.868310142822)
    (0.999999368832,0.873909849914) (0.999999497314,0.879521016915) (0.999999604163,0.885143641539) (0.999999692179,0.890777721651)
    (0.999999763923,0.896423255272) (0.999999821723,0.902080240575) (0.999999867686,0.907748675881) (0.999999903702,0.913428559665)
    (0.999999931459,0.919119890542) (0.999999952447,0.924822667275) (0.999999967973,0.930536888763) (0.999999979169,0.936262554039)
    (0.999999987001,0.941999662266) (0.999999992287,0.947748212729) (0.999999995698,0.953508204829) (0.999999997783,0.959279638074)
    (0.999999998969,0.965062512073) (0.999999999584,0.970856826524) (0.999999999863,0.976662581207) (0.999999999967,0.982479775970)
    (0.999999999996,0.988308410719) (1.000000000000,0.994148485402) (1.000000000000,1.000000000000)
}

\def\PathDataFive{%
    (-1.000000000000,-1.000000000000) (-0.993851506673,-1.000000000000) (-0.987715033860,-1.000000000000) (-0.981590581563,-1.000000000000)
    (-0.975478149781,-0.999999999998) (-0.969377738517,-0.999999999994) (-0.963289347776,-0.999999999982) (-0.957212977564,-0.999999999954)
    (-0.951148627892,-0.999999999899) (-0.945096298780,-0.999999999796) (-0.939055990252,-0.999999999618) (-0.933027702342,-0.999999999327)
    (-0.927011435095,-0.999999998873) (-0.921007188569,-0.999999998188) (-0.915014962836,-0.999999997191) (-0.909034757986,-0.999999995775)
    (-0.903066574126,-0.999999993814) (-0.897110411386,-0.999999991154) (-0.891166269917,-0.999999987609) (-0.885234149896,-0.999999982962)
    (-0.879314051527,-0.999999976960) (-0.873405975043,-0.999999969309) (-0.867509920708,-0.999999959669) (-0.861625888820,-0.999999947656)
    (-0.855753879712,-0.999999932832) (-0.849893893753,-0.999999914704) (-0.844045931353,-0.999999892720) (-0.838209992960,-0.999999866264)
    (-0.832386079067,-0.999999834653) (-0.826574190209,-0.999999797129) (-0.820774326968,-0.999999752863) (-0.814986489970,-0.999999700941)
    (-0.809210679892,-0.999999640366) (-0.803446897459,-0.999999570053) (-0.797695143446,-0.999999488821) (-0.791955418679,-0.999999395393)
    (-0.786227724036,-0.999999288390) (-0.780512060449,-0.999999166324) (-0.774808428900,-0.999999027599) (-0.769116830427,-0.999998870502)
    (-0.763437266120,-0.999998693201) (-0.757769737126,-0.999998493739) (-0.752114244641,-0.999998270031) (-0.746470789918,-0.999998019861)
    (-0.740839374261,-0.999997740873) (-0.735219999030,-0.999997430573) (-0.729612665633,-0.999997086320) (-0.724017375532,-0.999996705324)
    (-0.718434130238,-0.999996284641) (-0.712862931313,-0.999995821172) (-0.707303780366,-0.999995311653) (-0.701756679051,-0.999994752658)
    (-0.696221629071,-0.999994140589) (-0.690698632169,-0.999993471677) (-0.685187690131,-0.999992741976) (-0.679688804785,-0.999991947360)
    (-0.674201977992,-0.999991083519) (-0.668727211652,-0.999990145956) (-0.663264507697,-0.999989129984) (-0.657813868091,-0.999988030722)
    (-0.652375294823,-0.999986843092) (-0.646948789909,-0.999985561815) (-0.641534355387,-0.999984181410) (-0.636131993315,-0.999982696192)
    (-0.630741705767,-0.999981100264) (-0.625363494828,-0.999979387521) (-0.619997362595,-0.999977551641) (-0.614643311170,-0.999975586087)
    (-0.609301342658,-0.999973484105) (-0.603971459161,-0.999971238718) (-0.598653662778,-0.999968842727) (-0.593347955599,-0.999966288706)
    (-0.588054339700,-0.999963569006) (-0.582772817142,-0.999960675745) (-0.577503389963,-0.999957600814) (-0.572246060177,-0.999954335870)
    (-0.567000829768,-0.999950872337) (-0.561767700686,-0.999947201406) (-0.556546674843,-0.999943314029) (-0.551337754108,-0.999939200925)
    (-0.546140940303,-0.999934852572) (-0.540956235198,-0.999930259211) (-0.535783640506,-0.999925410843) (-0.530623157879,-0.999920297229)
    (-0.525474788903,-0.999914907892) (-0.520338535093,-0.999909232111) (-0.515214397888,-0.999903258926) (-0.510102378649,-0.999896977136)
    (-0.505002478648,-0.999890375299) (-0.499914699070,-0.999883441732) (-0.494839041003,-0.999876164514) (-0.489775505436,-0.999868531480)
    (-0.484724093255,-0.999860530229) (-0.479684805233,-0.999852148119) (-0.474657642030,-0.999843372273) (-0.469642604188,-0.999834189573)
    (-0.464639692123,-0.999824586669) (-0.459648906124,-0.999814549974) (-0.454670246343,-0.999804065667) (-0.449703712797,-0.999793119696)
    (-0.444749305358,-0.999781697777) (-0.439807023748,-0.999769785399) (-0.434876867541,-0.999757367820) (-0.429958836149,-0.999744430076)
    (-0.425052928825,-0.999730956978) (-0.420159144655,-0.999716933113) (-0.415277482553,-0.999702342851) (-0.410407941260,-0.999687170345)
    (-0.405550519334,-0.999671399530) (-0.400705215153,-0.999655014130) (-0.395872026903,-0.999637997660) (-0.391050952581,-0.999620333425)
    (-0.386241989986,-0.999602004527) (-0.381445136716,-0.999582993864) (-0.376660390167,-0.999563284138) (-0.371887747526,-0.999542857851)
    (-0.367127205767,-0.999521697313) (-0.362378761650,-0.999499784645) (-0.357642411716,-0.999477101779) (-0.352918152282,-0.999453630465)
    (-0.348205979442,-0.999429352272) (-0.343505889059,-0.999404248591) (-0.338817876764,-0.999378300641) (-0.334141937952,-0.999351489469)
    (-0.329478067783,-0.999323795958) (-0.324826261172,-0.999295200824) (-0.320186512793,-0.999265684629) (-0.315558817073,-0.999235227777)
    (-0.310943168189,-0.999203810518) (-0.306339560067,-0.999171412959) (-0.301747986381,-0.999138015060) (-0.297168440548,-0.999103596641)
    (-0.292600915727,-0.999068137386) (-0.288045404818,-0.999031616849) (-0.283501900458,-0.998994014454) (-0.278970395023,-0.998955309502)
    (-0.274450880622,-0.998915481172) (-0.269943349099,-0.998874508530) (-0.265447792029,-0.998832370529) (-0.260964200720,-0.998789046015)
    (-0.256492566208,-0.998744513730) (-0.252032879259,-0.998698752318) (-0.247585130368,-0.998651740327) (-0.243149309754,-0.998603456215)
    (-0.238725407368,-0.998553878352) (-0.234313412881,-0.998502985028) (-0.229913315695,-0.998450754454) (-0.225525104935,-0.998397164766)
    (-0.221148769450,-0.998342194031) (-0.216784297819,-0.998285820252) (-0.212431678340,-0.998228021369) (-0.208090899042,-0.998168775266)
    (-0.203761947676,-0.998108059772) (-0.199444811720,-0.998045852671) (-0.195139478381,-0.997982131699) (-0.190845934590,-0.997916874553)
    (-0.186564167008,-0.997850058894) (-0.182294162023,-0.997781662349) (-0.178035905754,-0.997711662519) (-0.173789384051,-0.997640036980)
    (-0.169554582495,-0.997566763288) (-0.165331486399,-0.997491818980) (-0.161120080812,-0.997415181585) (-0.156920350519,-0.997336828622)
    (-0.152732280041,-0.997256737603) (-0.148555853638,-0.997174886044) (-0.144391055311,-0.997091251459) (-0.140237868803,-0.997005811373)
    (-0.136096277602,-0.996918543319) (-0.131966264940,-0.996829424845) (-0.127847813800,-0.996738433518) (-0.123740906912,-0.996645546923)
    (-0.119645526762,-0.996550742674) (-0.115561655587,-0.996453998410) (-0.111489275384,-0.996355291804) (-0.107428367907,-0.996254600563)
    (-0.103378914674,-0.996151902433) (-0.099340896965,-0.996047175202) (-0.095314295828,-0.995940396702) (-0.091299092082,-0.995831544814)
    (-0.087295266316,-0.995720597469) (-0.083302798895,-0.995607532655) (-0.079321669963,-0.995492328414) (-0.075351859443,-0.995374962850)
    (-0.071393347043,-0.995255414130) (-0.067446112257,-0.995133660487) (-0.063510134369,-0.995009680223) (-0.059585392456,-0.994883451709)
    (-0.055671865389,-0.994754953393) (-0.051769531842,-0.994624163798) (-0.047878370287,-0.994491061528) (-0.043998359004,-0.994355625265)
    (-0.040129476081,-0.994217833779) (-0.036271699419,-0.994077665924) (-0.032425006731,-0.993935100642) (-0.028589375553,-0.993790116969)
    (-0.024764783241,-0.993642694032) (-0.020951206975,-0.993492811052) (-0.017148623767,-0.993340447349) (-0.013357010458,-0.993185582342)
    (-0.009576343726,-0.993028195549) (-0.005806600090,-0.992868266595) (-0.002047755908,-0.992705775206) (0.001700212613,-0.992540701216)
    (0.005437329417,-0.992373024567) (0.009163618595,-0.992202725311) (0.012879104383,-0.992029783611) (0.016583811155,-0.991854179743)
    (0.020277763421,-0.991675894096) (0.023960985826,-0.991494907178) (0.027633503141,-0.991311199612) (0.031295340267,-0.991124752137)
    (0.034946522223,-0.990935545617) (0.038587074150,-0.990743561031) (0.042217021303,-0.990548779484) (0.045836389049,-0.990351182201)
    (0.049445202863,-0.990150750533) (0.053043488324,-0.989947465956) (0.056631271116,-0.989741310068) (0.060208577016,-0.989532264599)
    (0.063775431899,-0.989320311402) (0.067331861729,-0.989105432461) (0.070877892560,-0.988887609887) (0.074413550528,-0.988666825920)
    (0.077938861852,-0.988443062933) (0.081453852825,-0.988216303426) (0.084958549820,-0.987986530034) (0.088452979275,-0.987753725519)
    (0.091937167701,-0.987517872778) (0.095411141669,-0.987278954841) (0.098874927814,-0.987036954868) (0.102328552827,-0.986791856154)
    (0.105772043457,-0.986543642126) (0.109205426502,-0.986292296345) (0.112628728809,-0.986037802505) (0.116041977271,-0.985780144435)
    (0.119445198823,-0.985519306097) (0.126221669133,-0.984988025128) (0.132958355957,-0.984443833970) (0.139655475998,-0.983886609127)
    (0.146313246357,-0.983316229240) (0.152931884439,-0.982732575078) (0.159511607866,-0.982135529528) (0.166052634392,-0.981524977586)
    (0.172555181816,-0.980900806346) (0.179019467904,-0.980262904983) (0.185445710308,-0.979611164741) (0.191834126487,-0.978945478913)
    (0.198184933637,-0.978265742826) (0.204498348616,-0.977571853815) (0.210774587871,-0.976863711206) (0.217013867377,-0.976141216293)
    (0.223216402565,-0.975404272313) (0.229382408264,-0.974652784423) (0.235512098636,-0.973886659670) (0.241605687121,-0.973105806968)
    (0.247663386377,-0.972310137071) (0.253685408230,-0.971499562540) (0.259671963618,-0.970673997720) (0.265623262546,-0.969833358704)
    (0.271539514034,-0.968977563306) (0.277420926072,-0.968106531033) (0.283267705580,-0.967220183046) (0.289080058362,-0.966318442136)
    (0.294858189071,-0.965401232686) (0.300602301167,-0.964468480644) (0.306312596887,-0.963520113486) (0.311989277203,-0.962556060184)
    (0.317632541800,-0.961576251175) (0.323242589037,-0.960580618327) (0.328819615924,-0.959569094904) (0.334363818093,-0.958541615535)
    (0.339875389775,-0.957498116179) (0.345354523773,-0.956438534093) (0.350801411442,-0.955362807799) (0.356216242669,-0.954270877049)
    (0.361599205854,-0.953162682793) (0.366950487891,-0.952038167147) (0.372270274151,-0.950897273360) (0.377558748472,-0.949739945780)
    (0.382816093139,-0.948566129822) (0.388042488875,-0.947375771937) (0.393238114831,-0.946168819581) (0.398403148571,-0.944945221180)
    (0.403537766071,-0.943704926099) (0.408642141701,-0.942447884616) (0.413716448228,-0.941174047885) (0.418760856803,-0.939883367908)
    (0.423775536961,-0.938575797505) (0.428760656612,-0.937251290285) (0.433716382043,-0.935909800612) (0.438642877913,-0.934551283581)
    (0.443540307253,-0.933175694989) (0.448408831462,-0.931782991301) (0.453248610313,-0.930373129626) (0.458059801948,-0.928946067691)
    (0.462842562882,-0.927501763807) (0.467597048007,-0.926040176849) (0.472323410591,-0.924561266225) (0.477021802284,-0.923064991850)
    (0.481692373122,-0.921551314120) (0.486335271529,-0.920020193889) (0.490950644326,-0.918471592440) (0.495538636732,-0.916905471461)
    (0.500099392375,-0.915321793022) (0.504633053293,-0.913720519548) (0.509139759944,-0.912101613800) (0.513619651215,-0.910465038843)
    (0.518072864426,-0.908810758033) (0.522499535337,-0.907138734986) (0.526899798163,-0.905448933558) (0.531273785575,-0.903741317825)
    (0.535621628714,-0.902015852058) (0.539943457197,-0.900272500703) (0.544239399130,-0.898511228360) (0.548509581113,-0.896731999761)
    (0.552754128256,-0.894934779749) (0.556973164182,-0.893119533261) (0.561166811044,-0.891286225304) (0.565335189531,-0.889434820936)
    (0.569478418883,-0.887565285249) (0.573596616897,-0.885677583347) (0.577689899942,-0.883771680329) (0.581758382967,-0.881847541269)
    (0.585802179516,-0.879905131200) (0.589821401738,-0.877944415092) (0.593816160396,-0.875965357838) (0.597786564883,-0.873967924234)
    (0.601732723231,-0.871952078963) (0.605654742123,-0.869917786578) (0.609552726905,-0.867865011482) (0.613426781601,-0.865793717917)
    (0.617277008921,-0.863703869941) (0.621103510274,-0.861595431418) (0.624906385783,-0.859468365997) (0.628685734293,-0.857322637099)
    (0.632441653386,-0.855158207901) (0.636174239394,-0.852975041319) (0.639883587410,-0.850773099995) (0.643569791298,-0.848552346279)
    (0.647232943710,-0.846312742217) (0.650873136095,-0.844054249535) (0.654490458714,-0.841776829624) (0.658085000649,-0.839480443525)
    (0.661656849818,-0.837165051917) (0.665206092988,-0.834830615100) (0.668732815783,-0.832477092983) (0.672237102704,-0.830104445070)
    (0.675719037132,-0.827712630444) (0.679178701349,-0.825301607756) (0.682616176544,-0.822871335210) (0.686031542830,-0.820421770549)
    (0.689424879252,-0.817952871045) (0.692796263804,-0.815464593481) (0.696145773438,-0.812956894141) (0.699473484075,-0.810429728795)
    (0.702779470623,-0.807883052690) (0.706063806984,-0.805316820530) (0.709326566068,-0.802730986471) (0.712567819804,-0.800125504102)
    (0.715787639156,-0.797500326438) (0.718986094131,-0.794855405900) (0.722163253792,-0.792190694312) (0.725319186271,-0.789506142879)
    (0.728453958784,-0.786801702180) (0.731567637637,-0.784077322156) (0.734660288241,-0.781332952095) (0.737731975127,-0.778568540619)
    (0.740782761953,-0.775784035676) (0.743812711521,-0.772979384525) (0.746821885784,-0.770154533723) (0.749810345863,-0.767309429114)
    (0.752778152055,-0.764444015816) (0.755725363848,-0.761558238212) (0.758652039933,-0.758652039933) (0.761558238212,-0.755725363848)
    (0.764444015816,-0.752778152055) (0.767309429114,-0.749810345863) (0.770154533723,-0.746821885784) (0.772979384525,-0.743812711521)
    (0.775784035676,-0.740782761953) (0.778568540619,-0.737731975127) (0.781332952095,-0.734660288241) (0.784077322156,-0.731567637637)
    (0.786801702180,-0.728453958784) (0.789506142879,-0.725319186271) (0.792190694312,-0.722163253792) (0.794855405900,-0.718986094131)
    (0.797500326438,-0.715787639156) (0.800125504102,-0.712567819804) (0.802730986471,-0.709326566068) (0.805316820530,-0.706063806984)
    (0.807883052690,-0.702779470623) (0.810429728795,-0.699473484075) (0.812956894141,-0.696145773438) (0.815464593481,-0.692796263804)
    (0.817952871045,-0.689424879252) (0.820421770549,-0.686031542830) (0.822871335210,-0.682616176544) (0.825301607756,-0.679178701349)
    (0.827712630444,-0.675719037132) (0.830104445070,-0.672237102704) (0.832477092983,-0.668732815783) (0.834830615100,-0.665206092988)
    (0.837165051917,-0.661656849818) (0.839480443525,-0.658085000649) (0.841776829624,-0.654490458714) (0.844054249535,-0.650873136095)
    (0.846312742217,-0.647232943710) (0.848552346279,-0.643569791298) (0.850773099995,-0.639883587410) (0.852975041319,-0.636174239394)
    (0.855158207901,-0.632441653386) (0.857322637099,-0.628685734293) (0.859468365997,-0.624906385783) (0.861595431418,-0.621103510274)
    (0.863703869941,-0.617277008921) (0.865793717917,-0.613426781601) (0.867865011482,-0.609552726905) (0.869917786578,-0.605654742123)
    (0.871952078963,-0.601732723231) (0.873967924234,-0.597786564883) (0.875965357838,-0.593816160396) (0.877944415092,-0.589821401738)
    (0.879905131200,-0.585802179516) (0.881847541269,-0.581758382967) (0.883771680329,-0.577689899942) (0.885677583347,-0.573596616897)
    (0.887565285249,-0.569478418883) (0.889434820936,-0.565335189531) (0.891286225304,-0.561166811044) (0.893119533261,-0.556973164182)
    (0.894934779749,-0.552754128256) (0.896731999761,-0.548509581113) (0.898511228360,-0.544239399130) (0.900272500703,-0.539943457197)
    (0.902015852058,-0.535621628714) (0.903741317825,-0.531273785575) (0.905448933558,-0.526899798163) (0.907138734986,-0.522499535337)
    (0.908810758033,-0.518072864426) (0.910465038843,-0.513619651215) (0.912101613800,-0.509139759944) (0.913720519548,-0.504633053293)
    (0.915321793022,-0.500099392375) (0.916905471461,-0.495538636732) (0.918471592440,-0.490950644326) (0.920020193889,-0.486335271529)
    (0.921551314120,-0.481692373122) (0.923064991850,-0.477021802284) (0.924561266225,-0.472323410591) (0.926040176849,-0.467597048007)
    (0.927501763807,-0.462842562882) (0.928946067691,-0.458059801948) (0.930373129626,-0.453248610313) (0.931782991301,-0.448408831462)
    (0.933175694989,-0.443540307253) (0.934551283581,-0.438642877913) (0.935909800612,-0.433716382043) (0.937251290285,-0.428760656612)
    (0.938575797505,-0.423775536961) (0.939883367908,-0.418760856803) (0.941174047885,-0.413716448228) (0.942447884616,-0.408642141701)
    (0.943704926099,-0.403537766071) (0.944945221180,-0.398403148571) (0.946168819581,-0.393238114831) (0.947375771937,-0.388042488875)
    (0.948566129822,-0.382816093139) (0.949739945780,-0.377558748472) (0.950897273360,-0.372270274151) (0.952038167147,-0.366950487891)
    (0.953162682793,-0.361599205854) (0.954270877049,-0.356216242669) (0.955362807799,-0.350801411442) (0.956438534093,-0.345354523773)
    (0.957498116179,-0.339875389775) (0.958541615535,-0.334363818093) (0.959569094904,-0.328819615924) (0.960580618327,-0.323242589037)
    (0.961576251175,-0.317632541800) (0.962556060184,-0.311989277203) (0.963520113486,-0.306312596887) (0.964468480644,-0.300602301167)
    (0.965401232686,-0.294858189071) (0.966318442136,-0.289080058362) (0.967220183046,-0.283267705580) (0.968106531033,-0.277420926072)
    (0.968977563306,-0.271539514034) (0.969833358704,-0.265623262546) (0.970673997720,-0.259671963618) (0.971499562540,-0.253685408230)
    (0.972310137071,-0.247663386377) (0.973105806968,-0.241605687121) (0.973886659670,-0.235512098636) (0.974652784423,-0.229382408264)
    (0.975404272313,-0.223216402565) (0.976141216293,-0.217013867377) (0.976863711206,-0.210774587871) (0.977571853815,-0.204498348616)
    (0.978265742826,-0.198184933637) (0.978945478913,-0.191834126487) (0.979611164741,-0.185445710308) (0.980262904983,-0.179019467904)
    (0.980900806346,-0.172555181816) (0.981524977586,-0.166052634392) (0.982135529528,-0.159511607866) (0.982732575078,-0.152931884439)
    (0.983316229240,-0.146313246357) (0.983886609127,-0.139655475998) (0.984443833970,-0.132958355957) (0.984988025128,-0.126221669133)
    (0.985519306097,-0.119445198823) (0.985780144435,-0.116041977271) (0.986037802505,-0.112628728809) (0.986292296345,-0.109205426502)
    (0.986543642126,-0.105772043457) (0.986791856154,-0.102328552827) (0.987036954868,-0.098874927814) (0.987278954841,-0.095411141669)
    (0.987517872778,-0.091937167701) (0.987753725519,-0.088452979275) (0.987986530034,-0.084958549820) (0.988216303426,-0.081453852825)
    (0.988443062933,-0.077938861852) (0.988666825920,-0.074413550528) (0.988887609887,-0.070877892560) (0.989105432461,-0.067331861729)
    (0.989320311402,-0.063775431899) (0.989532264599,-0.060208577016) (0.989741310068,-0.056631271116) (0.989947465956,-0.053043488324)
    (0.990150750533,-0.049445202863) (0.990351182201,-0.045836389049) (0.990548779484,-0.042217021303) (0.990743561031,-0.038587074150)
    (0.990935545617,-0.034946522223) (0.991124752137,-0.031295340267) (0.991311199612,-0.027633503141) (0.991494907178,-0.023960985826)
    (0.991675894096,-0.020277763421) (0.991854179743,-0.016583811155) (0.992029783611,-0.012879104383) (0.992202725311,-0.009163618595)
    (0.992373024567,-0.005437329417) (0.992540701216,-0.001700212613) (0.992705775206,0.002047755908) (0.992868266595,0.005806600090)
    (0.993028195549,0.009576343726) (0.993185582342,0.013357010458) (0.993340447349,0.017148623767) (0.993492811052,0.020951206975)
    (0.993642694032,0.024764783241) (0.993790116969,0.028589375553) (0.993935100642,0.032425006731) (0.994077665924,0.036271699419)
    (0.994217833779,0.040129476081) (0.994355625265,0.043998359004) (0.994491061528,0.047878370287) (0.994624163798,0.051769531842)
    (0.994754953393,0.055671865389) (0.994883451709,0.059585392456) (0.995009680223,0.063510134369) (0.995133660487,0.067446112257)
    (0.995255414130,0.071393347043) (0.995374962850,0.075351859443) (0.995492328414,0.079321669963) (0.995607532655,0.083302798895)
    (0.995720597469,0.087295266316) (0.995831544814,0.091299092082) (0.995940396702,0.095314295828) (0.996047175202,0.099340896965)
    (0.996151902433,0.103378914674) (0.996254600563,0.107428367907) (0.996355291804,0.111489275384) (0.996453998410,0.115561655587)
    (0.996550742674,0.119645526762) (0.996645546923,0.123740906912) (0.996738433518,0.127847813800) (0.996829424845,0.131966264940)
    (0.996918543319,0.136096277602) (0.997005811373,0.140237868803) (0.997091251459,0.144391055311) (0.997174886044,0.148555853638)
    (0.997256737603,0.152732280041) (0.997336828622,0.156920350519) (0.997415181585,0.161120080812) (0.997491818980,0.165331486399)
    (0.997566763288,0.169554582495) (0.997640036980,0.173789384051) (0.997711662519,0.178035905754) (0.997781662349,0.182294162023)
    (0.997850058894,0.186564167008) (0.997916874553,0.190845934590) (0.997982131699,0.195139478381) (0.998045852671,0.199444811720)
    (0.998108059772,0.203761947676) (0.998168775266,0.208090899042) (0.998228021369,0.212431678340) (0.998285820252,0.216784297819)
    (0.998342194031,0.221148769450) (0.998397164766,0.225525104935) (0.998450754454,0.229913315695) (0.998502985028,0.234313412881)
    (0.998553878352,0.238725407368) (0.998603456215,0.243149309754) (0.998651740327,0.247585130368) (0.998698752318,0.252032879259)
    (0.998744513730,0.256492566208) (0.998789046015,0.260964200720) (0.998832370529,0.265447792029) (0.998874508530,0.269943349099)
    (0.998915481172,0.274450880622) (0.998955309502,0.278970395023) (0.998994014454,0.283501900458) (0.999031616849,0.288045404818)
    (0.999068137386,0.292600915727) (0.999103596641,0.297168440548) (0.999138015060,0.301747986381) (0.999171412959,0.306339560067)
    (0.999203810518,0.310943168189) (0.999235227777,0.315558817073) (0.999265684629,0.320186512793) (0.999295200824,0.324826261172)
    (0.999323795958,0.329478067783) (0.999351489469,0.334141937952) (0.999378300641,0.338817876764) (0.999404248591,0.343505889059)
    (0.999429352272,0.348205979442) (0.999453630465,0.352918152282) (0.999477101779,0.357642411716) (0.999499784645,0.362378761650)
    (0.999521697313,0.367127205767) (0.999542857851,0.371887747526) (0.999563284138,0.376660390167) (0.999582993864,0.381445136716)
    (0.999602004527,0.386241989986) (0.999620333425,0.391050952581) (0.999637997660,0.395872026903) (0.999655014130,0.400705215153)
    (0.999671399530,0.405550519334) (0.999687170345,0.410407941260) (0.999702342851,0.415277482553) (0.999716933113,0.420159144655)
    (0.999730956978,0.425052928825) (0.999744430076,0.429958836149) (0.999757367820,0.434876867541) (0.999769785399,0.439807023748)
    (0.999781697777,0.444749305358) (0.999793119696,0.449703712797) (0.999804065667,0.454670246343) (0.999814549974,0.459648906124)
    (0.999824586669,0.464639692123) (0.999834189573,0.469642604188) (0.999843372273,0.474657642030) (0.999852148119,0.479684805233)
    (0.999860530229,0.484724093255) (0.999868531480,0.489775505436) (0.999876164514,0.494839041003) (0.999883441732,0.499914699070)
    (0.999890375299,0.505002478648) (0.999896977136,0.510102378649) (0.999903258926,0.515214397888) (0.999909232111,0.520338535093)
    (0.999914907892,0.525474788903) (0.999920297229,0.530623157879) (0.999925410843,0.535783640506) (0.999930259211,0.540956235198)
    (0.999934852572,0.546140940303) (0.999939200925,0.551337754108) (0.999943314029,0.556546674843) (0.999947201406,0.561767700686)
    (0.999950872337,0.567000829768) (0.999954335870,0.572246060177) (0.999957600814,0.577503389963) (0.999960675745,0.582772817142)
    (0.999963569006,0.588054339700) (0.999966288706,0.593347955599) (0.999968842727,0.598653662778) (0.999971238718,0.603971459161)
    (0.999973484105,0.609301342658) (0.999975586087,0.614643311170) (0.999977551641,0.619997362595) (0.999979387521,0.625363494828)
    (0.999981100264,0.630741705767) (0.999982696192,0.636131993315) (0.999984181410,0.641534355387) (0.999985561815,0.646948789909)
    (0.999986843092,0.652375294823) (0.999988030722,0.657813868091) (0.999989129984,0.663264507697) (0.999990145956,0.668727211652)
    (0.999991083519,0.674201977992) (0.999991947360,0.679688804785) (0.999992741976,0.685187690131) (0.999993471677,0.690698632169)
    (0.999994140589,0.696221629071) (0.999994752658,0.701756679051) (0.999995311653,0.707303780366) (0.999995821172,0.712862931313)
    (0.999996284641,0.718434130238) (0.999996705324,0.724017375532) (0.999997086320,0.729612665633) (0.999997430573,0.735219999030)
    (0.999997740873,0.740839374261) (0.999998019861,0.746470789918) (0.999998270031,0.752114244641) (0.999998493739,0.757769737126)
    (0.999998693201,0.763437266120) (0.999998870502,0.769116830427) (0.999999027599,0.774808428900) (0.999999166324,0.780512060449)
    (0.999999288390,0.786227724036) (0.999999395393,0.791955418679) (0.999999488821,0.797695143446) (0.999999570053,0.803446897459)
    (0.999999640366,0.809210679892) (0.999999700941,0.814986489970) (0.999999752863,0.820774326968) (0.999999797129,0.826574190209)
    (0.999999834653,0.832386079067) (0.999999866264,0.838209992960) (0.999999892720,0.844045931353) (0.999999914704,0.849893893753)
    (0.999999932832,0.855753879712) (0.999999947656,0.861625888820) (0.999999959669,0.867509920708) (0.999999969309,0.873405975043)
    (0.999999976960,0.879314051527) (0.999999982962,0.885234149896) (0.999999987609,0.891166269917) (0.999999991154,0.897110411386)
    (0.999999993814,0.903066574126) (0.999999995775,0.909034757986) (0.999999997191,0.915014962836) (0.999999998188,0.921007188569)
    (0.999999998873,0.927011435095) (0.999999999327,0.933027702342) (0.999999999618,0.939055990252) (0.999999999796,0.945096298780)
    (0.999999999899,0.951148627892) (0.999999999954,0.957212977564) (0.999999999982,0.963289347776) (0.999999999994,0.969377738517)
    (0.999999999998,0.975478149781) (1.000000000000,0.981590581563) (1.000000000000,0.987715033860) (1.000000000000,0.993851506673)
    (1.000000000000,1.000000000000)
}

\begin{figure}[t]
\centering
\begin{tikzpicture}[
  x=3.55cm,y=3.55cm,
  >=Latex,
  line cap=round,
  line join=round,
  curve1/.style={curveone,very thick},
  curve2/.style={curvetwo,very thick},
  curve3/.style={curvethree,very thick},
  curve4/.style={curvefour,very thick},
  curve5/.style={curvefive,very thick}
]
  \draw[black!55,thick] (-1,-1) rectangle (1,1);
  \draw[->,black!65] (-1.08,0)--(1.10,0) node[right] {$t$};
  \draw[->,black!65] (0,-1.08)--(0,1.10) node[above] {$s$};
  \foreach \x in {-1,1} {
    \draw[black!65] (\x,-.018)--(\x,.018);
    \node[below=2pt] at (\x,0) {$\x$};
  }
  \foreach \y in {-1,1} {
    \draw[black!65] (-.018,\y)--(.018,\y);
    \node[left=2pt] at (0,\y) {$\y$};
  }
  \node[below left=2pt] at (0,0) {$0$};

  \draw[curve5] plot coordinates {\PathDataFive};
  \draw[curve4] plot coordinates {\PathDataFour};
  \draw[curve3] plot coordinates {\PathDataThree};
  \draw[curve2] plot coordinates {\PathDataTwo};
  \draw[curve1] (-1,-1)--(1,1);

  \fill[curve2] (0.474790098372,-0.474790098372) circle[radius=.014];
  \fill[curve3] (0.625147328770,-0.625147328770) circle[radius=.014];
  \fill[curve4] (0.706719212873,-0.706719212873) circle[radius=.014];
  \fill[curve5] (0.758652039933,-0.758652039933) circle[radius=.014];

  \draw[curve1] (1.18,.68)--(1.40,.68);
  \node[anchor=west] at (1.46,.68) {$n=1$};
  \draw[curve2] (1.18,.43)--(1.40,.43);
  \node[anchor=west] at (1.46,.43) {$n=2$};
  \draw[curve3] (1.18,.18)--(1.40,.18);
  \node[anchor=west] at (1.46,.18) {$n=3$};
  \draw[curve4] (1.18,-.07)--(1.40,-.07);
  \node[anchor=west] at (1.46,-.07) {$n=4$};
  \draw[curve5] (1.18,-.32)--(1.40,-.32);
  \node[anchor=west] at (1.46,-.32) {$n=5$};
  \fill[black!65] (1.29,-.57) circle[radius=.014];
  \node[anchor=west] at (1.46,-.57) {turning point};
\end{tikzpicture}
\caption{Closed calibrated paths $s=T_n(t)$ joining $(-1,-1)$ to $(1,1)$.
The marked
points are $P_n=(\sqrt{a_n},-\sqrt{a_n})$.}
\label{fig:numerical-optimal-paths}
\end{figure}
\endgroup

%% file: functional_extremizers_2026-09-29.tex
\begin{figure}[t]
\centering
\begingroup
\definecolor{feqcolor1}{RGB}{31,119,180}
\definecolor{feqcolor2}{RGB}{230,126,34}
\definecolor{feqcolor3}{RGB}{44,160,44}
\definecolor{feqcolor4}{RGB}{214,39,40}
\definecolor{feqcolor5}{RGB}{148,103,189}
\expandafter\def\csname feqplus1\endcsname{%
(0.0000000000,0.0000000000) (0.0129903811,0.0000843750) (0.0281458256,0.0003960938) (0.0411362067,0.0008460938)
(0.0541265877,0.0014648438) (0.0671169688,0.0022523438) (0.0822724134,0.0033843750) (0.0952627944,0.0045375000)
(0.1082531755,0.0058593750) (0.1212435565,0.0073500000) (0.1342339376,0.0090093750) (0.1472243186,0.0108375000)
(0.1623797632,0.0131835938) (0.1775352078,0.0157593750) (0.1905255888,0.0181500000) (0.2035159699,0.0207093750)
(0.2165063509,0.0234375000) (0.2316617955,0.0268335938) (0.2446521766,0.0299273438) (0.2576425576,0.0331898437)
(0.2706329387,0.0366210938) (0.2836233197,0.0402210938) (0.2966137008,0.0439898438) (0.3096040819,0.0479273437)
(0.3247595264,0.0527343750) (0.3377499075,0.0570375000) (0.3529053520,0.0622710938) (0.3658957331,0.0669398437)
(0.3788861142,0.0717773438) (0.3918764952,0.0767835938) (0.4048668763,0.0819585938) (0.4178572573,0.0873023437)
(0.4330127019,0.0937500000) (0.4460030829,0.0994593750) (0.4589934640,0.1053375000) (0.4741489086,0.1124085938)
(0.4871392896,0.1186523438) (0.5001296707,0.1250648437) (0.5131200517,0.1316460937) (0.5282754963,0.1395375000)
(0.5412658774,0.1464843750) (0.5542562584,0.1536000000) (0.5672466395,0.1608843750) (0.5802370205,0.1683375000)
(0.5953924651,0.1772460938) (0.6083828462,0.1850648438) (0.6213732272,0.1930523438) (0.6365286718,0.2025843750)
(0.6495190528,0.2109375000) (0.6625094339,0.2194593750) (0.6754998150,0.2281500000) (0.6906552595,0.2385023438)
(0.7036456406,0.2475585938) (0.7166360216,0.2567835938) (0.7296264027,0.2661773438) (0.7447818473,0.2773500000)
(0.7577722283,0.2871093750) (0.7707626094,0.2970375000) (0.7859180539,0.3088335937) (0.7989084350,0.3191273437)
(0.8118988160,0.3295898438) (0.8248891971,0.3402210937) (0.8378795782,0.3510210938) (0.8530350227,0.3638343750)
(0.8660254038,0.3750000000) (0.8790157848,0.3863343750) (0.8920061659,0.3978375000) (0.9049965470,0.4095093750)
(0.9201519915,0.4233398438) (0.9353074361,0.4374000000) (0.9482978171,0.4496343750) (0.9612881982,0.4620375000)
(0.9742785793,0.4746093750) (0.9872689603,0.4873500000) (1.0024244049,0.5024273437) (1.0154147859,0.5155335938)
(1.0284051670,0.5288085938) (1.0413955481,0.5422523438) (1.0565509926,0.5581500000) (1.0695413737,0.5719593750)
(1.0825317547,0.5859375000) (1.0955221358,0.6000843750) (1.1085125168,0.6144000000) (1.1236679614,0.6313148438)
(1.1366583425,0.6459960938) (1.1496487235,0.6608460938) (1.1626391046,0.6758648437) (1.1756294856,0.6910523437)
(1.1907849302,0.7089843750) (1.2037753113,0.7245375000) (1.2167656923,0.7402593750) (1.2319211369,0.7588148437)
(1.2449115179,0.7749023438) (1.2579018990,0.7911585938) (1.2730573436,0.8103375000) (1.2860477246,0.8269593750)
(1.2990381057,0.8437500000) (1.3120284867,0.8607093750) (1.3250188678,0.8778375000) (1.3380092488,0.8951343750)
(1.3531646934,0.9155273438) (1.3683201380,0.9361500000) (1.3813105190,0.9540093750) (1.3943009001,0.9720375000)
(1.4072912811,0.9902343750) (1.4202816622,1.0086000000) (1.4354371068,1.0302398437) (1.4484274878,1.0489710938)
(1.4614178689,1.0678710938) (1.4744082499,1.0869398438) (1.4895636945,1.1094000000) (1.5025540756,1.1288343750)
(1.5155444566,1.1484375000) (1.5285348377,1.1682093750) (1.5436902822,1.1914898437) (1.5566806633,1.2116273438)
(1.5696710444,1.2319335938) (1.5826614254,1.2524085938) (1.5956518065,1.2730523438) (1.6086421875,1.2938648437)
(1.6237976321,1.3183593750) (1.6367880132,1.3395375000) (1.6519434577,1.3644585937) (1.6649338388,1.3860023438)
(1.6779242198,1.4077148438) (1.6909146009,1.4295960938) (1.7039049819,1.4516460937) (1.7168953630,1.4738648438)
(1.7320508076,1.5000000000) (1.7450411886,1.5225843750) (1.7601966332,1.5491460938) (1.7731870142,1.5720960937)
(1.7861773953,1.5952148438) (1.7991677764,1.6185023438) (1.8121581574,1.6419585938) (1.8251485385,1.6655835937)
(1.8403039830,1.6933593750) (1.8532943641,1.7173500000) (1.8662847452,1.7415093750) (1.8814401897,1.7699085937)
(1.8944305708,1.7944335938) (1.9095860153,1.8232593750) (1.9225763964,1.8481500000) (1.9355667775,1.8732093750)
(1.9485571585,1.8984375000) (1.9615475396,1.9238343750) (1.9767029841,1.9536773438) (1.9896933652,1.9794398437)
(2.0026837463,2.0053710938) (2.0156741273,2.0314710938) (2.0286645084,2.0577398438) (2.0416548894,2.0841773437)
(2.0568103340,2.1152343750) (2.0698007150,2.1420375000) (2.0827910961,2.1690093750) (2.0979465407,2.2006898437)
(2.1109369217,2.2280273438) (2.1239273028,2.2555335938) (2.1369176838,2.2832085938) (2.1499080649,2.3110523438)
(2.1650635095,2.3437500000) (2.1780538905,2.3719593750) (2.1910442716,2.4003375000) (2.2061997161,2.4336585937)
(2.2191900972,2.4624023438) (2.2321804783,2.4913148438) (2.2473359228,2.5252593750) (2.2603263039,2.5545375000)
(2.2733166849,2.5839843750) (2.2863070660,2.6136000000) (2.2992974470,2.6433843750) (2.3122878281,2.6733375000)
(2.3274432727,2.7084960938) (2.3404336537,2.7388148438) (2.3534240348,2.7693023438) (2.3664144158,2.7999585938)
(2.3815698604,2.8359375000) (2.3945602415,2.8669593750) (2.4075506225,2.8981500000) (2.4227060671,2.9347523437)
(2.4356964481,2.9663085938) (2.4508518927,3.0033375000) (2.4638422738,3.0352593750) (2.4898230359,3.0996093750)
(2.5158037980,3.1646343750) (2.5287941791,3.1974000000) (2.5439496236,3.2358398438) (2.5569400047,3.2689710938)
(2.5720954492,3.3078375000) (2.5980762114,3.3750000000) (2.6240569735,3.4428375000) (2.6370473545,3.4770093750)
(2.6522027991,3.5170898438) (2.6781835612,3.5863335938) (2.6911739423,3.6212085937) (2.7063293868,3.6621093750)
(2.7214848314,3.7032398438) (2.7344752124,3.7386773438) (2.7604559746,3.8100585938) (2.7756114191,3.8520093750)
(2.7886018002,3.8881500000) (2.8145825623,3.9609375000) (2.8405633244,4.0344000000) (2.8535537055,4.0713843750)
(2.8687091500,4.1147460938) (2.8816995311,4.1520960937) (2.8968549757,4.1958843750) (2.9228357378,4.2714843750)
(2.9509815634,4.3541460937) (2.9769623255,4.4311523438) (3.0051081511,4.5153375000) (3.0310889132,4.5937500000)
(3.0570696754,4.6728375000) (3.0852155010,4.7592773438) (3.1133613266,4.8465093750) (3.1393420887,4.9277343750)
(3.1653228508,5.0096343750) (3.1934686765,5.0991210938) (3.2194494386,5.1824273438) (3.2475952642,5.2734375000)
(3.2757410898,5.3652398438) (3.3017218519,5.4506835938) (3.3298676776,5.5440093750) (3.3558484397,5.6308593750)
(3.3818292018,5.7183843750) (3.4099750274,5.8139648438) (3.4359557895,5.9028960938) (3.4641016151,6.0000000000)
}
\expandafter\def\csname feqminus1\endcsname{%
(0.0000000000,0.0000000000) (0.0129903811,0.0000843750) (0.0281458256,0.0003960938) (0.0411362067,0.0008460938)
(0.0541265877,0.0014648438) (0.0671169688,0.0022523438) (0.0822724134,0.0033843750) (0.0952627944,0.0045375000)
(0.1082531755,0.0058593750) (0.1212435565,0.0073500000) (0.1342339376,0.0090093750) (0.1472243186,0.0108375000)
(0.1623797632,0.0131835938) (0.1775352078,0.0157593750) (0.1905255888,0.0181500000) (0.2035159699,0.0207093750)
(0.2165063509,0.0234375000) (0.2316617955,0.0268335938) (0.2446521766,0.0299273438) (0.2576425576,0.0331898437)
(0.2706329387,0.0366210938) (0.2836233197,0.0402210938) (0.2966137008,0.0439898438) (0.3096040819,0.0479273437)
(0.3247595264,0.0527343750) (0.3377499075,0.0570375000) (0.3529053520,0.0622710938) (0.3658957331,0.0669398437)
(0.3788861142,0.0717773438) (0.3918764952,0.0767835938) (0.4048668763,0.0819585938) (0.4178572573,0.0873023437)
(0.4330127019,0.0937500000) (0.4460030829,0.0994593750) (0.4589934640,0.1053375000) (0.4741489086,0.1124085938)
(0.4871392896,0.1186523438) (0.5001296707,0.1250648437) (0.5131200517,0.1316460937) (0.5282754963,0.1395375000)
(0.5412658774,0.1464843750) (0.5542562584,0.1536000000) (0.5672466395,0.1608843750) (0.5802370205,0.1683375000)
(0.5953924651,0.1772460938) (0.6083828462,0.1850648438) (0.6213732272,0.1930523438) (0.6365286718,0.2025843750)
(0.6495190528,0.2109375000) (0.6625094339,0.2194593750) (0.6754998150,0.2281500000) (0.6906552595,0.2385023438)
(0.7036456406,0.2475585938) (0.7166360216,0.2567835938) (0.7296264027,0.2661773438) (0.7447818473,0.2773500000)
(0.7577722283,0.2871093750) (0.7707626094,0.2970375000) (0.7859180539,0.3088335937) (0.7989084350,0.3191273437)
(0.8118988160,0.3295898438) (0.8248891971,0.3402210937) (0.8378795782,0.3510210938) (0.8530350227,0.3638343750)
(0.8660254038,0.3750000000) (0.8790157848,0.3863343750) (0.8920061659,0.3978375000) (0.9049965470,0.4095093750)
(0.9201519915,0.4233398438) (0.9353074361,0.4374000000) (0.9482978171,0.4496343750) (0.9612881982,0.4620375000)
(0.9742785793,0.4746093750) (0.9872689603,0.4873500000) (1.0024244049,0.5024273437) (1.0154147859,0.5155335938)
(1.0284051670,0.5288085938) (1.0413955481,0.5422523438) (1.0565509926,0.5581500000) (1.0695413737,0.5719593750)
(1.0825317547,0.5859375000) (1.0955221358,0.6000843750) (1.1085125168,0.6144000000) (1.1236679614,0.6313148438)
(1.1366583425,0.6459960938) (1.1496487235,0.6608460938) (1.1626391046,0.6758648437) (1.1756294856,0.6910523437)
(1.1907849302,0.7089843750) (1.2037753113,0.7245375000) (1.2167656923,0.7402593750) (1.2319211369,0.7588148437)
(1.2449115179,0.7749023438) (1.2579018990,0.7911585938) (1.2730573436,0.8103375000) (1.2860477246,0.8269593750)
(1.2990381057,0.8437500000) (1.3120284867,0.8607093750) (1.3250188678,0.8778375000) (1.3380092488,0.8951343750)
(1.3531646934,0.9155273438) (1.3683201380,0.9361500000) (1.3813105190,0.9540093750) (1.3943009001,0.9720375000)
(1.4072912811,0.9902343750) (1.4202816622,1.0086000000) (1.4354371068,1.0302398437) (1.4484274878,1.0489710938)
(1.4614178689,1.0678710938) (1.4744082499,1.0869398438) (1.4895636945,1.1094000000) (1.5025540756,1.1288343750)
(1.5155444566,1.1484375000) (1.5285348377,1.1682093750) (1.5436902822,1.1914898437) (1.5566806633,1.2116273438)
(1.5696710444,1.2319335938) (1.5826614254,1.2524085938) (1.5956518065,1.2730523438) (1.6086421875,1.2938648437)
(1.6237976321,1.3183593750) (1.6367880132,1.3395375000) (1.6519434577,1.3644585937) (1.6649338388,1.3860023438)
(1.6779242198,1.4077148438) (1.6909146009,1.4295960938) (1.7039049819,1.4516460937) (1.7168953630,1.4738648438)
(1.7320508076,1.5000000000) (1.7450411886,1.5225843750) (1.7601966332,1.5491460938) (1.7731870142,1.5720960937)
(1.7861773953,1.5952148438) (1.7991677764,1.6185023438) (1.8121581574,1.6419585938) (1.8251485385,1.6655835937)
(1.8403039830,1.6933593750) (1.8532943641,1.7173500000) (1.8662847452,1.7415093750) (1.8814401897,1.7699085937)
(1.8944305708,1.7944335938) (1.9095860153,1.8232593750) (1.9225763964,1.8481500000) (1.9355667775,1.8732093750)
(1.9485571585,1.8984375000) (1.9615475396,1.9238343750) (1.9767029841,1.9536773438) (1.9896933652,1.9794398437)
(2.0026837463,2.0053710938) (2.0156741273,2.0314710938) (2.0286645084,2.0577398438) (2.0416548894,2.0841773437)
(2.0568103340,2.1152343750) (2.0698007150,2.1420375000) (2.0827910961,2.1690093750) (2.0979465407,2.2006898437)
(2.1109369217,2.2280273438) (2.1239273028,2.2555335938) (2.1369176838,2.2832085938) (2.1499080649,2.3110523438)
(2.1650635095,2.3437500000) (2.1780538905,2.3719593750) (2.1910442716,2.4003375000) (2.2061997161,2.4336585937)
(2.2191900972,2.4624023438) (2.2321804783,2.4913148438) (2.2473359228,2.5252593750) (2.2603263039,2.5545375000)
(2.2733166849,2.5839843750) (2.2863070660,2.6136000000) (2.2992974470,2.6433843750) (2.3122878281,2.6733375000)
(2.3274432727,2.7084960938) (2.3404336537,2.7388148438) (2.3534240348,2.7693023438) (2.3664144158,2.7999585938)
(2.3815698604,2.8359375000) (2.3945602415,2.8669593750) (2.4075506225,2.8981500000) (2.4227060671,2.9347523437)
(2.4356964481,2.9663085938) (2.4508518927,3.0033375000) (2.4638422738,3.0352593750) (2.4898230359,3.0996093750)
(2.5158037980,3.1646343750) (2.5287941791,3.1974000000) (2.5439496236,3.2358398438) (2.5569400047,3.2689710938)
(2.5720954492,3.3078375000) (2.5980762114,3.3750000000) (2.6240569735,3.4428375000) (2.6370473545,3.4770093750)
(2.6522027991,3.5170898438) (2.6781835612,3.5863335938) (2.6911739423,3.6212085937) (2.7063293868,3.6621093750)
(2.7214848314,3.7032398438) (2.7344752124,3.7386773438) (2.7604559746,3.8100585938) (2.7756114191,3.8520093750)
(2.7886018002,3.8881500000) (2.8145825623,3.9609375000) (2.8405633244,4.0344000000) (2.8535537055,4.0713843750)
(2.8687091500,4.1147460938) (2.8816995311,4.1520960937) (2.8968549757,4.1958843750) (2.9228357378,4.2714843750)
(2.9509815634,4.3541460937) (2.9769623255,4.4311523438) (3.0051081511,4.5153375000) (3.0310889132,4.5937500000)
(3.0570696754,4.6728375000) (3.0852155010,4.7592773438) (3.1133613266,4.8465093750) (3.1393420887,4.9277343750)
(3.1653228508,5.0096343750) (3.1934686765,5.0991210938) (3.2194494386,5.1824273438) (3.2475952642,5.2734375000)
(3.2757410898,5.3652398438) (3.3017218519,5.4506835938) (3.3298676776,5.5440093750) (3.3558484397,5.6308593750)
(3.3818292018,5.7183843750) (3.4099750274,5.8139648438) (3.4359557895,5.9028960938) (3.4641016151,6.0000000000)
}
\expandafter\def\csname feqplus2\endcsname{%
(0.0000000000,0.0000000000) (0.0835064296,0.0638085938) (0.1176482220,0.0900375000) (0.1493072277,0.1144710938)
(0.1801508757,0.1383960938) (0.2105622301,0.1621148437) (0.2414985081,0.1863843750) (0.2708299430,0.2095335938)
(0.2998294794,0.2325585937) (0.3301510359,0.2567835938) (0.3597526381,0.2805843750) (0.3904675408,0.3054398437)
(0.4201189707,0.3295898438) (0.4507020499,0.3546585938) (0.4799194693,0.3787593750) (0.5099115139,0.4036523438)
(0.5406651993,0.4293375000) (0.5721681235,0.4558148438) (0.6044084866,0.4830843750) (0.6348136671,0.5089593750)
(0.6658293211,0.5355093750) (0.6974475533,0.5627343750) (0.7296609016,0.5906343750) (0.7597065782,0.6168023437)
(0.7930412420,0.6459960938) (0.8241039186,0.6733500000) (0.8556458139,0.7012710937) (0.8876626438,0.7297593750)
(0.9201503724,0.7588148437) (0.9531052004,0.7884375000) (0.9865235543,0.8186273438) (1.0204020751,0.8493843750)
(1.0547376075,0.8807085938) (1.0895271892,0.9126000000) (1.1247680405,0.9450585938) (1.1604575542,0.9780843750)
(1.1965932861,1.0116773438) (1.2331729457,1.0458375000) (1.2701943872,1.0805648437) (1.3042319142,1.1126273438)
(1.3420912857,1.1484375000) (1.3803869433,1.1848148438) (1.4191172300,1.2217593750) (1.4547024330,1.2558375000)
(1.4942582474,1.2938648437) (1.5305915961,1.3289273438) (1.5709679665,1.3680375000) (1.6080452244,1.4040843750)
(1.6492377837,1.4442773438) (1.6870552321,1.4813085938) (1.7290601494,1.5225843750) (1.7676145279,1.5606000000)
(1.8104284470,1.6029585937) (1.8497169018,1.6419585938) (1.8893540197,1.6814273438) (1.9293392711,1.7213648437)
(1.9737245510,1.7658375000) (2.0144393062,1.8067593750) (2.0555007584,1.8481500000) (2.0969085038,1.8900093750)
(2.1428565431,1.9365960937) (2.1849903065,1.9794398437) (2.2274692736,2.0227523438) (2.2702931403,2.0665335938)
(2.3177974152,2.1152343750) (2.3613446725,2.1600000000) (2.4096440643,2.2097835938) (2.4539136392,2.2555335938)
(2.5030070567,2.3064000000) (2.5479980336,2.3531343750) (2.5933322546,2.4003375000) (2.6390095578,2.4480093750)
(2.6896506740,2.5009898438) (2.7360479801,2.5496460937) (2.7827879463,2.5987710937) (2.8298704598,2.6483648438)
(2.8820567449,2.7034593750) (2.9298582860,2.7540375000) (2.9780020880,2.8050843750) (3.0264880770,2.8566000000)
(3.0802178130,2.9138085938) (3.1294221904,2.9663085938) (3.1839420234,3.0246000000) (3.2338645688,3.0780843750)
(3.2891742916,3.1374585937) (3.3398148465,3.1919273437) (3.3907972712,3.2468648437) (3.4421215477,3.3022710938)
(3.4989730754,3.3637593750) (3.5510152047,3.4201500000) (3.6561249466,3.5343375000) (3.7626020285,3.6504000000)
(3.8163533470,3.7091343750) (3.8758746603,3.7742835937) (3.9851551214,3.8941898438) (4.0403082518,3.9548460938)
(4.1013716716,4.0221093750) (4.2134563140,4.1458593750) (4.3326178183,4.2778148437) (4.4475077912,4.4054085938)
(4.5696156065,4.5414000000) (4.6932331166,4.6794585938) (4.8183607705,4.8195843750) (4.9389343856,4.9549593750)
(5.0670117966,5.0991210938) (5.1966008025,5.2453500000) (5.3277019279,5.3936460938) (5.4539664588,5.5368023438)
(5.5880213740,5.6891343750) (5.7235900216,5.8435335938) (5.8606729640,6.0000000000)
}
\expandafter\def\csname feqminus2\endcsname{%
(0.0000000000,0.0000000000) (0.7632701495,0.0000093750) (0.7663846843,0.0001898437) (0.7704035227,0.0006000000)
(0.7742698074,0.0011343750) (0.7784146210,0.0018375000) (0.7827752840,0.0027093750) (0.7873084972,0.0037500000)
(0.7919822369,0.0049593750) (0.7959663139,0.0060960937) (0.8008368844,0.0076148438) (0.8049590180,0.0090093750)
(0.8091298744,0.0105210938) (0.8133422921,0.0121500000) (0.8184432468,0.0142593750) (0.8227259543,0.0161460937)
(0.8270324557,0.0181500000) (0.8313583832,0.0202710938) (0.8374398803,0.0234375000) (0.8426694483,0.0263343750)
(0.8487827190,0.0299273438) (0.8540275589,0.0331898437) (0.8601462141,0.0372093750) (0.8653860794,0.0408375000)
(0.8714888797,0.0452835938) (0.8767073487,0.0492773438) (0.8827769538,0.0541500000) (0.8879605434,0.0585093750)
(0.8939826480,0.0638085938) (0.8991201900,0.0685335937) (0.9050829049,0.0742593750) (0.9101650913,0.0793500000)
(0.9152190211,0.0846093750) (0.9202431689,0.0900375000) (0.9260651638,0.0965835938) (0.9310201066,0.1023773437)
(0.9359411813,0.1083398437) (0.9408273028,0.1144710938) (0.9464822734,0.1218375000) (0.9512893445,0.1283343750)
(0.9560585691,0.1350000000) (0.9607891852,0.1418343750) (0.9662585137,0.1500210938) (0.9709031667,0.1572210937)
(0.9755072061,0.1645898437) (0.9800701142,0.1721273437) (0.9845914228,0.1798335938) (0.9890707104,0.1877085938)
(0.9935075998,0.1957523438) (0.9979017552,0.2039648438) (1.0029738654,0.2137593750) (1.0072744626,0.2223375000)
(1.0115315114,0.2310843750) (1.0157448260,0.2400000000) (1.0199142516,0.2490843750) (1.0240396630,0.2583375000)
(1.0281209631,0.2677593750) (1.0321580811,0.2773500000) (1.0361509716,0.2871093750) (1.0400996129,0.2970375000)
(1.0440040062,0.3071343750) (1.0478641741,0.3174000000) (1.0516801599,0.3278343750) (1.0554520262,0.3384375000)
(1.0591798541,0.3492093750) (1.0628637426,0.3601500000) (1.0665038071,0.3712593750) (1.0701001790,0.3825375000)
(1.0736530049,0.3939843750) (1.0771624459,0.4056000000) (1.0806286764,0.4173843750) (1.0840518841,0.4293375000)
(1.0874322690,0.4414593750) (1.0907700427,0.4537500000) (1.0940654279,0.4662093750) (1.0973186580,0.4788375000)
(1.1005299764,0.4916343750) (1.1036996358,0.5046000000) (1.1068278979,0.5177343750) (1.1099150331,0.5310375000)
(1.1129613197,0.5445093750) (1.1164640744,0.5604398437) (1.1194228458,0.5742773438) (1.1228243136,0.5906343750)
(1.1256969816,0.6048375000) (1.1289988467,0.6216210938) (1.1317868846,0.6361898437) (1.1349908981,0.6534000000)
(1.1381429685,0.6708398438) (1.1412436283,0.6885093750) (1.1438608273,0.7038375000) (1.1468674484,0.7219335937)
(1.1498242194,0.7402593750) (1.1527316978,0.7588148437) (1.1551850157,0.7749023438) (1.1580024467,0.7938843750)
(1.1607722039,0.8130960937) (1.1634948610,0.8325375000) (1.1657915164,0.8493843750) (1.1684282340,0.8692523437)
(1.1710195054,0.8893500000) (1.1735659134,0.9096773438) (1.1760680417,0.9302343750) (1.1785264748,0.9510210938)
(1.1809417976,0.9720375000) (1.1833145952,0.9932835937) (1.1856454520,1.0147593750) (1.1879349521,1.0364648438)
(1.1901836782,1.0584000000) (1.1923922118,1.0805648437) (1.1966910193,1.1255835937) (1.1987824466,1.1484375000)
(1.2011262916,1.1748375000) (1.2051115073,1.2217593750) (1.2092212401,1.2730523438) (1.2129122594,1.3218773437)
(1.2167169686,1.3752093750) (1.2203717575,1.4295960938) (1.2238818536,1.4850375000) (1.2272523810,1.5415335937)
(1.2304883553,1.5990843750) (1.2335946788,1.6576898438) (1.2367705760,1.7213648437) (1.2396239829,1.7821500000)
(1.2425405736,1.8481500000) (1.2453312551,1.9153500000) (1.2480012108,1.9837500000) (1.2505554603,2.0533500000)
(1.2529988586,2.1241500000) (1.2553360975,2.1961500000) (1.2577081574,2.2739648437) (1.2598405650,2.3484398437)
(1.2620046632,2.4288843750) (1.2640688427,2.5106835937) (1.2661506966,2.5987710937) (1.2680235771,2.6833593750)
(1.2699126439,2.7744000000) (1.2717097794,2.8669593750) (1.2734195908,2.9610375000) (1.2750464701,3.0566343750)
(1.2766783805,3.1591898437) (1.2782272262,3.2634375000) (1.2796974461,3.3693773438) (1.2810932495,3.4770093750)
(1.2824865039,3.5921343750) (1.2838061220,3.7091343750) (1.2850562590,3.8280093750) (1.2862984116,3.9548460938)
(1.2874725586,4.0837500000) (1.2885827200,4.2147210938) (1.2896812319,4.3541460937) (1.2907177402,4.4958398437)
(1.2916960622,4.6398023438) (1.2926605166,4.7927343750) (1.2935690976,4.9481460938) (1.2944253680,5.1060375000)
(1.2952666896,5.2734375000) (1.2968332213,5.6235960937) (1.2982717518,6.0000000000)
}
\expandafter\def\csname feqplus3\endcsname{%
(0.0000000000,0.0000000000) (0.1262489203,0.1134375000) (0.2088127276,0.1877085938) (0.2837315169,0.2552343750)
(0.3525305942,0.3174000000) (0.4223196496,0.3806460938) (0.4892045365,0.4414593750) (0.5536554731,0.5002593750)
(0.6193868138,0.5604398437) (0.6885896140,0.6240375000) (0.7557228756,0.6859710937) (0.8229032184,0.7481835938)
(0.8897688669,0.8103375000) (0.9590416145,0.8749710938) (1.0275339213,0.9391148437) (1.0949308958,1.0024593750)
(1.1642835781,1.0678710938) (1.2390211589,1.1386148438) (1.3123358874,1.2082593750) (1.3875669495,1.2799710938)
(1.4609872363,1.3501898437) (1.5361271955,1.4222835938) (1.6129789282,1.4962523438) (1.6915352123,1.5720960937)
(1.7717894577,1.6498148438) (1.8537356625,1.7294085938) (1.9331467593,1.8067593750) (2.0183770519,1.8900093750)
(2.1008994741,1.9708335937) (2.1849321319,2.0533500000) (2.2704718428,2.1375585938) (2.3575157217,2.2234593750)
(2.4460611577,2.3110523438) (2.5408864704,2.4050835938) (2.6325069124,2.4961500000) (2.7256224130,2.5889085937)
(2.8202312608,2.6833593750) (2.9163319220,2.7795023438) (3.0139230253,2.8773375000) (3.1130033484,2.9768648437)
(3.2135718053,3.0780843750) (3.3156274348,3.1809960938) (3.4191693897,3.2856000000) (3.5241969272,3.3918960938)
(3.6307094000,3.4998843750) (3.7387062476,3.6095648438) (3.8481869897,3.7209375000) (3.9591512182,3.8340023438)
(4.0715985918,3.9487593750) (4.1855288297,4.0652085938) (4.3009417062,4.1833500000) (4.4178370464,4.3031835938)
(4.5362147212,4.4247093750) (4.6560746434,4.5479273437) (4.7709933845,4.6662210937) (4.8937396792,4.7927343750)
(5.0179681734,4.9209398438) (5.1370256025,5.0439585938) (5.2641406367,5.1754593750) (5.3927380743,5.3086523438)
(5.5228180302,5.4435375000) (5.6543806386,5.5801148437) (5.7874260514,5.7183843750) (5.9219544365,5.8583460938)
(6.0579659760,6.0000000000)
}
\expandafter\def\csname feqminus3\endcsname{%
(0.0000000000,0.0000000000) (0.8983720574,0.0000000000) (0.9000452724,0.0002343750) (0.9019379918,0.0006773438)
(0.9041597750,0.0013500000) (0.9066226765,0.0022523438) (0.9092716296,0.0033843750) (0.9116606415,0.0045375000)
(0.9141392080,0.0058593750) (0.9166922863,0.0073500000) (0.9193073917,0.0090093750) (0.9219739673,0.0108375000)
(0.9246829534,0.0128343750) (0.9274264827,0.0150000000) (0.9301976570,0.0173343750) (0.9325236668,0.0194085938)
(0.9353301759,0.0220523437) (0.9376783263,0.0243843750) (0.9405035011,0.0273375000) (0.9428612104,0.0299273438)
(0.9456913227,0.0331898437) (0.9480481615,0.0360375000) (0.9504016287,0.0390023438) (0.9527500591,0.0420843750)
(0.9550919108,0.0452835938) (0.9574257540,0.0486000000) (0.9597502625,0.0520335937) (0.9620642052,0.0555843750)
(0.9643664384,0.0592523438) (0.9666559002,0.0630375000) (0.9689316036,0.0669398437) (0.9711926321,0.0709593750)
(0.9734381347,0.0750960937) (0.9756673213,0.0793500000) (0.9778794593,0.0837210938) (0.9800738695,0.0882093750)
(0.9822499232,0.0928148438) (0.9844070392,0.0975375000) (0.9882404393,0.1063335937) (0.9924224167,0.1165523438)
(0.9961139787,0.1261500000) (1.0001329608,0.1372593750) (1.0036737591,0.1476585938) (1.0075215289,0.1596585937)
(1.0109055573,0.1708593750) (1.0145768402,0.1837500000) (1.0178004989,0.1957523438) (1.0212924543,0.2095335938)
(1.0243541396,0.2223375000) (1.0273390301,0.2355210937) (1.0305657198,0.2506148437) (1.0333892201,0.2646000000)
(1.0364375477,0.2805843750) (1.0399731205,0.3003843750) (1.0416913061,0.3105375000) (1.0436544725,0.3225960937)
(1.0469173580,0.3438023438) (1.0485008114,0.3546585938) (1.0503082633,0.3675375000) (1.0533081602,0.3901500000)
(1.0564202156,0.4154085937) (1.0591686943,0.4394273437) (1.0620152283,0.4662093750) (1.0647293785,0.4937835937)
(1.0673152037,0.5221500000) (1.0697768542,0.5513085938) (1.0721185445,0.5812593750) (1.0743445285,0.6120023438)
(1.0764590772,0.6435375000) (1.0784664583,0.6758648437) (1.0805132726,0.7115648437) (1.0823115978,0.7455375000)
(1.0841429787,0.7830093750) (1.0858701570,0.8214000000) (1.0874981234,0.8607093750) (1.0890317487,0.9009375000)
(1.0905756107,0.9450585938) (1.0919287315,0.9871898437) (1.0932895744,1.0333500000) (1.0945635627,1.0805648437)
(1.0958324383,1.1320898437) (1.0969426940,1.1814843750) (1.0980476681,1.2353343750) (1.0990759916,1.2903843750)
(1.1000327084,1.3466343750) (1.1009760925,1.4077148438) (1.1018494144,1.4701500000) (1.1026119001,1.5301500000)
(1.1033633801,1.5952148438) (1.1040587807,1.6616343750) (1.1047022572,1.7294085938) (1.1059107204,1.8774023438)
(1.1069624487,2.0358375000) (1.1078953990,2.2097835938) (1.1087141751,2.4003375000) (1.1094256313,2.6086523438)
(1.1100504548,2.8410960938) (1.1105810958,3.0942210938) (1.1110364033,3.3750000000) (1.1114221364,3.6855843750)
(1.1117552160,4.0405523438) (1.1120325456,4.4376000000) (1.1122631910,4.8870375000) (1.1126113948,6.0000000000)
}
\expandafter\def\csname feqplus4\endcsname{%
(0.0000000000,0.0000000000) (0.2703086172,0.2552343750) (0.4335098622,0.4095093750) (0.5856148961,0.5535843750)
(0.7250221146,0.6859710937) (0.7987629771,0.7561500000) (0.8701307718,0.8241773437) (0.9414217425,0.8922398437)
(1.0122813121,0.9600000000) (1.0823729104,1.0271343750) (1.1547145062,1.0965375000) (1.2258567266,1.1649023438)
(1.2990299906,1.2353343750) (1.3742256071,1.3078335937) (1.4514353273,1.3824000000) (1.5268337830,1.4553375000)
(1.6040456804,1.5301500000) (1.6830650616,1.6068375000) (1.7598025676,1.6814273438) (1.8423308233,1.7617710937)
(1.9223926385,1.8398343750) (2.0040678991,1.9195898438) (2.0829294783,1.9967085938) (2.1677365472,2.0797593750)
(2.2495592558,2.1600000000) (2.3328186348,2.2417593750) (2.4175123878,2.3250375000) (2.5036383855,2.4098343750)
(2.5911946565,2.4961500000) (2.6851648222,2.5889085937) (2.7756555277,2.6783460937) (2.8675713640,2.7693023438)
(2.9609109073,2.8617773438) (3.0556728481,2.9557710938) (3.1518559837,3.0512835938) (3.2494592109,3.1483148438)
(3.3484815197,3.2468648437) (3.4489219868,3.3469335938) (3.5507797696,3.4485210938) (3.6540541008,3.5516273437)
(3.7528910285,3.6504000000) (3.8589178217,3.7564593750) (3.9663592922,3.8640375000) (4.0752149213,3.9731343750)
(4.1793210892,4.0775648437) (4.2909251807,4.1896148438) (4.4039421944,4.3031835938) (4.6277409894,4.5283593750)
(4.8568785902,4.7592773438) (5.0913534414,4.9959375000) (5.3311644035,5.2383398438) (5.5763106947,5.4864843750)
(5.8267918404,5.7403710938) (6.0826076294,6.0000000000)
}
\expandafter\def\csname feqminus4\endcsname{%
(0.0000000000,0.0000000000) (0.9441856825,0.0000093750) (0.9450536378,0.0002343750) (0.9460194100,0.0006000000)
(0.9471626681,0.0011343750) (0.9484420448,0.0018375000) (0.9498299442,0.0027093750) (0.9510547077,0.0035648438)
(0.9536527938,0.0056273437) (0.9561229144,0.0078843750) (0.9586855119,0.0105210938) (0.9613160381,0.0135375000)
(0.9639945334,0.0169335937) (0.9667044486,0.0207093750) (0.9691283381,0.0243843750) (0.9715576514,0.0283593750)
(0.9739850928,0.0326343750) (0.9764042785,0.0372093750) (0.9788096039,0.0420843750) (0.9811961381,0.0472593750)
(0.9832655141,0.0520335937) (0.9856055194,0.0577710938) (0.9876285818,0.0630375000) (0.9899099985,0.0693375000)
(0.9918774037,0.0750960937) (0.9940907971,0.0819585938) (0.9959952781,0.0882093750) (0.9978684022,0.0946898438)
(0.9997091175,0.1014000000) (1.0017719450,0.1093500000) (1.0035403111,0.1165523438) (1.0052738311,0.1239843750)
(1.0069719716,0.1316460937) (1.0086343022,0.1395375000) (1.0102604876,0.1476585938) (1.0118502812,0.1560093750)
(1.0134035190,0.1645898437) (1.0157702769,0.1785375000) (1.0182490381,0.1944000000) (1.0206206850,0.2109375000)
(1.0228862293,0.2281500000) (1.0248709867,0.2445210938) (1.0269374841,0.2630273438) (1.0289030352,0.2822085938)
(1.0307700023,0.3020648437) (1.0325409788,0.3225960937) (1.0342187506,0.3438023438) (1.0358062606,0.3656835938)
(1.0373065756,0.3882398438) (1.0387228567,0.4114710938) (1.0401660764,0.4374000000) (1.0414177028,0.4620375000)
(1.0426902590,0.4894898438) (1.0438801792,0.5177343750) (1.0449916120,0.5467710937) (1.0460286521,0.5766000000)
(1.0469953200,0.6072210937) (1.0478955446,0.6386343750) (1.0487950923,0.6733500000) (1.0496266316,0.7089843750)
(1.0503946076,0.7455375000) (1.0511032807,0.7830093750) (1.0518013824,0.8241773437) (1.0524408249,0.8664000000)
(1.0530260959,0.9096773438) (1.0535954301,0.9570023438) (1.0541127305,1.0055273437) (1.0550340712,1.1094000000)
(1.0557461977,1.2116273438) (1.0563522154,1.3218773437) (1.0568777684,1.4442773438) (1.0573262169,1.5797835938)
(1.0577030625,1.7294085938) (1.0580220901,1.8984375000) (1.0582862246,2.0886000000) (1.0585043930,2.3064000000)
(1.0588086700,2.8153500000) (1.0590003566,3.5056148438) (1.0591109014,4.4828648437) (1.0591687501,6.0000000000)
}
\expandafter\def\csname feqplus5\endcsname{%
(0.0000000000,0.0000000000) (0.3307735877,0.3191273437) (0.5117277130,0.4937835937) (0.6769613479,0.6534000000)
(0.8250468577,0.7966148438) (0.9783282584,0.9450585938) (1.1278973854,1.0901343750) (1.2738488192,1.2319335938)
(1.4210720164,1.3752093750) (1.5761201245,1.5263648438) (1.7348892042,1.6814273438) (1.8925517827,1.8356835937)
(2.0524356456,1.9923843750) (2.2140097084,2.1510093750) (2.3767621738,2.3110523438) (2.5450861114,2.4768375000)
(2.7139230030,2.6433843750) (2.8931928176,2.8204898438) (3.0779986349,3.0033375000) (3.2683312954,3.1919273437)
(3.4585105440,3.3806273437) (3.6538914518,3.5747460937) (3.8544685209,3.7742835937) (4.0602370116,3.9792398438)
(4.2649142673,4.1833500000) (4.4809015792,4.3989843750) (4.6954910434,4.6134585938) (4.9149589751,4.8330375000)
(5.1393034676,5.0577210937) (5.3685229405,5.2875093750) (5.6026161013,5.5224023438) (5.8415819118,5.7624000000)
(6.0779589654,6.0000000000)
}
\expandafter\def\csname feqminus5\endcsname{%
(0.0000000000,0.0000000000) (0.9647527483,0.0000000000) (0.9657031054,0.0003960938) (0.9670048504,0.0012398438)
(0.9684161021,0.0024000000) (0.9698069532,0.0037500000) (0.9712947102,0.0054000000) (0.9726581703,0.0070898438)
(0.9742707503,0.0093023438) (0.9757182118,0.0114843750) (0.9771909763,0.0138960937) (0.9784680712,0.0161460937)
(0.9799696918,0.0189843750) (0.9812628533,0.0216000000) (0.9827743098,0.0248648437) (0.9840691016,0.0278460938)
(0.9853606317,0.0309960938) (0.9866467085,0.0343148437) (0.9879253533,0.0378023438) (0.9891947785,0.0414585938)
(0.9916996666,0.0492773438) (0.9941502516,0.0577710938) (0.9963411829,0.0661500000) (0.9986640045,0.0759375000)
(1.0007267388,0.0855023438) (1.0027222078,0.0956343750) (1.0046477224,0.1063335937) (1.0065013175,0.1176000000)
(1.0082816673,0.1294335938) (1.0099880112,0.1418343750) (1.0116200881,0.1548023437) (1.0131780779,0.1683375000)
(1.0146625484,0.1824398437) (1.0160744075,0.1971093750) (1.0175332268,0.2137593750) (1.0187974539,0.2296148437)
(1.0200990020,0.2475585938) (1.0212229800,0.2646000000) (1.0223762207,0.2838375000) (1.0233688546,0.3020648437)
(1.0243841029,0.3225960937) (1.0253317388,0.3438023438) (1.0262148620,0.3656835938) (1.0270366060,0.3882398438)
(1.0278001118,0.4114710938) (1.0285085035,0.4353773438) (1.0292173117,0.4620375000) (1.0298689089,0.4894898438)
(1.0304670448,0.5177343750) (1.0315173235,0.5766000000) (1.0324262029,0.6410835938) (1.0332273879,0.7141500000)
(1.0339170281,0.7966148438) (1.0344814279,0.8864648438) (1.0348756528,0.9690210937) (1.0352147612,1.0615523438)
(1.0354923138,1.1616000000) (1.0357294868,1.2765093750) (1.0360767441,1.5491460938) (1.0363005052,1.9153500000)
(1.0364216350,2.3719593750) (1.0364891001,3.0405960938) (1.0365320673,6.0000000000)
}

\begin{tikzpicture}[x=2.05cm,y=1.45cm,>=Latex,
    line cap=round,line join=round]

\begin{scope}[shift={(0,0)}]
  \node[anchor=south] at (1.30,3.36) {$\varphi_n(\rho)$};
  \foreach \yy in {1,2,3} {
    \draw[black!13, very thin] (0,\yy)--(2.60,\yy);
    \draw[black!55, thin] (-.035,\yy)--(.035,\yy);
    \node[anchor=east,font=\small] at (-.08,\yy) {$\yy$};
  }
  \foreach \xx in {1,2} {
    \draw[black!55, thin] (\xx,-.035)--(\xx,.035);
    \node[anchor=north,font=\small] at (\xx,-.08) {$\xx$};
  }
  \draw[black!65,thin,->] (-.025,0)--(2.72,0)
      node[right,text=black] {$\rho$};
  \draw[black!65,thin,->] (0,-.025)--(0,3.23);
  \node[anchor=north east,font=\small] at (-.025,-.075) {$0$};
  \begin{scope}
    \clip (0,-.013) rectangle (2.60,3.075);
    \foreach \nn in {1,2,3,4,5} {
      \draw[feqcolor\nn,line width=1.15pt]
        plot coordinates {\csname feqplus\nn\endcsname};
    }
  \end{scope}
\end{scope}

\begin{scope}[shift={(3.65,0)}]
  \node[anchor=south] at (1.30,3.36) {$(\mathcal L\varphi_n)(\rho)$};
  \foreach \yy in {1,2,3} {
    \draw[black!13, very thin] (0,\yy)--(2.60,\yy);
    \draw[black!55, thin] (-.035,\yy)--(.035,\yy);
    \node[anchor=east,font=\small] at (-.08,\yy) {$\yy$};
  }
  \foreach \xx in {1,2} {
    \draw[black!55, thin] (\xx,-.035)--(\xx,.035);
    \node[anchor=north,font=\small] at (\xx,-.08) {$\xx$};
  }
  \draw[black!65,thin,->] (-.025,0)--(2.72,0)
      node[right,text=black] {$\rho$};
  \draw[black!65,thin,->] (0,-.025)--(0,3.23);
  \node[anchor=north east,font=\small] at (-.025,-.075) {$0$};
  \begin{scope}
    \clip (0,-.013) rectangle (2.60,3.075);
    \foreach \nn in {1,2,3,4,5} {
      \draw[feqcolor\nn,line width=1.15pt]
        plot coordinates {\csname feqminus\nn\endcsname};
    }
  \end{scope}
\end{scope}

\foreach \nn/\xx in {1/.48,2/1.68,3/2.88,4/4.08,5/5.28} {
  \draw[feqcolor\nn,line width=1.15pt] (\xx,-.61)--++(.33,0);
  \node[anchor=west,font=\small] at (\xx+.42,-.61) {$n=\nn$};
}
\end{tikzpicture}
\endgroup

\caption{Numerically computed equality functions for $n=1,\ldots,5$
in the canonical balanced normalization, with horizontal sections
$r_n(t)B_2^n$. The horizontal variable is $\rho=|x|$.
Left: the functions associated with $r_n(t)$.
Right: their Legendre duals, associated with $r_n(-t)$.
The blue curve is $\rho^2/2$ in both panels.
The higher-dimensional duals are zero on an interval and tend to
infinity at the finite radius of their domain; only the displayed
height range is shown.}
\label{fig:functional-equality-families}
\end{figure}

%% file: sharp_santalo_paper_2026-09-29_1623.bbl
\begin{thebibliography}{99}

\bibitem{Arnold1989}
V.~I. Arnold,
\emph{Mathematical Methods of Classical Mechanics},
second edition, Graduate Texts in Mathematics, vol.~60,
Springer-Verlag, New York, 1989.

\bibitem{ArtsteinICM}
S. Artstein-Avidan,
A duality inspired journey in convex geometric analysis,
in \emph{Proceedings of the International Congress of Mathematicians 2026,
Volume~4: Invited Lectures (Sections 5--8)},
Society for Industrial and Applied Mathematics, Philadelphia, 2026,
pp.~619--635, doi:10.1137/25M1803334.

\bibitem{ArtsteinAvidanGiannopoulosMilmanbook}
S. Artstein-Avidan, A. Giannopoulos, and V.~D. Milman,
\emph{Asymptotic Geometric Analysis, Part I},
Mathematical Surveys and Monographs, vol.~202,
American Mathematical Society, Providence, RI, 2015,
doi:10.1090/surv/202.

\bibitem{ArtsteinAvidanMilman2009}
S. Artstein-Avidan and V.~D. Milman,
The concept of duality in convex analysis, and the characterization of the
Legendre transform,
\emph{Ann. of Math. (2)} \textbf{169} (2009), no.~2, 661--674,
{doi:10.4007/annals.2009.169.661}.

\bibitem{ArtsteinAvidanMilman2011}
S. Artstein-Avidan and V.~D. Milman,
Hidden structures in the class of convex functions and a new duality
transform,
\emph{J. Eur. Math. Soc. (JEMS)} \textbf{13} (2011), no.~4, 975--1004,
{doi:10.4171/JEMS/273}.

\bibitem{ArtsteinKlartagMilman2004}
S. Artstein-Avidan, B. Klartag, and V.~D. Milman,
The Santal\'o point of a function, and a functional form of the Santal\'o
inequality,
\emph{Mathematika} \textbf{51} (2004), no.~1--2, 33--48,
{doi:10.1112/S0025579300015497}.

\bibitem{Artstein-Slomka}
S. Artstein-Avidan and B. A. Slomka,
\emph{A note on Santal\'o inequality for the polarity transform and its reverse},
Proc. Amer. Math. Soc. \textbf{143} (2015), no. 4, 1693-1704.

 
\bibitem{BallThesis1986}
K.~M. Ball,
\emph{Isometric Problems in $\ell_p$ and Sections of Convex Sets},
Ph.D. thesis, University of Cambridge, 1986.

\bibitem{Bobkov2010}
S.~G. Bobkov,
Convex bodies and norms associated to convex measures,
\emph{Probab. Theory Related Fields} \textbf{147} (2010), no.~1--2,
303--332.

 
 
 

\bibitem{FradeliziMeyer2007}
M. Fradelizi and M. Meyer,
Some functional forms of Blaschke--Santal\'o inequality,
\emph{Math. Z.} \textbf{256} (2007), no.~2, 379--395.

\bibitem{GSS}
S. Gilboa, A. Segal and B. A. Slomka,
\emph{The scaled polarity transform and related inequalities},
J. Fixed Point Theory Appl. \textbf{27} (2025), no. 3, Paper No. 73.


\bibitem{Lehec}  
J.~Lehec,
\newblock A direct proof of the functional Santaló inequality,
\newblock \emph{C. R. Math. Acad. Sci. Paris} \textbf{347} (2009), no.~1--2, 55--58.

\bibitem{MeyerPajor1989}
M. Meyer and A. Pajor,
On Santal\'o's inequality,
in \emph{Geometric Aspects of Functional Analysis (1987--88)},
Lecture Notes in Mathematics, vol.~1376, Springer, Berlin, 1989,
pp.~261--263.

\bibitem{MeyerPajor1990}
M. Meyer and A. Pajor,
On the Blaschke-Santal\'o inequality,
\emph{Arch. Math. (Basel)} \textbf{55} (1990), 82--93.

 
  
\bibitem{Rotem2014Santalo}
L. Rotem,
A sharp Blaschke-Santal\'o inequality for $\alpha$-concave functions,
\emph{Geom. Dedicata} \textbf{172} (2014), 217--228.

\bibitem{Santalo1949}
L.~A. Santal\'o,
Un invariante af\'in para los cuerpos convexos del espacio de $n$
dimensiones,
\emph{Portugal. Math.} \textbf{8} (1949), no.~4, 155--161.

\bibitem{Schneider2014}
R. Schneider,
\emph{Convex Bodies: The Brunn-Minkowski Theory},
second expanded edition, Encyclopedia of Mathematics and its Applications,
vol.~151, Cambridge University Press, Cambridge, 2014.



 

\end{thebibliography}
